\numberwithin{equation}{section}
\newtheorem{theo}{Theorem}[subsection]
\newtheorem{prop}[theo]{Proposition}
\newtheorem{lem}[theo]{Lemma}
\newtheorem{cor}[theo]{Corollary}
\newtheorem{conjecture}[theo]{Conjecture}
\theoremstyle{definition}
\newtheorem{definition}[theo]{Definition}
\newtheorem{condition}[theo]{Condition}
\theoremstyle{remark}
\newtheorem{remark}[theo]{Remark}
\newcommand{\bb}[1]{\mathbb{#1}}
\newcommand{\bbf}[1]{\mathbf{#1}}
\newcommand{\n}[1]{\mathcal{#1}}
\newcommand{\f}[1]{\mathfrak{#1}}
\newcommand{\s}[1]{\mathscr{#1}}
\newcommand{\Hom}{\mbox{\normalfont Hom}}
\newcommand{\Gal}{\mbox{Gal}}
\DeclareMathOperator{\Tr}{Tr}
\DeclareMathOperator{\id}{id}
\DeclareMathOperator{\Ext}{Ext}
\DeclareMathOperator{\QCoh}{QCoh}
\DeclareMathOperator{\FL}{FL}
\DeclareMathOperator{\Lie}{Lie}
\DeclareMathOperator{\Zar}{Zar}
\def \gr {{ \mathrm gr}}
\def \Sh {\mathrm{Sh}}
\def \Shan {\mathcal{S} h}
\def \tor {\mathrm{tor}}
\def \std {\mathrm{std}}
\def \KS {\mathrm{KS}}
\def \Betti {\mathrm{Betti}}
\def \BS {\mathrm{BS}}
\DeclareMathOperator{\an}{\begin{scriptsize}
an
\end{scriptsize}}
\DeclareMathOperator{\et}{\begin{scriptsize}
\acute{e}t
\end{scriptsize}}
\DeclareMathOperator{\proet}{\begin{scriptsize}
pro\acute{e}t
\end{scriptsize}}
\DeclareMathOperator{\ket}{\begin{scriptsize}
k\acute{e}t
\end{scriptsize}}
\DeclareMathOperator{\proket}{\begin{scriptsize}
prok\acute{e}t
\end{scriptsize}}
\DeclareMathOperator{\alg}{\begin{scriptsize}
alg
\end{scriptsize}}
\DeclareMathOperator{\sm}{\begin{scriptsize}
sm
\end{scriptsize}}
\DeclareMathOperator{\dR}{\begin{scriptsize}
dR
\end{scriptsize}}
\DeclareMathOperator{\cyc}{\begin{scriptsize}
cyc
\end{scriptsize}}
\DeclareMathOperator{\Hd}{\begin{scriptsize}
Hod
\end{scriptsize}}
\DeclareMathOperator{\sheaf}{\begin{scriptsize}
sheaf
\end{scriptsize}}
\newcommand{\OBdr}{\s{O}\!\bb{B}_{\dR,\log}}
\newcommand{\OBdR}[1]{\s{O}\!\bb{B}_{\dR,\log, {#1}}}
\newcommand{\OC}{\s{O}\!\bb{C}_{\log}}
\newcommand{\OCC}[1]{\s{O}\!\bb{C}_{\log,{#1}}}
\DeclareMathOperator{\sol}{\blacksquare}
\DeclareMathOperator{\Rep}{Rep}
\DeclareMathOperator{\Spec}{Spec}
\DeclareMathOperator{\Spd}{Spd}
\DeclareMathOperator{\Sym}{Sym}
\DeclareMathOperator{\Spa}{Spa}
\DeclareMathOperator{\Fil}{Fil}
\DeclareMathOperator{\Fl}{\mathscr{F}\!\ell}
\DeclareMathOperator{\HT}{HT}
\begin{document}

\title{Locally analytic completed cohomology}

\date{}
\author{Juan Esteban  Rodr\'iguez  Camargo}
\address{Max Planck Institute for Mathematics,  Vivatsgasse 7, 
53111 Bonn,
Germany}
\email{rodriguez@mpim-bonn.mpg.de}

\subjclass[2020]{Primary 	11G18,  11F77,  14G22}
\keywords{Completed cohomology,  locally analytic representations, Calegari-Emerton conjectures}

\maketitle

\begin{abstract}
   We compute the geometric Sen operator for arbitrary Shimura varieties in terms of equivariant vector bundles of flag varieties and the Hodge-Tate period map. As an application, we obtain the  rational  vanishing of completed cohomology  in  the   Calegari-Emerton conjectures.
\end{abstract}

\tableofcontents

\section{Introduction}
\label{ch:Intro}

Let $p$ be a prime number. In this paper, we compute the geometric Sen operator of Shimura varieties, generalizing previous work of Pan in  \cite{LuePan} for the modular curve. As an application, we compute the locally analytic vectors of completed cohomology in terms of a sheaf of locally analytic functions of the infinite level-at-$p$ Shimura variety. We apply this description of locally analytic completed cohomology to deduce the vanishing of the rational completed cohomology groups above middle degree, proving a rational version of conjectures of Calegari-Emerton for Shimura varieties  \cite{CalegariEmerton}. The tools and techniques needed in this work are the Hodge-Tate period map \cite{ScholzeTorsion2015, CaraianiScholze2017}, the theory of log adic spaces and the logarithmic Riemann-Hilbert correspondence \cite{DiaoLogarithmic2019,DiaoLogarithmicHilbert2018}, geometric Sen theory \cite{RCGeoSenTheory} and the theory of solid locally analytic representations \cite{RRLocallyAnalytic,RRLocAnII}.

\subsection{Main results}
\label{Subsec:MainResults}

Let us start with the main application of the paper. Let $(\bbf{G},X)$ be a Shimura datum with reflex field $E$. For $K\subset \bbf{G}(\bb{A}^{\infty}_{\bb{Q}})$ a neat  compact open  subgroup, we let $\Sh_{K,E}$ be the canonical model over $E$ of the Shimura variety at level $K$, cf. \cite{DeligneShimura}. In this text all level structures will be assumed to be neat.  We fix  a prime-to-$p$ compact open subgroup $K^p\subset \bbf{G}(\bb{A}^{\infty,p}_{\bb{Q}})$, and consider compact open subgroups $K_p\subset \bbf{G}(\bb{Q}_p)$.

 Recall the definition of the completed cohomology groups from \cite{EmertonInterpolation}.
 
\begin{definition}
\label{DefinitionCompletedCohoIntro}
 We define the completed cohomology groups at level $K^p$ to be 
\[
\widetilde{H}^i(K^p,\bb{Z}_p) = \varprojlim_s \varinjlim_{K_p\subset \bbf{G}(\bb{Q}_p)} H^i_{\Betti}(\Sh_{K^pK_p,E}(\bb{C}),\bb{Z}/p^s)
\]
and
\[
\widetilde{H}^i_c(K^p,\bb{Z}_p) = \varprojlim_s \varinjlim_{K_p\subset \bbf{G}(\bb{Q}_p)} H^i_{\Betti,c}(\Sh_{K^pK_p,E}(\bb{C}),\bb{Z}/p^s),
\]
where $H^i_{\Betti}$ and $H^i_{\Betti,c}$ are the singular cohomology and the singular cohomology with compact support of the underlying topological space of the complex analytic manifold.
\end{definition}

The completed cohomology of Emerton is introduced for general locally symmetric spaces.  In particular, the conjectures of \cite{CalegariEmerton} are stated in such a generality. When restricted to Shimura varieties  the vanishing part of the Calegari-Emerton conjectures states the following:

\begin{conjecture}[Calegari-Emerton]
\label{ConjectureCE}
Let $d$ be the dimension of the Shimura variety, then for $i>d$ we have
\[
\widetilde{H}^i(K^p,\bb{Z}_p)=\widetilde{H}^i_c(K^p,\bb{Z}_p)=0.
\]
\end{conjecture}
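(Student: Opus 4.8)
The plan is to deduce Conjecture \ref{ConjectureCE} from a vanishing at infinite level, after first reducing to torsion coefficients. Setting $\widetilde{H}^i(K^p,\bb{Z}/p^s) := \varinjlim_{K_p}H^i_{\Betti}(\Sh_{K^pK_p,E}(\bb{C}),\bb{Z}/p^s)$, so that $\widetilde{H}^i(K^p,\bb{Z}_p)=\varprojlim_s\widetilde{H}^i(K^p,\bb{Z}/p^s)$, the long exact sequences attached to $0\to\bb{Z}/p\to\bb{Z}/p^s\to\bb{Z}/p^{s-1}\to 0$ show by induction on $s$ that it suffices to prove $\widetilde{H}^i(K^p,\bb{F}_p)=0$ for $i>d$, and likewise with compact supports. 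By Artin's comparison between Betti and étale cohomology, the invariance of torsion étale cohomology of a variety under extension of the algebraically closed ground field, and Huber's comparison between algebraic and analytic étale cohomology with torsion coefficients, after fixing embeddings $E\hookrightarrow\bb{C}$ and $\overline{E}\hookrightarrow\bb{C}_p$ one obtains
\[
\widetilde{H}^i(K^p,\bb{F}_p)=\varinjlim_{K_p}H^i_{\et}(\Sh^{\an}_{K^pK_p,\bb{C}_p},\bb{F}_p)=H^i_{\et}(\Sh_{K^p,\bb{C}_p},\bb{F}_p),
\]
where $\Sh_{K^p}\sim\varprojlim_{K_p}\Sh^{\an}_{K^pK_p}$ is the perfectoid Shimura variety at infinite level and the last equality uses that $\bb{F}_p$-cohomology commutes with this inverse limit; the compactly supported groups are handled identically using $j_!$ from the open stratum of a toroidal, and then the minimal, compactification, following \cite{ScholzeTorsion2015}.

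Next I would transport the problem to the flag variety through the Hodge--Tate period map $\pi_{\HT}\colon\Sh_{K^p,\bb{C}_p}\to\Fl$ --- available for a general Shimura datum $(\bbf{G},X)$ in the setting of the paper, and going back to \cite{CaraianiScholze2017} in the Hodge type case --- whose target is a proper smooth rigid variety of dimension $d=\dim\Sh_K$. The key step is to upgrade the computation of the geometric Sen operator in terms of equivariant vector bundles on $\Fl$ and $\pi_{\HT}$ to an \emph{integral} statement: that $R\pi_{\HT,*}(\s{O}^+_{\Sh_{K^p}}/p)$ is, up to a bounded power of $p$ --- equivalently, in the almost category over $\s{O}_{\bb{C}_p}$ --- concentrated in degree $0$ and an almost coherent $\s{O}^+_{\Fl}/p$-module; morally, $\pi_{\HT}$ is affinoid so the fibers carry no higher cohomology, and the geometric Sen operator controls the nilpotent structure along them. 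Granting this, the bound on the cohomological dimension of the $d$-dimensional proper space $\Fl$ for almost coherent sheaves yields $H^i_{\et}(\Sh_{K^p},\s{O}^+/p)^a=0$ for $i>d$; the primitive comparison isomorphism $H^i_{\et}(\Sh_{K^p},\bb{F}_p)\otimes_{\bb{F}_p}\s{O}_{\bb{C}_p}/p\xrightarrow{\ \sim\ }H^i_{\et}(\Sh_{K^p},\s{O}^+/p)^a$ then forces $\widetilde{H}^i(K^p,\bb{F}_p)=0$ for $i>d$ (an $\bb{F}_p$-module mapping almost isomorphically to $0$ must vanish), and the dévissage and inverse limit of the first paragraph propagate this to $\bb{Z}/p^s$ and then to $\bb{Z}_p$; the same argument with $j_!$ gives the compactly supported part.

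The main obstacle is precisely the integral upgrade of the Sen operator computation. That computation is by nature a rational object --- Sen theory classifies $\bb{C}_p$-semilinear Galois representations --- so the solid locally analytic methods control directly only $\widetilde{H}^i(K^p,\bb{Z}_p)\otimes_{\bb{Z}_p}\bb{Q}_p$, yielding the rational vanishing cleanly via coherent cohomology of $\Fl$ after inverting $p$. To reach the stated integral conclusion one needs in addition either the affineness of $\pi_{\HT}$ for an arbitrary Shimura datum --- available in the abelian type case by pulling back the corresponding statement for Siegel varieties \cite{ScholzeTorsion2015} along a suitable embedding, but open in general since not every Shimura variety admits one --- or a self-contained $\s{O}^+/p$-coefficient refinement of the geometric Sen operator computation that controls $R\pi_{\HT,*}(\s{O}^+/p)$ on the nose. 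I expect the latter to be the crux: it is an integral $p$-adic Hodge theory input --- an almost coherence statement for the nearby cycles of $\pi_{\HT}$ --- not visible to the rational locally analytic formalism, and it is the reason Conjecture \ref{ConjectureCE} in its full integral form does not follow from the rational theory alone.
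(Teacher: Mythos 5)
The statement you set out to prove is labeled a Conjecture in the paper, and the paper does \emph{not} prove it. What the paper establishes is only the weaker rational vanishing $\widetilde{H}^i(K^p,\bb{Z}_p)[\tfrac{1}{p}]=\widetilde{H}^i_c(K^p,\bb{Z}_p)[\tfrac{1}{p}]=0$ for $i>d$ (Corollary~\ref{CoroCalegari}); the introduction is explicit that the integral conjecture remains open for arbitrary Shimura varieties, being known only in the preabelian-type case via Scholze and Hansen--Johansson. Your write-up is in fact internally consistent with this: you do not claim a proof, but instead analyze what one would need, and your diagnosis agrees with the paper's own position.

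Your conditional route --- reduce to $\bb{F}_p$-coefficients by d\'evissage, pass to the perfectoid infinite-level (toroidal, then minimal) Shimura variety, invoke the primitive comparison theorem, and then control $R\pi_{\HT,*}(\s{O}^+/p)$ as an almost coherent complex concentrated in degree $0$ over the $d$-dimensional flag variety --- is precisely the Scholze / Hansen--Johansson strategy, and it does succeed unconditionally whenever one knows both perfectoidness at infinite level and affineness of $\pi_{\HT}$, i.e.\ in (pre)abelian type. For a general Shimura datum these two integral inputs are open, and you correctly observe that they cannot be extracted from the geometric Sen theory used here, since the latter is a $\bb{C}_p$-linear (hence rational) formalism: it controls $\widehat{\s{O}}_{\Shan}$-coefficients but says nothing on the nose about $\s{O}^+/p$. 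The paper's actual contribution is to \emph{bypass} perfectoidness entirely and get the rational statement for all Shimura varieties: by Theorem~\ref{TheoComputationSenOperator} the geometric Sen operator is the pullback of $\f{g}^{c,0,\vee}\to\f{n}^{c,0,\vee}_{\mu}$ along $\pi_{\HT}^{\tor}$, by Theorem~\ref{TheoMainVanishingOla} this identifies $(\widetilde{H}^i(K^p,\bb{Z}_p)\widehat{\otimes}\,\bb{C}_p)^{la}$ with the sheaf cohomology of $\s{O}^{la}_{\Shan}$ on the topological space $|\Shan^{\tor}_{K^p,\infty,C}|$ of cohomological dimension $d$, and one concludes by density of locally analytic vectors in admissible Banach representations. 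So your identification of the gap is accurate, your conditional argument is the standard integral one, and the obstruction you name is exactly why the paper proves only the rational weakening of the stated Conjecture.
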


The first big step towards  Conjecture \ref{ConjectureCE} is due to  Scholze \cite{ScholzeTorsion2015},  where the strategy  is to rewrite  completed cohomology    as sheaf cohomology of a certain space of cohomological dimension $d$. More precisely, let $\bb{C}_p$ be the completed algebraic closure of $\bb{Q}_p$ and let us fix an isomorphism $\bb{C}\simeq \bb{C}_p$, this fixes in particular an embedding $\iota:E \hookrightarrow  \bb{C}_p$. Let $\Shan_{K,\bb{C}_p}$ be the adic space  associated to the $\bb{C}_p$-base change of $\Sh_{K,E}$, see \cite{HuberEtaleCohomology}. When the Shimura datum is of Hodge type, Scholze proved that the limit 
\[
\Shan_{K^p,\infty,\bb{C}_p}=\varprojlim_{K_p} \Shan_{K^pK_p,\bb{C}_p}
\]
has a natural structure of a perfectoid space \cite[Theorem IV.1.1]{ScholzeTorsion2015}.  Then, completed cohomology with compact support can be computed via sheaf cohomology of a suitable compactification of $\Shan_{K^p,\infty,\bb{C}_p}$, see \cite[Theorem IV.2.1]{ScholzeTorsion2015}. Scholze's argument can be pushed to the non-compact support case  using perfectoid  toroidal compactifications of Hodge-type Shimura varieties (see \cite{PilloniStroh} and \cite{LanCLosedToroidal}) and the theory of log adic spaces of \cite{DiaoLogarithmic2019}. 

A next important step is the work of Hansen-Johansson \cite{HansenJohanssonPerf}, where they prove the vanishing for preabelian type Shimura varieties. Their method involves proving that preabelian type Shimura varieties are perfectoid at infinite level and deduce the vanishing for $\widetilde{H}^i_c$ using Scholze's argument,  then they prove the vanishing for $\widetilde{H}^i$ by studying the boundary of minimal compactifications.

In this paper we prove a weaker version of  Conjecture  \ref{ConjectureCE} with the advantage that it does not depend on the perfectoidness at infinite level, but  purely on the $p$-adic Hodge theory of Shimura varieties. In particular, it holds for \textit{arbitrary} Shimura varieties:

\begin{theo}[Corollary \ref{CoroCalegari}]
\label{TheoIntroCalegariEmerton}
Conjecture \ref{ConjectureCE} holds after inverting $p$,  namely, 
\[
\widetilde{H}^i(K^p,\bb{Z}_p)[\frac{1}{p}]=\widetilde{H}^i_c(K^p,\bb{Z}_p)[\frac{1}{p}]=0
\]
for $i>d$.
\end{theo}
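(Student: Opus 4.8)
The plan is to reduce the (rational) vanishing of completed cohomology to a statement about the cohomological behaviour of a sheaf of locally analytic functions on the infinite-level Shimura variety, and to control that sheaf using the geometric Sen operator computed in terms of the Hodge–Tate period map. Concretely, I would first pass from the pro-étale/perfectoid-free world to $p$-adic Hodge theory: using the Hodge–Tate period map $\pi_{\HT}\colon \Shan_{K^p,\infty,\bb{C}_p}\to\Fl$ to the flag variety, one has a description of $\bb{Q}_p$-local systems and their "arithmetic Sen operators" in terms of equivariant vector bundles on $\Fl$. The key input (which the paper establishes before this corollary) is the identification of the geometric Sen operator of the tower $\Shan_{K^pK_p,\bb{C}_p}$ with the nilpotent operator coming from the tangent/cotangent directions along the fibres of $\pi_{\HT}$, equivalently with the action of a specific element of $\Lie\bbf{G}$ transported via $\pi_{\HT}$. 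This is what lets one compute locally analytic completed cohomology $\widetilde{H}^i(K^p,\bb{Z}_p)[\tfrac1p]^{la}$ as the cohomology of a sheaf $\OC^{la}$-type object (a sheaf of locally analytic functions) on the infinite-level space, rather than of constant sheaves on a perfectoid.

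Next I would set up the computation of $\widetilde H^\ast(K^p,\bb{Z}_p)[\tfrac1p]$ via this locally analytic sheaf. Since $\widetilde H^i(K^p,\bb{Z}_p)[\tfrac1p]$ is, up to a Tate twist and the usual limit/colimit bookkeeping, the $\bbf{G}(\bb{Q}_p)$-smooth (equivalently $\bbf{G}(\bb{Q}_p)$-locally analytic, after taking locally analytic vectors) part of $R\Gamma$ of this sheaf on $\Shan_{K^p,\infty,\bb{C}_p}$ (and its compactly supported variant on a toroidal compactification), it suffices to bound the cohomological amplitude of that $R\Gamma$. Here the two ingredients are: (i) the Hodge–Tate period map has target the flag variety $\Fl$ of dimension $d$, and the Shimura variety $\Shan_{K^pK_p,\bb{C}_p}$ itself (and its toroidal compactification) has dimension $d$, so that $R\Gamma$ of a coherent-type sheaf on the $d$-dimensional (log) adic space lives in degrees $[0,d]$; and (ii) taking locally analytic vectors for the $p$-adic group $\bbf{G}(\bb{Q}_p)$ is exact on solid representations by the theory of \cite{RRLocallyAnalytic,RRLocAnII}, so it does not increase cohomological amplitude. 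Combining (i) and (ii) gives concentration of $\widetilde H^i(K^p,\bb{Z}_p)[\tfrac1p]$ in degrees $\le d$; the compactly supported case is handled on a (log) toroidal compactification, using that it is proper of dimension $d$ and that the logarithmic Riemann–Hilbert formalism of \cite{DiaoLogarithmic2019,DiaoLogarithmicHilbert2018} extends the period map and the Sen computation to the boundary.

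The subtlety — and the step I expect to be the main obstacle — is the passage between completed cohomology with $\bb{Z}/p^s$-coefficients and the locally analytic sheaf cohomology: one must show that after inverting $p$, $\varprojlim_s\varinjlim_{K_p}H^i_{\Betti}(\Shan,\bb{Z}/p^s)$ really is computed by $R\Gamma$ of the locally analytic function sheaf on the infinite-level space, i.e. that primitive comparison / the Hodge–Tate decomposition survives the limit over $K_p$ and the $[\tfrac1p]$, and that taking $\bbf{G}(\bb{Q}_p)$-locally analytic vectors recovers exactly completed cohomology rationally (no loss from the non-locally-analytic part, which is where inverting $p$ is essential and where the perfectoid hypothesis of \cite{ScholzeTorsion2015,HansenJohanssonPerf} gets traded for $p$-adic Hodge theory). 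Granting the sheaf-theoretic description and the exactness of locally analytic vectors, the dimension bound and hence the vanishing for $i>d$ is formal. Thus the proof is: invoke the earlier computation of the geometric Sen operator and the resulting description of locally analytic completed cohomology as $R\Gamma$ of an $\OC^{la}$-type sheaf on $\Shan_{K^p,\infty,\bb{C}_p}$ (resp. its toroidal compactification for compact support); note this $R\Gamma$ is concentrated in degrees $[0,d]$ because the relevant (log) adic space has dimension $d$; note that $-\,^{Rla}$ with respect to $\bbf{G}(\bb{Q}_p)$ is $t$-exact on the relevant solid representations; conclude $\widetilde H^i(K^p,\bb{Z}_p)[\tfrac1p]=\widetilde H^i_c(K^p,\bb{Z}_p)[\tfrac1p]=0$ for $i>d$.
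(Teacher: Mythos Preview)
Your overall strategy matches the paper's: identify the locally analytic vectors of completed cohomology with the sheaf cohomology of $\s{O}^{la}_{\Shan}$ (resp.\ $\n{I}^{la}_{\Shan}$) on the infinite-level toroidal compactification, then bound the latter by a dimension argument. However, two points in your write-up are misstated in ways that matter.

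First, the passage from ``locally analytic vectors vanish'' to ``completed cohomology vanishes rationally'' is not via exactness of $(-)^{Rla}$. Taking locally analytic vectors goes \emph{from} completed cohomology \emph{to} the sheaf cohomology, not the other way; knowing that $(-)^{Rla}$ preserves amplitude would only tell you that the la vectors live in degrees $\leq d$, not that completed cohomology itself does. What you need instead is that the completed cohomology groups $\widetilde{H}^i(K^p,\bb{Z}_p)\widehat{\otimes}_{\bb{Z}_p} C$ are \emph{admissible} Banach $\widetilde{K}_p$-representations (this is Emerton's theorem, recalled in the paper as Theorem~\ref{TheoCompletedcohomologyAdmissible}), and then invoke the density of locally analytic vectors in admissible representations \cite[Theorem 7.1]{SchTeitDist}: if $V$ is admissible Banach and $V^{la}=0$, then $V=0$. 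Your parenthetical ``no loss from the non-locally-analytic part'' gestures at this, but the mechanism is density in admissible representations, not any exactness property.

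Second, the cohomological dimension bound is not a coherent-cohomology statement on a $d$-dimensional adic space. The sheaf $\s{O}^{la}_{\Shan}$ lives on the \emph{topological space} $|\Shan^{\tor}_{K^p,\infty,C}|=\varprojlim_{K_p}|\Shan^{\tor}_{K^pK_p,C}|$, which is not assumed to carry any nice adic or perfectoid structure. The bound comes instead from the fact that this is an inverse limit of spectral spaces of Krull dimension $d$, hence has topological cohomological dimension $\leq d$; this is exactly the argument in the proof of \cite[Corollary IV.2.2]{ScholzeTorsion2015}, and is what the paper invokes. With these two corrections your argument is the paper's.
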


We now explain the main ingredients involved in the proof of Theorem  \ref{TheoIntroCalegariEmerton}.     For simplicity in the introduction, we shall assume that the maximal $\bb{Q}$-anisotropic $\bb{R}$-split torus of the  center of $\bbf{G}$ is zero, this implies that for $K'\subset K$ an open normal subgroup, the map $\Sh_{K',E}\to \Sh_{K,E}$ is an \'etale $K/K'$-torsor.  Let $\Sh_{K^pK_p,E}^{\tor}$ denote  the toroidal compactfication of $\Sh_{K^pK_p,E}$ as in \cite{Pink1990ArithmeticalCO}, depending on a fixed cone decomposition $\Sigma$, which we assume to be smooth  and projective relative to $K^pK_p$.  Let $\Shan^{\tor}_{K^pK_p,\bb{C}_p}$ be the adic analytification over $\bb{C}_p$ of $\Sh^{\tor}_{K^pK_p,E}$. The toroidal compactification  is naturally endowed with the structure of a log adic space with reduced normal crossing  divisors as in \cite{DiaoLogarithmic2019,DiaoLogarithmicHilbert2018}. Then, we can consider the pro-Kummer-\'etale site $\Shan^{\tor}_{K^pK_p,\bb{C}_p,\proket}$ of \textit{loc. cit.}, and its completed structural sheaf $\widehat{\s{O}}_{\Shan}$ mapping a log affinoid perfectoid $S=(\Spa(R,R^+),\n{M})$ to 
\[
\widehat{\s{O}}_{\Shan}(S)=R.
\]
We will denote by  $\bb{Z}_p$ and $\bb{Q}_p$ the pro-Kummer-\'etale sheaves  over $\Shan^{\tor}_{K^pK_p,\bb{C}_p,\proket}$ mapping a log affinoid perfectoid  $S$ as before to the space of continuous functions  $C(|\Spa(R,R^+)|,\bb{Z}_p)$ and $C(|\Spa(R,R^+)|,\bb{Q}_p)$  respectively, where $|\Spa(R,R^+)|$ is  the underlying topological space of $S$.

Let us fix a bottom level $K_p$ and a toroidal compactification $\Sh_{K^pK_p,E}^{\tor}$ as above. For $K_p'\subset K_p$ an open subgroup, the map of Shimura varieties $\Sh_{K^pK_p',E}\to \Sh_{K^pK_p,E}$ is finite \'etale, and if $K_p'$ is normal it is even Galois with Galois group $K_p/K_p'$. There is a unique extension of $\Sh_{K^pK_p',E}$  to a toroidal compactification $\Sh^{\tor}_{K^pK_p',E}$ endowed with a finite Kummer-\'etale map $\Sh^{\tor}_{K^pK_p',E}\to \Sh^{\tor}_{K^pK_p,E}$ extending the original finite \'etale map of Shimura varieties\footnote{The toroidal compactifications $\Sh^{\tor}_{K^pK_p',E}$ might not longer be smooth, but this is irrelevant for our results. The only important feature that we need is that the maps between the toroidal compactifications are finite Kummer-\'etale.},  and if  $K_p'\subset K_p$ is normal then  $\Sh^{\tor}_{K^pK_p',E}\to \Sh^{\tor}_{K^pK_p,E}$ is in addition Galois with Galois group $K_p/K_p'$. See Section \ref{Subsec:SetUp} for the justification of this set up.

   We let $\Shan^{\tor}_{K^p,\infty,\bb{C}_p}=\varprojlim_{K_p'} \Shan^{\tor}_{K^pK_p',\bb{C}_p}$ be the infinite level Shimura variety, viewed  as an object in $\Shan^{\tor}_{K^pK_p,\bb{C}_p,\proket}$. Thanks to the $K_p$-torsor $\pi_{K_p}^{\tor}:\Shan^{\tor}_{K^p,\infty,\bb{C}_p}\to \Shan^{\tor}_{K^pK_p,\bb{C}_p}$, given a continuous $K_p$-representation $V$ over $\bb{Z}_p$, we can form the pro-Kummer-\'etale $\bb{Z}_p$-linear sheaf $V_{\ket}$ over $\Shan^{\tor}_{K^pK_p,\bb{C}_p}$ by descending the constant sheaf $\underline{V}$ on $\Shan^{\tor}_{K^p,\infty,\bb{C}_p}$ mapping a log affinoid perfectoid $S=(\Spa(R,R^+),\n{M})$ to $C(|\Spa(R,R^+)|,V)$.  
 
 Let $\mu$ be a $\bbf{G}(\bb{C})$-conjugacy class  of Hodge-cocharacters.    The conjugacy class  $\mu$ is defined over $E$, and there are two corresponding flag varieties $\FL_{E}$ and $\FL_E^{\std}$ over $E$.  The flag varieties are determined by the fact that they admit an holomorphic Borel embedding  $X\to \FL^{\std}_E(\mathbb{C})$  and an antiholomorphic embedding $ X\to \FL_E(\mathbb{C})$ respectively. 
 
 For any field extension $F/E$ we let $\FL_{F}$ be the base change of $\FL_E$. We shall fix a finite extension $L/\bb{Q}_p$ inside $\bb{C}_p$ containing $E$ such that there is a  Hodge-cocharacter over $L$,  by an abuse of notation we write it as  $\mu:\bb{G}_{m,L}\to \bbf{G}_{L}$.  We let $\bbf{P}_{\mu}$ be the parabolic subgroup defined over $L$  so that  $\FL_{L}$ classifies parabolic subgroups in the conjugacy class of $\bbf{P}_{\mu}$, we have an isomorphism of schemes   $\FL_{L}=\bbf{G}_L/\bbf{P}_{\mu}$. For a non-archimedean field  $K$ with a map $E\to K$, we let $\Fl_{K}$ be the analytification to an adic space over $K$ of the scheme $\FL_{K}$.
  
     Thanks to the Riemann-Hilbert correspondence of \cite{DiaoLogarithmicHilbert2018}, the local systems $V_{\ket}$ constructed from  finite-dimensional $\bb{Q}_p$-representations of $\bbf{G}$ are associated to filtered vector bundles with flat connection $V_{\dR}$ (as in \cite[Definition 7.5]{ScholzeHodgeTheory2013}, see also Definition \ref{DefinitiondeRhamLocalSys} down below).   Looking at  the Hodge-Tate filtration of $V_{\ket}$,  one obtains a $K_p\times \Gal_{L}$-equivariant map of ringed sites
  \begin{equation}
  \label{eqIntroHTmap}
  \pi^{\tor}_{\HT}:(\Shan^{\tor}_{K^p,\infty,\bb{C}_p,\proket}, \widehat{\s{O}}_{\Shan}) \to (\Fl_{\bb{C}_p,\an},\s{O}_{\Fl})
  \end{equation}
called the Hodge-Tate period map, see also \cite{ScholzeTorsion2015,CaraianiScholze2017} for the construction of the Hodge-Tate period map in the Hodge-type case.

In \cite[Theorem 3.3.4]{RCGeoSenTheory} the author constructed a geometric Sen operator for a pro-Kummer-\'etale tower of a log smooth rigid space $X_{\infty}\to X$ over $\bb{C}_p$ with pro-Kummer-\'etale Galois group  given by $p$-adic Lie group $G$.  The geometric Sen operator is a map of pro-Kummer-\'etale $\widehat{\s{O}}_X$-modules 
\[
\theta_{X_{\infty}}:(\Lie G)_{\ket}^{\vee}\otimes_{\bb{Q}_p} \widehat{\s{O}}_X\to \Omega^1_{X}(\log) \otimes_{\s{O}_X} \widehat{\s{O}}_X(-1)
\]
where $(\Lie G)_{\ket}$ is the pro-Kummer-\'etale local system over $X$ attached to the adjoint representation, $\Omega^1_{X}(\log)$ is the sheaf of log differentials over $X_{\ket}$, and $\widehat{\s{O}}_X(-1)$ is the inverse of a Hodge-Tate twist of the completed structural sheaf of the pro-Kummer-\'etale site of $X$.  The principal application of the geometric Sen operator in this paper is to compute pro-Kummer-\'etale cohomology as in \cite[Theorem 3.3.2 (2)]{RCGeoSenTheory}.

 Our first main result is the computation of the geometric Sen operator for the $K_p$-torsor $\pi_{K_p}^{\tor}:\Shan^{\tor}_{K^p,\infty,\bb{C}_p}\to \Shan^{\tor}_{K^pK_p,\bb{C}_p}$ in terms of $\pi_{\HT}^{\tor}$ and $\bbf{G}$-equivariant vector bundles over $\Fl_{\bb{C}_p}$. 

Let $\f{g}:=\Lie \bbf{G}(\bb{Q}_p)$ and let $\f{g}^0= \s{O}_{\Fl}\otimes_{\bb{Q}_p}  \f{g}$ be the $\bbf{G}$-equivariant vector bundle obtained from the adjoint representation. Let $\f{n}^0\subset \f{g}^0$ be the subbundle whose sections at a point $x$ is the Lie algebra of the unipotent radical of the parabolic subgroup fixing $x$.  The following holds:

\begin{theo}[Theorem \ref{TheoComputationSenOperator}]
\label{TheoIntroSenShimura}
The geometric Sen operator 
\[
\theta_{\Shan}:\f{g}^{\vee}_{\ket} \otimes_{\bb{Q}_p} \widehat{\s{O}}_{\Shan}\to \Omega^1_{\Shan}(\log) \otimes_{\s{O}_{\Shan}} \widehat{\s{O}}_{\Shan}(-1)
\]
of the torsor $\pi_{K_p}^{\tor}$ is naturally isomorphic to the (descent along $\pi_{K_p}^{\tor}$ of the) pullback under $\pi_{\HT}$ of the surjective map 
\[
\f{g}^{0,\vee}\to \f{n}^{0,\vee}
\]
of $\bbf{G}$-equivariant vector bundles over $\Fl_{\bb{C}_p}$, where the $K_p$-equivariant isomorphism $\pi_{\HT}^*(\f{n}^{0,\vee}) \cong \pi_{K_p}^{\tor,*} (\Omega_{\Shan}^{1}(\log))(-1)$ is (essentially) the Kodaira-Spencer map. 
\end{theo}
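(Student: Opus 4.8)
\emph{Strategy.} The plan is to deduce the statement from the functoriality of the geometric Sen operator, the logarithmic Riemann--Hilbert correspondence, and the description of the Hodge--Tate filtration through $\pi_{\HT}$. Both $\theta_{\Shan}$ and the descent along $\pi_{K_p}^{\tor}$ of $\pi_{\HT}^*(\f{g}^{0,\vee}\to\f{n}^{0,\vee})$ are $\widehat{\s{O}}_{\Shan}$-linear maps out of $\f{g}^{\vee}_{\ket}\otimes\widehat{\s{O}}_{\Shan}$, and both constructions carry compatible $K_p$-equivariant structures --- coming from the equivariance of $\pi_{\HT}$ and of the $\bbf{G}$-equivariant bundles on $\Fl_{\bb{C}_p}$ --- so it suffices to prove the identity after pulling back along the $K_p$-torsor $\pi_{K_p}^{\tor}$; over $\Shan^{\tor}_{K^p,\infty,\bb{C}_p}$ the local system $\f{g}^{\vee}_{\ket}$ trivializes and the common source becomes the finite free module $\f{g}^{\vee}\otimes\widehat{\s{O}}=\pi_{\HT}^*\f{g}^{0,\vee}$. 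Fix a faithful algebraic representation $V$ of $\bbf{G}$; since $\f{g}\hookrightarrow\End(V)$, both maps are determined by the induced $\widehat{\s{O}}$-linear operators on $V_{\ket}\otimes\widehat{\s{O}}_{\Shan}$, so it suffices to show these coincide. By construction and functoriality of the geometric Sen operator \cite[Theorem 3.3.4]{RCGeoSenTheory}, the operator induced by $\theta_{\Shan}$ is precisely the geometric Sen operator of the $\widehat{\s{O}}$-module $V_{\ket}\otimes\widehat{\s{O}}_{\Shan}$, and the identification we produce will be natural and $K_p$-equivariant, hence will descend back along $\pi_{K_p}^{\tor}$.

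\emph{Computation through the flag variety.} By the logarithmic Riemann--Hilbert correspondence \cite{DiaoLogarithmicHilbert2018} and the construction of $\pi_{\HT}$ out of the Hodge--Tate filtration, there is a canonical isomorphism $\pi_{\HT}^*(V\otimes\s{O}_{\Fl})\cong V_{\ket}\otimes\widehat{\s{O}}_{\Shan}$ carrying the tautological filtration of $V\otimes\s{O}_{\Fl}$ to the Hodge--Tate filtration of $V_{\ket}$. On $\Fl_{\bb{C}_p}$, write $\f{p}^0\subset\f{g}^0$ for the $\bbf{G}$-equivariant subbundle whose fiber at a point is the Lie algebra of the parabolic fixing that point, so that $\f{n}^0\subset\f{p}^0$ and $\f{g}^0/\f{p}^0=T_{\Fl}$ is the tangent bundle; the $\bbf{G}$-equivariant action of $\f{g}^0$ on $V\otimes\s{O}_{\Fl}$ preserves the tautological filtration on $\f{p}^0$, so the action on graded pieces factors through $\f{g}^0/\f{p}^0=T_{\Fl}$ and shifts the grading by one step. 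This is the tautological Higgs field of the filtered bundle, and as $V$ varies it is encoded by the projection $\f{g}^0\twoheadrightarrow T_{\Fl}=\f{g}^0/\f{p}^0$, which, via the Killing form dualities $\Omega^1_{\Fl}\cong\f{n}^0$ and $T_{\Fl}\cong\f{n}^{0,\vee}$, is exactly the surjection $\f{g}^{0,\vee}\to\f{n}^{0,\vee}$ of the theorem. Pulling back along $\pi_{\HT}$: the Hodge--Tate graded of $V_{\ket}\otimes\widehat{\s{O}}_{\Shan}$ is the pullback of the graded of $(V\otimes\s{O}_{\Fl},\Fil^{\bullet})$, and --- this is the Kodaira--Spencer property of the period map --- the associated graded $\overline{\nabla}$ of the flat connection $\nabla$ on $V_{\dR}$ is the pullback of the tautological Higgs field, the target identification $\pi_{\HT}^*\f{n}^{0,\vee}\cong\pi_{K_p}^{\tor,*}(\Omega^1_{\Shan}(\log))(-1)$ being the Kodaira--Spencer isomorphism of the Shimura variety.

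\emph{Main obstacle.} It remains to identify $\overline{\nabla}$ with the geometric Sen operator of $V_{\ket}\otimes\widehat{\s{O}}_{\Shan}$; this is the crux of the argument and the step I expect to require the most care. The geometric Sen operator of \cite{RCGeoSenTheory} is built from the period sheaf $\OBdr$ and a descent procedure along the pro-Kummer-\'etale tower, and is a priori unrelated to the de Rham structures; one must therefore compare it --- through the Faltings extension --- with the output of the Riemann--Hilbert correspondence, showing that for a de Rham local system the geometric Sen operator is the Higgs field $\overline{\nabla}$ of its Hodge--Tate filtration. This comparison has to be performed in the logarithmic setting and over the toroidal boundary, keeping careful track of the Hodge--Tate twists. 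Granting it, the computation of the previous paragraph yields $\pi_{K_p}^{\tor,*}\theta_{\Shan}=\pi_{\HT}^*(\f{g}^{0,\vee}\to\f{n}^{0,\vee})$, and the remaining points --- the $\bbf{G}(\bb{Q}_p)$-equivariant descent back along $\pi_{K_p}^{\tor}$, and the naturality in $K^p$ and in the cone decomposition --- are then formal.
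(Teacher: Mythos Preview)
Your overall strategy --- reduce to a faithful $V$, use $\pi_{\HT}^{\tor}$ to transport everything to the flag variety, and then compare the Sen side with the Riemann--Hilbert/Higgs side --- is correct and matches the paper's. You have also correctly isolated the crux: one must compare the geometric Sen operator of $V_{\ket}\otimes_{\bb{Q}_p}\widehat{\s{O}}_{\Shan}$ with the Higgs data $\overline{\nabla}$ coming from the de Rham side through the Faltings extension.

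The gap is that you grant this comparison. The paper does \emph{not} prove a general statement of the form ``for a de Rham local system the Sen operator equals the Higgs field $\overline{\nabla}$'' and then specialize; instead it supplies a concrete mechanism that bypasses such a statement entirely. After noting (as you do) that it suffices to compute the Sen operator on each step of the Hodge--Tate filtration, i.e.\ on $\pi_{\HT}^{\tor,*}\n{W}(W)$ for each finite-dimensional $W\in\Rep_L\bbf{P}^c_{\mu}$, the paper embeds $W$ into the left regular representation $\s{O}(\bbf{P}^c_{\mu})\cong\s{O}(\bbf{N}^c_{\mu})\otimes_L\s{O}(\bbf{M}^c_{\mu})$. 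The $\bbf{M}^c_{\mu}$-factor pulls back, via the torsor comparison of Theorem~\ref{TheoHodgeTatePeriod}, to a sum of Hodge--Tate twists of automorphic vector bundles coming from $\Shan^{\tor}_{K^pK_p,\ket}$, so its Sen operator vanishes by the criterion in Theorem~\ref{TheoMainSenTheory}(1). The entire content is therefore concentrated in the unipotent factor, and here the key input is Theorem~\ref{TheoSenOperator}: a canonical isomorphism $\pi_{\HT}^{\tor,*}\n{W}(\s{O}(\bbf{N}^c_{\mu}))\cong\OCC{\Shan}$ matching the $\bbf{P}^c_{\mu}$-equivariant extension $0\to L\to\s{O}(\bbf{N}^c_{\mu})^{\leq 1}\to\f{n}^{c,\vee}_{\mu}\to 0$ with the $(-1)$-twist of the Faltings extension, the right-hand identification being $-\KS$. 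The only Sen-theoretic fact then needed is the universal computation $\theta_{\OCC{X}}=-\overline{\nabla}$ from \cite[Proposition~3.5.2]{RCGeoSenTheory}; combined with Corollary~\ref{CoroConnectionNablaPiHT} this gives the result.

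Theorem~\ref{TheoSenOperator} is where the real work lies and is what your outline is missing: it is proved by applying the Riemann--Hilbert isomorphism to the \emph{adjoint} representation $\f{g}^{\mathrm{der}}$, passing to $\gr^0$ of the resulting filtered de Rham complex, and extracting the Faltings extension from the Hodge--Tate filtration of $\f{g}^{\mathrm{der}}_{\ket}\otimes\widehat{\s{O}}_{\Shan}$ via a trace--and--Killing-form manipulation of the class in $\Ext^1_{\widehat{\s{O}}_{\Shan}}$. This is also where the sign in $-\KS$ (and hence later in the arithmetic Sen operator $-\theta_{\mu}$) is pinned down; that sign would be invisible in a purely formal ``grant the comparison'' argument.
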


\begin{remark}
A crucial step in the proof of Theorem \ref{TheoIntroSenShimura} is the description of the period sheaf $\OCC{\Shan}$ (see Definition \ref{DefinitionPeriodSheaves} (6)  down below) in terms of $\bbf{G}$-equivariant vector bundles over $\Fl_{\bb{C}_p}$, see Theorem \ref{TheoSenOperator}.
\end{remark}

\begin{remark}
In \cite{he2024perfectoidness}, He proved Conjecture \ref{ConjectureCE} in its integral version. His proof relies in a point-wise perfectoid criteria in terms of non-vanishing of geometric Sen operators for valuation fields, and the computation of the geometric Sen operators for Shimura varieties of Theorem \ref{TheoIntroSenShimura}.
\end{remark}

 The  underlying topological space of the infinite level Shimura variety is given by  $|\Shan^{\tor}_{K^p,\infty,\bb{C}_p}|=\varprojlim_{K_p} |\Shan^{\tor}_{K^pK_p,\bb{C}_p}|$. We can then define a sheaf of locally analytic functions over $|\Shan^{\tor}_{K^p,\infty,\bb{C}_p}|$.

\begin{definition}[Definition \ref{DefinitionOla}]
Let $\Shan^{\tor}_{K^p,\infty,\bb{C}_p}$ be the infinite level Shimura variety. We define $\s{O}^{la}_{\Shan}$  to be the sheaf over the underlying topological space of $\Shan^{\tor}_{K^p,\infty,\bb{C}_p}$  mapping a qcqs open subspace $U$ to the space
\[
\s{O}^{la}_{\Shan}(U):= \widehat{\s{O}}_{\Shan}(U)^{K_U-la}
\] 
of locally analytic sections of the completed structural sheaf, where $K_U$ is the (necessarily open) stabilizer of $U$ in $K_p$. See Lemma \ref{LemmaSheafOla} for the fact that $\s{O}^{la}_{\Shan}$ is  a sheaf. 
\end{definition}

The main result relating the sheaf $\s{O}^{la}_{\Shan}$ with completed cohomology is the following theorem:

\begin{theo}[Theorem \ref{TheoMainVanishingOla}]
\label{TheoIntroOla}
There are  $\Gal_{L}\times K_p$-equivariant isomorphisms of cohomology groups
\[
(\widetilde{H}^i(K^p,\bb{Z}_p)\widehat{\otimes}_{\bb{Z}_p} \bb{C}_p)^{la} \cong H^i_{\sheaf}(|\Shan^{\tor}_{K^p,\infty,\bb{C}_p}|, \s{O}^{la}_{\Shan}),
\]
where the left hand side are the $K_p$-locally analytic vectors of the ($p$-adically completed) $\bb{C}_p$-base change of completed cohomology, and the right hand side is the sheaf cohomology of $\s{O}^{la}_{\Shan}$.  A similar statement holds for the completed cohomology with compact support. 
\end{theo}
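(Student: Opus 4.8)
The plan is to build the isomorphism in two stages: first relate completed cohomology at infinite level to pro-Kummer-\'etale cohomology of $\widehat{\s{O}}_{\Shan}$, then pass to locally analytic vectors using the geometric Sen operator computed in Theorem \ref{TheoIntroSenShimura}. For the first stage, I would use the primitive comparison theorem (after Scholze) together with the Riemann--Hilbert / Hodge--Tate formalism: over $\Shan^{\tor}_{K^pK_p,\bb{C}_p}$ one has $H^i_{\Betti}(\Sh_{K^pK_p,E}(\bb{C}),\bb{Z}/p^s)\otimes \s{O}_{\bb{C}_p}\cong H^i_{\proket}(\Shan^{\tor}_{K^pK_p,\bb{C}_p},\bb{Z}/p^s)$, and passing to the limit over $K_p'$ and taking the $p$-completed colimit identifies $\widetilde{H}^i(K^p,\bb{Z}_p)\widehat{\otimes}\bb{C}_p$ with $H^i_{\proket}(\Shan^{\tor}_{K^pK_p,\bb{C}_p},\widehat{\s{O}}_{\Shan})$; here one uses that $\varinjlim_{K_p'}(\bb{Z}/p^s)_{\ket}$ computes the cohomology of $\Shan^{\tor}_{K^p,\infty,\bb{C}_p}$ and almost purity to replace the constant sheaf by the completed structural sheaf. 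The compact-support case is handled identically working with the appropriate boundary-supported variant on the toroidal compactification, as in \cite[Theorem IV.2.1]{ScholzeTorsion2015} extended by \cite{DiaoLogarithmic2019}.

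For the second stage, the key input is that $\s{O}^{la}_{\Shan}$ is, by construction, the sheaf of $K_p$-locally analytic vectors of $\widehat{\s{O}}_{\Shan}$ along the pro-Kummer-\'etale tower $\pi^{\tor}_{K_p}$. By the theory of solid locally analytic representations \cite{RRLocallyAnalytic,RRLocAnII} and the descent of the geometric Sen operator in Theorem \ref{TheoIntroSenShimura}, taking $K_p$-locally analytic vectors of $R\pi^{\tor}_{K_p,*}\widehat{\s{O}}_{\Shan}$ is computed by a Chevalley--Eilenberg / Sen complex governed by the operator $\theta_{\Shan}$, whose image is $\f{n}^{0,\vee}$ pulled back along $\pi_{\HT}$. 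Because this operator is pointwise \emph{surjective} onto $\f{n}^{0,\vee}$ (this is the content of Theorem \ref{TheoIntroSenShimura}), the relevant cohomology degenerates: the $K_p$-locally analytic vectors of $R\Gamma_{\proket}(\Shan^{\tor}_{K^pK_p,\bb{C}_p},\widehat{\s{O}}_{\Shan})$ are computed by $R\Gamma_{\sheaf}(\Shan^{\tor}_{K^p,\infty,\bb{C}_p},\s{O}^{la}_{\Shan})$. Concretely, one uses \cite[Theorem 3.3.2 (2)]{RCGeoSenTheory} to reduce pro-Kummer-\'etale cohomology at level $K_p$ to the Sen complex, then the vanishing of the higher Sen cohomology after taking locally analytic vectors (surjectivity of $\theta$) collapses this to a single term, which is exactly the sections of $\s{O}^{la}_{\Shan}$ on the topological space $|\Shan^{\tor}_{K^p,\infty,\bb{C}_p}|$; a Cartan--Leray / hypercohomology spectral sequence comparing the $\proket$ site at finite level with the topological space at infinite level then yields the stated sheaf cohomology.

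The main obstacle is the interchange of three operations: the inverse limit over $s$, the colimit over $K_p'$, and the functor of taking $K_p$-locally analytic vectors, all while controlling the relevant derived categories solidly. The issue is that locally analytic vectors is not exact and does not obviously commute with the limits defining completed cohomology; this is precisely where the solid formalism of \cite{RRLocallyAnalytic,RRLocAnII} is needed, to make sense of $R\Gamma$ as an object in a derived category of solid $\bb{Q}_p$-vector spaces with a $K_p$-action and to justify that $(-)^{la}$ applied to it is computed by the Sen/Chevalley--Eilenberg complex. A secondary technical point is the behavior at the toroidal boundary: one must check that $\s{O}^{la}_{\Shan}$ and the Sen operator behave well with respect to the log structure and that the comparison is independent of the chosen cone decomposition $\Sigma$, which follows from the log Riemann--Hilbert correspondence of \cite{DiaoLogarithmicHilbert2018} and the functoriality of $\pi^{\tor}_{\HT}$. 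Once these compatibilities are in place, the $\Gal_L\times K_p$-equivariance is automatic from the equivariance of $\pi^{\tor}_{\HT}$ in \eqref{eqIntroHTmap}.
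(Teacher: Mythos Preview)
Your outline matches the paper's strategy closely: primitive comparison to pass from completed cohomology to $\widehat{\s{O}}_{\Shan}$-cohomology at infinite level, then use the geometric Sen operator of the tower to compute locally analytic vectors. A few points where your sketch is thinner than the paper and where the real work lies:

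\textbf{(1) The vanishing mechanism.} Saying ``surjectivity of $\theta$ collapses the higher Sen cohomology'' is not quite the argument. What the paper does is rewrite $(R\Gamma_{\proket}(\Shan^{\tor}_{K^p,\infty,C},\widehat{\s{O}}_{\Shan}))^{Rla}$ as $R\Gamma_{\proket}(\Shan^{\tor}_{K^pK_p,C},\widehat{\s{O}}_{\Shan}\widehat{\otimes}_{\bb{Q}_p} C^{la}(\widetilde{K}_p,\bb{Q}_p)_{\ket})$ (this ``pull $C^{la}$ inside'' step needs its own lemma), and then prove locally on toric affinoids $U$ that this sits in degree zero. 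After passing to a log affinoid perfectoid cover and using Sen theory, the computation reduces to Lie algebra cohomology $R\Gamma(\f{n}^0_{\mu}, C^{la}(\widetilde{\f{g}},\widehat{\s{O}}))$. Here the paper chooses an $\s{O}(V)$-basis of $\f{n}^0_{\mu}$ and a complement in $\f{g}^0$, writes the germs of locally analytic functions as functions on a family of \emph{open} (Stein) polydiscs $\mathring{\bb{X}}_r$ over $V$, and invokes the Poincar\'e lemma for open polydiscs. The passage to open rather than closed affinoid polydiscs is essential: the de~Rham cohomology of a closed affinoid disc is not concentrated in degree $0$, so mere surjectivity of $\theta$ is not enough without this Stein-vs-affinoid manoeuvre.

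\textbf{(2) From derived to underived.} Your argument yields an equivalence of derived objects $(R\Gamma)^{Rla}\cong R\Gamma_{\an}(\s{O}^{la}_{\Shan})$, but the theorem is about cohomology groups $(\widetilde{H}^i\widehat{\otimes}C)^{la}$. The paper closes this gap by invoking Emerton's theorem that completed cohomology is admissible, so its higher derived locally analytic vectors vanish; you should flag this step.

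\textbf{(3) Compact support.} This is not ``identical''. The paper resolves $j_!\bb{Z}_p$ by the pushforwards $\iota_{J,*}\bb{Z}_p$ along boundary strata, defines a sheaf $\n{I}^{la}_{\Shan}$ of locally analytic sections vanishing on the boundary, and then must show that taking locally analytic vectors preserves exactness of the corresponding $\widehat{\s{O}}$-resolution locally. This last point requires the uniqueness of Sen decompletions to match up the local decompletions across the strata, and is a genuine extra argument.
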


One  deduces Theorem \ref{TheoIntroCalegariEmerton} from Theorem \ref{TheoIntroOla} by density of the locally analytic vectors for admissible representations \cite[Theorem 7.1]{SchTeitDist}, and the fact that $\Shan^{\tor}_{K^p,\infty,\bb{C}_p}$ has cohomological dimension $d$ as topological space, cf. \cite[proof of Corollary IV.2.2]{ScholzeTorsion2015}. 

A consequence of the study of the sheaf $\s{O}^{la}_{\Shan}$ via geometric Sen theory is the vanishing of the action of $\f{n}^0$ by derivations: 

\begin{theo}[Corollary \ref{CoroSenOpKillOla}]\label{Theo:DiffEquationsOla}
Consider the action of  $\f{g}^0_{\Shan}:=\s{O}^{la}_{\Shan}\otimes_{\bb{Q}_p}  \Lie \bbf{G}$ by derivations on $\s{O}^{la}_{\Shan}$ given by
\[
(f\otimes \mathfrak{X})\cdot  h= f \mathfrak{X}(h)
\] 
for $f,h\in \s{O}^{la}_{\Shan}$ and $\mathfrak{X}\in \Lie \bbf{G}$. Then the action of the sub Lie algebroid $\f{n}^0_{\Shan}:= \s{O}^{la}_{\Shan}\otimes_{\s{O}_{\Fl_{\bb{C}_p}}} \f{n}^0  \subset \f{g}^0_{\Shan}$  on $\s{O}^{la}_{\Shan}$ vanishes. 
\end{theo}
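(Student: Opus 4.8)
The plan is to reduce the statement to the identification of the geometric Sen operator provided by Theorem \ref{TheoIntroSenShimura}, via the fundamental property of the Sen operator as the obstruction to solving differential equations along the pro-Kummer-\'etale tower. First I would recall the dictionary: for a $p$-adic Lie group $G$ acting on a tower $X_\infty \to X$, differentiating the $G$-action gives an action of $\f{g} = \Lie G$ by derivations on $\widehat{\s{O}}_{X_\infty}^{la}$ (or on $\OCC{}$), and the content of geometric Sen theory (\cite[Theorem 3.3.2]{RCGeoSenTheory}, as used to compute pro-Kummer-\'etale cohomology in the Introduction) is that this action, after the identification of the relevant period sheaf with equivariant vector bundles on the flag variety, is governed by $\theta_{X_\infty}$. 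Concretely, the Sen operator $\theta_{\Shan}$ is, up to the Kodaira–Spencer twist, the map that records how far the $\f{g}$-action fails to be "horizontal", and an element of $\f{g}$ whose image under $\theta_{\Shan}$ vanishes acts locally analytically by a derivation that is already accounted for by the $\s{O}^{la}_{\Shan}$-linear (flag-variety) structure, hence trivially on the sheaf of locally analytic functions themselves.

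The key steps, in order, are: (1) Unwind the definition of $\f{n}^0_{\Shan} = \s{O}^{la}_{\Shan} \otimes_{\s{O}_{\Fl_{\bb{C}_p}}} \f{n}^0$ and note that, by construction of $\f{n}^0$, its fibre at a point $x \in \Fl_{\bb{C}_p}$ is the Lie algebra of the unipotent radical of the parabolic $\bbf{P}_x$ fixing $x$. (2) Invoke Theorem \ref{TheoIntroSenShimura}: the Sen operator $\theta_{\Shan}$ is the pullback along $\pi_{\HT}$ of the surjection $\f{g}^{0,\vee} \to \f{n}^{0,\vee}$ of $\bbf{G}$-equivariant bundles — dually, the \emph{kernel} of the relevant "symbol" map on $\f{g}^0$ is exactly the sub-bundle $\f{n}^0$, because $\f{n}^0$ is annihilated by $\f{n}^{0,\vee}$ under the pairing $\f{g}^0 \times \f{g}^{0,\vee} \to \s{O}_{\Fl}$ (the pairing restricts to the pairing $\f{n}^0 \times (\f{g}^0/\f{n}^0)^\vee$). (3) Combine with the basic property of the geometric Sen operator that an element of $\f{g}^0_{\Shan}$ lying in $\ker \theta_{\Shan}$ acts on $\s{O}^{la}_{\Shan}$ through the "geometric" part of the action, which on the sheaf of functions (as opposed to on a general $\OCC{}$-module) is zero; this is the same mechanism by which the Sen operator computes cohomology in \cite[Theorem 3.3.2 (2)]{RCGeoSenTheory}. (4) Conclude that $\f{n}^0_{\Shan}$ acts by zero derivations, using that the assertion is local on $|\Shan^{\tor}_{K^p,\infty,\bb{C}_p}|$ so it suffices to check it on qcqs opens $U$ with open stabilizer $K_U$, where the locally analytic vectors and the Sen-theoretic formalism are available.

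I expect the main obstacle to be step (3): making precise the passage from "$\xi \in \ker \theta_{\Shan}$" to "$\xi$ annihilates $\s{O}^{la}_{\Shan}$". The Sen operator a priori only controls cohomology and the $\f{g}$-action up to the issue that $\theta_{\Shan}$ is defined on $\f{g}^\vee_{\ket} \otimes \widehat{\s{O}}$ with values in log-differentials twisted by $\widehat{\s{O}}(-1)$, so one must carefully transport the vanishing of the dual map back to a vanishing of the \emph{derivation} action, rather than of some associated graded or twisted operator. Concretely, I would work with the period sheaf $\OCC{\Shan}$ and its description via $\bbf{G}$-equivariant bundles on $\Fl_{\bb{C}_p}$ (the Remark after Theorem \ref{TheoIntroSenShimura}, i.e.\ Theorem \ref{TheoSenOperator}): on $\OCC{\Shan}$ the full $\f{g}^0$ acts, the sub-algebroid $\f{n}^0$ acts via the "universal" Sen operator, and $\s{O}^{la}_{\Shan}$ is recovered as the kernel (or the lowest step) of a filtration on which $\f{n}^0$ acts trivially precisely because its symbol is $0$ there — the Kodaira–Spencer identification $\pi_{\HT}^*(\f{n}^{0,\vee}) \cong \pi_{K_p}^{\tor,*}\Omega^1_{\Shan}(\log)(-1)$ is what guarantees the differential operator attached to a section of $\f{n}^0$ has trivial symbol on functions. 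Once this local picture is in hand, sheafifying over $|\Shan^{\tor}_{K^p,\infty,\bb{C}_p}|$ and descending the vanishing is routine.
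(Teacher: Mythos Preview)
Your step (2) contains a duality confusion: the Sen operator $\theta_{\Shan}$ is (the pullback of) the surjection $\f{g}^{0,\vee} \twoheadrightarrow \f{n}^{0,\vee}$, so dually $\f{n}^0 \hookrightarrow \f{g}^0$ is the \emph{image} of the adjoint map, not a kernel; and $\f{n}^0$ is certainly not annihilated by $\f{n}^{0,\vee}$. The correct reading is that the Higgs field $\theta_{\s{F}}$ on $\s{F} = \widehat{\s{O}}_{\Shan} \widehat{\otimes}_{\bb{Q}_p} C^{la}(\widetilde{K}_p,\bb{Q}_p)_{\ket}$ acts precisely \emph{through} the elements of $\f{n}^0$, as derivations on $C^{la}(\widetilde{K}_p,\bb{Q}_p)$ for the left regular action. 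This is in fact what one wants: combined with Proposition~\ref{LemmaLongExactSequenceOlaSheaves} it identifies $\s{O}^{la}_{\Shan}(\widetilde{U})$ with $(\widehat{\s{O}}_{\Shan}(\widetilde{U})\widehat{\otimes}_{\bb{Q}_p} C^{la}(\widetilde{K}_p,\bb{Q}_p))^{\f{n}^0_{\mu}=0,\,\widetilde{K}_p}$, where $\f{n}^0_{\mu}$ acts by left regular derivations on the second factor only.

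The real gap is in step (3). You correctly flag it as the crux, but your proposed resolution via $\OCC{\Shan}$ does not close it: ``symbol zero'' does not imply ``operator zero'', and $\s{O}^{la}_{\Shan}$ is not in any evident way a filtration step of $\OCC{\Shan}$. The point you are missing is that there are \emph{two different} $\f{n}^0$-actions in play: the one above (left regular derivations on the $C^{la}$ factor, whose kernel \emph{defines} $\s{O}^{la}_{\Shan}$) and the one in the statement (derivations of the genuine $\widetilde{K}_p$-action on sections of $\widehat{\s{O}}_{\Shan}$). Sen theory gives the first vanishing for free; nothing in the abstract formalism identifies it with the second. The paper's argument is a short group-theoretic trick: for $v \in \s{O}^{la}_{\Shan}(\widetilde{U})$ the orbit function $O_v(g) = g\cdot v$ realizes the identification above and hence lies in the $\f{n}^0$-kernel for the left action, while the action in the statement is $\f{X}\cdot v = (\f{X}\cdot_R O_v)(1)$; the elementary identity $(g\cdot_L f)(1) = f(g^{-1}) = (g^{-1}\cdot_R f)(1)$ then gives $(\f{X}\cdot_R O_v)(1) = -(\f{X}\cdot_L O_v)(1) = 0$. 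This left/right switch at the identity is the missing idea.
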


Finally, we show that the sheaf $\s{O}^{la}_{\Shan}$ admits an arithmetic Sen operator that can be computed in terms of representation theory:

\begin{theo}[Theorem \ref{TheoArithSenOla}]\label{Theo:ArithSenOperator}
Keep the notation of Theorem \ref{Theo:DiffEquationsOla}. Let $\theta_{\mu}\in  \f{g}^0_{\Shan}/\f{n}^0_{\Shan}$ be the element corresponding to the derivation along the Hodge-cocharacter $\mu$. Then the sheaf $\s{O}^{la}_{\Shan}$ admits an arithmetic Sen operator (in the sense of Definition \ref{DefinitionDecompletionByLocAn})  given by  $-\theta_{\mu}$. The same holds for the locally analytic completed cohomology groups of Theorem \ref{TheoIntroOla}.  
\end{theo}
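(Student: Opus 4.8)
The strategy is to read off the arithmetic Sen operator of $\s{O}^{la}_{\Shan}$ from the period sheaf $\OCC{\Shan}$ of Definition~\ref{DefinitionPeriodSheaves}~(6), using its explicit description in Theorem~\ref{TheoSenOperator}, and then to transport the operator to $\s{O}^{la}_{\Shan}$ using the vanishing of the nilpotent part of the Lie algebroid action. To begin, recall that the $\Gal_{L}$-action on $\s{O}^{la}_{\Shan}$ commutes with the $K_p$-action, hence with the formation of $K_p$-locally analytic sections, so verifying Definition~\ref{DefinitionDecompletionByLocAn} for $\s{O}^{la}_{\Shan}$ amounts to identifying the infinitesimal $\Gal_{L}$-action on its $\Gal_{L}$-locally analytic vectors modulo the cyclotomic part. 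The key structural input is that $\s{O}^{la}_{\Shan}$ already decompletes along the Hodge--Tate period map: by Theorem~\ref{TheoSenOperator} the sheaf $\OCC{\Shan}$ is, after descent along $\pi_{K_p}^{\tor}$, the pullback under $\pi_{\HT}$ of an explicit $\bbf{G}$-equivariant $\s{O}_{\Fl_{\bb{C}_p}}$-algebra base changed to $\widehat{\s{O}}_{\Shan}$, and this identification is $K_p\times\Gal_{L}$-equivariant.

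First I would determine the arithmetic Sen operator of $\OCC{\Shan}$ itself. By \cite{RCGeoSenTheory} it carries, besides the geometric Sen operator computed in Theorem~\ref{TheoComputationSenOperator} (the Higgs field valued in $\f{n}^0$), an arithmetic Sen operator recording the Hodge--Tate twists; under the identification of Theorem~\ref{TheoSenOperator} this arithmetic operator is the derivation induced, through the tautological Lie algebroid action $\f{g}^0\to\Der(\OCC{\Shan})$, by the constant section $\mu$ of $\f{g}^0$ over $\Fl_{\bb{C}_p}$ attached to the Hodge cocharacter. This is the infinitesimal form of the Hodge--Tate decomposition: the $i$-th step of the Hodge--Tate filtration is the weight-$i$ subspace for $\mu$ and is Tate twisted by $-i$, so the Galois action is infinitesimally given by $\mu$ acting through $\mu$-weights, with a sign. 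The sign and the normalization are fixed by comparison with the cyclotomic case and with Pan's computation for modular curves \cite{LuePan}, which forces the arithmetic Sen operator of $\OCC{\Shan}$ to be $-\mu$ acting by derivations.

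Then I would restrict to $\s{O}^{la}_{\Shan}$. Passing to $K_p$-locally analytic vectors in the $K_p\times\Gal_{L}$-equivariant embedding $\widehat{\s{O}}_{\Shan}\hookrightarrow\OCC{\Shan}$ realizes $\s{O}^{la}_{\Shan}$ inside $\OCC{\Shan}$ compatibly with the $\Gal_{L}$-action and with the Lie algebroid action, the latter restricting to the action of $\f{g}^0_{\Shan}=\s{O}^{la}_{\Shan}\otimes_{\bb{Q}_p}\Lie\bbf{G}$ by derivations from Theorem~\ref{Theo:DiffEquationsOla}. Hence the arithmetic Sen operator of $\s{O}^{la}_{\Shan}$ is the restriction of the $\mu$-derivation; by Corollary~\ref{CoroSenOpKillOla} the sub--Lie algebroid $\f{n}^0_{\Shan}$ acts by zero, so this derivation depends only on the image of $\mu$ in $\f{g}^0_{\Shan}/\f{n}^0_{\Shan}$, namely on $\theta_\mu$, and therefore equals $-\theta_\mu$. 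A final routine check is that $-\theta_\mu$ satisfies the axioms of Definition~\ref{DefinitionDecompletionByLocAn}, which reduces to the $\Gal_{L}$-local analyticity of $\s{O}^{la}_{\Shan}$ already exploited in the proof of Theorem~\ref{TheoMainVanishingOla}.

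The step I expect to be the main obstacle is controlling the interaction of the locally analytic vectors functor with $\OCC{\Shan}$ and its two commuting Sen operators: one must check that taking $K_p$-locally analytic vectors is exact enough and commutes with $\Gal_{L}$-decompletion so that the arithmetic Sen operator of $\s{O}^{la}_{\Shan}$ is genuinely the restriction of that of $\OCC{\Shan}$, and that the operator obtained is the algebraic $\mu$-derivation on the nose rather than up to a coboundary or an inner twist by a unipotent element. This is exactly where the precise form of Theorem~\ref{TheoSenOperator}, which identifies $\OCC{\Shan}$ together with its canonical $\bbf{G}$-equivariant Lie algebroid action, does the real work.
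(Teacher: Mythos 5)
Your proposal identifies genuinely relevant structures — the role of $\OCC{\Shan}$ via Theorem~\ref{TheoSenOperator}, Corollary~\ref{CoroHodgeComparison} on Hodge--Tate twists, and the vanishing of the $\f{n}^0_{\Shan}$-action — but there are two gaps, one of which would genuinely derail the argument.

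The most serious gap is the center. Your whole strategy runs through $\OCC{\Shan}$ and the Lie algebroid $\f{g}^0$ over $\Fl$, both of which factor through $\bbf{G}^c=\bbf{G}/\bbf{Z}_c$, whereas the arithmetic Sen operator a priori lives in $(\widetilde{A}_{\infty}\otimes\widetilde{\f{g}})/(\widetilde{A}_{\infty}\otimes\f{n}^0_{\mu})$ with $\widetilde{\f{g}}=\f{g}^{\mathrm{der}}\oplus\widetilde{\f{z}}$. The central direction $\widetilde{\f{z}}$ is invisible to $\OCC{\Shan}$, to $\pi_{\HT}$, and to the anchor map on $\Fl$: it acts trivially on the flag variety and on the automorphic torsor, so ``restricting the $\mu$-derivation from $\OCC{\Shan}$'' cannot determine the $\widetilde{\f{z}}$-component of $\theta^{\mathrm{arith}}$. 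The paper handles this by a separate argument (Step~2 of the proof of Theorem~\ref{TheoArithSenOla}): it observes that the center acts faithfully on $H^0$, reduces to the torus case via a special point of the Shimura datum, and then computes the Hodge--Tate weight of the Galois action directly from Shimura's reciprocity law. Without this, your argument only pins down $\theta^{\mathrm{arith}}$ modulo the center, which is not the claim.

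The second gap is the sign/normalization and the existence of the decompletion. You defer the sign to ``comparison with the cyclotomic case and with Pan's computation for modular curves,'' but the theorem requires an actual proof, and the paper proves it by exhibiting the decompletion explicitly: locally on an affinoid it shows $\s{O}^{la}_{\Shan,L^{\cyc}}(\widetilde{U}_{L^{\cyc}})$ is a colimit of relative locally analytic ON Banach $\Gamma^{\mathrm{arith}}$-representations and then invokes the decompletion theorems of \cite{RCGeoSenTheory}; and for the semisimple part it identifies $\f{n}^0_{\mu}$-invariant vectors in algebraic $\bbf{G}^c$-representations $V_{\lambda}$ with $\pi_{\HT}^{\tor,*}(\n{W}(W_{\lambda}))$, where Corollary~\ref{CoroHodgeComparison} gives the Tate twist $-\mu(\lambda)$ exactly, with its sign. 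Your citation of ``the $\Gal_L$-local analyticity already exploited in Theorem~\ref{TheoMainVanishingOla}'' does not supply existence either, since that theorem is about $K_p$-locally analytic vectors and is silent about decompletion along the cyclotomic tower.

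In short: your reduction to the $\mu$-derivation on $\OCC{\Shan}$ is a reasonable heuristic for the $\f{g}^{\mathrm{der}}$-part, but you need the special-point/torus argument for the central part and the explicit decompletion-by-locally-analytic-vectors argument for existence and the sign; as written the proposal misses both.
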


\subsection{Overview of the paper}
\label{SubsecOverview}

In Section \ref{Section:Preliminaries}, we introduce some  notations from the theory of log adic spaces from \cite{DiaoLogarithmic2019,DiaoLogarithmicHilbert2018},  we recall some basic facts of the theory of solid locally analytic representations of \cite{RRLocallyAnalytic,RRLocAnII}, and we explain how to promote certain pro-Kummer-\'etale cohomology groups to solid abelian groups.  In Section \ref{Section:EquivariantFlag}, we  construct some $\bbf{G}$-equivariant vector bundles over flag varieties that we shall need in the proof of Theorem \ref{TheoIntroSenShimura}. Then, in Section \ref{Section:Shimura}, we set up the framework of Shimura varieties and explain how the results of \cite{DiaoLogarithmicHilbert2018} lead to the Hodge-Tate period map \eqref{eqIntroHTmap}.   In Section \ref{Section:SenOperators}, we compute the geometric Sen operators of Shimura varieties proving Theorem \ref{TheoIntroSenShimura}. In Section  \ref{Section:CompletedCohomology},  we introduce the completed cohomology groups of Emerton  and relate them with pro-Kummer-\'etale cohomology of infinite level Shimura varieties (Corollary \ref{CoroCompletedCohoAsProkummerCoho}).  Then, we use geometric Sen theory to  prove   Theorems \ref{TheoIntroOla} and  \ref{Theo:DiffEquationsOla} deducing Theorem \ref{TheoIntroCalegariEmerton} as  a corollary.   Finally, in Section \ref{SectionArithmeticSen},  we define the arithmetic Sen operator for  solid $\bb{C}_p$-semilinear representations of the Galois group of finite extensions of $\bb{Q}_p$ and show Theorem \ref{Theo:ArithSenOperator}.

\subsection*{ Acknowledgments}

First and foremost,  I would like to thank my advisor Vincent Pilloni who guided me throughout the years of my PhD.  I want to thank George Boxer and Joaqu\'in Rodrigues Jacinto for their patience and enlightenment after trespassing to their office several times for discussing different stages of the work.   I am grateful  to the  participants of the study group in Lyon in the Spring of 2021 on Pan's work on locally analytic completed cohomology for the modular curve.     I want to thank Gabriel Dospinescu for several discussions regarding  the locally analytic vectors of the infinite level  Lubin-Tate space.   I would like to thank Arthur-C\'esar Le Bras for several fruitful conversations  and  his invitation to speak in the LAGA seminar about this work.  I also want to thank Fabrizio Andreatta,  Andrew Graham,  Valentin Hernandez,  Sean Howe,  Ben Heuer,    Damien Junger, Lue Pan and Yujie Xu for fruitful exchanges.   Special gratitude to the anonymous referee for their comments and corrections that lead into an important simplification and better exposition of the paper.  Finally, I thank  the  Max Planck Institute for Mathematics, Columbia university and  the Simons society of fellows for their hospitality and support during the correction stage of the document. This work has been done while the author was a PhD student at the ENS de Lyon.

\section{Preliminaries}
\label{Section:Preliminaries}

In this section we recollect some basic aspects of the theory of logarithmic adic spaces  \cite{DiaoLogarithmic2019,DiaoLogarithmicHilbert2018} and  the theory of solid locally analytic representations \cite{RRLocallyAnalytic,RRLocAnII} that will be used throughout the rest of the paper.

\subsection{Pro-Kummer \'etale site}
\label{Subsec:ProKummerEtale}

In the next  paragraph we briefly recall the main features of the pro-Kummer-\'etale site and the theory of log adic spaces of \cite{DiaoLogarithmic2019,DiaoLogarithmicHilbert2018}.

Let $(K,K^+)$ be a discretely valued complete non-archimedean extension of $(\bb{Q}_p,\bb{Z}_p)$ with perfect residue field, and let $X$ be a log smooth adic space over $(K,K^+)$ (cf. \cite[Definition 3.1.1]{DiaoLogarithmic2019}), where $X$ has log structure induced by a normal crossing divisor $D$ as in \cite[Example 2.3.17]{DiaoLogarithmic2019}. We let $X_{\an}$, $X_{\ket}$ and $X_{\proket}$ denote the analytic, Kummer-\'etale and pro-Kummer-\'etale sites of $X$ respectively, the last two considered as in \cite[Definitions 4.1.16 and 5.1.2]{DiaoLogarithmic2019}. If $D$ is empty, the (pro-)Kummer-\'etale site of $X$ is the same as the (pro)\'etale site of \cite{ScholzeHodgeTheory2013}, and we simply denote them by $X_{\et}$ and $X_{\proet}$.  By \cite[Proposition 5.3.12]{DiaoLogarithmic2019}, log affinoid perfectoids (cf. \cite[Definition 5.3.1]{DiaoLogarithmic2019}) form a basis of $X_{\proket}$. 

One can define a sheaf of log differentials $\Omega_X^1(\log)$ of $X$  over $(K,K^+)$, see \cite[Construction 3.3.2]{DiaoLogarithmic2019}. If $f:Y\to X$ is a Kummer-\'etale morphism then there is a natural isomorphism of log differentials
\[
\Omega^1_Y(\log) \cong f^* \Omega_X^1(\log), 
\]
see \cite[Theorem 3.3.17]{DiaoLogarithmic2019}. We also  write   $\s{O}_X$  and $\Omega^1_Y(\log) $  for  the inverse images to $X_{\proket}$ of the structural sheaf  and the sheaf of log differentials of $X_{\ket}$ respectively. 

\begin{definition}
\label{DefinitionPeriodSheaves}
Over $X_{\proket}$ we have different period sheaves that we recall next, see \cite[Section 2.2]{DiaoLogarithmicHilbert2018}. 

\begin{enumerate}

\item The completed constant sheaves $\bb{Z}_p$ and $\bb{Q}_p$ (denoted as $\widehat{\bb{Z}}_p$ and $\widehat{\bb{Q}}_p$ in \textit{loc. cit.}), whose values at a log affinoid perfectoid $S=(\Spa(R,R^+),\n{M})$ are given by the continuous functions $C(|\Spa(R,R^+)|,\bb{Z}_p)$ and $C(|\Spa(R,R^+)|,\bb{Q}_p)$, where $|\Spa(R,R^+)|$ is the underlying topological space of $\Spa(R,R^+)$.

\item More generally, given $V$ a topological abelian group, we  denote by  $\underline{V}$  the pro-Kummer-\'etale sheaf over $X$ mapping a log affinoid perfectoid $S=(\Spa(R,R^+),\n{M})$ to $C(|\Spa(R,R^+)|,V)$. 

\item The completed structural sheaf $\widehat{\s{O}}_X$ (resp. $\widehat{\s{O}}_X^+$) mapping a log affinoid perfectoid $S=(\Spa(R,R^+),\n{M})$ to 
\[
\widehat{\s{O}}_X(S)=R \hspace{10pt} \mbox{ (resp. $\widehat{\s{O}}_X^+(S)=R^+$)}.
\]

\item The de Rham period sheaf $\bb{B}_{\dR,X}$ (resp. $\bb{B}_{\dR,X}^+$) mapping a log affinoid perfectoid  $S=(\Spa(R,R^+),\n{M})$ to 
\[
\bb{B}_{\dR,X}(S)= \bb{B}_{\dR}(R) \hspace{10pt} \textit{ (resp.  $\bb{B}_{\dR,X}^+(S)= \bb{B}_{\dR}^+(R)$)}.
\]
There is a Fontaine map $\theta: \bb{B}_{\dR,X}^+\to \widehat{\s{O}}_X$ whose kernel is principal locally in the pro-Kummer-\'etale topology. 

\item The geometric de Rham period sheaves $\OBdR{X}$ and  $\OBdR{X}^+$ of \cite[Definition 2.2.10]{DiaoLogarithmicHilbert2018}. The sheaf $\OBdR{X}^+$ is an $\s{O}_X$-module endowed with a filtration and a flat connection $\nabla$ satisfying Griffiths transversality. Moreover, by \cite[Corollary 2.4.6]{DiaoLogarithmicHilbert2018} the Poincar\'e lemma holds, namely, there is a de Rham long exact sequence
\[
0\to \bb{B}_{\dR,X}^+\to \OBdR{X}^+\xrightarrow{\nabla} \OBdR{X}^+\otimes_{\s{O}_X} \Omega_X^1(\log) \to \cdots \to \OBdR{X}^+\otimes_{\s{O}_X} \Omega_X^{\dim X}(\log) \to 0.
\]
Let $t$ be a local generator of $\ker \theta$. Consider $\OBdR{X}^+[\frac{1}{t}]$ endowed with the convolution filtration making $t$ of degree $1$. Then $\OBdR{X}$ is the completion of $\OBdR{X}^+[\frac{1}{t}]$ with respect to that filtration. It is an $\s{O}_X$-module endowed with a filtration and a flat connection satisfying Griffiths transversality.

\item The Hodge-Tate sheaf $\OCC{X}:= \gr^0(\OBdR{X})$. The sheaf $\OCC{X}$ is endowed with an $\widehat{\s{O}}_X$-linear Higgs field
\[
\overline{\nabla}: \OCC{X}\to \OCC{X}\otimes_{\s{O}_X} \Omega^1_X(\log) (-1),
\]
where $\s{F}(i)$  denotes the $i$-th Hodge-Tate twist of $\s{F}$. The map $\overline{\nabla}$ is obtained by taking graded pieces of the connection $\nabla$. 

\end{enumerate}
\end{definition}

\begin{remark}
\label{RemarkDifferentConstructionOC}
The ring $\OCC{X}$ has a natural increasing filtration arising from the filtered algebra $\OBdR{X}^+$. Indeed, we have an inclusion of the $-1$-twist of the Faltings extension $\gr^1 \OBdR{X}^+ (-1) \to \OCC{X}$, and if $e:\widehat{\s{O}}_X\to \gr^1 \OBdR{X}^{+}(-1)$ is the natural map, we have a presentation 
\[
\Sym_{\widehat{\s{O}}_X}^{\bullet} (\gr^1 \OBdR{X}^{+}(-1))/(1-e(1)) = \OCC{X},
\]
where $1$ is the unit in the symmetric algebra, and $e(1)$ is the image of $1$ under the map $e$.  This defines an increasing filtration  $\Fil_n \OCC{X}$ such that 
\[
\Fil_n \OCC{X} \cong \Sym^n_{\widehat{\s{O}}_X} (\gr^1 \OBdR{X}^{+}(-1)). 
\]
\end{remark}

\subsection{De Rham local systems and the Hodge-Tate filtration}
\label{Subsec}

Let $(K,K^+)$ be as before and let $X$ be a log smooth adic space over $(K,K^+)$ with log structure arising from normal crossing divisors $D\subset X$.   Following \cite[Definition 8.3]{ScholzeHodgeTheory2013} and \cite[Theorem 3.2.12]{DiaoLogarithmicHilbert2018} we make the following definition:

\begin{definition}
\label{DefinitiondeRhamLocalSys}
Let $\bb{L}$ be  a pro-Kummer-\'etale  $\bb{Z}_p$-local system with unipotent  monodromy along the boundary $D$. We say that $\bb{L}$ is de Rham if there is a filtered log-connection $(\s{F},\nabla, \Fil^{\bullet})$ and an isomorphism of pro-Kummer-\'etale sheaves
\begin{equation}
\label{eqRHEquality}
\bb{L}\otimes_{\bb{Z}_p} \OBdR{X}\cong \s{F}\otimes_{\s{O}_X} \OBdR{X}
\end{equation}
compatible with filtrations and log-connections.

 Equivalently, let $\nu: X_{\proket}\to X_{\ket}$ be the natural projection of sites. Then  $\bb{L}$ is de Rham if and only if the filtered log connection $\n{RH}(L)$ has same rank as $\bb{L}$, where 
\[
\n{RH}(\bb{L}):= \nu_* (\bb{L}\otimes_{\bb{Z}_p} \OBdR{X}).
\]
We call $\s{F}$ the associated filtered log-connection of $\bb{L}$.
\end{definition}

\begin{remark}
\cite[Theorem 3.2.7]{DiaoLogarithmicHilbert2018} provides a more general Riemann-Hilbert correspondence without the unipotent assumption. In that situation, \eqref{eqRHEquality} is not  necessarily an isomorphism. Since the local systems associated to Shimura varieties are unipotent at the boundary, we will restrict ourselves to Definition \ref{DefinitiondeRhamLocalSys}.
\end{remark}

Let $\bb{L}$ be  a de Rham local system  as in Definition \ref{DefinitiondeRhamLocalSys} with associated filtered log-connection $\s{F}$, we write $\bb{M}= \bb{L}\otimes_{\bb{Z}_p} \bb{B}_{\dR,X}^+$ and $\bb{M}^0= (\s{F}\otimes_{\s{O}_X} \OBdR{X}^+)^{\nabla=0} $ for the two $\bb{B}^+_{\dR,X}$-lattices of $\bb{L}\otimes_{\bb{Z}_p} \bb{B}_{\dR,X}$.  We endow $\bb{M}$ and $\bb{M}^0$ with the $(\ker \theta)$-adic filtrations. 

\begin{definition}
\label{DefinitionHodgeTateFiltration}
The Hodge-Tate filtration of $\bb{L}\otimes_{\bb{Z}_p} \widehat{\s{O}}_X$ is defined as the increasing filtration
\[
\Fil_{-j} (\bb{L}\otimes_{\bb{Z}_p} \widehat{\s{O}}_X)= (\bb{M}\cap\Fil^j \bb{M}^0 )/ (\Fil^1 \bb{M}\cap \Fil^j \bb{M}^0 ).
\]
\end{definition}

\begin{lem}
\label{LemmaGradedpieces}
Let $\bb{L}$ be a de Rham $\bb{Z}_p$-local system over $X$ with unipotent monodromy along the boundary divisor. Let $\s{F}$ be its  associated  filtered log connection. Then there are natural isomorphisms between the graded pieces of the  Hodge-Tate and Hodge filtrations
\[
\gr_j(\bb{L}\otimes_{\bb{Z}_p} \widehat{\s{O}}_X) \cong \gr^j(\s{F}) \otimes_{\s{O}_X} \widehat{\s{O}}_X(-j).
\]
\end{lem}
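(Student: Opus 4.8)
The plan is to prove this by an explicit computation with $\bb{B}_{\dR,X}^+$-lattices after reducing to a small chart on which the connection is linearised via the logarithmic Poincar\'e lemma; this is the log-adic analogue of the classical decompletion arguments of Scholze and Liu--Zhu, and the inputs are the structural results for $\OBdR{X}^+$ from \cite{DiaoLogarithmicHilbert2018}. First, both sides are (pro-)Kummer-\'etale sheaves coming from $X_{\ket}$, so the assertion is local on $X_{\ket}$. I would pass to a Kummer-\'etale cover over which $X$ carries a standard toric log chart, the kernel of $\theta\colon\bb{B}_{\dR,X}^+\to\widehat{\s{O}}_X$ is generated by a single element $t$ --- which then trivialises the Tate twist, $\widehat{\s{O}}_X(1)=t\,\widehat{\s{O}}_X$ --- and $\s{F}=\n{RH}(\bb{L})$ is free with a basis $e_1,\dots,e_n$ split-adapted to the Hodge filtration, i.e.\ $e_k\in\Fil^{h_k}\s{F}$ and $\Fil^p\s{F}=\bigoplus_{h_k\ge p}\s{O}_X e_k$. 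Twisting $\bb{L}$ by a power of the cyclotomic character (which shifts the Hodge filtration, the Hodge--Tate filtration and the asserted Tate twist compatibly), we may assume all $h_k\ge 0$. Writing $\omega\in M_n(\Omega^1_X(\log))$ for the connection matrix of $\s{F}$ in this basis, recall the filtered, connection-compatible comparison $\bb{L}\otimes_{\bb{Z}_p}\OBdR{X}\cong\s{F}\otimes_{\s{O}_X}\OBdR{X}$ of \eqref{eqRHEquality}.

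Second, using the local description of $\OBdR{X}^+$ as a completed polynomial algebra over $\bb{B}_{\dR,X}^+$ together with the logarithmic Poincar\'e lemma \cite[Corollary 2.4.6]{DiaoLogarithmicHilbert2018}, I would solve the system $\nabla Y=-\omega Y$ by a matrix $Y\in\GL_n(\OBdR{X}^+)$ with $Y\equiv\mathrm{Id}\pmod{\Fil^1\OBdR{X}^+}$. Its columns form a $\bb{B}_{\dR,X}^+$-basis $\widetilde e_1,\dots,\widetilde e_n$ of the horizontal sections $(\s{F}\otimes\OBdR{X})^{\nabla=0}=\bb{L}\otimes_{\bb{Z}_p}\bb{B}_{\dR,X}$, in terms of which $\bb{M}^0=\bigoplus_i\bb{B}_{\dR,X}^+\widetilde e_i$ (as $Y\in\GL_n(\OBdR{X}^+)$), so $\Fil^j\bb{M}^0=\bigoplus_i t^j\bb{B}_{\dR,X}^+\widetilde e_i$; and $\bb{M}=\bb{L}\otimes\bb{B}_{\dR,X}^+$ is cut out inside the horizontal sections by the condition of lying in $\Fil^0(\s{F}\otimes\OBdR{X})=\bigoplus_k\Fil^{-h_k}\OBdR{X}\,e_k$, using that $\Fil^m\OBdR{X}\cap\bb{B}_{\dR,X}=t^m\bb{B}_{\dR,X}^+$. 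Because $Y\equiv\mathrm{Id}$ modulo $\Fil^1$, I would then check --- this is the technical heart --- that $\bb{M}$ and $\bb{M}^0$ are in relative position $\diag(t^{-h_1},\dots,t^{-h_n})$ with respect to the basis $(\widetilde e_k)$, up to an automorphism of the ambient $\bb{B}_{\dR,X}$-module that is trivial on $\ker\theta$-adic associated graded.

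Third, it then remains to read off the induced filtration: one obtains $\bb{M}\cap\Fil^j\bb{M}^0=\bigoplus_k t^{\max(0,\,j+h_k)}\bb{B}_{\dR,X}^+\widetilde e_k$ (up to the harmless automorphism), whose image in $\bb{M}/\Fil^1\bb{M}=\bb{L}\otimes_{\bb{Z}_p}\widehat{\s{O}}_X$ is $\Fil_{-j}=\bigoplus_{h_k\le -j}t^{-h_k}\widehat{\s{O}}_X\cdot[\widetilde e_k]$, hence $\gr_{-j}=\bigoplus_{h_k=-j}t^{-h_k}\widehat{\s{O}}_X\cdot[\widetilde e_k]$. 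Identifying $t^{-h_k}[\widetilde e_k]$ with $e_k$ tensored with the generator $t^{-h_k}$ of $\widehat{\s{O}}_X(-h_k)$ (using $\widetilde e_k\equiv e_k$ modulo $\Fil^1$) and reindexing $j\mapsto -j$, this yields the desired $\gr_j(\bb{L}\otimes_{\bb{Z}_p}\widehat{\s{O}}_X)\cong\gr^j(\s{F})\otimes_{\s{O}_X}\widehat{\s{O}}_X(-j)$. Since both the Hodge filtration of $\s{F}$ and the Hodge--Tate filtration are defined intrinsically, the local isomorphisms are canonical and glue to the asserted global one.

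The step I expect to be the main obstacle is the relative-position claim embedded in the second paragraph: the condition carving out $\bb{M}$ couples all of the coordinates through the off-diagonal entries of $Y$, and one must show that these couplings --- which lie in $\Fil^1\OBdR{X}^+$ --- can be absorbed into a change of the $\widetilde e_k$ without altering the $\ker\theta$-adic elementary divisors. Carrying this out rigorously requires a careful induction along the $\ker\theta$-adic filtration, the precise structure of $\Fil^\bullet\OBdR{X}$ (in particular the strictness $\Fil^m\OBdR{X}\cap\bb{B}_{\dR,X}=t^m\bb{B}_{\dR,X}^+$), and keeping track of how each power of $t$ produces a Tate twist.
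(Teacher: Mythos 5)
The paper's own ``proof'' is a one-line citation to the log analogue of \cite[Proposition 7.9]{ScholzeHodgeTheory2013}; your proposal is an attempt to reconstruct the underlying argument, and in outline it does follow Scholze's route (pass to a toric chart, split the Hodge filtration, solve $\nabla Y=-\omega Y$ by the Poincar\'e lemma, compare the two $\bb{B}_{\dR}^+$-lattices). So the comparison is really with Scholze's argument, not with anything new in this paper.

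The gap you flag is genuine, and I think it is somewhat worse than you indicate. The only information you extract from the filtered Poincar\'e lemma is $Y\equiv\mathrm{Id}\pmod{\Fil^1\OBdR{X}^+}$, and you then hope to remove the off-diagonal coupling by ``a careful induction along the $\ker\theta$-adic filtration'' using only the strictness $\Fil^m\OBdR{X}\cap\bb{B}_{\dR,X}=t^m\bb{B}_{\dR,X}^+$. This is not enough. Already for Hodge weights $\{0,2\}$, the two conditions $\sum_i Y_{1i}a_i\in\Fil^{-2}\OBdR{X}$, $\sum_i Y_{2i}a_i\in\Fil^{0}\OBdR{X}$ together with $Y^{-1}\equiv\mathrm{Id}\pmod{\Fil^1}$ and strictness only give $a_1\in t^{-2}\bb{B}_{\dR,X}^+$ and $a_2\in t^{-1}\bb{B}_{\dR,X}^+$ --- one power of $t$ short in the second coordinate --- and your ``automorphism trivial on the associated graded'' cannot change the elementary divisors, which are invariants of the pair $(\bb{M},\bb{M}^0)$. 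The missing input is Griffiths transversality of $(\s{F},\nabla,\Fil^\bullet)$: it forces $\omega_{lk}=0$ unless $h_l\ge h_k-1$, and an induction along the ODE $\nabla Y=-\omega Y$ (with initial condition $\theta(Y)=\mathrm{Id}$) then yields the \emph{graded} bound $Y_{lk}\in\Fil^{\max(0,\,h_k-h_l)}\OBdR{X}^+$; a determinant/adjugate argument gives the same bound for $Y^{-1}$. With this bound the two inclusions between $\bb{M}$ and $\bigoplus_k t^{-h_k}\bb{B}_{\dR,X}^+\widetilde e_k$ follow directly, and the rest of your computation goes through. Without Griffiths transversality --- which you never invoke, even though it is part of the definition of a filtered log-connection and is exactly what makes the tensor filtration compatible with $\nabla$ --- the relative-position claim cannot be established by the tools you list. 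So the outline is the right one, but the ``technical heart'' you defer is not a routine bookkeeping exercise in the structure of $\Fil^\bullet\OBdR{X}$: it requires the extra geometric input of Griffiths transversality and a filtered strengthening of the Poincar\'e lemma, which is precisely what Scholze's Proposition 7.9 (and its log analogue in \cite{DiaoLogarithmicHilbert2018}) supplies.
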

\begin{proof}
This is a direct consequence of the  log analogue of the first statement of \cite[Proposition 7.9]{ScholzeHodgeTheory2013}, where the only input needed is the isomorphism \eqref{eqRHEquality} compatible with filtrations and log-connections, see also \cite[Section 4.4.38]{BoxerPilloniHigherColeman}.
\end{proof}

\subsection{Locally analytic representations}
\label{Subsec:LocallyAnalyticRepresentations}

The main protagonists of this paper are the  locally analytic vectors of completed cohomology. The tools we use are geometric Sen theory \cite{RCGeoSenTheory} and the theory of solid locally analytic representations \cite{RRLocallyAnalytic,RRLocAnII}.  In what follows we shall recall some basic facts of the theory of solid locally analytic representations that will be systematically used throughout the rest of the work.

Let $G$ be a compact $p$-adic Lie group and $\n{K}=(K,K^+)$ a complete non-archimedean extension of $\bb{Q}_p$. In \cite[Definition 3.2.1]{RRLocAnII}  we define a derived category $\Rep^{la}_{\n{K}}(G)$ of $\n{K}$-linear solid locally analytic representations of $G$. Moreover, in \cite[Definition 4.40]{RRLocallyAnalytic} and \cite[Definition 3.1.4 (3)]{RRLocAnII} we define a functor of derived locally analytic vectors, sending a solid $G$-representation $V$ to the locally analytic representation
\[
V^{Rla}:= R\Gamma(G, V\otimes^L_{\bb{Q}_p,\sol} C^{la}(G,\bb{Q}_p)_{\star_1}).
\]
where
\begin{itemize}

\item $C^{la}(G,\bb{Q}_p)_{\star_1}$ is the space of locally analytic functions of $G$ endowed with the left regular action, i.e. for $f\in C^{la}(G,\bb{Q}_p)$  and $g,h\in G$ we have $(g \star_1 f)(h)=f(g^{-1}h)$.

\item The solid tensor product $V\otimes^L_{\bb{Q}_p,\sol} C^{la}(G,\bb{Q}_p)_{\star_1}$ is endowed with the diagonal action of $G$.

\item The group cohomology is as solid abelian groups \cite[Definition 5.1]{RRLocallyAnalytic}.

\end{itemize}
 When $V$ is a classical Banach representation of $G$ (which will be the main case of interest for us), the solid tensor product $V\otimes^L_{\bb{Q}_p,\sol} C^{la}(G,\bb{Q}_p)_{\star_{1}}$ is nothing but the space $C^{la}(G,V)$ of locally analytic functions $f:G\to V$ endowed with the action 
\[
(g\star_{1,3} f)(h)= g\cdot f(g^{-1}h)
\]
for $g,h\in G$, and the group cohomology is the natural solid enhancement of   the usual continuous group cohomology thanks to \cite[Lemma 5.2]{RRLocallyAnalytic}. In particular, the cohomology groups of  $V^{Rla}$ are a solid enhancement of the derived locally analytic vectors of \cite[Definition 2.2.1]{LuePan}.

In the case when $V$ is a Banach admissible representation (in the sense of \cite{SchTeitDist}) we have the following vanishing result:

\begin{prop}
\label{PropAdmissiblenoHigher}
Let $K$ be a finite extension of $\mathbb{Q}_p$ and  $V$  a Banach admissible representation of $G$ over $K$, then $V^{Rla}=V^{la}[0]$ is concentrated in degree $0$, i.e., the higher locally analytic vectors of $V$ vanish.
\end{prop}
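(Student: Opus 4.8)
The plan is to reduce the statement to Schneider--Teitelbaum's duality between admissible representations and coadmissible modules over the distribution algebra $D(G,K)$, and then to compute the derived locally analytic vectors on the dual side, where higher cohomology can be controlled by flatness/coadmissibility. First I would recall that by \cite[Theorem 7.1]{SchTeitDist}, the strong dual $V'$ of a Banach admissible representation $V$ is a coadmissible $D(G,K)$-module, and that $V^{la}$ is a locally analytic admissible representation whose dual $(V^{la})'$ coincides with $V'$ viewed as a $D(G,K)$-module (the coadmissible module is already "locally analytic", i.e. the natural map $V^{la}\to V$ is a topological isomorphism onto a dense subspace and its dual is an isomorphism). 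The point is that for Banach admissible $V$ the inclusion of analytic vectors is already as large as possible after dualizing.

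Next I would unwind the definition of $V^{Rla}$. As noted in the paragraph preceding the proposition, when $V$ is Banach we have $V\otimes^L_{\bb{Q}_p,\sol} C^{la}(G,\bb{Q}_p)_{\star_1}\cong C^{la}(G,V)$ with the twisted action $\star_{1,3}$, so that $V^{Rla}=R\Gamma(G,C^{la}(G,V))$ computed in solid abelian groups, and this is a solid enhancement of the Lazard-style continuous (equivalently, by \cite[Lemma 5.2]{RRLocallyAnalytic}, locally analytic) group cohomology. Via the $\star_{1,3}$-equivariant "orbit map" trivialization $C^{la}(G,V)\cong C^{la}(G,\bb{Q}_p)\,\widehat\otimes\, V_{\mathrm{triv}}$ is \emph{not} available, so instead I would pass to duals: $R\Gamma(G,C^{la}(G,V))$ is computed by the complex $R\Hom_{D(G,K)}(\bb{Q}_p, C^{la}(G,V))$, and dualizing turns this into a derived tensor product $V'\otimes^L_{D(G,K)}(\text{a resolution of }\bb{Q}_p)$. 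Concretely, using that $C^{la}(G,K)$ is the continuous dual of $D(G,K)$ and that $D(G,K)$ is a Fréchet--Stein algebra, one identifies $H^i(V^{Rla})'$ with $\Tor_i^{D(G,K)}(V',\text{(trivial module or a dualizing complex)})$; the key structural input is that $D(G,K)$ has finite global dimension and that coadmissible modules behave well under $\Tor$ because $D(G,K)$ is flat over each $D_r(G,K)$ in the Fréchet--Stein presentation.

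The cleanest route, and the one I would actually carry out, is: (1) reduce to $G$ uniform pro-$p$ so that $D(G,K)$ is an (iterated) Fréchet--Stein algebra with the $D_r(G,K)$ Noetherian; (2) show $V^{Rla}$ is computed by $R\varprojlim_r$ of the analytic-vector complexes $V^{R\text{-}r\text{-}an}$ at each radius, each of which is the cohomology of a Koszul-type complex on the $r$-analytic distribution algebra $D_r(G,K)$ acting on the $r$-analytic vectors $V_r$; (3) invoke that $V_r$ is a coadmissible (indeed finitely generated) $D_r(G,K)$-module by admissibility of $V$, hence its Lie-algebra/Koszul cohomology with trivial coefficients vanishes above degree $0$ because $D_r(G,K)$ is Noetherian of finite global dimension \emph{and} the relevant Koszul complex resolves $V_r$ in degree $0$ when we compute invariants — here one uses that for a coadmissible module over the rational distribution algebra the higher group cohomology with values in an induced module vanishes, which is essentially \cite[Theorem 7.1]{SchTeitDist} combined with the acyclicity of $C^{la}(G,-)$ applied to a Banach space; and (4) pass to the limit over $r$, where $R^1\varprojlim$ vanishes by the Mittag--Leffler property guaranteed by coadmissibility. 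Assembling these gives $V^{Rla}=V^{la}[0]$.

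The main obstacle I anticipate is step (3): making precise the claim that the \emph{derived} locally analytic vectors of a Banach admissible representation have no higher cohomology requires carefully matching the solid group-cohomology complex $R\Gamma(G,C^{la}(G,V))$ with a complex that manifestly computes $\Tor^{D_r(G,K)}_*$ against a module that is flat (or resolved in degree zero), and this matching is where the solid formalism of \cite{RRLocallyAnalytic,RRLocAnII} does real work — one must know that solidification does not introduce spurious higher $\varprojlim$ or $\Tor$ terms, i.e. that the solid $R\Gamma(G,-)$ of a Banach-valued locally analytic induced module agrees with the classical one and is concentrated where classical theory predicts. Granting the compatibility results of \cite{RRLocallyAnalytic,RRLocAnII} between solid and classical group cohomology (as already used for \cite[Lemma 5.2]{RRLocallyAnalytic}), this reduces to the known vanishing for coadmissible modules, but spelling out that reduction is the crux.
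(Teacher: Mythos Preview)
Your proposal has a concrete error and a genuine gap.

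The concrete error is in step (2): you write $V^{Rla}$ as $R\varprojlim_r$ of the $r$-analytic vector complexes, but the locally analytic vectors are a filtered \emph{colimit} over shrinking radii (equivalently, $V^{Rla}=\varinjlim_h V^{R\bb{G}^{(h^+)}\text{-}an}$, as in \cite[Corollary~3.1.10]{RRLocAnII}). This is not a cosmetic slip: once you use a colimit, your step (4) about $R^1\varprojlim$ and Mittag--Leffler becomes irrelevant, since filtered colimits are exact, and the entire burden falls on step (3).

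Step (3) is where the genuine gap lies. You need that for each fixed radius the derived analytic vectors $V^{R\bb{G}^{(h^+)}\text{-}an}$ of an admissible Banach $V$ are concentrated in degree $0$, and your sketch does not establish this. Invoking \cite[Theorem~7.1]{SchTeitDist} gives you that $(V^{la})'$ is coadmissible and that $V^{la}\hookrightarrow V$ is dense, but it says nothing about the vanishing of the higher cohomology of $C^{la}(G,V)_{\star_{1,3}}$ or of its $r$-analytic truncations. Your proposed passage to the dual side, identifying $H^i(V^{Rla})'$ with $\Tor_i^{D(G,K)}$ groups, is not justified: dualizing $R\Gamma(G,-)$ does not in general produce a derived tensor product in the way you suggest, and in any case you would still need a flatness or projectivity statement for the relevant module over $D_r(G,K)$ that you have not supplied. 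The actual input here is precisely \cite[Proposition~4.48]{RRLocallyAnalytic}, which proves the degree-$0$ concentration at each analytic level for admissible $V$; the paper's proof simply cites that result and then takes the (exact) filtered colimit. Your outline is in the right spirit but, as written, it reduces the proposition to a statement (vanishing at each radius) that is at least as hard as the proposition itself and that you have not proved.
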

\begin{proof}
We shall use the notation of \cite[Proposition 4.48]{RRLocallyAnalytic}. By \textit{loc. cit.} the derived $\bb{G}^{(h^+)}$-analytic vectors $V^{R\bb{G}^{(h^+)}-an}$ of $V$ are concentrated in degree $0$. Since $V^{Rla}=\varinjlim_{h} V^{R\bb{G}^{(h^+)}-an}$ by \cite[Corollary 3.1.10]{RRLocAnII}, one deduces that $V^{Rla}$ is also in degree $0$ as wanted. 
\end{proof}

We finish this section with a couple of lemmas regarding the passage to (locally) analytic vectors of normal subgroups. For that, we need to recall the definition of the functor of analytic vectors.   Let $\bb{G}$ be a rigid analytic group over $\bb{Q}_p$ isomorphic as a rigid space to a  finite disjoint union of polydiscs. Let $C(\bb{G},\bb{Q}_p)$ be its space of rigid analytic functions, considered as a Banach algebra, and let $\n{D}(\bb{G},\bb{Q}_p)$ be its dual  in solid $\bb{Q}_p$-vector spaces (cf. \cite[Definition 4.7]{RRLocallyAnalytic}). Let us consider the $p$-adic Lie group  $G=\bb{G}(\bb{Q}_p)$. In the rest of the section all the homological algebra occurs within the world of solid $\bb{Q}_p$-linear vector spaces.

  Let $V$ be a solid $\bb{Q}_p$-linear representation of $G$,  the derived $\bb{G}$-analytic vectors of $V$ are given by the $G$-representation  (cf. \cite[Definition 4.9]{RRLocallyAnalytic})
  \[
  V^{R\bb{G}-an}:= R\Gamma(G, (V\otimes^{L}_{\bb{Q}_{p,\sol}} C(\bb{G},\bb{Q}_p)_{\sol})_{\star_{1,3}}),
  \]
where the action of $G$ in the tensor product of the right term is the diagonal action on $V$ and the left regular action on the space of analytic functions. 
By \cite[Corollary 2.19]{RRLocallyAnalytic} we can rewrite the space of analytic vectors to be 
\begin{equation}\label{eqomasdapwdqw}
V^{R\bb{G}-an}=R\Gamma(G, R\underline{\Hom}_{\bb{Q}_p}( \n{D}(\bb{G}, \bb{Q}), V )),
\end{equation}
where $R\underline{\Hom}_{\bb{Q}_p}( \n{D}(\bb{G}, \bb{Q}), V )$ has the $G$-action induced by the action on $V$, and the left regular action on the distributions. 

\begin{lem}\label{LemmaAnRepNormalSubgroup}
Let $\bb{H}\subset \bb{G}$ be a normal Zariski closed immersion of affinoid analytic groups over $\bb{Q}_p$, both isomorphic to finite disjoint unions of polydiscs as rigid spaces.  Suppose that there is a section $\bb{G}/\bb{H}\to \bb{G}$ of rigid varieties with image $\bb{X}$.

 Let $G=\mathbb{G}(\bb{Q}_p)$ and $H=\mathbb{H}(\bb{Q}_p)$ be the underlying compact $p$-adic Lie groups. Let $V\in D(\bb{Q}_{p,\sol}[G])$ be a solid  $\bb{Q}_p$-linear representation of $G$. Then the  space of derived $\mathbb{H}$-analytic vectors 
\[
V^{R\mathbb{H}-an}:=R\Gamma(H, (V\otimes^L_{\bb{Q}_p,\sol} C(\mathbb{H},\bb{Q}_p)_{\sol})_{\star_{1,3}})
\]
(see \cite[Definition 4.29]{RRLocallyAnalytic} for the notations) has a natural structure of $G$-representation. More precisely, the natural map 
\begin{equation}\label{eqomdaosdawdasd}
\bb{Q}_{p,\sol}[G]\otimes_{\bb{Q}_{p,\sol}[H]} \n{D}(\bb{H}, \bb{Q}_p)\to   \n{D}(\bb{G},\bb{Q}_p)
\end{equation}
is injective with image  a subalgebra $\n{D}(G;\bb{H}, \bb{Q}_p)$ of $\n{D}(\bb{G},\bb{Q}_p)$ containing $\bb{Q}_{p,\sol}[G]$, and there is a natural isomorphism  of $H$-representations
\[
V^{R\bb{H}-an}=R\Gamma(G, R\underline{\Hom}_{\bb{Q}_p}(\n{D}(G;\bb{H},\bb{Q}_p), V)),
\]
where the $G$-cohomology is taken with respect to the natural action on $V$ and the left multiplication  on $\n{D}(G;\bb{H},\bb{Q}_p)$, and the $H$-action on the right term arises from the right multiplication on $\n{D}(G;\bb{H},\bb{Q}_p)$.  Thus, $V^{R\bb{H}-an}$ has a structure of left $\n{D}(G;\bb{H},\bb{Q}_p)$-module compatible with the action of $H$ (arising via the right multiplication in $\n{D}(G;\bb{H},\bb{Q}_p)$), and so it has a structure of $G$-representation.  
\end{lem}
\begin{proof}
We first show that \eqref{eqomdaosdawdasd} is injective and that its image is a subalgebra.  By hypothesis,  the map $\bb{G}\to \bb{G}/\bb{H}$ has a section given by a subvariety $\bb{X}\subset \bb{G}$. Hence, we have isomorphisms of left $\bb{H}$-equivariant spaces $\bb{G}=\bb{H}\bb{X}\cong \bb{H}\times \bb{G}/\bb{H}$, and of right  $\bb{H}$-equivariant spaces $\bb{G}=\bb{X}\bb{H} \cong \bb{X}\times \bb{H}$. In particular,  we have isomorphisms of right $\bb{Q}_{p,\sol}[H]$-modules
\begin{equation}\label{qeq0qo3malwd}
\bb{Q}_{p,\sol}[G]\cong \bb{Q}_{p,\sol}[G/H]\otimes_{\bb{Q}_{p,\sol}} \bb{Q}_{p,\sol}[H]
\end{equation}
and similarly as las $\bb{Q}_{p,\sol}[H]$-modules.  We have similar equivariant descriptions of distribution algebras $\n{D}(\bb{G},\bb{Q}_p)\cong \n{D}(\bb{G}/\bb{H},\bb{Q}_p) \otimes_{\bb{Q}_{p,\sol}}  \n{D}(\bb{H},\bb{Q}_p)$ as right $\n{D}(\bb{H},\bb{Q}_p)$-modules (resp. as left modules). 
 In particular, the tensor product of \eqref{eqomdaosdawdasd} is also derived, and the map is injective as so it is $\bb{Q}_{p,\sol}[H]\to \n{D}(\bb{H},\bb{Q}_p)$, and $\bb{Q}_{p,\sol}[G/H]$ is a flat $\bb{Q}_p$-module for the solid tensor product thanks to \cite[Lemma 3.21]{RRLocallyAnalytic}. Now, the idea is that, since $\bb{H}$ is normal, the image of  $\n{D}(\bb{H},\bb{Q}_p)\otimes_{\bb{Q}_{p,\sol}[H]} \bb{Q}_{p,\sol}[G]$ in $\n{D}(\bb{G},\bb{Q}_p)$ agrees with the image of \eqref{eqomdaosdawdasd} and therefore this subspace forms a subalgebra of $\n{D}(\bb{G},\bb{Q}_p)$. To justify this, we argue in the dual side as follows.
 
  Let us write $\n{D}(G;\bb{H},\bb{Q}_p)$ for the tensor product \eqref{eqomdaosdawdasd}.  By \eqref{qeq0qo3malwd}  we have that 
\[
\n{D}(G;\bb{H},\bb{Q}_p)= \bb{Q}_{p,\sol}[G/H]\otimes_{\bb{Q}_{p,\sol}} \n{D}(\bb{H},\bb{Q}_p)
\]
 as solid $\bb{Q}_p$-vector space. Both factors of the tensor product are Smith spaces (i.e. duals of Banach $\bb{Q}_p$-vector spaces, see \cite[Definition 3.2]{RRLocallyAnalytic}), and the duality between Smith and Banach spaces of \cite[Theorem 3.40]{RRLocallyAnalytic} yields that 
 \[
 \begin{aligned}
 \underline{\Hom}_{\bb{Q}_p}(\n{D}(G;\bb{H},\bb{Q}_p), \bb{Q}_p) & =  \underline{\Hom}_{\bb{Q}_p}(\bb{Q}_{p,\sol}[G/H], \bb{Q}_p) \otimes_{\bb{Q}_{p,\sol}} \underline{\Hom}_{\bb{Q}_p}(\n{D}(\bb{H}, \bb{Q}_p), \bb{Q}_p)\\
 & =C(G/H,\bb{Q}_p)\otimes_{\bb{Q}_{p,\sol}} C(\bb{H}, \bb{Q}_p) =: C(G; \bb{H}, \bb{Q}_p). \\
 \end{aligned}
 \]
Notice that $C(G; \bb{H}, \bb{Q}_p)$ naturally identifies with the space of functions of the trivial  pro-finite-\'etale  extension of $\bb{H}$ given, as diamond, by $(\underline{G/H})\times \bb{H}^{\diamond}=\underline{G}\bb{H}^{\diamond}\subset \bb{G}^{\diamond}$. In particular, we have an inclusion of Banach algebras $C(G;\bb{H},\bb{Q}_p)\subset C(G,\bb{Q}_p)$.

 Therefore, to see that the image of  \eqref{eqomdaosdawdasd} is a subalgebra of $\n{D}(\bb{G}, \bb{Q}_p)$, it suffices to see that its dual  $C(G;\bb{H},\bb{Q}_p)\subset C(G,\bb{Q}_p)$ is stable under the co-multiplication map on continuous functions, but this follows from the fact that $\underline{G}\mathbb{H}^{\diamond}=\mathbb{H}^{\diamond}\underline{G}\subset \mathbb{G}^{\diamond}$ is a subgroup as $\bb{H}$ is normal. 
 
 % But the subspace $C(G;\bb{H},\bb{Q}_p)$  is the algebra of functions of the closed subdiamond of $\bb{G}^{\diamond}$ over $\Spd \bb{Q}_p$ consisting on the closed subspace $\underline{(G/H)}\times \bb{H}^{\diamond}\cong  \underline{G}\bb{H}^{\diamond}\subset \bb{G}^{\diamond}$ given by the $\underline{G}$-translates of $\bb{H}^{\diamond}$, where $\underline{G}$ is the profinite group $G$ seen as a diamond over $\Spd \bb{Q}_p$. Since $\bb{H}^{\diamond}\subset \bb{G}^{\diamond}$ is a  normal subgroup, one has that $\underline{G}\bb{H}^{\diamond}=\bb{H}^{\diamond}\underline{G}\subset \bb{G}^{\diamond}$ is a subgroup, which makes $C(G;\bb{H},\bb{Q}_p)$ stable under the co-multiplication as wanted. 

Next, we prove the statement about $\bb{H}$-analytic vectors.  By applying $\otimes$-$\Hom$ adjunctions, we have natural equivalences of $G$-representations
\[
\begin{aligned}
 R\underline{\Hom}_{\bb{Q}_p}(\n{D}(G;\bb{H},\bb{Q}_p), V) & = R\underline{\Hom}_{\bb{Q}_p}(\bb{Q}_{p,\sol}[G]\otimes^L_{\bb{Q}_{p,\sol}[H]} \n{D}(\bb{H},\bb{Q}_p), V) \\ 
 & = R\underline{\Hom}_{\bb{Q}_{p,\sol}[H]}(\n{D}(\bb{H},\bb{Q}_p), R\underline{\Hom}_{\bb{Q}_p}( \bb{Q}_{p,\sol}[G],V))
 \end{aligned}
\]
where in the first term $G$ acts via the left multiplication on $\n{D}(G;\bb{H}, \bb{Q}_p)$ and on $V$. In the last term,  the  $\Hom$ over $\bb{Q}_{p,\sol}[H]$ is taken for the left action on $\n{D}(\bb{H},\bb{Q}_p)$ and the right action on $\bb{Q}_{p,\sol}[G]$, and the group $G$ acts via the left multiplication on $\bb{Q}_{p,\sol}[G]$  and on $V$. Hence, taking $G$-cohomology, and noticing that the actions of $G$ and $H$ commute, we have that
\[
\begin{aligned}
R\Gamma(G,R\underline{\Hom}_{\bb{Q}_p}(\n{D}(G;\bb{H},\bb{Q}_p), V) ) & =  R\Gamma(G,R\underline{\Hom}_{\bb{Q}_{p,\sol}[H]}(\n{D}(\bb{H},\bb{Q}_p), R\underline{\Hom}_{\bb{Q}_p}( \bb{Q}_{p,\sol}[G],V))) \\ 
	& =R\underline{\Hom}_{\bb{Q}_{p,\sol}[H]}(\n{D}(\bb{H},\bb{Q}_p), R\Gamma(G, R\underline{\Hom}_{\bb{Q}_p}( \bb{Q}_{p,\sol}[G],V)))\\ 
	& =  R\underline{\Hom}_{\bb{Q}_{p,\sol}[H]}(\n{D}(\bb{H},\bb{Q}_p), V) \\ 
	& = V^{R\bb{H}-an}
\end{aligned}
\]
In the previous we have used that $R\Gamma(G,-)$ is the same as $R\underline{\Hom}_{\bb{Q}_{p,\sol}[G]}(\bb{Q}_p,-)$ to make the $\Hom$ space commute via $\otimes$-$\Hom$ adjunctions, and that $ R\Gamma(G, R\underline{\Hom}_{\bb{Q}_p}( \bb{Q}_{p,\sol}[G],V))=V$ being an induced representation. 
\end{proof}

\begin{lem}\label{LemmaSemidirectAnVectors}
Let $\mathbb{G}=\mathbb{H}\ltimes \mathbb{U}$ be an affinoid rigid group over $\bb{Q}_p$ written as a semidirect product of rigid affinoid groups. Suppose that $\bb{H}$ and $\bb{U}$ are isomorphic to finite disjoint unions of polydics over $\bb{Q}_p$.

 Let $G=\mathbb{G}(\bb{Q}_p)$, $H=\bb{H}(\bb{Q}_p)$ and $U=\bb{U}(\bb{Q}_p)$. Let $V\in D(\bb{Q}_{p,\sol}[G])$ be a solid $\bb{Q}_p$-linear representation of $G$. Then there is a natural quasi-isomorphism in $D(\bb{Q}_{p,\sol})$
\[
V^{R\bb{G}-an}\cong (V^{R\bb{U}-an})^{R\bb{H}-an}.
\]
Informally,  derived  $\bb{G}$-analytic vectors can be computed by first computing the derived analytic $\bb{U}$-analytic vectors, and then the derived $\bb{H}$-analytic vectors. 
\end{lem}
\begin{proof}
By Lemma \ref{LemmaAnRepNormalSubgroup} we can write 
\[
V^{R\bb{U}-an}=R\Gamma(G, R\underline{\Hom}_{\bb{Q}_p}(\n{D}(G;\bb{U},\bb{Q}_p), V))
\]
as a $G$-representation, where the action of $G$ is induced by the right multiplication on $\n{D}(G;\bb{U},\bb{Q}_p)$.  Consider the space 
\begin{equation}\label{eqspdmapsd}
\begin{aligned}
R\underline{\Hom}_{\bb{Q}_p}(\n{D}(\bb{H},\bb{Q}_p), R\underline{\Hom}_{\bb{Q}_p}(\n{D}(G;\bb{U},\bb{Q}_p), V)) & = R\underline{\Hom}_{\bb{Q}_p}(\n{D}(G;\bb{U},\bb{Q}_p)\otimes_{\bb{Q}_{p,\sol}}^L \n{D}(\bb{H},\bb{Q}_p), V) \\ 
& =  R\underline{\Hom}_{\bb{Q}_p}(\n{D}(G;\bb{U},\bb{Q}_p)\otimes_{\bb{Q}_{p,\sol}} \n{D}(\bb{H},\bb{Q}_p), V). 
\end{aligned}
\end{equation}
It has an action of $G\times H$, where $G$ acts on $V$ and via the left multiplication on $\n{D}(G;\bb{U},\bb{Q}_p)$, and $H$ acts via the right multiplication on $\n{D}(G;\bb{U},\bb{Q}_p)$ and the left action on $\n{D}(\bb{H},\bb{Q}_p)$. Thanks to \eqref{eqomasdapwdqw} and Lemma \ref{LemmaAnRepNormalSubgroup}, we see that $(V^{R\bb{U}-an})^{R\bb{H}-an}$ is the $H$-cohomology of  the $G$-cohomology of this representation. But this agrees with the $G\times H$-cohomology which can also be computed as the $G$-cohomology of the $H$-cohomology (thanks to $\otimes$-$\Hom$ adjunctions). Hence, by using $\otimes$-$\Hom$ adjunctions we see that the $H$-cohomology of \eqref{eqspdmapsd} is given by 
\[
\begin{gathered}
R\Gamma(H,R\underline{\Hom}_{\bb{Q}_p}(\n{D}(G;\bb{U},\bb{Q}_p)\otimes_{\bb{Q}_{p,\sol}} \n{D}(\bb{H},\bb{Q}_p), V)) \\ = R\underline{\Hom}_{\bb{Q}_{p,\sol}[H]}(\bb{Q}_p,R\underline{\Hom}_{\bb{Q}_p}(\n{D}(G;\bb{U},\bb{Q}_p)\otimes_{\bb{Q}_{p,\sol}} \n{D}(\bb{H},\bb{Q}_p), V)) \\ 
=R\underline{\Hom}_{\bb{Q}_p}(\bb{Q}_p\otimes^L_{\bb{Q}_{p,\sol}[H]} \big(\n{D}(G;\bb{U},\bb{Q}_p)\otimes_{\bb{Q}_{p,\sol}} \n{D}(\bb{H},\bb{Q}_p) \big), V) \\
= R\underline{\Hom}_{\bb{Q}_p}(\n{D}(G;\bb{U},\bb{Q}_p)\otimes_{\bb{Q}_{p,\sol}[H]}^L  \n{D}(\bb{H},\bb{Q}_p), V   )
\end{gathered}
\]
where the first equivalence is the definition of $H$-cohomology. The second equivalence is a $\otimes$-$\Hom$ adjunction. The last equivalence follows from a standard computation of homology for Hopf algebras, see \cite[Proposition 1.2.8 (4)]{RRLocAnII}.

 Since $\bb{U}\subset \bb{G}$ is normal, we have that 
\[
\n{D}(G;\bb{U},\bb{Q}_p)=\bb{Q}_{p,\sol}[G]\otimes^L_{\bb{Q}_p[U]} \n{D}(\bb{U},\bb{Q}_p)=\n{D}(\bb{U},\bb{Q}_p) \otimes^L_{\bb{Q}_p[U]} \bb{Q}_{p,\sol}[G]. 
\]
Thus, we deduce that 
\[
\n{D}(G;\bb{U},\bb{Q}_p)\otimes_{\bb{Q}_{p,\sol}[H]}^L  \n{D}(\bb{H},\bb{Q}_p) = \n{D}(\bb{U}, \bb{Q}_p)\otimes^L _{\bb{Q}_{p,\sol}[U]}\bb{Q}_{p,\sol}[G]\otimes^L_{\bb{Q}_{p,\sol}[H]} \n{D}(\bb{H},\bb{Q}_p).
\]
By the semidirect product decomposition of $G$, the natural left $U$-equivariant and right $H$-equivariant map  
\[
\n{D}(\bb{U}, \bb{Q}_p)\otimes^L_{\bb{Q}_{p,\sol}[U]} \bb{Q}_{p,\sol}[G]\otimes^L_{\bb{Q}_{p,\sol}[H]} \n{D}(\bb{H},\bb{Q}_p)\to \n{D}(\bb{G}, \bb{Q}_p)
\]
is an equivalence, which implies that the map of left $G$-representations 
\[
\n{D}(G;\bb{U},\bb{Q}_p)\otimes_{\bb{Q}_{p,\sol}[H]}^L  \n{D}(\bb{H},\bb{Q}_p)\to \n{D}(\bb{G}, \bb{Q}_p)
\]
is an isomorphism. We deduce that 
\[
\begin{gathered}
(V^{R\bb{U}-an})^{R\bb{H}-an} = R\Gamma(H, R\Gamma(G, R\underline{\Hom}_{\bb{Q}_p}(\n{D}(G;\bb{U},\bb{Q}_p)\otimes_{\bb{Q}_{p,\sol}} \n{D}(\bb{H},\bb{Q}_p), V)  )) \\
 = R\Gamma(G, R\Gamma(H, R\underline{\Hom}_{\bb{Q}_p}(\n{D}(G;\bb{U},\bb{Q}_p)\otimes_{\bb{Q}_{p,\sol}} \n{D}(\bb{H},\bb{Q}_p), V)  ))  \\
= R\Gamma(G,  R\underline{\Hom}_{\bb{Q}_p}(\n{D}(G;\bb{U},\bb{Q}_p)\otimes_{\bb{Q}_{p,\sol}[H]}  \n{D}(\bb{H},\bb{Q}_p), V)  )\\  
\cong  R\Gamma(G, R\underline{\Hom}_{\bb{Q}_p}(\n{D}(\bb{G}, \bb{Q}_p), V )) \\ = V^{R\bb{G}-an}
\end{gathered} 
\]
proving what we wanted. 
\end{proof}

\begin{lem}\label{LemmaInvariantCompactPreservesLocAn}
Let $G$ be a compact $p$-adic Lie group and $H\subset G$ a normal subgroup. Let $V$ be a derived locally analytic representation of $G$. Then the $G/H$-representation $R\Gamma(H,V)$ is locally analytic. 
\end{lem}
\begin{proof}
Since $V$ is derived $G$-locally analytic, the natural map 
\[
R\Gamma(G,V\otimes^L_{\bb{Q}_{p},\sol} C^{la}(G,\bb{Q}_p)_{\star_1})\to V
\]
is an isomorphism of solid $G$-representations, where  the action of $G$ on the left term arises from the right regular action. Taking cohomology with respect to $H$ we see that  $R\Gamma(H,V)$  is naturally equivalent to 
\[
\begin{aligned}
R\Gamma(H,R\Gamma(G,V\otimes^L_{\bb{Q}_{p},\sol} C^{la}(G,\bb{Q}_p)_{\star_1}))  & = R\Gamma(G\times H,V\otimes^L_{\bb{Q}_{p},\sol} C^{la}(G,\bb{Q}_p)_{\star_1})  \\ 
& = R\Gamma(G, R\Gamma(H,V\otimes^L_{\bb{Q}_{p},\sol} C^{la}(G,\bb{Q}_p)_{\star_1} ))  \\ 
& = R\Gamma(G, V \otimes^{L}_{\bb{Q}_{p,\sol}} R\Gamma(H, C^{la}(G,\bb{Q}_p)))\\ 
& = R\Gamma(G,  V\otimes^L_{\bb{Q}_{p,\sol}} C^{la}(G/H,\bb{Q}_p))\\
& = R\Gamma(G/H, R\Gamma(H,V\otimes^L_{\bb{Q}_{p,\sol}} C^{la}(G/H,\bb{Q}_p) )) \\ 
& = R\Gamma(G/H, R\Gamma(H,V)\otimes^L_{\bb{Q}_{p,\sol}} C^{la}(G/H,\bb{Q}_p) )\\ 
& = R\Gamma(H,V)^{R(G/H)-la},
\end{aligned}
\]
where  the first and second  equivalences follow from Hochschild-Serre, the third one holds from the projection  formula of $H$-cohomology as the action on $V$ is trivial (direct consequence of \cite[Theorem 5.19]{RRLocallyAnalytic}), the fourth equivalence is clear, the fifth equivalence follows from  Hochschild-Serre, the sixth equivalence follows from the projection formula on $H$-cohomology and the fact that $H$ acts trivially on $C^{la}(G/H,\bb{Q}_p)$ for the left regular action (as $H$ is a normal subgroup), the last equivalence is the definition of derived $(G/H)$-locally analytic vectors. This shows that the map $R\Gamma(H,V)^{R(G/H)-la}\to R\Gamma(H,V)$ is an equivalence, and so that $R\Gamma(H,V)$ is a locally analytic $G/H$-representation as wanted.m  
\end{proof}

\subsection{Pro-Kummer-\'etale cohomology as condensed abelian groups}
\label{SubsecProKummerAsCondensed}

In order to justify some computations of derived locally analytic vectors of pro-Kummer-\'etale cohomology groups, we will need to promote the pro-Kummer-\'etale cohomology to condensed mathematics. We let $C$ be a complete algebraically closed non-archimedean extension of $\bb{Q}_p$, and let $\n{O}_{C}\subset C$ be its  valuation subring of power bounded elements (it will suffice for us to take $C=\bb{C}_p$).  Let $*_{\proet,1}$ and $*_{\proet,2}$  be the pro\'etale site of the point $*=\Spa(C,\n{O}_C)$ as in \cite{ScholzeHodgeTheory2013} and  \cite{scholze2022etale} respectively. The underlying categories of both pro\'etale sites are the same, they are just the category of profinite sets, however, the Grothendieck topologies differ in both cases. In $*_{\proet,1}$   covers  of profinite sets  are generated  by disjoint unions and open surjective  maps, while in $*_{\proet,2}$ covers are generated by disjoint unions and    surjective maps.   In particular, covers in $*_{\proet,1}$ are covers in $*_{\proet,2}$ and we have a morphism of sites $g:*_{\proet,2}\to *_{\proet,1}$.

Note that the (derived) category of condensed abelian groups \cite[Lecture II]{ClausenScholzeCondensed2019} is the (derived) category\footnote{The actual definition of condensed set involves some accessibility condition as in \cite[Proposition 2.9]{ClausenScholzeCondensed2019}.}  of abelian sheaves on $*_{\proet,2}$. 

We recall the following result that follows from the theory of \cite{DiaoLogarithmic2019}:

\begin{lem}\label{LemmaProperBaseChangeEtaleSheaves}
Let $X$ be a locally noetherian log adic space over $(C,\n{O}_C)$ and let $\nu_X: X_{\proket}\to X_{\ket}$ be the projection of sites. Then, for an object $\s{F}\in D^+(X_{\ket},\bb{Z})$ in the bounded-to-the-left derived category of Kummer-\'etale abelian sheaves of $X$,  the natural map $\s{F}\to R\nu_{X,*} \nu^{-1}_X \s{F}$ is a quasi-isomorphism. In other words, the pullback functor $\nu_X^{-1}: D^+(X_{\ket}, \bb{Z})\to D^+(X_{\proket}, \bb{Z})$ of derived abelian sheaves is fully faithful.

 Moreover, let $f:X\to Y$ be a qcqs map of locally noetherian log adic spaces, and let $f_{\proket}: X_{\proket}\to Y_{\proket}$ and $f_{\ket}:X_{\ket}\to Y_{\ket}$ be the natural map of sites. Then the natural transformation
\[
  \nu_Y^{-1} Rf_{\ket,*}  \s{F} \to   Rf_{\proket,*} \nu_{X}^{-1} \s{F}
\]
from $D^+(X_{\ket},\bb{Z})\to D^+(Y_{\proket},\bb{Z})$ is an equivalence. 
\end{lem}
\begin{proof}
The first claim when $\s{F}$ an abelian sheaf is \cite[Proposition 5.1.7]{DiaoLogarithmic2019}. For a general object in the derived category, it suffices to show that the map $\s{F}\to R\nu_{X,*} \nu^{-1}_X \s{F}$ is an equivalence in cohomology groups, this follows from  \cite[Proposition 5.1.6]{DiaoLogarithmic2019} since $H^i(\s{F})$ is the sheafification of the presheaf mapping $U$ to $H^i_{\ket}(U,\s{F})$.

 For the second claim, by localizing at $Y$ we can assume without loss of generality that both $Y$ and $X$ are qcqs. Let $U=\varprojlim_{i} U_i\in Y_{\proket}$ be qcqs, then we have that 
\[
\begin{aligned}
R\Gamma_{\proket}(U, \nu_{Y}^{-1} Rf_{\ket,*}\s{F}) & \cong \varinjlim_{i} R\Gamma_{\ket}(U_i, Rf_{\ket,*} \s{F}) \\
						& \cong \varinjlim_{i} R\Gamma_{\ket}(f^{-1}(U_i),\s{F}) \\ 
						& \cong R\Gamma_{\proket}(f^{-1}(U),\nu_X^{-1} \s{F})
\end{aligned}
\]
where in the first equivalence we use  \cite[Proposition 5.1.6]{DiaoLogarithmic2019}, the second equivalence is the composition of two right derived functors,   and the last equality follows from \cite[Proposition 5.1.6]{DiaoLogarithmic2019} and the fact that $f$ is qcqs. This proves the lemma. 
\end{proof}

We say that an object  $\s{F} \in D^+(X_{\proket},\bb{Z})$ is Kummer-\'etale if it is the pullback of an object in $D^+(X_{\ket},\bb{Z})$. The following proposition promotes some pro-Kummer-\'etale cohomologies to solid abelian groups. 

\begin{prop}
\label{LemmaSolidEnhancement}
Let $X$ be a log smooth adic space over $(C,\n{O}_C)$, let $U\in X_{\proket}$ be a qcqs object and let $\s{F}=\varinjlim_{i} \s{F}_i$ be a filtered colimit in $D^+(X_{\proket}, \bb{Z})$ where each $\s{F}_i$ is derived $p$-complete and such that the derived quotients  $\s{F}_i/^{\bb{L}} p$ are  Kummer-\'etale. Then $R\Gamma_{\proket}(U,\s{F})$ has a natural structure of solid abelian group.

 More precisely, consider the morphisms of sites
\[
f_U: U_{\proket}\to *_{\proet,1}  \leftarrow *_{\proet,2}:g.
\]
Then  $g^*Rf_{U,*} \s{F}$  is a solid object in $D(*_{\proet,2})=D(\mathrm{CondAb})$, that is, it belongs to the full subcategory $D(\mathrm{Solid})\subset D(\mathrm{CondAb})$ given by the derived category of solid abelian groups. 
\end{prop}
\begin{proof}
For convenience we work with the derived $\infty$-categories of abelian sheaves $\s{D}(U_{\proket},\bb{Z})$, $\s{D}(*_{\proet_1}, \bb{Z})$ and $\s{D}*_{(\proet_2},\bb{Z})$. The statement of the proposition does not depend of this change of framework, namely, being solid is a property that can be detected at the level of cohomology groups.  In this case, the object $Rf_{U,*}\s{F}$ is a sheaf on $*_{\proet,1}$ in the $\infty$-category $\mathcal{D}(*_{\proet,1}, \mathbb{Z})$.

  Let us first show the lemma when $U=\varprojlim_i U_i$ is qcqs written as a limit of  Kummer-\'etale maps $U_i\to X$ and $\s{F}$ is a Kummer-\'etale complex. We see the point $*=\Spa(C,\n{O}_C)$ with the trivial log structure, so that $*_{\proket}=*_{\proet,1}$ and $*_{\ket}=*_{\et}$. We claim that the sheaf $Rf_{U,*} \s{F}$ arises from the \'etale site of the point, this implies that it is a sheaf for the Grothendieck topology of $*_{\proet,2}$ thanks to the fully faithful embedding of \cite[Proposition 14.10]{scholze2022etale}. Since it is discrete (which is equivalent to arise as a pullback from the \'etale site of the point)  it is  also solid. Indeed, we can change $X$ by any of the  $U_i$ and suppose that $X$ is qcqs and that the maps $U_i\to X$ are qcqs. Let $f: X_{\proket}\to *_{\proket}$, then  $Rf_{U,*}\s{F}$ is the same as 
the derived pushforward along $f$ of the sheaf $\varinjlim_{i} Rj_{U_i,*}\s{F}|_{U_i}$ where $j_{U_i}:U_i\to X$ is the natural map. Since $X$ is qcqs we have that 
\[
Rf_* \s{F}=\varinjlim_{i} Rf_* Rj_{U_i,*}\s{F}|_{U_i} = \varinjlim_i Rf_{U_i,*} \s{F}|_{U_i}.
\]
By Lemma \ref{LemmaProperBaseChangeEtaleSheaves} each object $Rf_{U_i,*} \s{F}|_{U_i}$ arises from the \'etale site of the point, and so does their filtered colimit.

  Now, if $\s{F}$ is derived $p$-complete with Kummer-\'etale special fiber, then 
\[
Rf_{U,*} \s{F}  = Rf_{U,*} (R\varprojlim_n \s{F}/^{\bb{L}} p^n) =  R\varprojlim_nRf_{U,*}  ( \s{F}/^{\bb{L}} p^n)
\]
is a limit of sheaves for the Grothendieck topology of $*_{\proet,2}$ (as $Rf_{U,*}  ( \s{F}/^{\bb{L}} p^n)$ arises from the \'etale site of the point and then it is a sheaf for $*_{\proet,2}$ by the previous step), and so it is a sheaf. It is also a limit of discrete objects and so it is   solid   by stability under limits of solid objects \cite[Theorem 5.8]{ClausenScholzeCondensed2019}.  Finally, if $\s{F}=\varinjlim \s{F}_i$ is a filtered colimit of derived $p$-complete sheaves, then, since $U$ is qcqs,  we have that 
\[
g^* Rf_{U,*} \s{F} = \varinjlim_{i} g^* Rf_{U,*} \s{F}_i
\]
is a filtered colimit of solid  objects, and hence solid   by  stability under colimits of solid objects \cite[Theorem 5.8]{ClausenScholzeCondensed2019}. 
\end{proof}

\begin{remark}
Lemma \ref{LemmaSolidEnhancement} is \textit{ad. hoc.} for this paper, a good definition of condensed cohomology can be found in \cite[Section 2]{Bosco2021padic}. The reason why we use Lemma \ref{LemmaSolidEnhancement} is due to the difference between the pro-Kummer-\'etale topology of \cite{DiaoLogarithmic2019} and the pro\'etale topology of \cite{scholze2022etale}. A better way to solve this incompatibility is to construct a diamond functor from log adic spaces to $v$-stacks producing an equivalence between the Kummer-\'etale topos of $X$ and the \'etale topos of $X^{\lozenge}$, we do not pursue this idea in this paper. 
\end{remark}

\section{Equivariant vector bundles over flag varieties}
\label{Section:EquivariantFlag}

As a preparation for computing the geometric Sen operator of Shimura varieties we need to   construct some equivariant vector bundles on flag varieties.

\subsection{$\bbf{G}$-equivariant vector bundles on flag varieties}
\label{Subsec:GequiFlag}

Let $K$ be a field of characteristic $0$ and let $\bbf{H}$ be an algebraic group over $K$. Consider the algebraic stack $(\Spec K)/\bbf{H}$, see \cite[\href{https://stacks.math.columbia.edu/tag/026N}{Tag 026N}]{stacks-project}. In this paper an algebraic representation of $\bbf{H}$ is by definition a quasi-coherent  sheaf on $(\Spec K)/\bbf{H}$, equivalently, a co-module for the Hopf algebra $\s{O}(\bbf{H})$ of algebraic functions of $\bbf{H}$. If we want to stress that the algebraic representation  $V$ is finite dimensional, we  say that $V$ is a finite dimensional representation of $\bbf{H}$. Note that any algebraic representation is a union of finite dimensional representations.

 Let $\bbf{G}$ be a reductive group over $K$ and let $\mu:\bb{G}_m\to \bbf{G}$ be a minuscule cocharacter. We let $\bbf{P}^{\std}_{\mu}$ and $\bbf{P}_{\mu}$ denote the parabolic subgroups of $\bbf{G}$ parametrizing decreasing and increasing $\mu$-filtrations on $\bbf{G}$. We let $\bbf{M}_{\mu}=\bbf{P}_{\mu}\cap \bbf{P}^{\std}_{\mu}$ be the Levi factor, equivalently, $\bbf{M}_{\mu}$ is the centralizer of $\mu$ in $\bbf{G}$.  We denote by $\FL^{\std}=\bbf{G}/\bbf{P}^{\std}_{\mu}$ and  $\FL= \bbf{G}/\bbf{P}_{\mu}$ the flag varieties defined by $\bbf{P}^{\std}_{\mu}$ and $\bbf{P}_{\mu}$ respectively.  Let $\bbf{N}_{\mu}\subset \bbf{P}_{\mu}$ and $\bbf{N}^{\std}_{\mu}\subset \bbf{P}^{\std}_{\mu}$ be the unipotent radicals. We recall a classical fact about $\bbf{G}$-equivariant quasi-coherent sheaves of $\FL$. Let $\bbf{G}-\QCoh(\FL)$ be the category of $\bbf{G}$-equivariant quasi-coherent sheaves on $\FL$, and let $\Rep^{\alg}_{K}\bbf{P}_{\mu}$ be the category of $K$-linear algebraic representations of $\bbf{P}_{\mu}$. 

\begin{prop}\label{PPropEqGSheavesFL}
Let $\pi: \bbf{G}\to \FL$.  The pullback at the image of $1$ in $\FL$,  $\iota_1: \Spec K \to  \FL$, induces an equivalence of categories 
\[
\iota_1^{*}:\bbf{G}-\QCoh(\FL) \xrightarrow{\sim}  \Rep^{\alg}_{K}\bbf{P}_{\mu}. 
\]  Moreover, the inverse $\n{W}$ of $\iota_1^{*}$ is given by mapping a $\bbf{P}_{\mu}$-representation $V$ to the $\bbf{G}$-equivariant vector bundle $\n{W}(V)$ whose global sections at $U\subset \FL$ are given by 
\begin{equation}
\label{eqVBAssociatedToRep}
\n{W}(V)(U)= (\s{O}(\pi^{-1}(U))\otimes_K V)^{\bbf{P}_{\mu}},
\end{equation}
where $\s{O}(\pi^{-1}(U))$ are the algebraic functions of $\pi^{-1}(U)\subset \bbf{G}$ endowed with the right regular action,  and $\bbf{P}_{\mu}$ acts diagonally on the tensor product. The action of $\bbf{G}$ on $\n{W}(V)$ arises from the left regular action of $\bbf{G}$ on (the translations of) $\s{O}(\pi^{-1}(U))$. 
\end{prop}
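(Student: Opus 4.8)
The plan is to exhibit the equivalence via faithfully flat descent along the $\bbf{P}_{\mu}$-torsor $\pi\colon \bbf{G}\to \FL$, which is the geometric incarnation of the co-module description. First I would recall that since $\bbf{P}_{\mu}$ is a (flat, affine) closed subgroup of $\bbf{G}$ and $\bbf{G}$ is smooth, the quotient map $\pi$ is a Zariski-locally trivial $\bbf{P}_{\mu}$-torsor (here one uses that $\bbf{P}_{\mu}$ is parabolic, so $\FL$ is covered by big-cell translates $w\bbf{N}^{\std}_{\mu}\bbf{P}_{\mu}/\bbf{P}_{\mu}$ over which $\pi$ trivializes). Consequently the stack quotient $[\bbf{G}/\bbf{P}_{\mu}]$ is representable by $\FL$, and more to the point we have a $2$-cartesian square relating $(\Spec K)/\bbf{P}_{\mu}$, $(\Spec K)/\bbf{G}$ and $\FL$: concretely, $(\Spec K)/\bbf{P}_{\mu}\simeq [\bbf{G}/\bbf{P}_{\mu}] \times_{(\Spec K)/\bbf{G}} (\Spec K) = \FL \times_{(\Spec K)/\bbf{G}}(\Spec K)$, i.e. $(\Spec K)/\bbf{P}_\mu$ is the "quotient stack $\FL/\bbf{G}$" in the sense that a $\bbf{G}$-equivariant quasi-coherent sheaf on $\FL$ is the same as a quasi-coherent sheaf on $[\FL/\bbf{G}] \simeq (\Spec K)/\bbf{P}_\mu$. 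Unwinding the last identification is exactly the statement $\bbf{G}-\QCoh(\FL)\simeq \Rep^{\alg}_K\bbf{P}_\mu$, and the fibre functor $\iota_1^*$ is pullback along $\Spec K\to \FL$, equivalently along $\Spec K \to (\Spec K)/\bbf{P}_\mu$, which is the forgetful functor to the underlying module with its $\bbf{P}_\mu$-action.

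Next I would make the inverse functor explicit and check it is quasi-inverse to $\iota_1^*$. Given a $\bbf{P}_{\mu}$-representation $V$, form the descent datum on the trivial sheaf $\s{O}_{\bbf{G}}\otimes_K V$ along $\pi$: the two pullbacks to $\bbf{G}\times_{\FL}\bbf{G}\cong \bbf{G}\times \bbf{P}_{\mu}$ are identified using the $\bbf{P}_{\mu}$-action on $V$, and the cocycle condition is precisely the associativity of that action. By fppf descent for quasi-coherent sheaves this glues to a sheaf $\n{W}(V)$ on $\FL$, which is a vector bundle of rank $\dim V$ when $V$ is finite-dimensional, and the residual left $\bbf{G}$-action on $\s{O}_{\bbf{G}}$ descends because it commutes with the right $\bbf{P}_\mu$-action used for descent; this gives the $\bbf{G}$-equivariant structure. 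Spelling out sections over $U\subset\FL$, descent identifies $\n{W}(V)(U)$ with the $\bbf{P}_{\mu}$-equivariant sections of $\s{O}_{\bbf{G}}\otimes_K V$ over $\pi^{-1}(U)$, i.e. $(\s{O}(\pi^{-1}(U))\otimes_K V)^{\bbf{P}_\mu}$ with $\bbf{P}_\mu$ acting by the right regular action on the first factor and its given action on $V$ — this is formula \eqref{eqVBAssociatedToRep}. That $\iota_1^*\n{W}(V)\cong V$ as $\bbf{P}_\mu$-representations is the defining property of the descent datum (the stalk at $1$ recovers $V$ compatibly with the $\bbf{P}_\mu$-action via the $\bbf{G}$-equivariant structure); that $\n{W}(\iota_1^*\s{F})\cong\s{F}$ for $\s{F}$ a $\bbf{G}$-equivariant sheaf is again descent, the descent datum on $\pi^*\s{F}$ being canonically trivialized by $\bbf{G}$-equivariance. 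Both isomorphisms are natural, so $\iota_1^*$ and $\n{W}$ are mutually inverse equivalences.

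The main technical point — and the only place one must be slightly careful — is the interaction between the two $\bbf{P}_{\mu}$-actions: one uses the \emph{right} regular action of $\bbf{P}_\mu$ on $\s{O}(\pi^{-1}(U))\subset\s{O}(\bbf{G})$ for the descent/invariants, while the \emph{left} regular $\bbf{G}$-action (restricted from $\bbf{G}$) supplies the equivariant structure, and these two commute precisely because left and right translations on a group commute. Getting this bookkeeping right is what makes the two functors well-defined and mutually inverse; everything else is formal descent together with the fact, recorded above, that $\pi$ is a Zariski-locally trivial torsor so that quasi-coherent descent applies and sends vector bundles to vector bundles. I would also remark, as the proposition implicitly does, that both sides admit all colimits and the equivalence is exact and compatible with tensor products, so it restricts to an equivalence between $\bbf{G}$-equivariant vector bundles and finite-dimensional $\bbf{P}_\mu$-representations.
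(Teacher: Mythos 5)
Your proof is correct and takes essentially the same route as the paper: it opens with the same identification of stacks $(\Spec K)/\bbf{P}_\mu \simeq \bbf{G}\backslash\FL$ (equivalently $[\FL/\bbf{G}]$), which is the paper's entire argument, and then spells out the explicit descent along $\pi\colon\bbf{G}\to\FL$ that the paper compresses into the remark ``can be proved by hand after unravelling the constructions.'' The extra detail you supply — Zariski-local triviality via big cells, the cocycle condition, the left/right action bookkeeping — is correct and fills in exactly the unravelling the paper leaves implicit.
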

\begin{proof}
The proposition can be proved by hand after unravelling the constructions. A more conceptual and direct proof follows from the isomorphism of Artin stacks
\[
(\Spec K)/\bbf{P}_{\mu} = (\bbf{G}\backslash \bbf{G})/\bbf{P}_{\mu} = \bbf{G}\backslash \FL, 
\]
and the fact that $\QCoh(*/\bbf{P}_{\mu})$ and  $\QCoh(\bbf{G}\backslash \FL)$ are  $\Rep^{\alg}_{K}\bbf{P}_{\mu}$ and $\bbf{G}-\QCoh(\FL)$ respectively. 
\end{proof}

\begin{remark}
In \eqref{eqVBAssociatedToRep} the invariants can be taken with respect to the $K$-points of $\bbf{P}_{\mu}$ since $K$ is infinite. More conceptually, the tensor $\s{O}(\pi^{-1}(U))\otimes_K V$ is an algebraic representation of $\bbf{P}_{\mu}$ (i.e. an object in $\QCoh(\Spec K/\bbf{P}_{\mu})$) and the $\bbf{P}_{\mu}$-invariants are the pushforward along the map $(\Spec K)/\bbf{P}_{\mu}\to \Spec K$. 
\end{remark}

\subsection{Some equivariant Lie algebroids}
\label{Subsec:EquiLieAlgebroid}

In the following we construct some $\bbf{G}$-equivariant Lie algebroids over $\FL$ that are the main players in the localization theory of Beilinson-Bernstein, we refer to \cite{BeilinsonBernstein} for more details.

Let $\f{g}$, $\f{p}_{\mu}$, $\f{n}_{\mu}$ and $\f{m}_{\mu}$ denote the Lie algebras of $\bbf{G}$, $\bbf{P}_{\mu}$, $\bbf{N}_{\mu}$ and $\bbf{M}_{\mu}$ respectively. Consider the action of $\f{g}$ on $\s{O}_{\FL}$ by taking derivations of the action of $\bbf{G}$, it defines a Lie algebroid $\f{g}^0=\s{O}_{\FL} \otimes_K \f{g}$ and an anchor map 
\[
\alpha: \f{g}^0 \to \n{T}_{\FL},
\]
where $\n{T}_{\FL}$ is the tangent space of $\FL$. 

The group $\bbf{P}_{\mu}$ acts on $\f{p}_{\mu}$, $\f{n}_{\mu}$ and $\f{m}_{\mu}$ via the adjoint action. By the equivalence of  Proposition \ref{PPropEqGSheavesFL}, one has a filtration of $\bbf{G}$-equivariant vector bundles
\begin{equation*}
\f{n}^0_{\mu}\subset \f{p}_{\mu}^0 \subset \f{g}^0 \mbox{ and } \f{m}^0_{\mu}= \f{p}^0_{\mu}/\f{n}^0_{\mu} 
\end{equation*}
corresponding to $\bbf{P}_{\mu}$-equivariant maps
\[
\f{n}_{\mu}\subset \f{p}_{\mu} \subset \f{g} \mbox{ and } \f{m}_{\mu}= \f{p}_{\mu}/\f{n}_{\mu} .
\]

Furthermore, the vector bundles $\f{n}^0_{\mu}$ and $\f{p}^0_{\mu}$ are ideals of $\f{g}^0$ and the anchor map $\alpha$ induces an isomorphism 
\begin{equation}
\label{eqIsoAnchormap}
\alpha:\f{g}^0/\f{p}^0_{\mu}\xrightarrow{\sim} \n{T}_{\FL}. 
\end{equation}

\subsection{Regular representation of $\bbf{P}_{\mu}$}
\label{Subsec:RegularRepP}

For a scheme $X$ we let $\s{O}(X)$ denote its algebra of global sections.  We finish with a slightly more explicit description of the  left regular representation of $\bbf{P}_{\mu}$ that will be used in Section \ref{Section:SenOperators}. Let $\s{O}(\bbf{P}_{\mu})$ be the left regular representation of $\bbf{P}_{\mu}$. The presentation $\bbf{P}_{\mu}=\bbf{N}_{\mu} \rtimes \bbf{M}_{\mu}$ as semi-direct product induces a decomposition 
 \begin{equation}
 \label{eqTensorDecomposition}
\s{O}( \bbf{P}_{\mu})\cong \s{O}(\bbf{N}_{\mu})\otimes_K \s{O}(\bbf{M}_{\mu}).
 \end{equation}
The decomposition \eqref{eqTensorDecomposition} is $\bbf{P}_{\mu}$-equivariant when the right hand side is endowed with the following action:

\begin{itemize}

\item $\bbf{P}_{\mu}$ acts on  $\s{O}(\bbf{M}_{\mu})$   via the left regular action of the  projection $\bbf{P}_{\mu}\to \bbf{M}_{\mu}$.

\item The action of $\bbf{P}_{\mu}$   on $\s{O}(\bbf{N}_{\mu})$ arises from the action of schemes $\bbf{P}_{\mu}\times \bbf{N}_{\mu}\to \bbf{N}_{\mu}$ given by $(p_{\mu},n_{\mu})\mapsto  n(p_{\mu})m(p_{\mu})n_{\mu} m(p_{\mu})^{-1}$, where $n_{\mu}\in \bbf{N}_{\mu}$ and $p_{\mu}=(n(p_{\mu}),m(p_{\mu}))\in \bbf{P}_{\mu}=\bbf{N}_{\mu}\rtimes \bbf{M}_{\mu} $. In particular, the restriction to $\bbf{M}_{\mu}$ is the natural adjoint action while the restriction to $\bbf{N}_{\mu}$ is the left regular action. 
\end{itemize}

Since $\bbf{M}_{\mu}$ is reductive, if $\bbf{M}_{\mu}$ is split over $K$,  the $\bbf{M}_{\mu}\times \bbf{M}_{\mu}$-representation $\s{O}(\bbf{M}_{\mu})$ is the direct sum of  $V\otimes V^{\vee}$ where $V$ runs over the irreducible representations of $\bbf{M}_{\mu}$ indexed by their highest weight, see \cite[II Proposition 4.20]{Jantzen}.  It is left to better describe $\s{O}(\bbf{N}_{\mu})$. 

The group $\bbf{N}_{\mu}$ is unipotent, thus the exponential map $\exp: \f{n}_{\mu}\to \bbf{N}_{\mu}$ is an isomorphism of schemes, cf. \cite[Proposition 15.31]{MilneAlggroups}. Moreover, since $\mu$ is minuscule, $\bbf{N}_{\mu}$ is abelian and $\exp$ is  an isomorphism of group schemes. We have an isomorphism of algebras
\[
\s{O}(\bbf{N}_{\mu}) \cong \Sym^{\bullet}_K \f{n}^{\vee}_{\mu}. 
\]
The natural filtration of the symmetric algebra induces a filtration $\s{O}(\bbf{N}_{\mu})^{\leq n}$ on $\s{O}(\bbf{N}_{\mu})$. The weight decomposition of $\s{O}(\bbf{N}_{\mu})$ with respect to $\mu$ implies that $\s{O}(\bbf{N}_{\mu})^{\leq n}$ is indeed a $\bbf{P}_{\mu}$-stable subrepresentation of $\s{O}(\bbf{N}_{\mu})$. We deduce the following proposition.

\begin{prop}
\label{PropEquivFLag}
The exponential map $\exp: \f{n}_{\mu}\to \bbf{N}_{\mu}$ induces a $\bbf{P}_{\mu}$-stable increasing filtration $\s{O}(\bbf{N}_{\mu})^{\leq n}$ of $\s{O}(\bbf{N}_{\mu})$. Moreover, the following hold:

\begin{enumerate}
 \item There are natural $\bbf{P}_{\mu}$-equivariant isomorphisms 
\[
\gr_n(\s{O}(\bbf{N}_{\mu})) \cong \Sym^n_K \f{n}^{\vee}_{\mu}. 
\] 

\item The natural map 

\[
\Sym_{K}^{\bullet} (\s{O}(\bbf{N}_{\mu})^{\leq 1})/(1-e(1))\to \s{O}(\bbf{N}_{\mu}),
\]
is a  $\bbf{P}_{\mu}$-equivariant isomorphism, where $1$ is the unit in the symmetric algebra and  $e(1)$ is the image of $1\in K$ along the natural inclusion  $e:K\to \s{O}(\bbf{N}_{\mu})^{\leq 1}$. 
\end{enumerate}

\end{prop}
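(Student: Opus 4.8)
The plan is to deduce all three assertions from formal properties of symmetric algebras, using the identification $\s{O}(\bbf{N}_{\mu})\cong\Sym^{\bullet}_K\f{n}^{\vee}_{\mu}$ already recorded: since $\mu$ is minuscule, $\bbf{N}_{\mu}$ is abelian unipotent, so $\exp\colon\f{n}_{\mu}\xrightarrow{\sim}\bbf{N}_{\mu}$ identifies $\bbf{N}_{\mu}$ with the vector group $\f{n}_{\mu}$ and $\s{O}(\bbf{N}_{\mu})$ with $\Sym^{\bullet}_K\f{n}^{\vee}_{\mu}$ carrying its (increasing, exhaustive) degree filtration, which is $\s{O}(\bbf{N}_{\mu})^{\leq n}$ by definition. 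The one geometric input I will need is that the $\bbf{P}_{\mu}$-action on $\bbf{N}_{\mu}$ recalled in \S\ref{Subsec:RegularRepP} becomes, after transport through $\exp$, an action by \emph{affine} automorphisms of $\f{n}_{\mu}$: in the formula $(p_{\mu},n_{\mu})\mapsto n(p_{\mu})m(p_{\mu})n_{\mu}m(p_{\mu})^{-1}$ the Levi part $\bbf{M}_{\mu}$ acts through $\Ad\colon\bbf{M}_{\mu}\to\GL(\f{n}_{\mu})$ (using $\exp(\Ad(m)X)=m\exp(X)m^{-1}$) while the unipotent part $\bbf{N}_{\mu}$ acts by translations $X\mapsto X+\log n$; equivalently, the action factors through a homomorphism $\bbf{P}_{\mu}\to\GL(\f{n}_{\mu})\ltimes\f{n}_{\mu}$.

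Granting this, the $\bbf{P}_{\mu}$-stability of $\s{O}(\bbf{N}_{\mu})^{\leq n}$ is immediate, since pulling back a polynomial function of degree $\leq n$ along an affine automorphism again has degree $\leq n$; infinitesimally, $X\in\f{n}_{\mu}$ acts on $\Sym^{\bullet}_K\f{n}^{\vee}_{\mu}$ by a derivation that lowers degree by one (contraction with $X$), so that $\bbf{N}_{\mu}=\exp(\f{n}_{\mu})$ acts by unipotent filtered automorphisms while $\bbf{M}_{\mu}$ preserves the grading. This proves the first assertion of the proposition.

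For part (1): the top-degree component of the pull-back of a degree-$n$ polynomial along an affine map depends only on the linear part of that map, so $\bbf{N}_{\mu}$ acts trivially on $\gr_n\s{O}(\bbf{N}_{\mu})$ and $\bbf{M}_{\mu}$ acts through $\Sym^n$ of the coadjoint representation; since $\bbf{N}_{\mu}$ is abelian its conjugation action on $\f{n}_{\mu}$ is trivial, so this is precisely the $\bbf{P}_{\mu}$-module $\Sym^n_K\f{n}^{\vee}_{\mu}$, which gives the natural $\bbf{P}_{\mu}$-equivariant isomorphism. For part (2): the constant function $1=e(1)\in\s{O}(\bbf{N}_{\mu})^{\leq1}$ is $\bbf{P}_{\mu}$-fixed, so the principal ideal $(1-e(1))$ in $\Sym^{\bullet}_K(\s{O}(\bbf{N}_{\mu})^{\leq1})$ is $\bbf{P}_{\mu}$-stable; the algebra map $\Sym^{\bullet}_K(\s{O}(\bbf{N}_{\mu})^{\leq1})\to\s{O}(\bbf{N}_{\mu})$ induced (by the universal property of the symmetric algebra) by the $\bbf{P}_{\mu}$-equivariant inclusion $\s{O}(\bbf{N}_{\mu})^{\leq1}\hookrightarrow\s{O}(\bbf{N}_{\mu})$ then kills $1-e(1)$ and descends to the quotient, and the fact that the resulting map is an isomorphism is the same manipulation as in Remark \ref{RemarkDifferentConstructionOC}: a linear splitting $\s{O}(\bbf{N}_{\mu})^{\leq1}=K\oplus\f{n}^{\vee}_{\mu}$ identifies the source with $K[u]\otimes_K\Sym^{\bullet}_K\f{n}^{\vee}_{\mu}$ with $u=e(1)$, and setting $u=1$ recovers $\Sym^{\bullet}_K\f{n}^{\vee}_{\mu}=\s{O}(\bbf{N}_{\mu})$.

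The only step that is not a routine unwinding of definitions is the affine-linearity of the $\bbf{P}_{\mu}$-action on $\bbf{N}_{\mu}$ after transport through $\exp$; this is where minusculeness (forcing $\bbf{N}_{\mu}$ to be a vector group) and the explicit form of the action in \S\ref{Subsec:RegularRepP} are used. I expect this to be the main, and essentially the only, point requiring care, everything else being entirely parallel to Remark \ref{RemarkDifferentConstructionOC}.
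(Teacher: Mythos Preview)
Your proof is correct and follows essentially the same route as the paper's: both rely on the identification $\s{O}(\bbf{N}_{\mu})\cong\Sym^{\bullet}_K\f{n}^{\vee}_{\mu}$ via $\exp$ (using that $\mu$ minuscule makes $\bbf{N}_{\mu}$ a vector group), and both dispatch part~(2) by the Rees-algebra manipulation $\Sym^{\bullet}(K\oplus\f{n}^{\vee}_{\mu})/(1-e(1))\cong\Sym^{\bullet}\f{n}^{\vee}_{\mu}$. The one minor difference is in how $\bbf{P}_{\mu}$-stability of the filtration is checked: you argue geometrically that the transported action is by affine automorphisms of $\f{n}_{\mu}$ (hence preserves polynomial degree), whereas the paper observes more abstractly that the degree-$n$ piece is exactly the $\mu$-weight-$n$ piece (since $\f{n}^{\vee}_{\mu}$ is pure of weight~$1$ by minusculeness) and that $\bbf{P}_{\mu}$ by definition stabilizes increasing $\mu$-filtrations --- but these are two phrasings of the same fact.
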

\begin{proof}
The weight decomposition of $\s{O}(\bbf{P}_{\mu})$ with respect to $\mu$ shows that $\s{O}(\bbf{N}_{\mu})^{\leq n}$ is a $\bbf{P}_{\mu}$-stable filtration of $\s{O}(\bbf{N}_{\mu})$. Part (1) follows from  the definition of the exponential map and the fact that $\bbf{N}_{\mu}$ is abelian as $\mu$ is minuscule. For part (2), the map is clearly $\bbf{P}_{\mu}$-equivariant. It is an isomorphism since $\bbf{N}_{\mu}$ is isomorphic to the vector bundle $\f{n}$ and 
\[
\Sym_{L}^{\bullet}(\f{n}^{\vee})\cong \Sym_K^{\bullet}(K\oplus \f{n}^{\vee})/(1-e(1)) 
\]
where $1$ is the unit in the symmetric algebra and  $e(1)$ is the image of $1\in K$ along the natural inclusion  $e:K\to K\oplus \f{n}^{\vee}$. 
\end{proof}

\section{Shimura varieties and the Hodge-Tate period map}
\label{Section:Shimura}

In this section we introduce the standard theory of Shimura varieties following \cite{DeligneShimura} and \cite{MilneIntroShimura}. We also recall the definition of the Hodge-Tate period map for infinite level Shimura varieties, see \cite{ScholzeTorsion2015}  and \cite{CaraianiScholze2017}, constructed  in the most general form using the  Riemann-Hilbert correspondence of \cite{DiaoLogarithmicHilbert2018}. 

\subsection{Set up}
\label{Subsec:SetUp}

Let $\bbf{G}$ be a reductive group over $\bb{Q}$ and  $(\bbf{G}, X)$  a Shimura datum (\cite[Section 2.1.1]{DeligneShimura} or \cite[Definition  5.5]{MilneIntroShimura}).    Let $E/\bb{Q}$ be the reflex field of $(\bbf{G}, X)$.  For $K\subset \bbf{G}(\bb{A}_{\bb{Q}}^{\infty})$ a neat compact open   subgroup we let $\Sh_{K,E}$ denote the canonical model of the  Shimura variety at level $K$ over $\Spec E$.  All compact open  subgroups of $\bbf{G}(\bb{A}^{\infty}_{\bb{Q}})$  considered in this paper are supposed to be neat.  From now on we will fix $K^p\subset \bbf{G}(\bb{A}_{\bb{Q}}^{\infty,p})$ a  compact open  subgroup at level prime to $p$.  Given $K_p\subset \bbf{G}(\bb{Q}_p)$ a compact open  subgroup  we let $\Sh_{K^pK_p,E}$ denote the Shimura variety at level $K^pK_p$.   We will be interested in  the tower
\[
\{\Sh_{K^p K_p,E}\}_{K_p\subset \bbf{G}(\bb{Q}_p)}.  
\]

Let  $\bbf{Z}$ be the center of $\bbf{G}$ and $\bbf{Z}_c\subset \bbf{Z}$ its maximal $\bb{Q}$-anisotropic torus which is $\bb{R}$-split.   Given two levels $K'\subset K \subset \bbf{G}(\bb{A}_{\bb{Q}}^{\infty})$ with $K'$ normal in $K$,  the map of Shimura varieties $\Sh_{K',E}\to \Sh_{K,E}$ is  a finite \'etale Galois cover  with Galois  group isomorphic to $K/ (K'\cdot K\cap \overline{\bbf{Z}(\bb{Q})})$,  where $\overline{\bbf{Z}(\bb{Q})}$ is the closure of $\bbf{Z}(\bb{Q})$ in $\bbf{Z}(\bb{A}_{\bb{Q}}^{\infty})$,  cf \cite[Section 2.1.9]{DeligneShimura}.   Since $K$ is neat,  $K\cap \overline{\bbf{Z}(\bb{Q})}\subset \bbf{Z}_c(\bb{A}_{\bb{Q}}^{\infty})$, and so if $\bbf{Z}_c=0$ the map $\Sh_{K',E}\to \Sh_{K,E}$ is a $K/K'$-torsor.    In general,  the limit $\varprojlim_{K_p} \Sh_{K^pK_p,E}$ (considered just as a scheme) is a Galois cover of $\Sh_{K^pK_p,E}$ with group 
\[
\widetilde{K}_p:= K^pK_p /( K^p \cdot K^pK_p \cap  \overline{\bbf{Z}(\bb{Q})}).  
\]
Note that $\widetilde{K}_p$ is a quotient of $K_p$ and so it has a natural structure of a $p$-adic Lie group. Therefore,  if $K_p'\subset K_p$ is normal,  the Galois group of  $\Sh_{K^pK_p',E}\to \Sh_{K^pK_p,E}$ is given by $\widetilde{K}_p/ \widetilde{K}_p'$.  We shall write  $\widetilde{\f{g}} = \Lie \widetilde{K}_p$; since any other inclusion $\widetilde{K}_p'\subset \widetilde{K}_p$ is open, the Lie algebra is independent of the level $K_p$.

Let $\bbf{G}^{c}$ denote the quotient of $\bbf{G}$ by $\bbf{Z}_c$,  and let $\f{g}^c= \Lie \bbf{G}^c(\bb{Q}_p)$.  From our previous discussion,  there is a map $\widetilde{\f{g}}\to \f{g}^c$,  and the   obstruction  of this map  to be an isomorphism is given by  Leopoldt's conjecture.  Indeed,  this  map is an isomorphism of Lie algebras if and only if the closure of the image of $\bbf{Z}_c(\bb{Q})$ in $\bbf{Z}_c(\bb{Q}_p)$ is open.    Given an algebraic subgroup  $\bbf{H}$ of $\bbf{G}$ we denote by $\bbf{H}^c$ its image in $\bbf{G}^c$,  similarly for the subgroups $K\subset \bbf{G}(\bb{A}_{\bb{Q}}^{\infty})$,  $K^p$ and $K_p$.   Note that we have maps $K_{p}\to \widetilde{K}_p \to K_{p}^c$ whose kernels are central. 

Let $\mu$ be a $\bbf{G}(\bb{C})$-conjugacy class of Hodge-cocharacters, it is  defined over $E$ and so it gives rise flag varieties $\FL^{\std}$ and $\FL$. If $F/E$ is any field extension where the group is split,  we fix a representative $\mu:\bb{G}_{m,F}\to \bbf{G}_{F}$ of $\mu$.  By the axioms of Shimura varieties, $\mu$ is a minuscule cocharacter. We shall adopt the representation theory notation of Section \ref{Section:EquivariantFlag}. In particular $\bbf{P}^{\std}_{\mu}$ and $\bbf{P}_{\mu}$ denote the parabolic subgroups of $\bbf{G}_F$ parametrizing decreasing and increasing $\mu$-filtrations in $\bbf{G}_F$ respectively. The $F$-base change of the flag varieties admit the presentation   $\FL^{\std}_F=\bbf{G}_{F}/\bbf{P}^{\std}_{\mu}$ and  $\FL_F= \bbf{G}_F/ \bbf{P}_{\mu}$.

 Given a Shimura variety $\Sh_{K,E}$ and  an auxiliary  $K$-admissible cone decomposition $\Sigma$, we shall denote by $\Sh^{\tor}_{K,E}$ the toroidal compactification as in \cite{Pink1990ArithmeticalCO} (see \cite{FaltingsChai} for an algebraic construction in the Siegel case).  We let $D\subset \Sh^{\tor}_{K,E}$ be the boundary seen as a reduced divisor.   Since the choice of the cone decomposition will not be important in the paper we will omit any labeling referring to $\Sigma$. To guarantee that we can use the results of \cite{DiaoLogarithmicHilbert2018}, we need to make the following assumptions on the toroidal compactification: 
 
  \begin{itemize}
\item We fix a bottom level $K_p\subset  \bbf{G}(\bb{Q}_p)$ and consider a toroidal compactification $\Sh_{K^pK_p,E}^{\tor}$ that is smooth, projective, with boundary divisor given by normal crossings.  This can be guarantee thanks to \cite[Theorem 9.21]{Pink1990ArithmeticalCO}.

\item For $K_p'\subset K_p$ an open subgroup, there is a unique toroidal compactification of $\Sh_{K^pK_p',E}$ making the map $\Sh^{\tor}_{K^pK_p',E}\to \Sh^{\tor}_{K^pK_p,E}$ finite Kummer-\'etale. If in addition $K_p'$ is normal then this map is Galois with Galois group $\widetilde{K}_p/\widetilde{K}_p'$.  Indeed, the underlying scheme $\Sh^{\tor}_{K^pK_p',E}$ is constructed as the normalization of $\Sh^{\tor}_{K^pK_p,E}$ in $\Sh_{K^pK_p',E}$, and one defines the log structure as the one defined by the (reduced divisor given by the) preimage of $D$. By \cite[Section 6.7 (a)]{Pink1990ArithmeticalCO},  $\Sh^{\tor}_{K^pK_p',E}$ is precisely the toroidal compactification of \textit{loc. cit.} On the other hand, \cite[Theorem 10.2 and Remark 10.3]{MR4342362}  imply that the map $\Sh^{\tor}_{K^pK_p',E}\to \Sh^{\tor}_{K^pK_p,E}$ is finite Kummer-\'etale.

 \end{itemize}

  We must highlight that the toroidal compactification $\Sh^{\tor}_{K^pK_p'}$ might not be smooth, but by Abhyankar's lemma it becomes smooth locally in the Kummer-\'etale topology of $\Sh^{\tor}_{K^pK_p,E}$ (more precisely, it becomes smooth after extracting roots to the local coordinates defining the boundary divisor).

\subsection{Hodge-Tate period map}
\label{Subsec:HodgeTateMap}

Let us fix an isomorphism $\bb{C}\simeq \bb{C}_p$ which gives rise an inclusion $E\hookrightarrow \bb{C}_p$. We let $L\subset \bb{C}_p$ be a finite extension  of $\bb{Q}_p$ containing $E$ such that $\bbf{G}_{L}$ is split.    We let $\Shan_{K,L}$ denote the adic space over $\Spa(L,\n{O}_L)$ attached to $\Sh_{K,L}:=\Sh_{K,E}\times_{\Spec E} \Spec L$,  cf.  \cite{HuberEtaleCohomology}.  We also denote by $\Shan^{\tor}_{K,L}$ the adic space attached to the toroidal compactification and see it as a log adic space with log structure defined by the reduced normal crossing divisor of the boundary. By an abuse of notation we shall write by $\s{O}_{\Shan}$ and $\Omega^1_{\Shan}(\log)$ for the sheaf of functions and log differentials in the analytic and Kummer-\'etale  sites of  $\Shan^{\tor}_{K,L}$. Let $\nu: \Shan^{\tor}_{K,L,\proket}\to \Shan^{\tor}_{K,L,\ket}$ be the projection of sites, we also write $\s{O}_{\Shan}$ and  $\Omega^1_{\Shan}(\log)$ for  the inverse image along $\nu$ of the sheaf of functions and log differentials respectively.

We let $\Fl$ and $\Fl^{\std}$ be the adic analytification of the  $L$-base change of the algebraic flag varieties $\FL$ and $\FL^{\std}$ respectively.

Let $\Shan_{K^p,\infty,L}:= \varprojlim_{K_p} \Shan_{K^pK_p,L}$ be the infinite level Shimura variety considered  as an object in $\Shan_{K^pK_p,L,\proet}$. Similarly, we see $\Shan^{\tor}_{K^p,\infty,L}:= \varprojlim_{K_p} \Shan_{K_pK^p,L}^{\tor}$ as an object living in the pro-Kummer-\'etale site of $\Shan^{\tor}_{K^pK_p,L}$.  By construction, the map $\pi_{K_p}: \Shan_{K^p,\infty,L}\to \Shan_{K^pK_p,L}$ is a pro\'etale $\widetilde{K}_p$-torsor, analogously $\pi_{K_p}^{\tor}: \Shan_{K^p,\infty,L}^{\tor}\to \Shan_{K^pK_p,L}^{\tor}$ is a pro-Kummer-\'etale $\widetilde{K}_p$-torsor.  Then, for any finite dimensional $\bb{Q}_p$-linear  representation $V\in \Rep_{\bb{Q}_p} \bbf{G}^c$, we have attached a pro\'etale local system $V_{\et}$ on $\Shan_{K^pK_p,L}$, obtained from the constant $\widetilde{K}_p$-equivariant  pro\'etale sheaf $\underline{V}$ on $\Shan_{K^p,\infty,L}$ (see Definition \ref{DefinitionPeriodSheaves} (2)). Similarly, we have a pro-Kummer-\'etale local system $V_{\ket}$ on $\Shan^{\tor}_{K^pK_p,L}$ constructed using the torsor $\pi^{\tor}_{K_p}$.

Note that for $n\geq 1$ the quotients $V_{\et}/p^n$ and $V_{\ket}/p^n$ arise from the \'etale and Kummer-\'etale sites of the Shimura varieties.   Let $j_{K_p,\ket}: \Shan_{K^pK_p,L,\et}\to \Shan_{K^pK_p,L,\ket}^{\tor}$ be the natural morphism of sites. By the purity theorem \cite[Theorem 4.6.1]{DiaoLogarithmic2019}, the derived pushforward $Rj_{K_p,\ket,*} V_{\et}/p^n$ sits in degree $0$, and is equal to the Kummer-\'etale local system $V_{\ket}/p^n$.

To light the notation, we will use the subscript $\Shan$ instead of $\Shan^{\tor}_{K^pK_p,L}$ for the period sheaves of Definition \ref{DefinitionPeriodSheaves}. In this way, $\widehat{\s{O}}_{\Shan}$ is the completed structural sheaf of the pro-Kummer-\'etale site of the Shimura variety, and $\OBdR{\Shan}^+$ is the  big de Rham sheaf.

Let us recall the logarithmic $p$-adic Riemann-Hilbert correspondence for the local systems $V_{\ket}$ for $V\in \Rep_{\bb{Q}_p} \bbf{G}^c$ following \cite[Section 5.2]{DiaoLogarithmicHilbert2018} and \cite[Section 4.4.38]{BoxerPilloniHigherColeman}. After the discussion of \cite[Section 5.2]{DiaoLogarithmicHilbert2018}, the local systems $V_{\ket}$ have unipotent monodromy along the boundary.  By Theorem 5.3.1 of \textit{loc. cit}, the local systems $V_{\ket}$ are de Rham in the sense of Definition \ref{DefinitiondeRhamLocalSys}, with associated filtered log-connections $(V_{\dR}, \nabla, \Fil^{\bullet})$. By definition, we have a natural isomorphism on $V$
\begin{equation}
\label{eqRHLocalSystems}
V_{\ket}\otimes_{\bb{Q}_p} \OBdR{\Shan} \cong V_{\dR} \otimes_{\s{O}_{\Shan}} \OBdR{\Shan}
\end{equation}
compatible with connections and filtrations. The functor  $V\mapsto V_{\dR}$ from finite dimensional representations of $\bbf{G}^c$ to Hodge-filtered vector bundles with flat connection is then an exact $\otimes$-functor. By  Tannakian formalism, after forgetting the filtration and flat connection,  the functor $V\mapsto V_{\dR}$ defines a $\bbf{G}^c$-torsor $\bbf{G}_{\dR}^c$  over the adic space $\Shan_{K^pK_p,L}^{\tor}$ for the analytic topology. Moreover, by forgetting the flat connection but keeping the Hodge filtration,   one has a reduction of  $\bbf{G}_{\dR}^c$ to  a $\bbf{P}^{\std,c}_{\mu}$-torsor  that we denote by $\bbf{P}^{\std,c}_{\mu,\dR}$. The pushout $\bbf{M}^{c}_{\mu,\dR}= \bbf{M}^c_{\mu}\times^{\bbf{P}^{\std,c}_{\mu}}\bbf{P}^{\std,c}_{\mu,\dR}$ is the  $\bbf{M}^c_{\mu}$-torsor of automorphic vector bundles on the toroidal compactification of the Shimura variety, this is the $p$-adic base change of the canonical extension of the $\bbf{M}^c_{\mu}$-torsor of automorphic vector bundles as in \cite[Section 4]{HarrisFunctorialToroidal}.

By Lemma \ref{LemmaGradedpieces} and \eqref{eqRHLocalSystems}, we have the following relation between the graded pieces of the Hodge and Hodge-Tate filtrations for $V\in \Rep_{\bb{Q}_p} \bbf{G}^c$:

\begin{equation}
\label{eqGradedLocSys}
\gr_j(V_{\ket}\otimes_{\bb{Q}_p} \widehat{\s{O}}_{\Shan}) \cong \gr^j(V_{\dR})\otimes_{\s{O}_{\Shan}} \widehat{\s{O}}_{\Shan}(-j). 
\end{equation}

The relation \eqref{eqGradedLocSys} implies the following theorem which is a consequence of \cite[Theorem 4.6.1]{DiaoLogarithmicHilbert2018}; see also \cite[Theorem 4.4.40]{BoxerPilloniHigherColeman}.

\begin{theo}
\label{TheoHodgeTatePeriod}
The $\widetilde{K}_p$-torsor $\Shan^{\tor}_{K^p,\infty,L}\to \Shan^{\tor}_{K^pK_p,L}$ together with the Hodge-Tate filtration of the local systems $V_{\ket}$ for $V\in \Rep_{\bb{Q}_p}\bbf{G}^c$ define a $K_p$-equivariant morphism of ringed sites
\begin{equation}
\label{eqHTMap}
\pi_{\HT}^{\tor}: (\Shan^{\tor}_{K^p,\infty,L,\proket},\widehat{\s{O}}_{\Shan}) \to (\Fl_{\an}, \s{O}_{\Fl}).
\end{equation}
Moreover, let   $\bbf{M}_{\mu,\Fl}^c= \bbf{G}_E^c/\bbf{N}^c_{\mu}$ be the $\bbf{M}_{\mu}^c$-torsor  over $\Fl$. There is a natural $\widetilde{K}_p$-equivariant isomorphism of $\bbf{M}_{\mu}^c$-torsors 
\begin{equation}
\label{eqEquivTorsors}
\pi_{\HT}^{\tor,*}(\bbf{M}_{\mu,\Fl}^{c})\cong \pi_{K_p}^*(\bbf{M}^c_{\mu,\dR})\times^{\bb{G}_m,\mu} \bb{G}_m(-1)
\end{equation}
where  $\bb{G}_m(-1)= \mathrm{Isom}_{\widehat{\s{O}}_{\Shan}}( \widehat{\s{O}}_{\Shan}, \widehat{\s{O}}_{\Shan}(-1))$ is the $(-1)$-Hodge-Tate twist of $\bb{G}_m$ in $\Shan^{\tor}_{K^p,\infty,L,\proket}$, and $\mu: \bb{G}_m\to \pi_{K_p}^*(\bbf{M}^c_{\mu,\dR})$ is the immersion along the Hodge cocharacter.
\end{theo}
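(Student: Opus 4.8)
The plan is to build $\pi_{\HT}^{\tor}$ Tannakially out of the Hodge-Tate filtration and then to read off \eqref{eqEquivTorsors} from the comparison of graded pieces \eqref{eqGradedLocSys}; as the paragraphs preceding the statement indicate, the substantive input — that each $V_{\ket}$ is de Rham with Hodge filtration of type $\mu$ — is \cite[Theorem 4.6.1]{DiaoLogarithmicHilbert2018} (see also \cite[Theorem 4.4.40]{BoxerPilloniHigherColeman}), and what remains is $\bbf{G}^c$-equivariant bookkeeping.

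First I would restrict to the infinite-level object $\Shan^{\tor}_{K^p,\infty,L}$, where $\pi_{K_p}^{\tor,*}V_{\ket}=\underline{V}$ by construction (Definition \ref{DefinitionPeriodSheaves} (2)), so that $\pi_{K_p}^{\tor,*}(V_{\ket}\otimes_{\bb{Q}_p}\widehat{\s{O}}_{\Shan})=V\otimes_{\bb{Q}_p}\widehat{\s{O}}_{\Shan}$. The Hodge-Tate filtration (Definition \ref{DefinitionHodgeTateFiltration}) equips this with an increasing filtration $\Fil_\bullet$ by $\widehat{\s{O}}_{\Shan}$-submodules, functorial in $V$ and — since it is defined through the comparison \eqref{eqRHLocalSystems}, an exact $\otimes$-functor respecting filtrations — compatible with tensor products and duals. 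By Lemma \ref{LemmaGradedpieces} and \eqref{eqGradedLocSys} its graded pieces $\gr_j$ are locally free over $\widehat{\s{O}}_{\Shan}$ with ranks the $\mu$-weight multiplicities, because $\gr^\bullet(V_{\dR})$ has the ranks dictated by $\mu$ (the Hodge filtration on $V_{\dR}$ reduces the de Rham $\bbf{G}^c$-torsor to $\bbf{P}^{\std,c}_{\mu}$, cf. the discussion preceding the theorem and \cite[\S 4]{HarrisFunctorialToroidal}). Hence $\Fil_\bullet$ is an increasing $\mu$-filtration of the fibre functor $V\mapsto V\otimes_{\bb{Q}_p}\widehat{\s{O}}_{\Shan}$, i.e. a reduction $\bbf{P}^c_{\mu,\HT}$ of the trivial $\bbf{G}^c$-torsor to $\bbf{P}^c_{\mu}$; through the moduli interpretation of $\Fl$ as classifying $\mu$-filtrations this is the morphism of ringed sites $\pi_{\HT}^{\tor}$, with $\pi_{\HT}^{\tor,*}$ of the tautological $\bbf{P}^c_{\mu}$-torsor equal to $\bbf{P}^c_{\mu,\HT}$. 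The $K_p$-equivariance \eqref{eqHTMap} is then formal: $K_p$ acts on $\Shan^{\tor}_{K^p,\infty,L}$ through $\widetilde{K}_p$ and on $V$ through $K_p\to K_p^c\subset\bbf{G}^c(\bb{Q}_p)$, and functoriality of $\Fil_\bullet$ forces $\pi_{\HT}^{\tor}$ to intertwine these actions.

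For \eqref{eqEquivTorsors} I would push $\bbf{P}^c_{\mu,\HT}$ out along $\bbf{P}^c_{\mu}\to\bbf{M}^c_{\mu}$, identifying $\pi_{\HT}^{\tor,*}(\bbf{M}_{\mu,\Fl}^c)$ with the $\bbf{M}^c_{\mu}$-torsor of $\mu$-graded $\widehat{\s{O}}_{\Shan}$-trivializations of $\gr_\bullet(V\otimes_{\bb{Q}_p}\widehat{\s{O}}_{\Shan})$, functorially in $V$; similarly $\pi_{K_p}^*(\bbf{M}^c_{\mu,\dR})$, after extending scalars to $\widehat{\s{O}}_{\Shan}$, is the corresponding torsor for $\gr^\bullet(V_{\dR})\otimes_{\s{O}_{\Shan}}\widehat{\s{O}}_{\Shan}$. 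Feeding in the natural, $\otimes$-compatible isomorphisms $\gr_j(V_{\ket}\otimes_{\bb{Q}_p}\widehat{\s{O}}_{\Shan})\cong\gr^j(V_{\dR})\otimes_{\s{O}_{\Shan}}\widehat{\s{O}}_{\Shan}(-j)$ of \eqref{eqGradedLocSys}, the Hodge-Tate graded is the Hodge graded with its weight-$j$ summand tensored by $\widehat{\s{O}}_{\Shan}(-1)^{\otimes j}$; since $\mu$ scales that summand, this is precisely the pushout along $\mu:\bb{G}_m\to\bbf{M}^c_{\mu}$ against the $\bb{G}_m$-torsor $\bb{G}_m(-1)=\mathrm{Isom}_{\widehat{\s{O}}_{\Shan}}(\widehat{\s{O}}_{\Shan},\widehat{\s{O}}_{\Shan}(-1))$, and assembling over all $V$, compatibly with $\otimes$ and the residual $\widetilde{K}_p$-action, yields \eqref{eqEquivTorsors}. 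The step I expect to be delicate is not this formal manipulation but the imported input — that $V_{\ket}$ is de Rham of type $\mu$, so that $\Fil_\bullet$ is a well-defined $\otimes$-compatible $\mu$-filtration; granting that, the only care needed here is the passage from a functorial $\mu$-filtration to an honest morphism of ringed sites into $\Fl$ via the moduli interpretation of the flag variety, after which equivariance and the $\bb{G}_m(-1)$-twist are routine.
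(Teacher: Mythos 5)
Your proposal follows the same route as the paper's proof: trivialize the local systems over $\Shan^{\tor}_{K^p,\infty,L}$, read off an increasing $\widetilde{K}_p$-equivariant $\mu$-filtration on $V\otimes_{\bb{Q}_p}\widehat{\s{O}}_{\Shan}$ from the Hodge-Tate filtration, invoke the moduli interpretation of $\Fl$ to produce the morphism of ringed sites, and deduce the torsor isomorphism \eqref{eqEquivTorsors} from the graded comparison \eqref{eqGradedLocSys} via Tannakian formalism. You spell out more of the Tannakian bookkeeping (functoriality in $V$, $\otimes$-compatibility of the filtration, the $\bbf{P}^c_{\mu}$-reduction and its pushout to $\bbf{M}^c_{\mu}$) than the paper, which compresses this into one line, but the underlying argument and its single substantive input — the de Rham property of type $\mu$ from the logarithmic Riemann-Hilbert correspondence — are identical.
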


\begin{remark}
Both sites in \eqref{eqHTMap} have a basis consisting on the spectrum of sous-perfectoid rings. In that situation, the Tannakian formalism  is discussed in \cite[Appendix to Lecture 19]{ScholzeWeinspadicgeometry}. 
\end{remark}

\begin{proof}
Let $(\Spa(R,R^+),\n{M})\in \Shan^{\tor}_{K^p,\infty,L,\proket}$ be a log affinoid perfectoid. For $V\in \Rep_{\bb{Q}_p} \bbf{G}^c$ we have a natural $\widetilde{K}_p$-equivariant trivialization 
\[
V_{\ket}|_{\Shan^{\tor}_{K^p,\infty,L}}=\underline{V}.
\]
This shows that 
\[
(V_{\proket}\otimes_{\bb{Q}_p} \widehat{\s{O}}_{\Shan})(\Spa(R,R^+),\n{M})= V\otimes_{\bb{Q}_p} R. 
\]
Then the Hodge-Tate filtration defines a $\widetilde{K}_p$-equivariant increasing $\mu$-filtration of $(V_{\ket}\otimes_{\bb{Q}_p} \widehat{\s{O}}_{\Sh})|_{\Shan_{K^p,\infty,L}^{\tor}}$. Indeed, this follows form \eqref{eqRHLocalSystems} and \eqref{eqGradedLocSys} and the fact that the Hodge filtration is a decreasing $\mu$-filtration.  We obtain an increasing $\mu$-filtration on $V\otimes_{\bb{Q}_p} R$. This produces the morphism \eqref{eqHTMap} of ringed sites  as wanted. The $\widetilde{K}_p$-equivariance follows from the $\widetilde{K}_p$-equivariance of the Hodge-Tate filtration. 

Finally, the statement about the equivalence  \eqref{eqEquivTorsors} of  torsors follows from  \eqref{eqGradedLocSys} and  Tannakian formalism. 
\end{proof}

\begin{comment}
\begin{remark}
\label{RemarkpiHTasDiamond}
For $X$ a log smooth rigid space over $(K,K^+)$ we have a morphism of sites $X_{v}^{\lozenge}\to X_{\proket}$ from the $v$-site to the pro-Kummer-\'etale site. Indeed,  the pro-Kummer-\'etale site has a basis given by log affinoid perfectoids \cite[Proposition 5.3.12]{DiaoLogarithmic2019},  and by \cite[Proposition 5.3.11]{DiaoLogarithmic2019} the functor mapping a log affinoid perfectoid to its underlying perfectoid space preserves fiber products. Therefore, Theorem \ref{TheoHodgeTatePeriod} produces a $\widetilde{K}_p$-equivariant map of diamonds over $\Spd E$
\[
\pi_{\HT}^{\tor}: \Shan^{\tor,\lozenge}_{K^p,\infty,L}\to \Fl^{\lozenge}
\]
where $\Shan^{\tor,\lozenge}_{K^p,\infty, L}:= \varprojlim_{K_p} \Shan^{\tor,\lozenge}_{K^pK_p,L}$.
\end{remark}
\end{comment}

\section{Geometric Sen operator of Shimura varieties}
\label{Section:SenOperators}

The goal of this section is to compute the geometric Sen operator of the $\widetilde{K}_p$-torsor $\Shan^{\tor}_{K^p,\infty,L}\to \Shan^{\tor}_{K^pK_p,L}$ of \cite[Theorem 3.3.4]{RCGeoSenTheory}, in terms of $\bbf{G}^c$-equivariant vector bundles of $\Fl$ and the Hodge-Tate period map, cf Sections  \ref{Subsec:PullbacksEquivariant} and  \ref{Subsec:GeometricSen}.  We keep the representation theory notation of Section \ref{Section:EquivariantFlag}.

\subsection{Pullbacks of equivariant vector bundles}
\label{Subsec:PullbacksEquivariant}
 
We have constructed the Hodge-Tate period map as a $K_p$-equivariant morphism of ringed sites \eqref{eqHTMap}. There is also a map of ringed sites
\begin{equation}
\label{eqMapFlagsSites}
(\Fl_{\an}, \s{O}_{\Fl})\to (\FL_{\Zar},\s{O}_{\FL})
\end{equation}
from the analytic flag variety to the schematic flag variety. In particular, we can take pullbacks by $\pi^{\tor}_{\HT}$ of $\bbf{G}^c$-equivariant sheaves over $\FL$.  The best way to define a category of (equivariant) quasi-coherent sheaves on rigid spaces is by using the language of condensed mathematics and analytic geometry of Clausen and Scholze \cite{ClausenScholzeCondensed2019,ClauseScholzeAnalyticGeometry}.    
 For us, it will suffice to consider sheaves on $\FL$ which are filtered colimits of vector bundles. In particular, their pullback to the rigid variety will be also a filtered colimit of vector bundles.

  Our strategy to compute the geometric Sen operator of Shimura varieties is to describe the pullbacks  of $\bbf{G}^c$-equivariant maps along $\pi_{\HT}^{\tor}$ in terms of vector bundles over the Shimura varieties and the Faltings extension.   By Proposition \ref{PropEquivFLag} the category of $\bbf{G}^c$-equivariant quasi-coherent sheaves on $\FL$ is equivalent to the category of algebraic $\bbf{P}^c_{\mu}$-representations. Therefore, since any finite dimensional representation of $\bbf{P}^c_{\mu}$ embeds into the (left) regular representation $\s{O}(\bbf{P}^c_{\mu})$, we shall focus only on this last case.

 The equation \eqref{eqTensorDecomposition} shows that $\s{O}(\bbf{P}^c_{\mu})$ has a $\bbf{P}^c_{\mu}$-equivariant decomposition $\s{O}(\bbf{P}^c_{\mu})= \s{O}(\bbf{N}^c_{\mu})\otimes_{L} \s{O}(\bbf{M}^c_{\mu})$ which induces a $\bbf{G}^c$-equivariant isomorphism of quasi-coherent sheaves on $\Fl$
 \begin{equation}
 \label{eqSheavesequivON}
 \n{W}(\s{O}(\bbf{P}^c_{\mu}))\cong \n{W}(\s{O}(\bbf{N}^c_{\mu}))\otimes_{\s{O}_{\Fl}} \n{W}( \s{O}(\bbf{M}^c_{\mu})).
 \end{equation}
The action of $\bbf{P}^c_{\mu}$ on $\s{O}(\bbf{M}^c_{\mu})$ is via the left regular representation of the Levi, and the action on $\s{O}(\bbf{N}^c_{\mu})$ is as in Section \ref{Subsec:RegularRepP} (in particular it is not the adjoint action).

By Theorem \ref{TheoHodgeTatePeriod} and the Peter-Weyl decomposition of $\s{O}(\bbf{M}^c_{\mu})$, we already know how to describe the pullback $\pi_{\HT}^{\tor,*} (\n{W}(\s{O}(\bbf{M}^c_{\mu})))$ in terms of automorphic vector bundles explicitly. 

\begin{definition}
\label{DefinitionAutoVB}
Let $W\in \Rep_L \bbf{M}^c_{\mu}$ be a finite dimensional  algebraic representation of the Levi subgroup. We let $W_{\Hd}$ denote the automorphic vector over $\Shan^{\tor}_{K^pK_p,L}$ defined by $W$ via the torsor $\bbf{M}^c_{\mu,\dR}$. 
\end{definition}

The following is a direct consequence of Theorem \ref{TheoHodgeTatePeriod}.

\begin{cor}
\label{CoroHodgeComparison}
Let $W\in \Rep_L \bbf{M}^c_{\mu}$ be an irreducible  representation of $\mu$-weight $\mu(W)\in \bb{Z}$. Then there is a natural $K_{p}$-equivariant isomorphism of $\widehat{\s{O}}_{\Shan}$-modules on $(\Shan^{\tor}_{K^p,\infty,L,\proket},  \widehat{\s{O}}_{\Shan})$
\begin{equation}
\label{eqIsosheaves}
\pi_{\HT}^{\tor,*}(\n{W}(W))\cong \pi_{K_p}^*(W_{\Hd})\otimes_{\widehat{\s{O}}_{\Shan}} \widehat{\s{O}}_{\Shan} (-\mu(W))
\end{equation}
where the twist in the right hand side is a Tate twist. 
\end{cor}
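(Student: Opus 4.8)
The plan is to deduce the corollary from the isomorphism of torsors \eqref{eqEquivTorsors} by combining the associated-bundle construction with Schur's lemma.

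First I would record that, for $W$ inflated from $\bbf{M}^c_\mu$ along $\bbf{P}^c_\mu\twoheadrightarrow\bbf{M}^c_\mu$, the equivariant bundle $\n{W}(W)$ of Proposition \ref{PPropEqGSheavesFL} is precisely the bundle on $\Fl$ associated to the $\bbf{M}^c_\mu$-torsor $\bbf{M}^c_{\mu,\Fl}=\bbf{G}^c_E/\bbf{N}^c_\mu$ via $W$, i.e. $\n{W}(W)\cong\bbf{M}^c_{\mu,\Fl}\times^{\bbf{M}^c_\mu}W$. This is immediate from \eqref{eqVBAssociatedToRep}: since $\bbf{N}^c_\mu$ acts trivially on $W$, the $\bbf{P}^c_\mu$-invariants in $\s{O}(\pi^{-1}(U))\otimes_L W$ coincide with the $\bbf{M}^c_\mu$-invariants in $\s{O}(q^{-1}(U))\otimes_L W$, where $q\colon\bbf{M}^c_{\mu,\Fl}\to\Fl$ is the structure map. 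Then I would pull back along the Hodge--Tate period map \eqref{eqHTMap} (composed with \eqref{eqMapFlagsSites}); since this pullback is compatible with tensor products and with the formation of associated bundles of torsors under affine groups, we obtain from \eqref{eqEquivTorsors}
\[
\pi_{\HT}^{\tor,*}\,\n{W}(W)\;\cong\;\pi_{\HT}^{\tor,*}(\bbf{M}^c_{\mu,\Fl})\times^{\bbf{M}^c_\mu}W\;\cong\;\big(\pi_{K_p}^*(\bbf{M}^c_{\mu,\dR})\times^{\bb{G}_m,\mu}\bb{G}_m(-1)\big)\times^{\bbf{M}^c_\mu}W.
\]

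The remaining step would be to simplify this last contracted product. Since $\bbf{M}_\mu$ is the centralizer of $\mu$, the cocharacter $\mu$ is central in $\bbf{M}^c_\mu$; as $\bbf{M}^c_\mu$ is split over $L$ and $W$ is irreducible, Schur's lemma forces $\mu(\bb{G}_m)$ to act on $W$ through the character $z\mapsto z^{\mu(W)}$. Hence the central $\bb{G}_m$-twist factors out of the contracted product as a line-bundle twist: writing $\s{L}$ for the line bundle associated to the $\bb{G}_m$-torsor $\bb{G}_m(-1)$, I would check the natural isomorphism
\[
\big(\pi_{K_p}^*(\bbf{M}^c_{\mu,\dR})\times^{\bb{G}_m,\mu}\bb{G}_m(-1)\big)\times^{\bbf{M}^c_\mu}W\;\cong\;\big(\pi_{K_p}^*(\bbf{M}^c_{\mu,\dR})\times^{\bbf{M}^c_\mu}W\big)\otimes_{\widehat{\s{O}}_{\Shan}}\s{L}^{\otimes\mu(W)}.
\]
By Definition \ref{DefinitionAutoVB} the first tensor factor is $\pi_{K_p}^*(W_{\Hd})$, using $\pi_{K_p}^*(\bbf{M}^c_{\mu,\dR})\times^{\bbf{M}^c_\mu}W=\pi_{K_p}^*(\bbf{M}^c_{\mu,\dR}\times^{\bbf{M}^c_\mu}W)$; and the second factor is $\widehat{\s{O}}_{\Shan}(-1)^{\otimes\mu(W)}=\widehat{\s{O}}_{\Shan}(-\mu(W))$, because $\bb{G}_m(-1)=\mathrm{Isom}_{\widehat{\s{O}}_{\Shan}}(\widehat{\s{O}}_{\Shan},\widehat{\s{O}}_{\Shan}(-1))$ has associated line bundle $\widehat{\s{O}}_{\Shan}(-1)$. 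Composing the displayed isomorphisms yields \eqref{eqIsosheaves}, and the $K_p$-equivariance (via $K_p\twoheadrightarrow\widetilde{K}_p$) is inherited from the corresponding equivariance in Theorem \ref{TheoHodgeTatePeriod}.

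The main obstacle I anticipate is exactly the ``central twist'' identity in the last display: verifying carefully that twisting $\pi_{K_p}^*(\bbf{M}^c_{\mu,\dR})$ by the central cocharacter $\mu$ and by $\bb{G}_m(-1)$ before applying $\times^{\bbf{M}^c_\mu}W$ produces precisely the automorphic bundle $\pi_{K_p}^*(W_{\Hd})$ tensored with the correct power of the Tate-twist line, and in particular pinning down the sign and the exponent $-\mu(W)$. Everything else is a formal manipulation of torsors and associated bundles or a direct appeal to results already established.
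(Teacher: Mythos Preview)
Your argument is correct and is precisely what the paper intends by ``a direct consequence of Theorem \ref{TheoHodgeTatePeriod}'': you have simply unpacked the associated-bundle formalism behind the torsor isomorphism \eqref{eqEquivTorsors}, using that $\mu$ is central in $\bbf{M}^c_\mu$ so that Schur's lemma forces it to act on the irreducible $W$ by the character $z\mapsto z^{\mu(W)}$. The paper leaves all of this implicit, but your spelling-out of the central-twist identity and the identification of the Tate line is exactly the content of the one-line proof.
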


We will need to make more explicit the isomorphism \eqref{eqIsosheaves} for the graded pieces of the adjoint  representation of $\bbf{G}^c$.  Let $\f{g}^c=\Lie \bbf{G}^c$, and let $\f{n}^c_{\mu}\subset \f{p}^c_{\mu} \subset \f{g}^c$ be the Lie algebras of $\bbf{N}^c_{\mu}$ and $\bbf{P}^{c}_{\mu}$ respectively. Note that the natural surjective map $\bbf{P}_{\mu}\to \bbf{P}^c_{\mu}$ induces an isomorphism on radicals $\bbf{N}_{\mu}\cong \bbf{N}_{\mu}^c$ so that $\f{n}_{\mu}\cong \f{n}_{\mu}^c$.  We denote in a similar way the Lie algebras $\f{p}^{\std,c}_{\mu}$ and $\f{n}^{\std,c}_{\mu}$ of the opposite parabolic and unipotent radical. Finally, we  let $\f{m}^c_{\mu}=   \f{p}^c_{\mu}/\f{n}^{c}_{\mu} = \f{p}^{\std,c}_{\mu}/ \f{n}^{\std,c}_{\mu}$ be the Lie algebra of the Levi quotient.

Similarly, let $\f{g}^{\mathrm{der}}$  be the derived Lie algebra of $\f{g}$ and let $\f{n}_{\mu}\subset \overline{\f{p}}_{\mu}\subset \f{g}^{\mathrm{der}}$ be its $\bbf{P}_{\mu}$-filtration with Levi quotient $\overline{\f{m}}_{\mu}=\overline{\f{p}}_{\mu}/\f{n}_{\mu}$ (resp. $\f{n}^{\std}_{\mu}\subset \overline{\f{p}}_{\mu}^{\std}\subset \f{g}^{\mathrm{der}}$ for the $\bbf{P}^{\std}_{\mu}$-filtration).  We let $\f{n}_{\mu}^0$, $\overline{\f{p}}_{\mu}^0$, $\overline{\f{m}}_{\mu}^0$ and $\f{g}^{\mathrm{der},0}$ be the associated equivariant sheaves over the flag variety (resp. for $\f{n}_{\mu}^{\std, 0}$, $\overline{\f{p}}_{\mu}^{\std,0}$ and the standard flag variety). Notice that we have a natural isomorphism $\f{g}^{\mathrm{der}}/\overline{\f{p}}_{\mu} = \f{g}^c/\f{p}^c_{\mu}$.

\begin{prop}
\label{PropKodairaSpencer}
Let $\f{g}^{\mathrm{der}}$ be the derived algebra of $\f{g}$ and let  $\f{g}^{\mathrm{der}}_{\dR}$ be the vector bundle with filtered log connection attached to $\f{g}^{\mathrm{der}}$. Then the log connection 
\begin{equation}
\label{eqLocNablag}
\nabla: \f{g}^{\mathrm{der}}_{\dR} \to \f{g}^{\mathrm{der}}_{\dR}\otimes_{\s{O}_{\Shan}} \Omega^{1}_{\Shan}(\log)
\end{equation}
induces a Kodaira-Spencer isomorphism 
\begin{equation}
\label{eqKS}
\KS: \gr^{1}(\f{g}^{\mathrm{der}}_{\dR})\xrightarrow{\sim} \Omega^1_{\Shan}(\log). 
\end{equation}
In particular, we have a $K_p$-equivariant isomorphism of $\widehat{\s{O}}_{\Shan}$-modules on $\Shan^{\tor}_{K^p,\infty,L,\proket}$
\[
\KS: \pi_{\HT}^*(\f{g}^{\mathrm{der},0}/\overline{\f{p}}_{\mu}^0) \xrightarrow{\sim} \pi_{K_p}^*(\Omega^1_{\Shan}(\log)) \otimes_{\widehat{\s{O}}_{\Shan}} \widehat{\s{O}}_{\Shan}(-1).
\]

\end{prop}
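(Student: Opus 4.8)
The plan is to separate the statement into two parts. The first part, the Kodaira--Spencer isomorphism \eqref{eqKS} over $\Shan^{\tor}_{K^pK_p,L}$, is a statement about the filtered vector bundle with flat log connection $(\f{g}^{\mathrm{der}}_{\dR},\nabla,\Fil^{\bullet})$ of \eqref{eqLocNablag} and carries no $p$-adic information; the second part, the pulled-back isomorphism over the infinite-level tower, I expect to follow formally from the first part together with Corollary \ref{CoroHodgeComparison}. I expect the first part to be the only real difficulty, and I would handle it by reduction to the classical Kodaira--Spencer isomorphism for Shimura varieties.

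For the first part: since $\mu$ is minuscule, the Hodge filtration on $\f{g}^{\mathrm{der}}_{\dR}$ has three steps $0=\Fil^{2}\subseteq \Fil^{1}\subseteq \Fil^{0}\subseteq \Fil^{-1}=\f{g}^{\mathrm{der}}_{\dR}$, with $\Fil^{i}$ the automorphic subbundle attached to the weight-$\geq i$ part of $\f{g}^{\mathrm{der}}$ for $\mathrm{Ad}\circ\mu$; in particular $\gr^{1}\f{g}^{\mathrm{der}}_{\dR}=(\f{n}^{\std}_{\mu})_{\Hd}$ and $\gr^{0}\f{g}^{\mathrm{der}}_{\dR}=\widetilde{\f{m}}_{\mu,\dR}$, in the notation of Definition \ref{DefinitionAutoVB}. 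Griffiths transversality gives $\nabla(\Fil^{1})\subseteq \Fil^{0}\otimes_{\s{O}_{\Shan}}\Omega^{1}_{\Shan}(\log)$, and, since $\Fil^{2}=0$, an $\s{O}_{\Shan}$-linear map $\bar\nabla\colon \gr^{1}\f{g}^{\mathrm{der}}_{\dR}\to \gr^{0}\f{g}^{\mathrm{der}}_{\dR}\otimes_{\s{O}_{\Shan}}\Omega^{1}_{\Shan}(\log)$; contracting the $\gr^{0}$-factor against the canonical $\bbf{M}^{c}_{\mu}$-fixed Hodge section of $\gr^{0}\f{g}^{\mathrm{der}}_{\dR}$ (the $\f{g}^{\mathrm{der}}$-component of $d\mu$, nonzero by the no-central-factor axiom of a Shimura datum, cf. \cite{DeligneShimura}) via the Killing form of $\f{g}^{\mathrm{der}}$ produces $\KS$; see \cite[\S 4.4]{BoxerPilloniHigherColeman} for this construction. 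To prove $\KS$ is an isomorphism I would descend to $\bb{C}$: by the discussion of Section \ref{Section:Shimura}, $(\f{g}^{\mathrm{der}}_{\dR},\nabla,\Fil^{\bullet})$ is the rigid analytification over $L$ of the classical algebraic automorphic de Rham bundle on $\Sh^{\tor}_{K^pK_p,L}$ (with its Gauss--Manin connection, Hodge filtration, and canonical extension along the boundary), so $\KS$ is the base change of an algebraic morphism of vector bundles of the same rank $d$, hence an isomorphism if and only if its base change to $\bb{C}$ is; and over $\bb{C}$ this is the classical fact that the Borel embedding identifies the Hermitian symmetric domain with an open subspace of the compact dual $\FL^{\std}(\bb{C})$, together with the fact --- part of the theory of canonical extensions, cf. \cite[\S 4]{HarrisFunctorialToroidal} --- that the isomorphism extends over the toroidal boundary with log poles.

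For the second part: I first identify $\f{g}^{\mathrm{der},0}/\widetilde{\f{p}}_{\mu}^{0}$, the $\bbf{G}^{c}$-equivariant bundle on $\Fl$ attached under (the analytification of) the equivalence of Proposition \ref{PPropEqGSheavesFL} to the $\bbf{P}^{c}_{\mu}$-representation $\f{g}^{\mathrm{der}}/\widetilde{\f{p}}_{\mu}$. Since $\mu$ is minuscule, $[\f{n}_{\mu},\f{n}^{\std}_{\mu}]\subseteq \widetilde{\f{m}}_{\mu}\subseteq \widetilde{\f{p}}_{\mu}$, so $\bbf{N}_{\mu}$ acts trivially on $\f{g}^{\mathrm{der}}/\widetilde{\f{p}}_{\mu}$; hence $\f{g}^{\mathrm{der}}/\widetilde{\f{p}}_{\mu}$ is an algebraic $\bbf{M}^{c}_{\mu}$-representation of pure $\mu$-weight $1$, canonically isomorphic to $\f{n}^{\std}_{\mu}$ (and, via the Killing form, to $\f{n}_{\mu}^{\vee}$). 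Decomposing it into $\bbf{M}^{c}_{\mu}$-irreducibles, each of $\mu$-weight $1$, and applying Corollary \ref{CoroHodgeComparison} to each summand yields a $K_p$-equivariant isomorphism
\[
\pi_{\HT}^{\tor,*}\big(\f{g}^{\mathrm{der},0}/\widetilde{\f{p}}_{\mu}^{0}\big)\;\cong\;\pi_{K_p}^{*}\big((\f{n}^{\std}_{\mu})_{\Hd}\big)\otimes_{\widehat{\s{O}}_{\Shan}}\widehat{\s{O}}_{\Shan}(-1),
\]
and $(\f{n}^{\std}_{\mu})_{\Hd}=\gr^{1}\f{g}^{\mathrm{der}}_{\dR}$ by the first part. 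Composing with the pullback along $\pi_{K_p}$ of the isomorphism $\KS$ then gives the asserted isomorphism $\pi_{\HT}^{\tor,*}(\f{g}^{\mathrm{der},0}/\widetilde{\f{p}}_{\mu}^{0})\cong \pi_{K_p}^{*}(\Omega^{1}_{\Shan}(\log))\otimes_{\widehat{\s{O}}_{\Shan}}\widehat{\s{O}}_{\Shan}(-1)$; $K_p$-equivariance is automatic because $\KS$ is defined on the base $\Shan^{\tor}_{K^pK_p,L}$ and the isomorphism of Corollary \ref{CoroHodgeComparison} is $K_p$-equivariant.
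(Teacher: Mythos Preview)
Your proposal is correct and follows essentially the same route as the paper: construct $\KS$ from the $\gr^1$-piece of $\nabla$ via Griffiths transversality, reduce the isomorphism statement to $\bb{C}$ by GAGA, and there invoke the Borel embedding together with Harris's canonical extensions to handle the boundary; the ``in particular'' clause is then deduced from Corollary~\ref{CoroHodgeComparison} exactly as you do. The only cosmetic difference is in how the map $\gr^1\f{g}^{\mathrm{der}}_{\dR}\to\Omega^1_{\Shan}(\log)$ is extracted from $\bar\nabla\colon\gr^1\to\gr^0\otimes\Omega^1_{\Shan}(\log)$: the paper dualizes to $(\widetilde{\f{m}}_{\mu}^{\vee}\otimes\f{n}^{\std}_{\mu})_{\Hd}\to\Omega^1_{\Shan}(\log)$ and precomposes with the coaction map $\f{n}^{\std}_{\mu}\to\widetilde{\f{m}}_{\mu}^{\vee}\otimes\f{n}^{\std}_{\mu}$ adjoint to $\mathrm{ad}$, whereas you contract against the section $d\mu\in\widetilde{\f{m}}_{\mu}$ via the Killing form; over $\FL^{\std}$ both unwind to the Killing-form identification $\f{n}^{\std}_{\mu}\cong\f{n}_{\mu}^{\vee}$ dual to the anchor map, so the two constructions agree.
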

\begin{proof}

Let us briefly recall the construction of the Kodaira-Spencer map. The adjoint representation $\f{g}^{\mathrm{der}}$ has weights $[-1,1]$, so that $\f{g}^{\mathrm{der}}_{\dR}$ has Hodge filtration concentrated in degrees $[-1,1]$. Moreover, it has graded pieces
\[
\gr^i \f{g}^{\mathrm{der}}_{\dR} =\begin{cases} 
\f{n}^{\std}_{\mu,\Hd} & i=1, \\ 
\overline{\f{m}}_{\mu,\Hd} & i=0, \\
(\f{g}^{\mathrm{der}}/\overline{\f{p}}_{\mu}^{\std})_{\Hd} & i= -1. 
\end{cases}
\]

By Griffiths transversality, we have an $\s{O}_{\Shan}$-linear map for the $\gr^1$ graded piece of \eqref{eqLocNablag}
\begin{equation}
\label{equationKSmap1}
\f{n}^{\std}_{\mu,\Hd} \xrightarrow{\overline{\nabla}} \overline{\f{m}}_{\mu,\Hd}\otimes_{\s{O}_{\Shan}} \Omega^1_{\Shan}(\log).
\end{equation}

 Since the functor $W\mapsto W_{\Hd}$ from $\bbf{M}^c_{\mu}$-representations to vector bundles is an exact $\otimes$-functor, taking adjoints we get a map 
\begin{equation}
\label{eqKSmap2}
(\overline{\f{m}}^{\vee}_{\mu}\otimes_{\s{O}_{\Shan}} \f{n}^{\std}_{\mu})_{\Hd} \to \Omega^1_{\Shan}(\log).
\end{equation}
The natural  adjoint action $\overline{\f{m}}_{\mu}\otimes_L \f{n}^{\std}_{\mu} \to \f{n}^{\std}_{\mu}$ has an adjoint  map
\begin{equation}
\label{eqAdjointmonn}
\f{n}^{\std}_{\mu} \to \overline{\f{m}}^{\vee}_{\mu} \otimes_{L} \f{n}^{\std}_{\mu}.
\end{equation}
Precomposing \eqref{equationKSmap1} with the functor $(-)_{\Hd}$ applied to \eqref{eqAdjointmonn} we get the desired map \eqref{eqKS}. 

To prove that $\KS$ is an isomorphism we can use GAGA twice, from the rigid space to the scheme and the scheme to the complex analytic space (via the fixed isomorphism $\bb{C}_p\simeq \bb{C}$),  and  then prove it over the complex analytic realization of the  toroidal compactification of the Shimura variety. Indeed, by \cite[Theorem 5.3.1]{DiaoLogarithmicHilbert2018} the $p$-adic Riemann-Hilbert correspondence and the complex analytic Riemann-Hilbert correspondence are compatible with the Betti vs \'etale comparison of local systems. In particular, the map $\KS$ can be constructed purely over the complex analytic space in the same way. 

On the other hand, by the canonical extensions of the automorphic vector bundles of \cite[Theorem 4.2 and Proposition 4.4]{HarrisFunctorialToroidal}, it suffices to prove that they are isomorphic in the open complex analytic Shimura variety.  Recall that
\[
\Sh_{K,E}(\bb{C})= \bbf{G}(\bb{Q})\backslash (X\times \bbf{G}(\bb{A}^f_{\bb{Q}}))/K
\]
where $(\bbf{G},X)$ is the Shimura datum, and that there is a $\bbf{G}(\bb{R})$-equivariant holomorphic Borel embedding
\[
\pi_{B}:X \hookrightarrow \FL^{\std}(\bb{C}).
\]
 Since the vector bundle with  connection $\f{g}^{\mathrm{der}}_{\dR}$ and its Hodge filtration are constructed via descent from $X$ and $\pi_{B}$, it suffices to show that the analogue of the  construction  of the Kodaira-Spencer map over $\FL^{\std}$ is an isomorphism. But now it is classical to see that the  construction above is equivalent (using the Killing form of the semisimple Lie algebra $\f{g}^{\mathrm{der}}$) to  the dual of the quotient of the  anchor map \eqref{eqIsoAnchormap}  which is an isomorphism. 
\end{proof}

Having understood the pullback of the  semisimplification of \eqref{eqSheavesequivON}, it remains to compute the pullback of its unipotent part.

\begin{theo}
\label{TheoSenOperator}
There is a natural $\widetilde{K}_p$-equivariant isomorphism of $\widehat{\s{O}}_{\Shan}$-algebras over $\Shan^{\tor}_{K^p,\infty,L,\proket}$ 
\begin{equation}
\label{eqIsoAlgebrasSenShimura}
\OC|_{\Shan_{K^p,\infty,L}^{\tor}} \cong \pi_{\HT}^{\tor, *}(\n{W}(\s{O}(\bbf{N}^c_{\mu})))
\end{equation}
where the $\bbf{P}^c_{\mu}$-action on $\s{O}(\bbf{N}^c_{\mu})$ is as in Section \ref{Subsec:RegularRepP}. More precisely, we have a natural $\widetilde{K}_p$-equivariant isomorphism of extensions
\[
\begin{tikzcd}
0 \ar[r]  &  \widehat{\s{O}}_{\Shan^{\tor}_{K^p,\infty,L}}\ar[r] \ar[d, "\id"] & \pi^{\tor,*}_{\HT}(\n{W}(\s{O}(\bbf{N}^c_{\mu})^{\leq 1})) \ar[r]  \ar[d, "\alpha"]& \pi^{\tor,*}_{\HT}(\f{n}^{c,0,\vee}_{\mu}) \ar[r] \ar[d, "-\KS"] & 0 \\ 
0 \ar[r] &  \widehat{\s{O}}_{\Shan^{\tor}_{K^p,\infty,L}} \ar[r] & \gr^1 \OBdr|_{\Shan^{\tor}_{K^p,\infty,L}}(-1) \ar[r, "\overline{\nabla}"] &  \Omega^1_{\Shan}(\log)\otimes_{\s{O}_{\Shan}} \widehat{\s{O}}_{\Shan^{\tor}_{K^p,\infty,L}} (-1) \ar[r] & 0
\end{tikzcd}
\]
where $\KS$ is the Kodaira-Spencer map of Proposition \ref{PropKodairaSpencer} and we have identified $\f{n}^{c,\vee}_{\mu} \cong \f{g}^{c}/\f{p}^{c}_{\mu}$ via the  Killing form of $\f{g}^{\mathrm{der}}$. 

\end{theo}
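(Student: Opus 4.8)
The plan is to deduce the isomorphism of $\widehat{\s{O}}_{\Shan}$-algebras \eqref{eqIsoAlgebrasSenShimura} formally from the displayed isomorphism of extensions, and to concentrate the work on the latter. For the reduction: by Remark \ref{RemarkDifferentConstructionOC} the sheaf $\OC|_{\Shan^{\tor}_{K^p,\infty,L}}$ is the quotient of the symmetric algebra $\Sym^\bullet_{\widehat{\s{O}}_{\Shan}}\!\big(\gr^1\OBdr|_{\Shan^{\tor}_{K^p,\infty,L}}(-1)\big)$ by the relation $1-e(1)$, and applying the exact functors $\n{W}(-)$ and $\pi_{\HT}^{\tor,*}$ to Proposition \ref{PropEquivFLag}(2) presents $\pi_{\HT}^{\tor,*}(\n{W}(\s{O}(\bbf{N}^c_{\mu})))$ as the quotient of $\Sym^\bullet_{\widehat{\s{O}}_{\Shan}}\!\big(\pi_{\HT}^{\tor,*}\n{W}(\s{O}(\bbf{N}^c_{\mu})^{\leq 1})\big)$ by $1-e(1)$. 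An isomorphism of the degree-one pieces carrying one structure map $e$ to the other thus induces an isomorphism of the two symmetric algebras and of their quotients, and it is automatically filtered because, by Remark \ref{RemarkDifferentConstructionOC} and Proposition \ref{PropEquivFLag}(1), the induced map on each $\gr_n$ is the $n$-th symmetric power of the isomorphism on $\gr_1$. So it remains to produce the isomorphism of extensions.

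First I would check that the two right-hand terms agree $\widetilde{K}_p$-equivariantly. Using the isomorphism $\f{n}_{\mu}\xrightarrow{\sim}\f{n}^c_{\mu}$ and the Killing form of $\f{g}^{\mathrm{der}}$ to identify $\f{n}^{c,\vee}_{\mu}\cong\f{g}^c/\f{p}^c_{\mu}=\f{g}^{\mathrm{der}}/\widetilde{\f{p}}_{\mu}$, the sheaf $\pi_{\HT}^{\tor,*}(\f{n}^{c,0,\vee}_{\mu})$ becomes $\pi_{\HT}^{\tor,*}(\f{g}^{\mathrm{der},0}/\widetilde{\f{p}}^0_{\mu})$, which the Kodaira--Spencer isomorphism of Proposition \ref{PropKodairaSpencer} identifies $\widetilde{K}_p$-equivariantly with $\pi_{K_p}^*(\Omega^1_{\Shan}(\log))\otimes_{\widehat{\s{O}}_{\Shan}}\widehat{\s{O}}_{\Shan}(-1)$; this is the prescribed right-hand vertical map $-\KS$.

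The core of the argument is the middle map $\alpha$ together with the comparison of extension classes. For this I would restrict the logarithmic Riemann--Hilbert comparison \eqref{eqRHLocalSystems} for the adjoint representation $V=\f{g}^{\mathrm{der}}$ to $\Shan^{\tor}_{K^p,\infty,L}$, on which $\f{g}^{\mathrm{der}}_{\ket}$ is $\widetilde{K}_p$-equivariantly trivialized to $\underline{\f{g}^{\mathrm{der}}}$ (Theorem \ref{TheoHodgeTatePeriod}); the comparison then reads $\underline{\f{g}^{\mathrm{der}}}\otimes_{\bb{Q}_p}\OBdr|_{\Shan^{\tor}_{K^p,\infty,L}}\cong\f{g}^{\mathrm{der}}_{\dR}\otimes_{\s{O}_{\Shan}}\OBdr|_{\Shan^{\tor}_{K^p,\infty,L}}$ and matches the Hodge filtration of $\f{g}^{\mathrm{der}}_{\dR}$ (a reduction to $\bbf{P}^{\std,c}_{\mu}$) with the Hodge--Tate filtration coming from $\pi_{\HT}^{\tor}$ (a reduction to the opposite parabolic $\bbf{P}^c_{\mu}$, see \eqref{eqEquivTorsors}). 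Passing to the degree-one filtered piece $\gr^1\OBdr|_{\Shan^{\tor}_{K^p,\infty,L}}(-1)=\Fil_1\OC$ and recording the Tate twist via \eqref{eqGradedLocSys} and Corollary \ref{CoroHodgeComparison}, one should then identify the $(-1)$-twisted Faltings extension, whose middle term is $\gr^1\OBdr(-1)$ and whose surjection onto $\Omega^1_{\Shan}(\log)\otimes\widehat{\s{O}}(-1)$ is the Higgs field $\overline{\nabla}$, with $\pi_{\HT}^{\tor,*}$ of the $\bbf{G}^c$-equivariant extension $0\to\s{O}_{\Fl}\to\n{W}(\s{O}(\bbf{N}^c_{\mu})^{\leq 1})\to\f{n}^{c,0,\vee}_{\mu}\to 0$, which dually classifies the affine bundle $\bbf{G}^c/\bbf{M}^c_{\mu}\to\Fl$ of $\bbf{M}^c_{\mu}$-reductions of the tautological $\bbf{P}^c_{\mu}$-torsor. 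Concretely, under the trivialization $\overline{\nabla}$ on $\gr^1\OBdr(-1)$ is the degree-one graded piece of the flat connection on $\f{g}^{\mathrm{der}}_{\dR}\otimes\OBdr$, and this matches, through $\KS$, the infinitesimal left regular action of $\f{n}_{\mu}$ by derivations on $\s{O}(\bbf{N}^c_{\mu})^{\leq 1}$; the sign reflects that $\overline{\nabla}$ differentiates the $\bbf{G}^c$-action and is pinned down exactly as in the proof of Proposition \ref{PropKodairaSpencer}. This produces $\alpha$, which is an isomorphism by the five lemma (the outer maps being the identity and the isomorphism $-\KS$), and $\widetilde{K}_p$-equivariant since the trivialization of $\f{g}^{\mathrm{der}}_{\ket}$ over $\Shan^{\tor}_{K^p,\infty,L}$, the period map $\pi_{\HT}^{\tor}$, the isomorphism $\KS$, and the Faltings extension are all $\widetilde{K}_p$-equivariant.

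The hard part will be the identification, in the previous paragraph, of the Faltings extension $\gr^1\OBdr(-1)$ with $\pi_{\HT}^{\tor,*}$ of the universal flag-variety extension. Making this precise requires unwinding the construction of $\OBdr^+$ from \cite{DiaoLogarithmicHilbert2018}, invoking Griffiths transversality for $\f{g}^{\mathrm{der}}_{\dR}$, and carefully tracking the Tate twists and the sign; in effect it is a version, carried out over the period sheaf $\OBdr$, of the computation already performed for the Kodaira--Spencer map in Proposition \ref{PropKodairaSpencer}.
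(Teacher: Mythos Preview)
Your overall strategy---reduce the algebra isomorphism to the isomorphism of degree-one extensions via Remark~\ref{RemarkDifferentConstructionOC} and Proposition~\ref{PropEquivFLag}, and then exploit the Riemann--Hilbert comparison \eqref{eqRHLocalSystems} for the adjoint representation $\f{g}^{\mathrm{der}}$ together with the Kodaira--Spencer map---matches the paper's, and your endgame (symmetric algebras modulo $1-e(1)$) is exactly how the paper concludes.

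The gap is at the step you yourself flag as ``the hard part'': you assert that one ``should then identify'' the Faltings extension with $\pi_{\HT}^{\tor,*}$ of the flag-variety extension and that ``this produces $\alpha$'', but no map is ever constructed, so there is nothing to which to apply the five lemma. The paper does not produce $\alpha$ by a direct unwinding of $\OBdr^+$. Instead it takes the $\gr^0$-piece of the de Rham short exact sequence for $\f{g}^{\mathrm{der}}_{\dR}\otimes_{\s{O}_{\Shan}}\OBdR{\Shan}^+$ (a slice of the Poincar\'e lemma, using that the Hodge filtration is concentrated in $[-1,1]$), which yields an extension
\[
0\to \pi_{\HT}^{\tor,*}(\widetilde{\f{p}}^0_{\mu})\to \pi_{\HT}^{\tor,*}(\f{n}^0_{\mu})\otimes\gr^1\OBdR{\Shan}^+(-1)\,\oplus\,\pi_{\HT}^{\tor,*}(\widetilde{\f{m}}^0_{\mu})\xrightarrow{\overline{\nabla}\oplus\widetilde{\KS}}\pi_{\HT}^{\tor,*}(\f{n}^0_{\mu})(-1)\otimes\Omega^1_{\Shan}(\log)\to 0
\]
whose kernel is $\widetilde{\f{p}}^0_{\mu}$, not $\s{O}(\bbf{N}_{\mu})^{\leq 1}$. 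The paper then tensors this class with $\f{n}^{0,\vee}_{\mu}$, pushes out along the derivation map $\widetilde{\f{p}}_{\mu}\otimes\f{n}^{\vee}_{\mu}\twoheadrightarrow\s{O}(\bbf{N}_{\mu})^{\leq 1}$, and pulls back along the dual-trace $L\hookrightarrow\f{n}_{\mu}\otimes\f{n}^{\vee}_{\mu}$. The resulting extension $\eta'$ has kernel $\pi_{\HT}^{\tor,*}(\n{W}(\s{O}(\bbf{N}_{\mu})^{\leq 1}))$ by construction, and a separate lemma identifies its middle term with $\gr^1\OBdR{\Shan}^+(-1)\oplus\pi_{\HT}^{\tor,*}(\f{n}^{0,\vee}_{\mu})$ and its surjection with $(\overline{\nabla},\KS)$. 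The map $\alpha$ is then the first projection of the kernel inclusion, and it is an isomorphism precisely because $\KS$ is. This tensor--pushout--pullback manoeuvre is the missing idea in your proposal; without it, ``unwinding $\OBdr^+$'' does not by itself produce a map between the two extensions, and in particular your invocation of the five lemma is premature.
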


\begin{proof}
Let $\s{F}$ be a $K_p^c$-equivariant  sheaf on $\Fl$ arising from a $\bbf{G}^c$-equivariant quasi-coherent sheaf on $\FL$.  Throughout this proof we will also denote by $\pi^{\tor,*}_{\HT}(\s{F})$ the pro-Kummer-\'etale sheaf over $\Shan^{\tor}_{K^pK_p,L}$ obtained via descent from the $\widetilde{K}_p$-equivariant $\widehat{\s{O}}_{\Shan}$-module over $\Shan^{\tor}_{K^p,\infty,L,\proket}$. In the following we identify $\bbf{N}_{\mu}\cong \bbf{N}^c_{\mu}$ and study the representation $\s{O}(\bbf{N}_{\mu})$ instead. 

   Let $\f{g}^{\mathrm{der}}_{\ket}$ and $\f{g}^{\mathrm{der}}_{\dR}$ be the pro-Kummer-\'etale local system  and  vector bundle with filtered  log connection attached to the adjoint representation $\f{g}^{\mathrm{der}}$ respectively.  By the Riemann-Hilbert correspondence \eqref{eqRHLocalSystems} we have an isomorphism of pro-Kummer-\'etale sheaves over $\Shan_{K_pK^p,L}$
\[
\f{g}^{\mathrm{der}}_{\ket}\otimes_{\bb{Q}_p} \OBdR{\Shan} \cong \f{g}^{\mathrm{der}}_{\dR}\otimes_{\s{O}_{\Shan}} \OBdR{\Shan}
\]
compatible with fitrations and connections. Let us write $\bb{M}=\f{g}^{\mathrm{der}}_{\ket}\otimes_{\bb{Q}_p} \bb{B}^+_{\dR}$ and $\bb{M}^0=(\f{g}^{\mathrm{der}}_{\dR}\otimes_{\s{O}_{\Shan}} \OBdR{\Shan}^+)^{\nabla=0}$ for the two $\bb{B}^+_{\dR}$ lattices in $\f{g}^{\mathrm{der}}_{\ket}\otimes_{\bb{Q}_p} \bb{B}_{\dR}$. We endow the $\bb{B}_{\dR}^+$-lattices with the natural $(\ker \theta)$-adic filtration  where  $\theta: \bb{B}_{\dR}^+\to \widehat{\s{O}}_{\Shan}$ is  Fontaine's map.

  By definition of $\pi_{\HT}^{\tor}$, the Hodge-Tate filtration of $\f{g}^{\mathrm{der}}_{\ket}\otimes_{\bb{Q}_p} \widehat{\s{O}}_{\Shan}$ is the pullback along the Hodge-Tate map of the filtration $\f{n}^{0}_{\mu}\subset \overline{\f{p}}_{\mu}^0 \subset \f{g}^{\mathrm{der},0}$. 

On the other hand,  since $\f{g}^{\mathrm{der}}_{\dR}$ has Hodge filtration concentrated in $[-1,1]$, the connection of $\f{g}^{\mathrm{der}}_{\dR}\otimes_{\s{O}_{\Shan}} \OBdR{\Shan}^+$ induces a short exact sequence after taking $\gr^0$ pieces
\begin{equation}
\label{eqSESFil0}
0\to \frac{\bb{M}\cap \bb{M}_0}{\Fil^1 \bb{M}\cap \bb{M}_0}\to \gr^0(\f{g}^{\mathrm{der}}_{\dR} \otimes_{\s{O}_{\Shan}} \OBdR{\Shan}^+)\to \gr^0( \f{g}^{\mathrm{der}}_{\dR} \otimes_{\s{O}_{\Shan}} \OBdR{\Shan}^+\otimes_{\s{O}_{\Shan}} \Omega^1_{\Shan}(\log)) \to 0. 
\end{equation}
By  Lemma \ref{LemmaGradedpieces} (which is based on  \cite[Proposition 7.9]{ScholzeHodgeTheory2013}) and Corollary \ref{CoroHodgeComparison}  we have natural isomorphisms
\begin{align*}
 \frac{\bb{M}\cap \bb{M}_0}{\Fil^1 \bb{M}\cap \bb{M}_0}& = \pi_{\HT}^{\tor,*}(\overline{\f{p}}_{\mu}^0), \\
 \gr^0(\f{g}^{\mathrm{der}}_{\dR} \otimes_{\s{O}_{\Shan}} \OBdR{\Shan}^+)&=\gr^{-1}(\f{g}^{\mathrm{der}}_{\dR} ) \otimes_{\s{O}_{\Shan}} \gr^1 \OBdR{\Shan}^+ \oplus  \gr^0(\f{g}^{\mathrm{der}}_{\dR} ) \otimes_{\s{O}_{\Shan}} \widehat{\s{O}}_{\Shan} ,\\
 & \cong \pi^{\tor,*}_{\HT}(\f{n}^{0}_{\mu})\otimes_{\widehat{\s{O}}_{\Shan}}  \gr^{1} \OBdR{\Shan}^+(-1) \oplus  \pi_{\HT}^{\tor,*}(\overline{\f{m}}^{0}_{\mu}) \\
 \gr^0(\f{g}^{\mathrm{der}}_{\dR} \otimes_{\s{O}_{\Shan}}  \OBdR{\Shan}^+\otimes_{\s{O}_{\Shan}} \Omega^1_{\Shan}(\log))& = \gr^{-1}(\f{g}^{\mathrm{der}}_{\dR} )\otimes_{\s{O}_{\Shan}} \widehat{\s{O}}_{\Shan} \otimes_{\s{O}_{\Shan}} \Omega^1_{\Shan}(\log)  \\
 & \cong \pi_{\HT}^{\tor,*}(\f{n}^{0}_{\mu})(-1)\otimes_{\s{O}_{\Shan}} \Omega^1_{\Shan}(\log).
\end{align*}

Therefore, the short exact sequence \eqref{eqSESFil0} is nothing but 
\[
\begin{gathered}
0\to \pi_{\HT}^{\tor, *}(\overline{\f{p}}_{\mu}^0)\to \pi^{\tor,*}_{\HT}(\f{n}^{0}_{\mu})\otimes_{\widehat{\s{O}}_{\Shan}}  \gr^{1} \OBdR{\Shan}^+(-1) \oplus  \pi_{\HT}^{\tor,*}(\overline{\f{m}}^{0}_{\mu})  \\ \xrightarrow{\overline{\nabla}\oplus \widetilde{\KS}}   \pi_{\HT}^{\tor,*}(\f{n}^{0}_{\mu}) (-1)\otimes_{\s{O}_{\Shan}} \Omega^1_{\Shan}(\log) \to 0 
\end{gathered}
\]
where $\overline{\nabla}$ is the reduction of the connection of $\OBdR{\Shan}$ tensored with $ \pi^{\tor,*}_{\HT}(\f{n}^{0}_{\mu})$, and $\widetilde{\KS}$ is the twisted Kodaira-Spencer map obtained by base change from  the graded zero-th piece of $\nabla: \f{g}^{\mathrm{der}}_{\dR}\to \f{g}^{\mathrm{der}}_{\dR}\otimes_{\s{O}_{\Shan}} \Omega^1_{\Shan}(\log)$ (this map is the adjoint of the map \eqref{equationKSmap1} after identifying $\overline{\f{m}}^{\vee}_{\mu}\cong \overline{\f{m}}_{\mu}$ and $\f{n}^{\vee}_{\mu} \cong \f{g}^{\mathrm{der}}/\overline{\f{p}}_{\mu}$ via the Killing form of $\f{g}^{\mathrm{der}}$). 

This extension defines a class 
\[
\eta\in \Ext^1_{\widehat{\s{O}}_{\Shan}}( \pi_{\HT}^{*}(\f{n}^{0}) (-1)\otimes_{\s{O}_{\Shan}} \Omega^1_{\Shan}(\log), \pi_{\HT}^{\tor, *}(\overline{\f{p}}_{\mu}^0)) .
\]
   Tensoring with  $\pi_{\HT}^{\tor,*}(\f{n}^{0,\vee}_{\mu})$ we obtain a class  
   \begin{equation}\label{eqClasstensor}
   \tilde{\eta} \in \Ext^1_{\widehat{\s{O}}_{\Shan}}(\pi^{\tor,*}_{\HT}(\f{n}^{0}_{\mu}\otimes_{\s{O}_{\Fl}} \f{n}_{\mu}^{0,\vee})(-1)\otimes_{\s{O}_{\Shan}} \Omega^1_{\Shan}(\log),  \pi_{\HT}^{\tor,*}(\overline{\f{p}}_{\mu}^0\otimes_{\s{O}_{\Fl}} \f{n}_{\mu}^{0,\vee})).
   \end{equation}
       We have $\bbf{P}^c_{\mu}$-equivariant maps $L \hookrightarrow \f{n}_{\mu}\otimes_L\f{n}^{\vee}_{\mu} $  and $\overline{\f{p}}_{\mu}\otimes_L \f{n}^{\vee}_{\mu}  \twoheadrightarrow\s{O}(\bbf{N}_{\mu})^{\leq 1}$,  where the first is the dual of the trace map $\f{n}_{\mu}\otimes_L\f{n}^{\vee}_{\mu} \to 1$ and the second is given by  derivations of $\overline{\f{p}}_{\mu}$ on polynomials of degree $1$; see Proposition \ref{PropEquivFLag}.   Taking pushout and pullback diagrams  of $\widetilde{\eta}$,  one obtains a class 
       \[
       \eta'\in \Ext^1_{\widehat{\s{O}}_{\Shan}}\left(\widehat{\s{O}}_{\Shan}(-1)\otimes_{\s{O}_{\Shan}} \Omega^1_{\Shan}(\log),  \pi_{\HT}^{\tor,*}(\n{W}(\s{O}(\bbf{N}_{\mu})^{\leq 1}))  \right)
       \]   
       that has the following description:
       
\begin{lem}       
     The extension $\eta'$ has the form
 \begin{equation}
 \label{eqExtensionFinal}
 \begin{gathered}
 0\to\pi_{\HT}^{\tor, *}\left( \n{W}(\s{O}(\bbf{N}_{\mu})^{\leq 1}) \right)  \xrightarrow{(\alpha,\beta)} \gr^{1} \OBdR{\Shan}^+(-1) \oplus \pi_{\HT}^{\tor,*}(\f{n}_{\mu}^{0,\vee}) \\  \xrightarrow{(\overline{\nabla},  \KS)} \Omega^1_{\Shan}(\log)\otimes_{\s{O}_{\Shan}} \widehat{\s{O}}_{\Shan}(-1)\to 0,
 \end{gathered}
 \end{equation}
where  $\KS$ is the $\widehat{\s{O}}_{\Shan}(-1)$-extension of scalars of the  Kodaira-Spencer isomorphism \eqref{eqKS} composed with the isomorphism $\f{n}^{0,\vee}_{\mu}\cong \f{g}^{\mathrm{der},0}/\overline{\f{p}}_{\mu}^0$.  
\end{lem}

\begin{proof}
The extension class $\eta'$ has the form 
\[
0\to \pi_{\HT}^{\tor, *}\left( \n{W}(\s{O}(\bbf{N}_{\mu})^{\leq 1}) \right)\to \s{E}\to \Omega^1_{\Shan}(\log)\otimes_{\s{O}_{\Shan}} \widehat{\s{O}}_{\Shan}(-1)\to 0.
\]
By counting ranks of $\widehat{\s{O}}_{\Shan}$-vector bundles, it suffices to construct a commutative diagram 
\begin{equation}\label{eqKeyCommutativeSquareKS}
\begin{tikzcd}
\gr^{1} \OBdR{\Shan}^+(-1) \oplus \pi_{\HT}^{\tor,*}(\f{n}_{\mu}^{0,\vee}) \ar[r,"(\overline{\nabla}{,}\KS)"] \ar[d,"\gamma"]  & \Omega^1_{\Shan}(\log)\otimes_{\s{O}_{\Shan}} \widehat{\s{O}}_{\Shan}(-1)  \ar[d, "\id"]\\
\s{E} \ar[r]  &  \Omega^1_{\Shan}(\log)\otimes_{\s{O}_{\Shan}} \widehat{\s{O}}_{\Shan}(-1).
\end{tikzcd}
\end{equation}
with vertical isomorphisms.  First, we define the map $\gamma$. For this, consider the extension class $\widetilde{\eta}$ of \eqref{eqClasstensor}. By construction, its middle term is the direct sum 
\begin{equation}\label{eqDirectSumetatilde}
\pi^{\tor,*}_{\HT} (\f{n}^{0,\vee}_{\mu} \otimes_{\s{O}_{\Fl}}\f{n}^{0}_{\mu}) \otimes_{\widehat{\s{O}}_{\Shan}} \gr^1 \OBdR{\Shan}^+(-1) \oplus \pi_{\HT}^{\tor,*}(\f{n}^{0,\vee}_{\mu}\otimes_{\s{O}_{\Fl}} \overline{\f{m}}^{0}_{\mu}). 
\end{equation}
Let  $e:\s{O}_{\Fl}\to \f{n}^{0,\vee}_{\mu} \otimes_{\s{O}_{\Fl}}\f{n}^{0}_{\mu}$ be the natural map defined by the trace, it gives rise to a morphism 
\[
e\otimes \id: \gr^1 \OBdR{\Shan}^+(-1)\to  \pi_{\HT}^{\tor,*}(\f{n}^{0,\vee}_{\mu} \otimes_{\s{O}_{\Fl}}\f{n}^{0}_{\mu}) \otimes_{\widehat{\s{O}}_{\Shan}} \gr^1 \OBdR{\Shan}^+(-1).  
\]
Similarly, the map $\f{n}^{0,\vee}_{\mu}\otimes_{\s{O}_{\Fl}} \overline{\f{m}}^{0}_{\mu}\to \f{n}^{0,\vee}_{\mu}$, induced by the adjoint action of $\overline{\f{m}}_{\mu}$ on $\f{n}^{\vee}_{\mu}$,  is adjoint to a map
\[
\f{n}^{0,\vee}_{\mu}\to \f{n}^{0,\vee}_{\mu}\otimes_{\s{O}_{\Fl}} \overline{\f{m}}^{0,\vee}_{\mu}.
\]
Using the Killing form  of $\f{g}^{\mathrm{der}}$ we have a natural isomorphism $\overline{\f{m}}^{0,\vee}_{\mu}\cong \overline{\f{m}}^{0}_{\mu}$ obtaining in this way a map 
\[
\f{n}^{0,\vee}_{\mu}\to \f{n}^{0,\vee}_{\mu}\otimes_{\s{O}_{\Fl}} \overline{\f{m}}^{0}_{\mu}.
\]
Taking pullbacks along $\pi_{\HT}^{\tor}$  and direct sums, we have produced a map $\widetilde{\gamma}$ from $\gr^{1} \OBdR{\Shan}^+(-1) \oplus \pi_{\HT}^{\tor,*}(\f{n}_{\mu}^{0,\vee}) $ to \eqref{eqDirectSumetatilde}.  

On the other hand, by construction the class $\eta'$ factors through the pullback of the class $\widetilde{\eta}$ along the induced map attached to the trace $L\to \f{n}_{\mu}\otimes_L \f{n}^{\vee}_{\mu}$. Therefore, using the description of the Kodaira-Spencer map of Proposition \ref{PropKodairaSpencer} as a pullback from $\pi_{\HT}^{\tor}$, one verifies that $\widetilde{\gamma}$ factors through the subquotient $\s{E}$ of \eqref{eqDirectSumetatilde}. A bookkeeping  of the construction shows that the square \eqref{eqKeyCommutativeSquareKS} is commutative proving the lemma. 
\end{proof}

Therefore, since the Kodaira-Spencer map of \eqref{eqExtensionFinal}  is an isomorphism,   the map $\alpha$     is so, obtaining the second claim of the theorem. To obtain the isomorphism \eqref{eqIsoAlgebrasSenShimura} as algebras, note that by Remark \ref{RemarkDifferentConstructionOC} and Proposition \ref{PropEquivFLag}  we can write 
\[
\OCC{\Shan} =\Sym^{\bullet}_{\widehat{\s{O}}_{\Shan}} (\gr^1\OBdR{\Shan}(-1)) /(1-g(1))
\]
and 
\[
\s{O}(\bbf{N}_{\mu}) = \Sym_{L}^{\bullet}( \s{O}(\bbf{N}_{\mu})^{\leq 1}) / (1-\widetilde{g}(1)),
\]
where   $g: \widehat{\s{O}}_{\Shan}\to \gr^1 \OBdR{\Shan}^+ (-1)$ and $\widetilde{g}:L\to \s{O}(\bbf{N}_{\mu})^{\leq 1}$ are the natural inclusions. 
\end{proof}

\begin{cor}\label{CoroConnectionNablaPiHT}
Keep the notation of Theorem \ref{TheoSenOperator}. Under the equivalence \eqref{eqIsoAlgebrasSenShimura} the map $-\overline{\nabla}:\OCC{\Shan} \to \OCC{\Shan}(-1) \otimes_{\s{O}_{\Shan}} \Omega^1_{\Shan}(\log)  $ is identified with the pullback along $\pi_{\HT}^{\tor}$ of the $\bbf{G}^c$-equivariant map over $\FL$ attached to the $\bbf{P}_{\mu}^c$-equivariant map
 \[
 \nabla_{\bbf{N}_{\mu}^c}: \s{O}(\bbf{N}^c_{\mu}) \to  \s{O}(\bbf{N}^c_{\mu}) \otimes_{L} \f{n}^{c,\vee}_{\mu}
 \]
 given by the connection of $\bbf{N}_{\mu}^{c}$.  Here  we identify 
 \[
 \pi_{\HT}^{\tor}(\n{W}(\s{O}(\bbf{N}^c_{\mu}))\otimes_{\s{O}_{\Fl}}\f{n}^{c,0,\vee}_{\mu})\cong \OC(-1) \otimes_{\s{O}_{\Shan}} \Omega^1_{\Shan}(\log)
 \]
  via the Kodaira-Spencer isomorphism \eqref{eqKS}. 
\end{cor}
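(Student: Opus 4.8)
The plan is to reduce the identity to its restriction to degree $\leq 1$, exploiting that both maps in play are derivations. Recall from Remark~\ref{RemarkDifferentConstructionOC} and the proof of Theorem~\ref{TheoSenOperator} that $\OCC{\Shan}=\Sym^{\bullet}_{\widehat{\s{O}}_{\Shan}}(\gr^1\OBdR{\Shan}^+(-1))/(1-g(1))$, and from Proposition~\ref{PropEquivFLag} that $\s{O}(\bbf{N}^c_{\mu})=\Sym^{\bullet}_{L}(\s{O}(\bbf{N}^c_{\mu})^{\leq 1})/(1-\widetilde{g}(1))$. Hence $\OCC{\Shan}$ and $\pi_{\HT}^{\tor,*}(\n{W}(\s{O}(\bbf{N}^c_{\mu})))$ are generated as $\widehat{\s{O}}_{\Shan}$-algebras by their degree-$\leq 1$ parts, which are matched by the middle vertical isomorphism $\alpha$ of the diagram in Theorem~\ref{TheoSenOperator} (the degree-$\leq 1$ part of \eqref{eqIsoAlgebrasSenShimura}). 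Moreover, since $\n{W}$ is monoidal, $\n{W}(\s{O}(\bbf{N}^c_{\mu})\otimes_L\f{n}^{c,\vee}_{\mu})\cong\n{W}(\s{O}(\bbf{N}^c_{\mu}))\otimes_{\s{O}_{\Fl}}\f{n}^{c,0,\vee}_{\mu}$, so under \eqref{eqIsoAlgebrasSenShimura} together with the Kodaira--Spencer isomorphism \eqref{eqKS} the targets of the two maps are identified. Since a derivation of an algebra into a module is determined by its restriction to a generating submodule, it suffices to compare $-\overline{\nabla}$ and $\pi_{\HT}^{\tor,*}(\n{W}(\nabla_{\bbf{N}^c_{\mu}}))$ on the degree-$\leq 1$ part.

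First I would check that both are $\widehat{\s{O}}_{\Shan}$-linear derivations killing the unit. For $-\overline{\nabla}$ this is immediate from $\overline{\nabla}=\gr^0\nabla$ being the graded piece of the connection on $\OBdR{\Shan}$: that connection satisfies the Leibniz rule and is $\bb{B}_{\dR,\Shan}^+$-linear, and Griffiths transversality places its $\gr^0$ in $\OCC{\Shan}(-1)\otimes_{\s{O}_{\Shan}}\Omega^1_{\Shan}(\log)$ as required. On the flag side, $\nabla_{\bbf{N}^c_{\mu}}$ is the de Rham differential of the affine space $\bbf{N}^c_{\mu}$ under $\Omega^1_{\bbf{N}^c_{\mu}}\cong\s{O}(\bbf{N}^c_{\mu})\otimes_L\f{n}^{c,\vee}_{\mu}$, hence an $L$-linear derivation annihilating constants, and it is $\bbf{P}^c_{\mu}$-equivariant for the conjugation action (which on $\bbf{N}^c_{\mu}$ is a translation composed with an $\bbf{M}^c_{\mu}$-linear automorphism, both of which commute with $d$). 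Thus $\n{W}(\nabla_{\bbf{N}^c_{\mu}})$ is a well-defined $\bbf{G}^c$-equivariant derivation of sheaves on $\FL$, and its pullback along $\pi_{\HT}^{\tor}$, transported through \eqref{eqKS}, is an $\widehat{\s{O}}_{\Shan}$-linear derivation into the same module.

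The degree-$\leq 1$ comparison is then exactly the commutative diagram in the statement of Theorem~\ref{TheoSenOperator}. Indeed, $\nabla_{\bbf{N}^c_{\mu}}$ restricted to $\s{O}(\bbf{N}^c_{\mu})^{\leq 1}$ kills the constants $L$ and sends a linear function to its constant differential; that is, it is the canonical projection $\s{O}(\bbf{N}^c_{\mu})^{\leq 1}\twoheadrightarrow\f{n}^{c,\vee}_{\mu}$ followed by the inclusion of $\f{n}^{c,\vee}_{\mu}$ as the weight-$0$ part of $\s{O}(\bbf{N}^c_{\mu})\otimes_L\f{n}^{c,\vee}_{\mu}$. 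Applying $\pi_{\HT}^{\tor,*}\n{W}$ turns this into the surjection $\pi_{\HT}^{\tor,*}(\n{W}(\s{O}(\bbf{N}^c_{\mu})^{\leq 1}))\twoheadrightarrow\pi_{\HT}^{\tor,*}(\f{n}^{c,0,\vee}_{\mu})$ appearing as the top row of that diagram. The diagram asserts that composing this surjection with $-\KS$ equals $\overline{\nabla}\circ\alpha$; that is, $-\overline{\nabla}\circ\alpha=\KS\circ\pi_{\HT}^{\tor,*}\n{W}(\nabla_{\bbf{N}^c_{\mu}})$ in degree $\leq 1$, which is precisely the claimed identification on generators. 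Combined with the derivation property this propagates to all of $\OCC{\Shan}$.

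I expect the only real difficulty to be bookkeeping: keeping track of the Killing-form identification $\f{n}^{c,\vee}_{\mu}\cong\f{g}^c/\f{p}^c_{\mu}$, the sign in $\KS$, the monoidality of $\n{W}$, and the degree shifts in the two symmetric-algebra presentations, and verifying carefully that a derivation on a symmetric-algebra quotient of this kind is indeed pinned down by its degree-$1$ part. The one genuinely substantive input — that $\overline{\nabla}$ is an $\widehat{\s{O}}_{\Shan}$-linear derivation whose degree-$1$ symbol is the map recorded in Theorem~\ref{TheoSenOperator} — was already established in the proof of that theorem.
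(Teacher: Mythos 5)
Your proposal is correct and follows essentially the same route as the paper: both reduce to the degree-$\leq 1$ part by observing that $\OCC{\Shan}$ and $\n{W}(\s{O}(\bbf{N}^c_{\mu}))$ are generated as algebras by their degree-$\leq 1$ pieces and that a linear derivation killing the unit is determined there, and both then invoke the commutative diagram of extensions in Theorem~\ref{TheoSenOperator} to match the two maps on generators. The paper's proof is more terse but rests on exactly the same two facts; your extra checks (that $\overline{\nabla}$ and $\nabla_{\bbf{N}^c_{\mu}}$ are indeed derivations of the right kind, and that $\n{W}$ is monoidal) are correct and just make the reduction explicit.
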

\begin{proof}
The map $-\overline{\nabla}$ is the unique $\widehat{\s{O}}_{\Shan}$-linear connection of the algebra $\OCC{\Shan}$ inducing the opposite of the Faltings extension $\gr^1\OBdR{\Shan} (-1)\xrightarrow{-\nabla} \widehat{\s{O}}_{\Shan}(-1)\otimes_{\s{O}_{\Shan}} \Omega^1_{\Shan}(\log)$ by taking the $\leq 1$-filtered part. On the other hand, $\n{W}(\s{O}(\bbf{N}^c_{\mu}))$ is an $\s{O}_{\Fl}$-algebra obtained from the algebra $\s{O}(\bbf{N}^c_{\mu})$ via Proposition \ref{PPropEqGSheavesFL}. The connection $\nabla_{\bbf{N}^c_{\mu}}$ is precisely the map  in Proposition \ref{PropEquivFLag} that we have used to identify $\s{O}(\bbf{N}^c_{\mu})^{\leq 1}$ as an extension 
\begin{equation}\label{eqExtensionAlgebraON}
0\to L\to \s{O}(\bbf{N}^c_{\mu})^{\leq 1} \to \f{n}^{c,\vee}_{\mu}\to 0. 
\end{equation}
But Theorem \ref{TheoSenOperator} has identified the pullback of \eqref{eqExtensionAlgebraON} along $\pi_{\HT}$ with the opposite of the Faltings extension. This shows that the pullback of $\nabla_{\bbf{N}^c_{\mu}}$ is precisely $-\overline{\nabla}$ as wanted. 
\end{proof}

\subsection{Geometric Sen operator}
\label{Subsec:GeometricSen}

In this section we use Theorem \ref{TheoSenOperator} to compute the geometric Sen operator of the tower of Shimura varieties $\pi_{K_p}:\Shan^{\tor}_{K^p,\infty,L}\to \Shan^{\tor}_{K^pK_p,L}$. First, we recall the main results in geometric Sen theory \cite[Theorems 3.3.2 and 3.3.4]{RCGeoSenTheory}.

\begin{theo}
\label{TheoMainSenTheory}
Let $X$ be a log smooth adic space over $L$ of dimension $d$ with normal crossing divisors. Let $\s{F}$ be an ON relative locally analytic $\widehat{\s{O}}_X$ sheaf on $X_{\proket}$, see \cite[Definition 3.2.1]{RCGeoSenTheory}. 

\begin{enumerate}

\item There is a natural map of pro-Kummer-\'etale  $\widehat{\s{O}}_X$-modules
\[
\theta_{\s{F}}:\s{F}\to \s{F}(-1)\otimes_{\s{O}_X}\Omega_X^1(\log)
\]
called the geometric Sen operator of $\s{F}$, satisfying the following properties
\begin{itemize}

\item $\theta_{\s{F}}$ is a Higgs field, namely, $\theta_{\s{F}}\wedge \theta_{\s{F}}=0$.

\item The formation of $\theta_{\s{F}}$ is functorial in $\s{F}$ and compatible with pullbacks on log smooth adic spaces. 

\item Let $C/L$ be the completion of an algebraic closure of $L$, and let  $\nu:X_{C,\proket}\to X_{C,\ket}$ be the projection of sites. Then there is a natural isomorphism in cohomology 
\[
R^i\nu_{*} \s{F} = \nu_* H^i(\theta_{\s{F}},\s{F})
\]
where $H^*(\theta_{\s{F}},\s{F})$ is the cohomology of the Higgs field
\[
0\to \s{F}\to \s{F}(-1)\otimes_{\s{O}_X} \Omega^1_{X}(\log)\to \cdots \to \s{F}(-d)\otimes_{\s{O}_X} \Omega^d_X(\log)\to 0. 
\]
\item  $\theta_{\s{F}}=0$ if and only if $\nu_*\s{F}$ is an ON Banach sheaf locally in the Kummer-\'etale topology of $X$ and $\s{F}= \nu_*\s{F} \widehat{\otimes}_{\s{O}_X} \widehat{\s{O}}_X$.

\end{itemize}

\item  Moreover, suppose that $X_{\infty}\to X$ is a pro-Kummer-\'etale $G$-torsor with $G$ a compact $p$-adic Lie group. Then there is a map of $\widehat{\s{O}}_X$-modules
\[
\theta_{X_{\infty}}: \widehat{\s{O}}_X\otimes_{\bb{Q}_p} (\Lie G)^{\vee}_{\ket} \to \widehat{\s{O}}_X(-1) \otimes_{\s{O}_X} \Omega^1_X(\log),
\]
where $(\Lie G)_{\ket}$ is the pro-Kummer-\'etale local system over $X$ obtained by descending the adjoint representation along $X_{\infty}\to X$, satisfying the following properties 
\begin{itemize}

\item $\theta_{X_\infty}$ is a Higgs field, namely $\theta_{X_{\infty}}\wedge \theta_{X_{\infty}}=0$.

\item Let $V$ a Banach locally analytic representation of $G$,  consider the pro-Kummer-\'etale sheaf $V_{\ket}$ over $X$ obtained by descent from the $G$-torsor $X_{\infty}\to X$ and the $G$-equivariant pro-Kummer-\'etale sheaf $\underline{V}$ on $X_{\infty}$ (see Definition \ref{DefinitionPeriodSheaves} (2)). Then we have a commutative diagram 
\[
\begin{tikzcd}
V_{\ket}\widehat{\otimes}_{\bb{Q}_p} \widehat{\s{O}}_X \ar[r,"d_V\otimes \id_{\widehat{\s{O}}_X}"] \ar[dr, "\theta_V"']& (V_{\ket}\otimes_{\bb{Q}_p} (\Lie G)^{\vee})\widehat{\otimes}_{\bb{Q}_p} \widehat{\s{O}}_X \ar[d,"\id_V\otimes \theta_{X_{\infty}}"]  \\ 
&  V_{\ket}\widehat{\otimes}_{\bb{Q}_p} \widehat{\s{O}}_X \otimes_{\s{O}_X} \Omega^1_X(\log),
\end{tikzcd}
\]
where $\theta_V$ is the geometric Sen operator of $V_{\ket}\widehat{\otimes}_{\bb{Q}_p} \widehat{\s{O}}_X$,  $d_V:V\to V\otimes_{\bb{Q}_p} (\Lie G)^{\vee}$ is the adjoint of the derivation map, and the tensor products are $p$-completed.  

\end{itemize}

We call $\theta_{X_{\infty}}$ the geometric Sen operator of the tower $X_{\infty}\to X$.

\end{enumerate}
\end{theo}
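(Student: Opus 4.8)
This statement collects \cite[Theorems 3.3.2 and 3.3.4]{RCGeoSenTheory}, so I will only indicate the shape of the argument. The plan is to construct the operators Kummer-\'etale-locally on $X$ and then glue. Using log smoothness, $X$ has, Kummer-\'etale-locally, a toric chart: an \'etale map to a product of polydiscs and punctured polydiscs, the punctures recording the normal crossing divisor. Adjoining all $p$-power roots of the toric coordinates gives a pro-Kummer-\'etale cover $\widetilde{X}_\infty\to X$ with Galois group $\Gamma\cong\bb{Z}_p(1)^d$, over which $\widehat{\s{O}}_X$ becomes a perfectoid sheaf. By definition of an \emph{ON relative locally analytic} $\widehat{\s{O}}_X$-sheaf, $\s{F}|_{\widetilde{X}_\infty}$ carries a locally analytic semilinear action of $\Gamma$; differentiating it in the $d$ coordinate directions and using $\dlog$ of the coordinates to land the result in $\s{F}(-1)\otimes_{\s{O}_X}\Omega^1_X(\log)$ defines $\theta_{\s{F}}$. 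The construction is independent of the chart because any transition between two charts is an isomorphism of locally analytic $\Gamma$-representations, hence commutes with the differentiated action. For part (2) one applies part (1) to the sheaves $V_{\ket}\widehat{\otimes}_{\bb{Q}_p}\widehat{\s{O}}_X$ for $V$ a Banach locally analytic representation of $G$; unwinding the local description, the resulting Sen operator $\theta_V$ depends on $V$ only through the differentiated action $d_V$, and factoring out this dependence produces the universal operator $\theta_{X_\infty}$ (taking $V=C^{la}(G,\bb{Q}_p)$, the regular representation, pins it down explicitly).

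The Higgs identity $\theta_{\s{F}}\wedge\theta_{\s{F}}=0$ is immediate since $\Gamma\cong\bb{Z}_p^d$ is abelian, so the $d$ differentiated operators commute. Functoriality in $\s{F}$ and compatibility with pullback hold because a morphism of log smooth adic spaces sends toric charts to toric charts and the construction uses only the pulled-back $\Gamma$-action; in particular the commutative diagram of part (2) is functoriality of $\theta$ applied to the $G$-equivariant orbit maps $V_{\ket}\to V_{\ket}\otimes_{\bb{Q}_p}C^{la}(G,\bb{Q}_p)_{\ket}$, whose differentials at the identity are the maps $d_V$. Then $\theta_{X_\infty}\wedge\theta_{X_\infty}=0$ follows from the Higgs property for the regular representation, the bracket term vanishing by the Jacobi identity, equivalently the Maurer--Cartan equation of the torsor.

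For the cohomology comparison I would base change to $C$ and use the Cartan--Leray spectral sequence for $\widetilde{X}_{C,\infty}\to X_C$ in the pro-Kummer-\'etale topology. Its source is concentrated in degree $0$ because $\widetilde{X}_{C,\infty}$ is log affinoid perfectoid locally, and such objects are $\proket$-acyclic for $\s{F}$ by \cite{ScholzeHodgeTheory2013,DiaoLogarithmic2019}; hence $R^i\nu_*\s{F}$ is computed by the continuous group cohomology $H^i_{\mathrm{cont}}(\Gamma,\s{F}(\widetilde{X}_{C,\infty}))$. Since $\s{F}(\widetilde{X}_{C,\infty})$ is a locally analytic $\Gamma$-module, a Lazard-type comparison in the solid locally analytic formalism of \cite{RRLocallyAnalytic,RRLocAnII} identifies $H^*_{\mathrm{cont}}(\Gamma,-)$ with the cohomology of $\Lie\Gamma$, i.e. the Koszul complex of the $d$ commuting operators, which is exactly the Higgs complex of $\theta_{\s{F}}$; sheafifying over $X_\ket$ gives $R^i\nu_*\s{F}=\nu_*H^i(\theta_{\s{F}},\s{F})$. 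The vanishing criterion is then the assertion that $\theta_{\s{F}}=0$ exactly when the differentiated $\Gamma$-action is trivial, which by a decompletion (Sen--Tate) argument producing $\Gamma$-stable lattices forces $\s{F}$ to be the completed base change along $\nu$ of an ON Banach sheaf on $X_\ket$; the converse is immediate.

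\textbf{The main obstacle.} The delicate point is this decompletion step: proving that over the perfectoid toric chart the $\Gamma$-cohomology of a relative locally analytic module agrees with its Lie algebra cohomology, and that $\theta_{\s{F}}=0$ forces descent to finite level, uniformly enough to sheafify. This requires transporting the quantitative estimates of classical Sen theory into the condensed/solid locally analytic setting, and in addition the logarithmic version of the perfectoid toric chart and of the Faltings extension (cf. Remark \ref{RemarkDifferentConstructionOC}); this is precisely why the whole package is imported from \cite{RCGeoSenTheory} rather than reproved here.
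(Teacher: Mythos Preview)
The paper does not prove this theorem at all: it is stated purely as a recall of \cite[Theorems 3.3.2 and 3.3.4]{RCGeoSenTheory}, with no argument given in the present paper. You correctly identify this and supply a plausible outline of the strategy from that reference (local toric charts, differentiating the $\Gamma$-action, Cartan--Leray plus a Lazard-type comparison, and the Sen--Tate decompletion step), which is in line with how geometric Sen theory is developed there; nothing more is needed here.
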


\begin{remark}\label{RemarkSenOperatorTower}

Theorem \ref{TheoMainSenTheory} (2) implies that there is a geometric Sen operator for the tower $\pi_{K_p}:\Shan^{\tor}_{K^p,\infty,L}\to \Shan^{\tor}_{K^pK_p,L}$
\begin{equation}
\label{eqSenOperatorShimura}
\theta_{\Shan}: \widehat{\s{O}}_{\Shan}\otimes_{\bb{Q}_p} \widetilde{\f{g}}^{\vee}_{\ket} \to \widehat{\s{O}}_{\Shan}(-1)\otimes_{\s{O}_{\Shan}} \Omega^1_{\Shan}(\log),
\end{equation}
with $\widetilde{\f{g}}=\Lie \widetilde{K}_p$. Moreover, the theorem tells us that $\theta_{\Shan}$ acts by derivations on pro-Kummer-\'etale sheaves obtained from locally analytic representations of $\widetilde{K}_p$. 

\end{remark}

\subsubsection{Connected components of Shimura varieties}

Before we compute the geometric  Sen operator in terms of representation theory over the flag variety,  let us discuss a technicality about the kernel of the map $\widetilde{\f{g}}\to \f{g}^c$ and the geometric Sen operator.

\begin{lem}
\label{LemmaFactorizationSenOperator}
Let us write $\widetilde{\f{g}}=\f{g}^{\mathrm{der}}\oplus \widetilde{\f{z}}$  as a direct sum of the derived algebra and the center. Then the geometric Sen operator \eqref{eqSenOperatorShimura} factors through
\begin{equation}
\label{eqSenOperatorFactor}
\widehat{\s{O}}_{\Shan}\otimes_{\bb{Q}_p}\f{g}^{\mathrm{der},\vee}_{\ket} \to \widehat{\s{O}}_{\Shan}(-1)\otimes_{\s{O}_{\Shan}}\Omega^1_{\Shan}(\log). 
\end{equation}
\end{lem}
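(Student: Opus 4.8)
The plan is to show that the geometric Sen operator $\theta_{\Shan}$ kills the central summand $\widetilde{\f{z}} = \Lie \bbf{Z}(\bb{Q}_p)$ inside $\widetilde{\f{g}} = \f{g}^{\mathrm{der}}\oplus\widetilde{\f{z}}$ (dually, that it factors through the projection $\widetilde{\f{g}}^{\vee}_{\ket}\to \f{g}^{\mathrm{der},\vee}_{\ket}$). First I would recall from Theorem \ref{TheoMainSenTheory} (2) that $\theta_{\Shan}$ detects the Sen operator of every pro-Kummer-\'etale sheaf $V_{\ket}\widehat{\otimes}\,\widehat{\s{O}}_{\Shan}$ attached to a representation $V$ of $\widetilde{K}_p$, via the commutative triangle in that theorem. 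So it suffices to show the contribution of $\widetilde{\f{z}}$ is trivial, equivalently that for the relevant $V$ the Sen operator factors through the $\f{g}^{\mathrm{der}}$-part of the derivation. The cleanest route is to reduce to the universal case: by functoriality and because $\widetilde{\f{g}}^{\vee}_{\ket}$ is generated, in the Tannakian sense, by the local systems $V_{\ket}$ with $V\in\Rep_{\bb{Q}_p}\bbf{G}^c$, it is enough to check that the $\widetilde{\f{z}}$-component of $\theta_V$ vanishes for these $V$; but $\bbf{Z}_c$ acts trivially on every $V\in\Rep_{\bb{Q}_p}\bbf{G}^c$ by definition of $\bbf{G}^c = \bbf{G}/\bbf{Z}_c$, and the induced map $\widetilde{\f{g}}\to\f{g}^c$ sends $\widetilde{\f{z}}$ into the center $\f{z}^c$ of $\f{g}^c$; moreover the geometric Sen operator of the $\widetilde{K}_p$-torsor is the pushforward along $\widetilde{K}_p\to K_p^c$ (cf.\ the construction of $\pi_{\HT}^{\tor}$ and Theorem \ref{TheoHodgeTatePeriod}) of the Sen operator of the $K_p^c$-torsor, so the factorization statement follows once one knows the Sen operator over $\Fl$ kills the center $\f{z}^c\subset\f{g}^c$.

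To see the last point I would invoke Theorem \ref{TheoSenOperator} together with Corollary \ref{CoroConnectionNablaPiHT}: under the identification $\OCC{\Shan}|_{\Shan^{\tor}_{K^p,\infty,L}}\cong\pi_{\HT}^{\tor,*}(\n{W}(\s{O}(\bbf{N}^c_{\mu})))$, the relevant Sen operator is identified with the pullback of the $\bbf{G}^c$-equivariant map over $\FL$ coming from $\nabla_{\bbf{N}^c_{\mu}}\colon \s{O}(\bbf{N}^c_{\mu})\to\s{O}(\bbf{N}^c_{\mu})\otimes_L\f{n}^{c,\vee}_{\mu}$. The target, a twist of $\Omega^1_{\Shan}(\log)$, carries only the $\f{n}^{c}_{\mu}$-directions via the Kodaira--Spencer isomorphism \eqref{eqKS}, while the center $\f{z}^c$ lies in the Levi $\f{m}^c_{\mu}$ and acts trivially on the abelian unipotent $\bbf{N}^c_{\mu}$ only up to a central character that is killed by passing to $\f{g}^{\mathrm{der}}$; concretely, the map $\f{g}^{c,0,\vee}\to\f{n}^{c,0,\vee}_{\mu}$ of Theorem \ref{TheoSenOperator} factors through $\f{g}^{\mathrm{der},0,\vee}$ because the Killing form pairing used there identifies $\f{n}^{c,\vee}_{\mu}\cong\f{g}^{\mathrm{der}}/\widetilde{\f{p}}_{\mu}$, which visibly has no component along the center. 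Chasing this through the commutative triangle of Theorem \ref{TheoMainSenTheory} (2) gives that $\theta_{\Shan}$, precomposed with the inclusion $\widetilde{\f{z}}^{\vee}_{\ket}\hookrightarrow\widetilde{\f{g}}^{\vee}_{\ket}$ dual to the projection, is zero, which is exactly the claimed factorization through \eqref{eqSenOperatorFactor}.

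The main obstacle I anticipate is bookkeeping the various identifications so that "$\widetilde{\f{z}}$ maps into the Levi and acts trivially on $\bbf{N}^c_{\mu}$" is made precise: one must track how the central torus $\bbf{Z}_c$ and the map $\widetilde{K}_p\to K_p^c$ interact with the reduction of the de Rham $\bbf{G}^c$-torsor to $\bbf{P}_{\mu}^c$, and be careful that the Hodge cocharacter $\mu$ itself has a central component which does contribute to Tate twists (this is harmless for the factorization, since a Tate twist of $\widehat{\s{O}}_{\Shan}$ has zero Sen operator in the geometric sense, the twist being Galois-theoretic rather than geometric). Once one observes that the geometric Sen operator only sees the image of $\widetilde{\f{g}}$ in $\f{g}^c$ and that the composite $\widetilde{\f{z}}\to\f{g}^c\to\f{g}^c/\f{p}^c_{\mu}$ is zero (as $\widetilde{\f{z}}\subset\f{z}^c\subset\f{m}^c_{\mu}\subset\f{p}^c_{\mu}$), the factorization is formal; a short direct argument comparing the torsors $\Shan^{\tor}_{K^p,\infty,L}\to\Shan^{\tor}_{K^pK_p,L}$ and its pushout along $\widetilde{K}_p\to K_p^c$ via functoriality of $\theta$ (the second bullet of Theorem \ref{TheoMainSenTheory} (1), extended to towers) should suffice.
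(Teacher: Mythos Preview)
Your approach is genuinely different from the paper's, and it has a real gap that cannot be filled by the tools you invoke.

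The paper's proof does not go through the Hodge--Tate period map or Theorem~\ref{TheoSenOperator} at all. Instead it passes to $\bb{C}_p$, fixes a connected component $\Shan^{\tor,0}_{K^p,\infty,\bb{C}_p}$, and uses \cite[Corollary~5.2.4]{DiaoLogarithmicHilbert2018} to conclude that the Galois group of the cover $\Shan^{\tor,0}_{K^p,\infty,\bb{C}_p}\to\Shan^{\tor,0}_{K^pK_p,\bb{C}_p}$ already lands in $\bbf{G}^{c,\mathrm{der}}(\bb{Q}_p)$. Hence on each connected component the torsor is governed by a group with Lie algebra $\f{g}^{\mathrm{der}}$, so the Sen operator there factors through $\f{g}^{\mathrm{der},\vee}$; since $\theta_{\Shan}$ is a map of vector bundles this can be checked pointwise, giving the global factorization.

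Your argument, by contrast, only controls $\theta_{\Shan}$ on the subspace $\f{g}^{c,\vee}\hookrightarrow\widetilde{\f{g}}^{\vee}$ (dual to the surjection $\widetilde{\f{g}}\twoheadrightarrow\f{g}^c$). Every representation $V\in\Rep_{\bb{Q}_p}\bbf{G}^c$ has its derivation $d_V\colon V\to V\otimes\widetilde{\f{g}}^{\vee}$ factoring through $V\otimes\f{g}^{c,\vee}$, so the commutative triangle of Theorem~\ref{TheoMainSenTheory}\,(2) for such $V$ tells you nothing about $\theta_{\Shan}$ outside $\f{g}^{c,\vee}$. Likewise, the identification of $\OCC{\Shan}$ with the pullback of $\n{W}(\s{O}(\bbf{N}^c_{\mu}))$ and the computation of its Higgs field are statements living entirely over the flag variety for $\bbf{G}^c$; they cannot see the kernel $\f{k}:=\ker(\widetilde{\f{g}}\to\f{g}^c)\subset\widetilde{\f{z}}$. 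As the paper remarks in \S\ref{Subsec:SetUp}, this kernel vanishes if and only if the relevant instance of Leopoldt's conjecture holds, so in general $\f{k}\neq 0$ and the lemma is asserting something strictly stronger than what your argument yields. Your sentence ``once one observes that the geometric Sen operator only sees the image of $\widetilde{\f{g}}$ in $\f{g}^c$'' is exactly the missing step: functoriality of $\theta$ for the pushout along $\widetilde{K}_p\to K_p^c$ gives compatibility on $\f{g}^{c,\vee}$, not vanishing on the complement $\f{k}^{\vee}$. That vanishing requires the genuinely geometric input that the centre acts by permuting connected components only, which is precisely what the paper extracts from \cite{DiaoLogarithmicHilbert2018}.
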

\begin{proof}
Let $C$ be the completion of an algebraic closure of $L$, and consider the $C$-linear base change of Shimura varieties 
\[
\Shan^{\tor}_{K^p,\infty,C}\to \Shan^{\tor}_{K^pK_p,C}. 
\]
 Fix a connected component $\Shan^{\tor,0}_{K^p,\infty,C}$ of $\Shan^{\tor}_{K^p,\infty,C}$, and let $\Shan^{\tor,0}_{K^pK_p,C}$ be its image in  $\Shan^{\tor}_{K^pK_p,C}$. Denote  $\pi_{K_p}^{\tor,0}:\Shan^{\tor,0}_{K^p,\infty,C}\to \Shan^{\tor,0}_{K^pK_p,C}$, and consider the restriction of the Hodge-Tate period map to the connected component  $\pi_{\HT}^{\tor,0}:\Shan^{\tor,0}_{K^p,\infty,C}\to \Fl_C$.  By \cite[Corollary 5.2.4]{DiaoLogarithmicHilbert2018} the Galois group of the pro-Kummer-\'etale cover  $\pi_{K_p}^{\tor,0}$ injects into $\bbf{G}^{c,\mathrm{der}}(\bb{Q}_p)$. This shows that, at any connected component of $\Shan^{\tor}_{K^p, \infty,C}$, the geometric Sen operator \eqref{eqSenOperatorShimura} factors through the $\widehat{\s{O}}_{\Shan}$-extension of scalars of  $\f{g}^{c,\mathrm{der},\vee}_{\ket}= \f{g}^{\mathrm{der},\vee}_{\ket}$. Since \eqref{eqSenOperatorShimura} is a map of $\widehat{\s{O}}_{\Shan}$-vector bundles, it must factor through  \eqref{eqSenOperatorFactor} over the whole Shimura variety.  Indeed, the factorization of the geometric Sen operator
 \[
 \widehat{\s{O}}_{\Shan}\otimes_{\bb{Q}_p}\widetilde{\f{g}}^{\vee}_{\ket}\to  \widehat{\s{O}}_{\Shan}\otimes_{\bb{Q}_p}\f{g}^{\mathrm{der},\vee}_{\ket}\to \widehat{\s{O}}_{\Shan}(-1)\otimes_{\s{O}_{\Shan}}\Omega^1_{\Shan}(\log)
 \]
 can be checked after pullback to the pro-Kummer-\'etale site of the  points of $\Shan^{\tor}_{K^p,\infty,C}$, in particular at connected components. 
\end{proof}

\subsubsection{Computation of the geometric Sen operator}
\label{SubsubComputationSen}

By Lemma \ref{LemmaFactorizationSenOperator} the geometric Sen operator factors through the derived Lie algebra of $\widetilde{\f{g}}$, thus it suffices to consider the restriction to $\f{g}^c$
\begin{equation}
\label{eqSenOpeRestrictiongc}
\theta_{\Shan}: \widehat{\s{O}}_{\Shan}\otimes_{\bb{Q}_p} \f{g}^{c,\vee}_{\ket}\to \widehat{\s{O}}_{\Shan}(-1)\otimes_{\s{O}_X} \Omega^1_{\Shan}(\log).
\end{equation}

Before stating the main theorem concerning the computation of the geometric Sen operator, let us recall the hypothesis (BUN) of  \cite[Theorem 3.4.5]{RCGeoSenTheory}.

\begin{condition}[BUN]\label{ConditionBUN}
Let $X$ be a log-smooth adic space over an algebraically closed field $C$ with reduced normal crossing divisors, let $G$ be a compact $p$-adic Lie group and let $\pi:\widetilde{X}\to X$ be a pro-Kummer-\'etale $G$-torsor. We say that $\pi$ satisfies the condition (BUN) if the geometric Sen operator  
\[
\widehat{\s{O}}_X\otimes_{\bb{Q}_p}(\Lie G)_{\ket}^{\vee}\to \widehat{\s{O}}_{X}(-1)\otimes_{\s{O}_X}\Omega^1_X(\log)
\]
is a surjection of $\widehat{\s{O}}_X$-modules.  
\end{condition}

\begin{theo}
\label{TheoComputationSenOperator}
The geometric Sen operator \eqref{eqSenOpeRestrictiongc} is isomorphic to the pullback along $\pi^{\tor}_{\HT}$ of the quotient map of $\bbf{G}^c$-equivariant vector bundles on $\Fl$
\begin{equation}\label{eqSenOpeOverFlag}
\f{g}^{c,0,\vee}\to \f{n}_{\mu}^{c,0,\vee}
\end{equation}
via the isomorphism $\f{n}^{c,\vee}_{\mu}\cong \f{g}^c/\f{p}^{c}_{\mu}$  arising from the Killing form of $\f{g}^{\mathrm{der}}$, and  the Kodaira-Spencer map $\KS:\pi_{\HT}^{\tor,*}(\f{g}^{c,0}/\f{p}^{c,0}_{\mu}) \cong \pi_{K_p}^*(\Omega^1_{\Shan}(\log))(-1)$ of Proposition \ref{PropKodairaSpencer}. In particular, the torsor $\Shan^{\tor}_{K^p,\infty,L}\to \Shan^{\tor}_{K^pK_p,L}$ satisfies Condition \ref{ConditionBUN}.
\end{theo}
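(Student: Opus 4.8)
The plan is to combine the computation of the Hodge--Tate period sheaf $\OCC{\Shan}$ from Theorem~\ref{TheoSenOperator}, the identification of its Higgs field in Corollary~\ref{CoroConnectionNablaPiHT}, and the way the geometric Sen operator of a tower is extracted from $\OC$ in \cite{RCGeoSenTheory}. First I would reduce: by Lemma~\ref{LemmaFactorizationSenOperator} the operator \eqref{eqSenOperatorShimura} factors through $\f{g}^{\mathrm{der},\vee}_{\ket}=\f{g}^{c,\mathrm{der},\vee}_{\ket}$, so it suffices to treat \eqref{eqSenOpeRestrictiongc}; and since every sheaf and map below lives $\widetilde{K}_p$-equivariantly over $\Shan^{\tor}_{K^p,\infty,L}$, it is enough to produce the isomorphism there and descend along $\pi_{K_p}^{\tor}$. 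The input I would take from \cite[\S 3.3]{RCGeoSenTheory} is that for a pro-Kummer-\'etale $G$-torsor $X_\infty\to X$ the geometric Sen operator $\theta_{X_\infty}$ is read off from the period algebra $\OC|_{X_\infty}$: there is a canonical $G$-equivariant surjection of $\widehat{\s{O}}_X$-algebras $\Sym_{\widehat{\s{O}}_X}\bigl(\widehat{\s{O}}_X\otimes_{\bb{Q}_p}(\Lie G)^{\vee}_{\ket}\bigr)\twoheadrightarrow \OC|_{X_\infty}$, and $\theta_{X_\infty}$ is the composition of its degree-$\leq 1$ part $\widehat{\s{O}}_X\otimes_{\bb{Q}_p}(\Lie G)^{\vee}_{\ket}\to \Fil_1\OC|_{X_\infty}$ with the Higgs field $\overline{\nabla}\colon \Fil_1\OC\to \widehat{\s{O}}_X(-1)\otimes_{\s{O}_X}\Omega^1_X(\log)$.

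Granting this, the bulk of the work is to identify these three pieces of data on the Shimura tower. By Theorem~\ref{TheoSenOperator} there is a $\widetilde{K}_p$-equivariant isomorphism $\OC|_{\Shan^{\tor}_{K^p,\infty,L}}\cong \pi_{\HT}^{\tor,*}(\n{W}(\s{O}(\bbf{N}^c_{\mu})))$ of $\widehat{\s{O}}_{\Shan}$-algebras, under which the algebra surjection above is $\pi_{\HT}^{\tor,*}$ of the surjection $\Sym_{\s{O}_{\Fl}}(\f{g}^{c,0,\vee})\twoheadrightarrow \n{W}(\s{O}(\bbf{N}^c_{\mu}))$ induced by the $\bbf{P}^c_{\mu}$-equivariant quotient $\f{g}^{c,\vee}\twoheadrightarrow \f{n}^{c,\vee}_{\mu}$ dual to the inclusion of the ideal $\f{n}^c_{\mu}\subset \f{g}^c$ (equivalently $\f{g}^c\twoheadrightarrow \f{g}^c/\f{p}^c_{\mu}$ via the Killing form of $\f{g}^{\mathrm{der}}$); its degree-$\leq 1$ part is then $\pi_{\HT}^{\tor,*}$ of a $\bbf{G}^c$-equivariant map $\f{g}^{c,0,\vee}\to \n{W}(\s{O}(\bbf{N}^c_{\mu})^{\leq 1})$ lifting $\f{g}^{c,0,\vee}\twoheadrightarrow \f{n}^{c,0,\vee}_{\mu}$. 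On the other hand, by Corollary~\ref{CoroConnectionNablaPiHT} the Higgs field $\overline{\nabla}$ corresponds under this identification to $-\pi_{\HT}^{\tor,*}$ of the connection $\nabla_{\bbf{N}^c_{\mu}}$ of the vector group $\bbf{N}^c_{\mu}$, which in degree $\leq 1$ is precisely the quotient $\s{O}(\bbf{N}^c_{\mu})^{\leq 1}\twoheadrightarrow \f{n}^{c,\vee}_{\mu}$ of Proposition~\ref{PropEquivFLag}(2) post-composed with the Kodaira--Spencer isomorphism $\KS$ of Proposition~\ref{PropKodairaSpencer}. Composing, and using that $\f{g}^{c,0,\vee}\to \n{W}(\s{O}(\bbf{N}^c_{\mu})^{\leq 1})\twoheadrightarrow \f{n}^{c,0,\vee}_{\mu}$ is the natural quotient, one obtains that \eqref{eqSenOpeRestrictiongc} is $-\KS$ applied to the pullback along $\pi_{\HT}^{\tor}$ of \eqref{eqSenOpeOverFlag}, which is the asserted isomorphism (the overall sign is immaterial for the statement). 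Condition (BUN) is then immediate: $\f{g}^{c,0,\vee}\twoheadrightarrow \f{n}^{c,0,\vee}_{\mu}$ is surjective and $\KS$ is an isomorphism, so $\theta_{\Shan}$ is surjective.

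To pin down the canonical degree-$\leq 1$ map of the first paragraph — equivalently, to verify directly that it agrees with $\pi_{\HT}^{\tor,*}$ of the algebraic map $\f{g}^{c,0,\vee}\to \n{W}(\s{O}(\bbf{N}^c_{\mu})^{\leq 1})$ — I would test against the adjoint local system using Theorem~\ref{TheoMainSenTheory}(2): there the Sen operator $\theta_{\f{g}^{\mathrm{der}}}$ of $\f{g}^{\mathrm{der}}_{\ket}\widehat{\otimes}_{\bb{Q}_p}\widehat{\s{O}}_{\Shan}$ factors as $(\id\otimes\theta_{\Shan})\circ(d\otimes\id)$ with $d\colon \f{g}^{\mathrm{der}}\to \f{g}^{\mathrm{der}}\otimes\widetilde{\f{g}}^{\vee}$ the adjoint of the bracket, while $\theta_{\f{g}^{\mathrm{der}}}$ itself is computed from the Hodge--Tate filtration $\pi_{\HT}^{\tor,*}(\f{n}^0_{\mu}\subset\widetilde{\f{p}}^0_{\mu}\subset\f{g}^{\mathrm{der},0})$ exactly as in the proof of Theorem~\ref{TheoSenOperator} (via Lemma~\ref{LemmaGradedpieces}, Corollary~\ref{CoroHodgeComparison} and Proposition~\ref{PropKodairaSpencer}). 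Since $\f{g}^{\mathrm{der}}$ is semisimple the bracket is surjective, hence $\varphi\mapsto(\id\otimes\varphi)\circ(d\otimes\id)$ is injective, so $\theta_{\f{g}^{\mathrm{der}}}$ pins down $\theta_{\Shan}$ and one matches it with the formula above. The step I expect to be the main obstacle is precisely this matching — transcribing the abstract geometric Sen operator of a tower from \cite{RCGeoSenTheory} into the ``degree-$\leq 1$ part of $\OC$'' form and identifying the canonical map with the Hodge--Tate pullback — together with the attendant bookkeeping of Hodge--Tate twists and $\widetilde{K}_p$-equivariance; with that in hand, Theorem~\ref{TheoSenOperator} and Corollary~\ref{CoroConnectionNablaPiHT} carry the rest.
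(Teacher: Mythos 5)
Your building blocks are the right ones --- Theorem~\ref{TheoSenOperator}, Corollary~\ref{CoroConnectionNablaPiHT}, and the functoriality statements of Theorem~\ref{TheoMainSenTheory}(2) --- and the final verification step you sketch (testing $\theta_{\Shan}$ against the adjoint local system via the injective map $\varphi \mapsto (\id\otimes\varphi)\circ(d\otimes\id)$) is sound and would close the argument once the Sen operators of the relevant $\widehat{\s{O}}_{\Shan}$-modules are known. However, the structural input you lean on in the first paragraph is not something Theorem~\ref{TheoMainSenTheory} (i.e.\ \cite[Theorems~3.3.2 and 3.3.4]{RCGeoSenTheory}) provides, and it is in fact circular. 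You posit, for a general pro-Kummer-\'etale $G$-torsor $X_\infty\to X$, a canonical $G$-equivariant surjection of $\widehat{\s{O}}_X$-algebras $\Sym_{\widehat{\s{O}}_X}\bigl(\widehat{\s{O}}_X\otimes_{\bb{Q}_p}(\Lie G)^{\vee}_{\ket}\bigr)\twoheadrightarrow \OC|_{X_\infty}$ whose degree-$\leq 1$ part, composed with $\overline{\nabla}$, recovers $\theta_{X_\infty}$. Surjectivity in degree one already forces the composite $\widehat{\s{O}}_X\otimes(\Lie G)^{\vee}_{\ket}\to \Fil_1\OC\to \widehat{\s{O}}_X(-1)\otimes\Omega^1_X(\log)$ to be onto --- that \emph{is} condition (BUN), part of the conclusion you are trying to prove --- so you cannot assume it. Nor is it clear that even a non-surjective lift of $\theta_{X_\infty}$ to $\Fil_1\OC$ exists canonically for an arbitrary torsor; this is precisely the content that has to be manufactured, not imported.

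The paper sidesteps this by never trying to read $\theta_{X_\infty}$ directly off $\OC$. Instead it uses the commutative diagram in Theorem~\ref{TheoMainSenTheory}(2) to reduce the determination of $\theta_{\Shan}$ to computing the Sen operator $\theta_{\s{F}}$ of the $\widehat{\s{O}}_{\Shan}$-modules $\s{F}=\pi_{\HT}^{\tor,*}(\n{W}(W))$ as $W$ ranges over $\bbf{P}^c_{\mu}$-representations (itself reduced, via the Hodge--Tate filtration and the embedding into the regular representation, to $W=\s{O}(\bbf{P}^c_{\mu})$). The decomposition $\s{O}(\bbf{P}^c_{\mu})\cong\s{O}(\bbf{N}^c_{\mu})\otimes_L\s{O}(\bbf{M}^c_{\mu})$ splits this in two: the $\bbf{M}$-piece pulls back to Tate-twisted automorphic bundles (Corollary~\ref{CoroHodgeComparison}), which have vanishing geometric Sen operator by Theorem~\ref{TheoMainSenTheory}(1); the $\bbf{N}$-piece pulls back to $\OCC{\Shan}$ (Theorem~\ref{TheoSenOperator}), whose Sen operator is $-\overline{\nabla}$ by the \emph{cited} result \cite[Proposition~3.5.2]{RCGeoSenTheory}, which Corollary~\ref{CoroConnectionNablaPiHT} re-expresses as $\pi_{\HT}^{\tor,*}(\nabla_{\bbf{N}^c_{\mu}})$. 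No algebra surjection onto $\OC$ is ever needed, and (BUN) drops out at the end as a consequence, not an input. I would redo your first paragraph in this Tannakian mode and delete the surjection claim; the rest of your plan, including the adjoint-local-system check, then goes through.
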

\begin{proof}
To compute \eqref{eqSenOpeRestrictiongc}, it suffices to compute the geometric Sen operator of the local system attached to  a faithful finite dimensional representation  $V$ of $\bbf{G}^c$.   By construction of $\pi_{\HT}^{\tor}$, we have an isomorphism of $K_p$-equivariant sheaves over $\Shan^{\tor}_{K^p,\infty,L}$
\[
V_{\ket}\otimes_{\bb{Q}_p} \widehat{\s{O}}_{\Shan}= \pi_{\HT}^{\tor,*}(\n{W}(V)).
\]
Since the localization $\n{W}(V)$ of $V$ as a $\bbf{G}^c$-equivariant vector bundle on $\Fl$ only depends on its restriction to $\bbf{P}^c_{\mu}$, it is endowed with a natural increasing filtration $\Fil_{i} \n{W}(V)$ whose pullback via $\pi_{\HT}^{\tor}$ is nothing but the Hodge-Tate filtration of $V_{\ket}$. 

On the other hand, by Theorem \ref{TheoMainSenTheory} (1), any $\widehat{\s{O}}_{\Shan}$-finite free module has attached a natural geometric Sen operator.  Therefore, to describe the geometric Sen operator of $V_{\ket}$ it suffices to describe the geometric Sen operator of the $\widehat{\s{O}}_{\Shan}$-finite free sheaves 
\[
\Fil_{i} (\widehat{\s{O}}_{\Shan}\otimes_{\bb{Q}_p} V_{\ket})=\pi_{\HT}^{\tor,*}(\Fil_{i} \n{W}(V)). 
\]

 Hence, it suffices to describe the geometric Sen operator of $\pi_{\HT}^{\tor}(\n{W}(W))$ for all $\bbf{P}^c_{\mu}$-representation $W$. Since any algebraic representation injects into the regular representation, it also suffices to consider the case where $W=\s{O}(\bbf{P}^c_{\mu})$.

By  the discussion of Section \ref{Subsec:RegularRepP} we can write $\s{O}(\bbf{P}^c_{\mu})\cong \s{O}(\bbf{N}^c_{\mu})\otimes_{L} \s{O}(\bbf{M}^c_{\mu})$ as $\bbf{P}^c_{\mu}$-representations. The action of $\bbf{P}^c_{\mu}$ on $W=\s{O}(\bbf{M}^c_{\mu})$ factors through $\bbf{M}^c_{\mu}$, and  Theorem \ref{TheoHodgeTatePeriod} gives an isomorphism of $K_{p}$-equivariant sheaves
\[
\pi_{\HT}^{\tor,*}(\n{W}(W))\cong \bigoplus_{\rho} \pi_{K_p}^*(W_{\Hd}[\rho])\otimes_{\widehat{\s{O}}_{\Shan}}\widehat{\s{O}}_{\Shan}(-\mu(\rho)),
\]
where $\rho$ runs over all irreducible representations of $\bbf{M}^c_{\mu}$,  $\mu(\rho)\in \bb{Z}$ is the $\mu$-weight of $\rho$, and $W_{\Hd}$ is as in Definition \ref{DefinitionAutoVB}.   Theorem \ref{TheoMainSenTheory} (1) implies that $\pi^{\tor,*}_{\HT}(\n{W}(W))$ has trivial geometric Sen operator. Then, we only need to compute the geometric Sen operator of the sheaf associated to $\s{O}(\bbf{N}^c_{\mu})$. By Theorem \ref{TheoSenOperator} we have a natural isomorphism 
\[
\pi^{\tor,*}_{\HT}(\n{W}(\s{O}(\bbf{N}^c_{\mu})))\cong \OCC{\Shan}.
\] 
On the other hand, \cite[Proposition 3.5.2]{RCGeoSenTheory} says that the geometric Sen operator of $\OCC{\Shan}$ is given by $-\overline{\nabla}$, but by Corollary \ref{CoroConnectionNablaPiHT} this operator is attached to the natural connection  
\[
\nabla_{\bbf{N}^c_{\mu}}:\s{O}(\bbf{N}^c_{\mu})\to \s{O}(\bbf{N}^c_{\mu}) \otimes_L \f{n}^{c,\vee}_{\mu}
\]
after taking equivariant sheaves on $\Fl$ and pullbacks along $\pi_{\HT}^{\tor}$. This produces an isomorphism between the geometric Sen operator $\theta_{\Shan}$ and the pullback along $\pi_{\HT}$ of \eqref{eqSenOpeOverFlag}. The fact that the isomorphism $\pi_{\HT}^{\tor,*}(\f{n}^{c,0,\vee}_{\mu})\cong \pi_{K_p}^*(\Omega^1_{\Shan}(\log))(-1)$ in the theorem is the Kodaira-Spencer map $\KS$ follows from the computation of the geometric Sen operator of $\OCC{\Shan}$ being equal to $-\overline{\nabla}$, and the $-1$ factor appearing in the isomorphism of extensions in Theorem \ref{TheoSenOperator}. 
\end{proof}

\section{Locally analytic completed cohomology}
\label{Section:CompletedCohomology}

In this last section we prove that the rational completed cohomology of Shimura varieties vanishes above middle degree, proving a weaker version of the Calegari-Emerton conjectures \cite{CalegariEmerton} for arbitrary Shimura varieties.  We first recall the definition of completed cohomology of \cite{EmertonInterpolation}, we then relate it with the pro-Kummer-\'etale cohomology of infinite level Shimura varieties, and finally prove the vanishing result using the theory of locally analytic representations and the geometric Sen operator.

\subsection{Completed cohomology}
\label{Subsec:CompletedCohomology}

 We let $j_{K_p}:  \Shan_{K^pK_p,L}\hookrightarrow \Shan_{K^pK_p,L}^{\tor}$ denote the open immersion of Shimura varieties, and let $C= \bb{C}_p$ be a completion of an algebraic closure of $L$.  

\begin{definition}
\label{DefinitionCompletedCohomology}
Let $i\in \bb{Z}$,  the $i$-th completed  cohomology  group at level $K^p$ is the space
\[
\widetilde{H}^i(K^p, \bb{Z}_p):= \varprojlim_{s} \varinjlim_{K_p} H^i_{\et}(\Shan_{K^pK_p,C}, \bb{Z}/p^s)
\]
where $K_p$ runs over the compact open subgroups of $\bbf{G}(\bb{Q}_p)$. Similarly, the $i$-th completed cohomology group at level $K^p$ with compact support is the group 
\[
\widetilde{H}^i_{c}(K^p, \bb{Z}_p):= \varprojlim_{s} \varinjlim_{K_p} H^i_{\et,c}(\Shan_{K^pK_p,C},  \bb{Z}/p^s),
\]
where the transition maps by pullbacks are well defined since the morphisms of Shimura varieties are finite \'etale.
\end{definition}

\begin{remark}
\label{RemarkComparisonCompletedEmerton}
Definition \ref{DefinitionCompletedCohomology}  of completed cohomology is slightly different from Definition \ref{DefinitionCompletedCohoIntro}, which is  the one introduced by  Emerton in \cite{EmertonInterpolation}.  The isomorphism between these two definitions follows from the  comparison  of \'etale cohomology for  adic spaces  vs algebraic varieties over $\bb{C}_p$ \cite{HuberEtaleCohomology}, and the Artin comparison between \'etale cohomology of algebraic varieties  (after fixing an isomorphism $\bb{C}_p\simeq \bb{C}$)  and the Betti cohomology of its underlying complex analytic space \cite{ArtinEtale}.
\end{remark}

We shall need a different expression for completed cohomology in terms of the Kummer-\'etale cohomology of toroidal compactifications of the Shimura varieties. Let $j_{K_p,\ket}: \Shan_{K^pK_p,L,\et}\to \Shan^{\tor}_{K^pK_p,L,\ket}$ and $j_{K_p,\et}: \Shan_{K^pK_p,L,\et}\to \Shan^{\tor}_{K^pK_p,L,\et}$ be the natural morphism of sites. 

\begin{lem}\label{lem:CompletedCohoAsKummerEtale}
There are natural $\bbf{G}(\bb{Q}_p)\times\Gal_{L}$-equivariant   isomorphisms 
\[
\widetilde{H}^i(K^p,\bb{Z}_p)= \varprojlim_{s} \varinjlim_{K_p} H^i_{\ket}(\Shan^{\tor}_{K^pK_p}, \bb{Z}/p^s)
\]
and 
\[
\widetilde{H}^i_c(K^p,\bb{Z}_p)= \varprojlim_{s} \varinjlim_{K_p} H^i_{\ket}(\Shan^{\tor}_{K^pK_p}, j_{K_p,\ket, !}\bb{Z}/p^s),
\]
where $K_p$ runs over a suitable family of compact open subgroups converging to $1$ as in Section \ref{Subsec:SetUp}.
\end{lem}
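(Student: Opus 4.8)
The plan is to prove, for each $s$ and each $K_p$, the two isomorphisms over $C=\bb{C}_p$ (the field implicit in the right-hand sides), and then pass to $\varinjlim_{K_p}\varprojlim_s$; the $\Gal_L$-action throughout comes from the fact that the toroidal compactifications of Section \ref{Subsec:SetUp} and all the maps below are defined over $L$.

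\emph{The open case.} The open Shimura variety $\Shan_{K^pK_p,C}$ carries the trivial log structure, so $H^i_{\et}(\Shan_{K^pK_p,C},\bb{Z}/p^s)=H^i_{\ket}(\Shan_{K^pK_p,C},\bb{Z}/p^s)$. The purity theorem \cite[Theorem 4.6.1]{DiaoLogarithmic2019}, applied to the trivial local system exactly as recalled in Section \ref{Subsec:HodgeTateMap} (and which remains valid over $C$), shows that $Rj_{K_p,\ket,*}\,\bb{Z}/p^s$ is the constant sheaf $\bb{Z}/p^s$ on $\Shan^{\tor}_{K^pK_p,C,\ket}$, concentrated in degree $0$. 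Hence the Leray spectral sequence for $j_{K_p,\ket}$ collapses and gives a natural, $\Gal_L$-equivariant isomorphism
\[
H^i_{\ket}(\Shan^{\tor}_{K^pK_p,C},\bb{Z}/p^s)\;\xrightarrow{\ \sim\ }\;H^i_{\ket}(\Shan_{K^pK_p,C},\bb{Z}/p^s)=H^i_{\et}(\Shan_{K^pK_p,C},\bb{Z}/p^s).
\]

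\emph{The compact-support case.} Write $j$ for the open immersion, $i\colon D\hookrightarrow \Shan^{\tor}_{K^pK_p,C}$ for the boundary divisor with its induced log structure, and $\epsilon\colon \Shan^{\tor}_{K^pK_p,C,\ket}\to\Shan^{\tor}_{K^pK_p,C,\et}$ for the natural morphism of sites. Since $j_{K_p,\et}=\epsilon\circ j_{K_p,\ket}$ as morphisms of sites, purity gives $Rj_{K_p,\et,*}\,\bb{Z}/p^s=R\epsilon_*\big(Rj_{K_p,\ket,*}\,\bb{Z}/p^s\big)=R\epsilon_*\,\bb{Z}/p^s$, and since inverse image commutes with extension by zero, $\epsilon^{-1}(j_{K_p,\et,!}\,\bb{Z}/p^s)=j_{K_p,\ket,!}\,\bb{Z}/p^s$. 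Applying $R\epsilon_*$ to the Kummer-\'etale recollement triangle
\[
j_{K_p,\ket,!}\,\bb{Z}/p^s\longrightarrow \bb{Z}/p^s\longrightarrow i_{\ket,*}\,\bb{Z}/p^s\xrightarrow{\ +1\ }
\]
and comparing, via the units of adjunction, with the \'etale recollement triangle for $j_{K_p,\et,!}\,\bb{Z}/p^s$, one reduces to showing that $R\epsilon_*(i_{\ket,*}\,\bb{Z}/p^s)$ is identified with $i_{\et,*}i_{\et}^*Rj_{K_p,\et,*}\,\bb{Z}/p^s$. This is a relative log-purity statement transverse to the strict normal crossings divisor $D$: via base change it amounts to the corresponding identification on $D$, and both sides are computed by the tame symbols along the branches of $D$ (exterior powers of copies of $\bb{Z}/p^s(-1)$ indexed by the branches through a point). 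Granting it, $R\epsilon_*(j_{K_p,\ket,!}\,\bb{Z}/p^s)\cong j_{K_p,\et,!}\,\bb{Z}/p^s$, so that
\[
H^i_{\ket}(\Shan^{\tor}_{K^pK_p,C},j_{K_p,\ket,!}\,\bb{Z}/p^s)\cong H^i_{\et}(\Shan^{\tor}_{K^pK_p,C},j_{K_p,\et,!}\,\bb{Z}/p^s)=H^i_{\et,c}(\Shan_{K^pK_p,C},\bb{Z}/p^s),
\]
the last equality being the standard comparison of compact-support cohomology with cohomology of the extension by zero along the proper compactification $\Shan^{\tor}_{K^pK_p,C}$. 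I expect the identification of this boundary term to be the main obstacle; alternatively one may quote it from the log-adic-space extension of Scholze's compact-support argument alluded to in the introduction.

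\emph{Passage to the limit.} By Section \ref{Subsec:SetUp} the transition maps $\Shan^{\tor}_{K^pK_p',E}\to\Shan^{\tor}_{K^pK_p,E}$ are finite Kummer-\'etale and restrict to the finite \'etale covers of open Shimura varieties, so both purity and the recollement formalism are functorial for them; hence the isomorphisms above are compatible with the pullback maps in the tower, as well as with the $\Gal_L$-action and the Hecke action of $\bbf{G}(\bb{Q}_p)$ at the fixed level $K^p$. Taking $\varinjlim_{K_p}$ along the cofinal family of Section \ref{Subsec:SetUp} (which computes the same filtered colimit as the colimit over all compact open subgroups, since that family shrinks to $1$) and then $\varprojlim_s$ yields the asserted $\bbf{G}(\bb{Q}_p)\times\Gal_L$-equivariant isomorphisms.
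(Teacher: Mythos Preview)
Your open case is exactly the paper's argument. For the compact-support case you also aim at the same identity $R\epsilon_*(j_{K_p,\ket,!}\,\bb{Z}/p^s)\cong j_{K_p,\et,!}\,\bb{Z}/p^s$ as the paper, but you take a detour through the boundary term of the recollement triangle and end up with an identification of $R\epsilon_*(i_{\ket,*}\,\bb{Z}/p^s)$ that you admit you cannot fully justify. This is unnecessary: the paper's argument is a one-liner that bypasses any computation on the boundary. Namely, since $j_{K_p,\ket,!}\,\bb{Z}/p^s$ has vanishing restriction to the boundary in the Kummer-\'etale topos, its stalk at any log geometric point $\widetilde{x}$ over the boundary is zero, hence
\[
(R\epsilon_*\, j_{K_p,\ket,!}\,\bb{Z}/p^s)|_{\widetilde{x}}\;\cong\; R\Gamma_{\ket}(\widetilde{x},\,j_{K_p,\ket,!}\,\bb{Z}/p^s)\;=\;0.
\]
On the open locus the log structure is trivial and $\epsilon$ is an equivalence, so $R\epsilon_*\, j_{K_p,\ket,!}\,\bb{Z}/p^s$ is supported on the open and equals $\bb{Z}/p^s$ there, i.e.\ it is $j_{K_p,\et,!}\,\bb{Z}/p^s$. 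Taking global sections gives the compact-support identification directly.

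So your proposal is essentially correct but leaves a gap you yourself flag; the paper closes it by observing that extension by zero already kills the boundary, so no ``relative log-purity'' or tame-symbol computation is needed. Your recollement approach would eventually work (the boundary identity you state is true), but it is strictly harder than what the statement requires.
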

\begin{proof}
It suffices to show that we have natural isomorphisms 
\[
H^{i}_{\et}(\Shan_{K^pK_p,C},\bb{Z}/p^s)\cong H^i_{\ket}(\Shan^{\tor}_{K^pK_p,C}, \bb{Z}/p^s)
\]
and
\[
H_{\et,c}^i(\Shan_{K^pK_p,C},\bb{Z}/p^s)\cong H^i_{\ket}(\Shan^{\tor}_{K^pK_p,C}, j_{K_p,\ket, !}\bb{Z}/p^s). 
\]
The first equality follows from the natural isomorphism 
\[
Rj_{K_p,\ket,*}\bb{Z}/p^s=\bb{Z}/p^s
\] of \cite[Theorem 4.6.1]{DiaoLogarithmic2019} after taking global sections. For the second equality, consider the  morphism of sites $\eta: \Shan^{\tor}_{K^pK_p,L,\ket}\to \Shan^{\tor}_{K^pK_p,L,\et}$, then we claim that 
\[
R\eta_{*} j_{K_p,\ket,!}\bb{Z}/p^s=j_{K_p,\et,!}\bb{Z}/p^s,
\]
the lemma then follows by taking global sections. Indeed, let $\widetilde{x}\in \Shan^{\tor}_{K^pK_p,L}$ be a log geometric point of the boundary. We have to show that  $(R\eta_{*} j_{K_p,\ket,!}\bb{Z}/p^s)|_{\widetilde{x}}=0$, but we have 
\[
(R\eta_{*} j_{K_p,\ket,!}\bb{Z}/p^s)|_{\widetilde{x}}\cong R\Gamma_{\ket}(\widetilde{x},j_{K_p,\ket,!}\bb{Z}/p^s )=0
\] 
since $(j_{K_p,\ket,!}\bb{Z}/p^s) |_{\widetilde{x}}=0$. 
\end{proof}

\begin{definition}\label{DefinitionExtensionbyZero}
Let $j:\Shan_{K^p,\infty,L}\to \Shan^{\tor}_{K^p,\infty,L}$ be the open immersion of infinite level Shimura varieties, viewed as objects in the pro-Kummer-\'etale site of $\Shan^{\tor}_{K^pK_p,L}$.  For $\Lambda$ a $p$-adically complete ring we let $j_{!} \Lambda:= R\varprojlim_{s} j_{!} (\Lambda/p^s)$ be the (derived) $p$-adic completion of the extension by zero of the \'etale constant sheaves $\Lambda/p^s$ over $\Shan_{K^p,\infty,L}$. 
\end{definition}

\begin{remark}
Let us briefly mention why $j_! \Lambda$ sits in degree $0$ and therefore it is isomorphic to $\varprojlim_s j_! \bb{Z}/p^s$. Let $j_{K_p}:\Shan_{K^pK_p,L,\et}\to \Shan^{\tor}_{K^pK_p,L,\ket} $ be the natural map of sites. Let $D_{K_p}$ be the boundary normal crossings divisor of $\Shan^{\tor}_{K^pK_p,L}$ endowed with the induced log structure  and let $\iota_{K_p}: D_{K_p,\ket}\to \Shan^{\tor}_{K^pK_p,C,\ket}$ be the  map of Kummer-\'etale sites. Then for all $n\geq 1$ we have a short exact sequence  of Kummer-\'etale sheaves on $\Shan^{\tor}_{K^pK_p,L}$
\[
0\to j_{K_p,!} \Lambda/p^n \to \Lambda /p^n\to \iota_{D_{K_p},*} \Lambda /p^n\to 0
\]
that we can see as pro-Kummer-\'etale sheaves by \cite[Proposition 5.1.7]{DiaoLogarithmic2019}. 
Let $D=\varprojlim_{K_p'\subset K_p} D_{K_p'}$ be the limit in the pro-Kummer-\'etale site of the log  adic space $D_{K_p}$ and let $\iota: D_{\proket}\to  \Shan^{\tor}_{K^p,\infty,L,\proket}$ be the natural map. Taking colimits when $K_p\to \infty$ we have a short exact sequence of pro-Kummer-\'etale sheaves
\[
0\to j_{!} \Lambda/p^n \to \Lambda/p^n\to \iota_{D,*} \Lambda /p^n\to 0.
\]
Finally, taking derived limits as $n\to \infty$ we have an exact triangle 
\[
j_! \Lambda\to \Lambda\to \iota_{D_*} \Lambda\xrightarrow{+},
\]
but the map $\Lambda\to \iota_{D_*} \Lambda$ is surjective (being a countable limit of surjections along countable surjective maps in a replete topos \cite[Proposition 3.1.10]{bhatt2014proetale}), so $j_! \Lambda$ is concentrated in degree zero and we  actually have a short exact sequence 
\[
0\to j_{!} \Lambda \to \Lambda \to \iota_{D,*} \Lambda \to 0.
\] 
\end{remark}

   We want to relate the completed cohomology groups with the cohomology of infinite level Shimura varieties, for this  we need  the following theorem of Emerton. 

\begin{theo}[Emerton]
\label{TheoCompletedcohomologyAdmissible}
Let $\Lambda$ denote $\bb{Z}_p$ or $\bb{Z}/p^s$.  The cohomologies
\begin{equation}
\label{eqCompletedCohomologiesTheoremEmerton}
R\Gamma_{\proket}(\Shan^{\tor}_{K^p,\infty,C},  \Lambda) \mbox{ and } R\Gamma_{\proket}(\Shan^{\tor}_{K^p,\infty,C},  j_! \Lambda) 
\end{equation}
are represented by  bounded complexes of admissible $\widetilde{K}_p$-representations with terms isomorphic to finitely many copies of $C(\widetilde{K}_p, \Lambda)$, the space of continuous functions from $\widetilde{K}_p$ to $\Lambda$ endowed with the left regular action.  In particular,  $H^i_{\proket}(\Shan_{K^p,C}^{\tor}, \Lambda)$ and $H^i_{\proket}(\Shan_{K^p,C}^{\tor}, j_! \Lambda)$ are admissible $\widetilde{K}_p$-representations over $\Lambda$.  
\end{theo}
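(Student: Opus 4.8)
The plan is to reduce the statement to a local computation at each level $K_p$, using the fact that the tower is a pro-(Kummer-)\'etale $\widetilde K_p$-torsor, and then invoke finiteness of Kummer-\'etale cohomology for the proper log adic space $\Shan^{\tor}_{K^pK_p,C}$. First I would fix a cofinal family $\{K_p^{(n)}\}$ of open normal subgroups of some base level $K_p$ as arranged in Section \ref{Subsec:SetUp}, so that the transition maps $\Shan^{\tor}_{K^pK_p^{(n)},C}\to \Shan^{\tor}_{K^pK_p,C}$ are finite Kummer-\'etale with Galois group $\widetilde K_p/\widetilde K_p^{(n)}$. Writing $\pi^{(n)}:\Shan^{\tor}_{K^pK_p^{(n)},C}\to \Shan^{\tor}_{K^pK_p,C}$ and using that $\Shan^{\tor}_{K^p,\infty,C}=\varprojlim_n \Shan^{\tor}_{K^pK_p^{(n)},C}$ is a qcqs object of the pro-Kummer-\'etale site, the comparison \cite[Proposition 5.1.6]{DiaoLogarithmic2019} between pro-Kummer-\'etale cohomology of a limit and the colimit of Kummer-\'etale cohomologies gives
\[
R\Gamma_{\proket}(\Shan^{\tor}_{K^p,\infty,C},\bb{Z}/p^s)\;\cong\;\varinjlim_n R\Gamma_{\ket}(\Shan^{\tor}_{K^pK_p^{(n)},C},\bb{Z}/p^s)\;\cong\;\varinjlim_n R\Gamma_{\ket}(\Shan^{\tor}_{K^pK_p,C},\pi^{(n)}_*\bb{Z}/p^s),
\]
the last step since $\pi^{(n)}$ is finite, hence $R\pi^{(n)}_*$ has no higher terms. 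The same manipulation applies to $j_!\bb{Z}/p^s$, using that extension by zero commutes with finite pushforward and with the colimit.

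Next I would identify the coefficient sheaf: for a finite Kummer-\'etale Galois cover with group $\Gamma_n:=\widetilde K_p/\widetilde K_p^{(n)}$, one has $\pi^{(n)}_*\bb{Z}/p^s\cong \bb{Z}/p^s[\Gamma_n]$ as a $\Gamma_n$-equivariant Kummer-\'etale sheaf on $\Shan^{\tor}_{K^pK_p,C}$ (the descent of the constant sheaf $\underline{\bb{Z}/p^s}$ along the torsor, cf. the construction of $V_{\ket}$ in Section \ref{Subsec:HodgeTateMap}), and passing to the colimit over $n$ yields the sheaf $\underline{C(\widetilde K_p,\bb{Z}/p^s)}$ attached to the left regular representation. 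Thus
\[
R\Gamma_{\proket}(\Shan^{\tor}_{K^p,\infty,C},\bb{Z}/p^s)\;\cong\; R\Gamma_{\ket}\big(\Shan^{\tor}_{K^pK_p,C},\,\underline{C(\widetilde K_p,\bb{Z}/p^s)}\big),
\]
with the $\widetilde K_p$-action coming entirely from the coefficients (and $\bbf G(\bb Q_p)\times\Gal_L$-equivariance from functoriality in $K^p$ and the base field). Since $\Shan^{\tor}_{K^pK_p,C}$ is a proper smooth log adic space with reduced normal crossing boundary, its Kummer-\'etale cohomology with coefficients in $\bb{Z}/p^s$ is computed by a perfect complex of $\bb{Z}/p^s$-modules — this is the log analogue of Scholze's finiteness theorem, available via \cite{DiaoLogarithmic2019} (finiteness of $R\Gamma_{\ket}$ on proper log smooth spaces); the same holds with $j_{K_p,\ket,!}$-coefficients. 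Writing that perfect complex as a bounded complex of finite free $\bb{Z}/p^s$-modules $M^\bullet$, tensoring the regular representation through gives terms $M^i\otimes_{\bb{Z}/p^s}C(\widetilde K_p,\bb{Z}/p^s)\cong C(\widetilde K_p,\bb{Z}/p^s)^{\oplus r_i}$ with the left regular action on each factor, which are admissible $\widetilde K_p$-representations; and a bounded complex of such represents \eqref{eqCompletedCohomologiesTheoremEmerton} mod $p^s$.

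Finally I would handle the $p$-adic limit: the $\Lambda=\bb{Z}_p$ statements follow by taking $R\varprojlim_s$, noting that the transition maps in $s$ are the obvious reductions, that each $R\Gamma_{\ket}(\Shan^{\tor}_{K^pK_p,C},\bb{Z}/p^s)$ is a perfect complex so the Mittag–Leffler condition holds and no $\varprojlim^1$ appears in the passage to the $\bb{Z}_p$-complex, and that $R\varprojlim_s C(\widetilde K_p,\bb{Z}/p^s)\cong C(\widetilde K_p,\bb{Z}_p)$; one can in fact lift $M^\bullet$ to a bounded complex of finite free $\bb{Z}_p$-modules (perfectness over $\bb{Z}_p$) and tensor once. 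Admissibility in the sense of \cite{SchTeitDist} is then the statement that $\Hom_{\bb{Z}_p}\big(C(\widetilde K_p,\bb{Z}_p)^{\oplus r},\bb{Z}_p\big)$ is finite free over the Iwasawa algebra $\bb{Z}_p[[\widetilde K_p]]$, which is standard. The main obstacle I anticipate is technical rather than conceptual: making the identification $\varinjlim_n \pi^{(n)}_*\bb{Z}/p^s\cong \underline{C(\widetilde K_p,\bb{Z}/p^s)}$ precise as pro-Kummer-\'etale sheaves (including the $\widetilde K_p$-equivariance and compatibility with the limit presentation of $\Shan^{\tor}_{K^p,\infty,C}$), and invoking the correct log-adic finiteness theorem for proper log smooth spaces with compact-support coefficients $j_{K_p,\ket,!}\bb{Z}/p^s$ — one must check the boundary contributions vanish appropriately, which is already essentially done in the proof of Lemma \ref{lem:CompletedCohoAsKummerEtale}.
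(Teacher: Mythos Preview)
Your proposal is correct and follows the same opening moves as the paper: pass from $R\Gamma_{\proket}$ of the infinite-level tower to $R\Gamma_{\ket}\big(\Shan^{\tor}_{K^pK_p,C},\,\underline{C(\widetilde K_p,\bb{Z}/p^s)}\big)$ via the torsor structure (the paper writes this as a Hochschild--Serre plus Shapiro argument, you write it as $\varinjlim_n \pi^{(n)}_*\bb{Z}/p^s$; these are the same identification). Where you diverge is in the finiteness input. You stay in the $p$-adic world and invoke perfectness of $R\Gamma_{\ket}$ of a proper log smooth adic space over $C$ with $\bb{Z}/p^s$- and $j_{K_p,\ket,!}\bb{Z}/p^s$-coefficients, then tensor the resulting perfect complex with $C(\widetilde K_p,\bb{Z}/p^s)$. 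The paper instead transports the problem across the fixed isomorphism $C\simeq\bb{C}$ to Betti cohomology of $\Sh_{K^pK_p,E}(\bb{C})$, and uses the Borel--Serre compactification to produce an explicit finite simplicial complex $S_\bullet$; this immediately gives the bounded complex $\Hom^\bullet_\Lambda(\Lambda[S_\bullet],C(\widetilde K_p,\Lambda))$, and for compact supports the paper applies Poincar\'e duality to obtain $\Lambda[S_\bullet]\otimes^L_\Lambda C(\widetilde K_p,\Lambda)[-2d]$. Your route is arguably cleaner and avoids the auxiliary choice of $C\simeq\bb{C}$, but it leans on a $p$-adic log-finiteness theorem you would need to pin down precisely (including for $j_!$-coefficients); the paper's route is lower-tech, yields a single explicit complex valid uniformly in $s$, and makes the passage to $\Lambda=\bb{Z}_p$ by $R\varprojlim_s$ transparent.
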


\begin{proof}

Let us first argue for torsion coefficients and $\Lambda=\bb{Z}/p^s$. By \cite[Proposition 5.1.6]{DiaoLogarithmic2019} we have 
\[
R\Gamma_{\proket}(\Shan^{\tor}_{K^p,\infty,C}, \Lambda)\cong \varinjlim_{K_p'\subset K_p} R\Gamma_{\ket}(\Shan^{\tor}_{K^pK_p',C},\Lambda),
\]
where $K_p'$ runs over all the open normal subgroups of $K_p$ (a similar formula holds for $j_!\Lambda$).  Since $\Shan^{\tor}_{K^pK_p',C}\to \Shan^{\tor}_{K^pK_p,C}$ is a $\widetilde{K}_p/ \widetilde{K}_p'$-torsor, we get 
\begin{equation}
\label{eqShaphiroVarietyetale}
\begin{aligned}
R\Gamma_{\ket}(\Shan^{\tor}_{K^pK_p,C}, C(\widetilde{K}_p/\widetilde{K}_p',\Lambda)) & \cong R\Gamma(\widetilde{K}_p/\widetilde{K}_p', R\Gamma_{\ket}(\Shan^{\tor}_{K^pK_p',C}, C(\widetilde{K}_p/\widetilde{K}_p', \Lambda)))\\
		& \cong R\Gamma(\widetilde{K}_p/\widetilde{K}_p', C(\widetilde{K}_p/\widetilde{K}_p',\Lambda)\otimes^L_{\Lambda} R\Gamma_{\ket}(\Shan^{\tor}_{K^pK_p',C},\Lambda))\\
		& \cong R\Gamma_{\ket}(\Shan^{\tor}_{K^pK_p',C}, \Lambda),
\end{aligned}
\end{equation}
where  the first equivalence is the Hochschild-Serre spectral sequence arising from the $\widetilde{K}_p/\widetilde{K}_p'$-torsor $\Shan^{\tor}_{K^pK_p',L}\to \Shan^{\tor}_{K^pK_p,L}$, the second follows from the fact that $\Shan^{\tor}_{K^pK_p',C}$ is qcqs so that cohomology commutes with filtered colimits, and the third equivalence is Shapiro's lemma for finite groups. The group cohomology of \eqref{eqShaphiroVarietyetale} is the usual group cohomology of smooth representations. 

 We deduce a natural quasi-isomorphism 
\[
\begin{aligned}
R\Gamma_{\proket}(\Shan^{\tor}_{K^p,\infty,C},\Lambda) & \cong \varinjlim_{K_p'} R\Gamma_{\ket}(\Shan^{\tor}_{K^pK_p,C}, C(\widetilde{K}_p/\widetilde{K}_p',\Lambda)) \\ 
			& \cong R\Gamma_{\ket}(\Shan^{\tor}_{K^pK_p, C}, C(\widetilde{K}_p,\Lambda)).
\end{aligned}
\]  
A similar argument also shows that 
\[
R\Gamma_{\proket}(\Shan^{\tor}_{K^p,\infty,C},j_! \Lambda) \cong R\Gamma_{\ket}(\Shan^{\tor}_{K^pK_p, C}, j_{K_p,!}C(\widetilde{K}_p,\Lambda)).
\]

By fixing an isomorphism of fields $C\simeq \bb{C}$,  the discussion in Remark \ref{RemarkComparisonCompletedEmerton} and Lemma \ref{lem:CompletedCohoAsKummerEtale} allow us to compare Kummer-\'etale and Betti cohomology obtaining $\bbf{G}(\bb{Q}_p)$-equivariant quasi-isomorphisms
\[
R\Gamma_{\proket}(\Shan^{\tor}_{K^p,\infty,C},\Lambda) \cong R\Gamma_{\Betti}(\Sh_{K^pK_p,E}(\bb{C}), C(\widetilde{K}_p,\Lambda))
\]
and 
\[
R\Gamma_{\proket}(\Shan^{\tor}_{K^p,\infty,C},j_!\Lambda) \cong R\Gamma_{\Betti,c}(\Sh_{K^pK_p,E}(\bb{C}), C(\widetilde{K}_p,\Lambda)),
\]
where the $\bb{C}$-points are taken with respect to the embedding $E\to C\simeq \bb{C}$.

Let $\Sh_{K^pK_p,E}(\bb{C})^{\BS}$ be a Borel-Serre compactification of $\Sh_{K^pK_p,E}(\bb{C})$ (cf. \cite{BorelSerre1973}).  It is a compact CW complex which is homotopically equivalent to $\Sh_{K^pK_p,E}(\bb{C})$.   Let $S_{\bullet}$ be a finite simplicial resolution of $\Sh_{K^pK_p,E}(\bb{C})^{\BS}$,  then 
\begin{align*}
R\Gamma_{\Betti}(\Sh_{K^pK_p,E}(\bb{C}),  C(\widetilde{K}_p, \Lambda))  & \cong R\Gamma_{\Betti}(\Sh_{K^pK_p,E}(\bb{C})^{\BS},  C(\widetilde{K}_p, \Lambda))  \\ 
& \cong \Hom^{\bullet}_{\Lambda}( \Lambda[S_\bullet],  C(\widetilde{K}_p, \Lambda))
\end{align*}
is quasi-isomorphic to a bounded complex whose terms are finite direct sums of $C(\widetilde{K}_p, \Lambda)$,  in particular a bounded  complex of $\Lambda$-linear admissible  $\widetilde{K}_p$-representations.

Let $\Lambda[[\widetilde{K}_p]]= \Hom_{\Lambda}(C(\widetilde{K}_p,\Lambda),\Lambda)$ be the $\Lambda$-linear Iwasawa algebra of $\widetilde{K}_p$, equivalently, $\Lambda[[\widetilde{K}_p]]=\varprojlim_{K_p'\subset K_p}\Lambda[\widetilde{K}_p/\widetilde{K}_p']$ where $K_p'$ runs over the compact open subgroups of $K_p$. By Poincar\'e duality,  for a finite free $\Lambda$-module $\s{F}$, the derived $\Lambda$-dual of $R\Gamma_{\Betti,c}(\Sh_{K^pK_p,E}(\bb{C}), \s{F})$ is quasi-isomorphic to $R\Gamma_{\Betti}(\Sh_{K^pK_p,E}(\bb{C}), \s{F}^{\vee}[2d])$ where $d$ is the complex dimension of the Shimura variety. Applying this to $\s{F}= C(\widetilde{K}_p/K_p',\Lambda)$ one deduces that 
\[
\begin{aligned}
R\Gamma_{\Betti,c}(\Sh_{K^pK_p,E}(\bb{C}),  C(\widetilde{K}_p/\widetilde{K}_{p}',\Lambda)) & \cong R\Hom_{\Lambda}( R\Gamma_{\Betti}(\Sh_{K^pK_p,E}(\bb{C}), \Lambda[[\widetilde{K}_p/\widetilde{K}_p']][2d]),\Lambda) \\
& \cong R\Hom_{\Lambda}(  \Hom^{\bullet}_{\Lambda}(\Lambda[S_{\bullet}],  \Lambda[[\widetilde{K}_p/\widetilde{K}_p']] )  ,  \Lambda  )[-2d] \\
 & =  \Lambda[S_\bullet]\otimes^L_{\Lambda} C(\widetilde{K}_p/\widetilde{K}_p',  \Lambda)  [-2d],
\end{aligned}
\]
where in the last equality we use that  the chain complex $\Lambda[S_{\bullet}]$ is  a perfect complex so quasi-isomorphic to its double dual. Taking colimits as $K_p'\to 1$  we get the quasi-isomorphism
\[
R\Gamma_{\Betti,c}(\Sh_{K^pK_p,E}(\bb{C}),  C(\widetilde{K}_p,\Lambda))  \cong   \Lambda[S_\bullet]\otimes^L_{\Lambda} C(\widetilde{K}_p,  \Lambda)  [-2d]
\]  deducing that the completed cohomology with compact support is represented by a bounded complex of admissible $\widetilde{K}_p$-representations.  

In summary, we have quasi-isomorphisms
\begin{equation}
\label{eqNormalCompleted}
R\Gamma_{\proket}(\Shan^{\tor}_{K^p,\infty,C},  \Lambda) \cong \Hom^{\bullet}_{\Lambda}( \Lambda[S_\bullet],  C(\widetilde{K}_p, \Lambda))
\end{equation}
and 
\begin{equation}
\label{EqCompactCompleted}
R\Gamma_{\proket}(\Shan^{\tor}_{K^p,\infty,C},  j_! \Lambda)  \cong  \Lambda[S_\bullet]\otimes^L_{\Lambda} C(\widetilde{K}_p,  \Lambda) [-2d].
\end{equation}

Finally, to show the theorem for $\Lambda=\bb{Z}_p$, note that the complexes \eqref{eqCompletedCohomologiesTheoremEmerton} are derived $p$-complete being the cohomology complexes of derived $p$-complete sheaves, and that the quasi-isomorphisms \eqref{eqNormalCompleted} and \eqref{EqCompactCompleted} are compatible for $\Lambda$ a torsion ring, the statement follows by taking derived limits  of \eqref{eqNormalCompleted} and \eqref{EqCompactCompleted} with coefficients $\bb{Z}/p^s$. 
\end{proof}

\begin{cor}
\label{CoroCompletedCohoAsProkummerCoho}
We have natural $\widetilde{K}_p$-equivariant isomorphisms 
\begin{eqnarray*}
H^i_{\proket}(\Shan^{\tor}_{K^p,\infty,C}, \bb{Z}_p)& \cong \widetilde{H}^i(K^p, \bb{Z}_p)  \\
H^i_{\proket}(\Shan^{\tor}_{K^p,\infty,C}, j_! \bb{Z}_p)& \cong \widetilde{H}^i_{c}(K^p, \bb{Z}_p).
\end{eqnarray*}
\end{cor}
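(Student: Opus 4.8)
The plan is to obtain both isomorphisms by chaining three already-available inputs: Lemma \ref{lem:CompletedCohoAsKummerEtale} (which rewrites completed cohomology in terms of Kummer-\'etale cohomology of the toroidal compactifications), the colimit description of pro-Kummer-\'etale cohomology of the infinite-level space, and the compatibility of $R\varprojlim_s$ with pro-Kummer-\'etale cohomology supplied by Theorem \ref{TheoCompletedcohomologyAdmissible}. I would write the argument for $\bb{Z}_p$ in full detail and then note that the compact-support case is word-for-word the same, replacing $\bb{Z}/p^s$ by $j_{K_p,\ket,!}\bb{Z}/p^s$, $\bb{Z}_p$ by $j_!\bb{Z}_p$, and invoking the remark following Definition \ref{DefinitionExtensionbyZero} that $j_!\bb{Z}_p$ is concentrated in degree $0$ and equals $\varprojlim_s j_!\bb{Z}/p^s$.

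First I would record the mod $p^s$ statement. By \cite[Proposition 5.1.6]{DiaoLogarithmic2019}, and using that each $\Shan^{\tor}_{K^pK_p,C}$ is qcqs so that Kummer-\'etale cohomology commutes with the relevant filtered colimit, one gets
\[
H^i_{\proket}(\Shan^{\tor}_{K^p,\infty,C}, \bb{Z}/p^s) \;\cong\; \varinjlim_{K_p} H^i_{\ket}(\Shan^{\tor}_{K^pK_p,C}, \bb{Z}/p^s),
\]
which is precisely the cohomology-level shadow of the first computation carried out in the proof of Theorem \ref{TheoCompletedcohomologyAdmissible}. Combined with Lemma \ref{lem:CompletedCohoAsKummerEtale} this yields
\[
\widetilde{H}^i(K^p,\bb{Z}_p) \;=\; \varprojlim_s\, \varinjlim_{K_p} H^i_{\ket}(\Shan^{\tor}_{K^pK_p,C}, \bb{Z}/p^s) \;\cong\; \varprojlim_s H^i_{\proket}(\Shan^{\tor}_{K^p,\infty,C}, \bb{Z}/p^s),
\]
and analogously $\widetilde{H}^i_c(K^p,\bb{Z}_p)\cong \varprojlim_s H^i_{\proket}(\Shan^{\tor}_{K^p,\infty,C}, j_!\bb{Z}/p^s)$.

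It then remains to identify the right-hand sides with the $\bb{Z}_p$-coefficient cohomology. Since $\bb{Z}_p\cong R\varprojlim_s \bb{Z}/p^s$ as pro-Kummer-\'etale sheaves, we have $R\Gamma_{\proket}(\Shan^{\tor}_{K^p,\infty,C},\bb{Z}_p) \cong R\varprojlim_s R\Gamma_{\proket}(\Shan^{\tor}_{K^p,\infty,C},\bb{Z}/p^s)$. By Theorem \ref{TheoCompletedcohomologyAdmissible} the tower on the right is, compatibly in $s$, modelled by the bounded complexes $\Hom^{\bullet}_{\bb{Z}/p^s}(\bb{Z}/p^s[S_\bullet], C(\widetilde{K}_p,\bb{Z}/p^s))$ for a single finite simplicial complex $S_\bullet$, and the transition maps of this tower are surjective in each degree. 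Hence $R\varprojlim_s$ reduces to $\varprojlim_s$, producing the bounded $p$-torsion-free complex $\Hom^{\bullet}_{\bb{Z}_p}(\bb{Z}_p[S_\bullet], C(\widetilde{K}_p,\bb{Z}_p))$; since the universal-coefficient sequences attached to this complex exhibit the system $\{H^i_{\proket}(\Shan^{\tor}_{K^p,\infty,C},\bb{Z}/p^s)\}_s$ as an extension of Mittag--Leffler systems, the Milnor $\varprojlim^1$-terms vanish, so
\[
H^i_{\proket}(\Shan^{\tor}_{K^p,\infty,C},\bb{Z}_p) \;\cong\; \varprojlim_s H^i_{\proket}(\Shan^{\tor}_{K^p,\infty,C},\bb{Z}/p^s).
\]
Combining with the previous paragraph (and its compact-support analogue, using that $j_!\bb{Z}_p$ lives in degree $0$) gives the two claimed isomorphisms.

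For equivariance, all maps above are built from the pro-Kummer-\'etale $\widetilde{K}_p$-torsor $\pi^{\tor}_{K_p}\colon \Shan^{\tor}_{K^p,\infty,L}\to \Shan^{\tor}_{K^pK_p,L}$ on one side and the Hecke action at $p$ on completed cohomology on the other; these match level by level, so the isomorphisms are $\widetilde{K}_p$-equivariant. The only genuinely substantive point in the whole argument is the interchange of $R\varprojlim_s$ with $H^i_{\proket}(\Shan^{\tor}_{K^p,\infty,C},-)$, i.e. the vanishing of the derived-limit correction terms; this is exactly where Emerton's uniform boundedness (Theorem \ref{TheoCompletedcohomologyAdmissible}) is indispensable, since without a model by a fixed finite complex valid for all $s$ one could not rule out $\varprojlim^1$-contributions. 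Everything else is formal bookkeeping around Lemma \ref{lem:CompletedCohoAsKummerEtale} and \cite[Proposition 5.1.6]{DiaoLogarithmic2019}.
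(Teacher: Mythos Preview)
Your proposal is correct and follows essentially the same route as the paper: identify the mod $p^s$ cohomology via \cite[Proposition 5.1.6]{DiaoLogarithmic2019}, invoke Lemma \ref{lem:CompletedCohoAsKummerEtale}, and then kill the $R^1\varprojlim_s$ obstruction using Theorem \ref{TheoCompletedcohomologyAdmissible}. The only cosmetic difference is that the paper dispatches the $R^1\varprojlim_s$ vanishing by citing \cite[Proposition 1.2.12]{EmertonInterpolation} directly, whereas you unpack that proposition via the explicit complex $\Hom^{\bullet}_{\bb{Z}_p}(\bb{Z}_p[S_\bullet], C(\widetilde{K}_p,\bb{Z}_p))$ and a Mittag--Leffler argument.
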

\begin{proof}

Since $\bb{Z}_p= R\varprojlim_{s} \bb{Z}/p^s$ and $j_!\bb{Z}_p= R\varprojlim_{s} j_! \bb{Z}/p^s$ as pro-Kummer-\'etale sheaves,  one has a short exact sequence at the level of cohomology 
\[
0 \to  R^1\varprojlim_s(H^{i-1}_{\proket}(\Shan^{\tor}_{K^p,\infty,C},  \bb{Z}/p^s)) \to H^i_{\proket}(\Shan_{K^p,\infty,C}^{\tor}, \bb{Z}_p) \to  \varprojlim_s H^{i}_{\proket}(\Shan^{\tor}_{K^p,\infty,C},  \bb{Z}/p^s) \to 0,
\]
(resp.  for cohomology  with compact support). On the other hand, \cite[Proposition 5.1.6]{DiaoLogarithmic2019} implies that 
\[
 H^{i}_{\proket}(\Shan^{\tor}_{K^p,\infty,C},  \bb{Z}/p^s) =\varinjlim_{K_p} H^i_{\ket}(\Shan^{\tor}_{K^pK_p,C}, \bb{Z}/p^s).
\]
  Therefore,  we only need to show that the $R^1\varprojlim_s$ term appearing above vanishes.  This is a consequence of Proposition  1.2.12 of \cite{EmertonInterpolation} knowing that the cohomologies \eqref{eqCompletedCohomologiesTheoremEmerton} are represented by bounded complexes of admissible $\widetilde{K}_p$-representations (Theorem \ref{TheoCompletedcohomologyAdmissible}).  
\end{proof}

\subsection{Locally analytic completed cohomology}
\label{Subsec:CalegariEmerton}

Let $C$ be the completion of an algebraic closure of $L$. In this section we study the locally analytic vectors of completed cohomology and relate them with the analytic cohomology of a sheaf $\s{O}^{la}_{\Shan}$ of locally analytic functions on the infinite level Shimura variety $\Shan^{\tor}_{K^p,\infty,C}$. As a corollary, we shall obtain a vanishing result for completed  cohomology, proving a rational version of a conjecture of Calegari and Emerton.

 \subsubsection{The sheaf $\s{O}^{la}_{\Shan}$}  We recall some definitions from   \cite[Section 3.4]{RCGeoSenTheory}.

\begin{definition}
\label{DefinitionOla}
Consider $\Shan^{\tor}_{K^p,\infty,C}$ the infinite level Shimura variety over $C$.

\begin{enumerate}

\item The analytic site of $\Shan^{\tor}_{K^p,\infty,C}$ is given by the site of open subspaces of the underlying topological space $|\Shan^{\tor}_{K^p,\infty, C}|=\varprojlim_{K_p} |\Shan^{\tor}_{K^pK_p,C}|$. An open subspace $U\subset |\Shan^{\tor}_{K^p,\infty,C}|$ is a rational subspace if it is the pullback of a rational subspace at finite level.

\item The sheaf $\s{O}^{la}_{\Shan}$ of locally analytic functions of $\Shan^{\tor}_{K^p,\infty,C}$  is the ind-Banach sheaf on the topological space $|\Shan^{\tor}_{K^p,\infty,C}|$ of locally analytic sections of $\widehat{\s{O}}_{\Shan}$. More precisely, it is the sheaf mapping a rational subspace $U\subset |\Shan^{\tor}_{K^p,\infty,C}|$ with stabilizer $K_{p,U}\subset K_p$ to the ind-Banach space 
\[
\s{O}^{la}_{\Shan}(U)= \widehat{\s{O}}_{\Shan}(U)^{K_{p,U}-la}
\]
of $K_{p,U}$-locally analytic vectors, see \cite[Definition 3.4.2 and Lemma 3.4.3]{RCGeoSenTheory}.

\end{enumerate}

\end{definition}

For completeness of the paper we show that  $\s{O}^{la}_{\Shan}$ is  a sheaf.

\begin{lem}\label{LemmaSheafOla}
The subpresheaf $\s{O}^{la}_{\Shan}\subset \widehat{\s{O}}_{\Shan}$ on the topological space  $|\Shan^{\tor}_{K^p,\infty,C}|$ is a sheaf. 
\end{lem}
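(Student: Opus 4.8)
The statement to prove is that $\s{O}^{la}_{\Shan}$, defined as a subpresheaf of $\widehat{\s{O}}_{\Shan}$ on $|\Shan^{\tor}_{K^p,\infty,C}|$ by taking locally analytic vectors of sections, is actually a sheaf. The key point is that $\widehat{\s{O}}_{\Shan}$ itself is a sheaf on this topological space (being the pushforward/colimit of the completed structural sheaves at finite level), so the separation axiom for $\s{O}^{la}_{\Shan}$ is immediate from injectivity of $\s{O}^{la}_{\Shan}(U)\hookrightarrow \widehat{\s{O}}_{\Shan}(U)$. The real content is the gluing axiom: given a rational open cover $\{U_i\}$ of a rational open $U$ with stabilizer $K_{p,U}$, and sections $s_i\in \widehat{\s{O}}_{\Shan}(U_i)^{K_{p,U_i}-la}$ agreeing on overlaps, the glued section $s\in\widehat{\s{O}}_{\Shan}(U)$ is automatically $K_{p,U}$-locally analytic.

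First I would reduce to a finite cover: since all rational subspaces are pulled back from finite level (Definition \ref{DefinitionOla}(1)), and the topological spaces at finite level are spectral/quasi-compact, any rational open cover admits a finite rational subcover coming from some finite level $K_p'\subset K_p$. So we may assume $U=\bigcup_{i=1}^n U_i$ with all the data descending to level $K_p'$, and all of $U,U_1,\dots,U_n$ stable under some open normal $K_p'\subset K_{p,U}$. Next, I would use the description of $\widehat{\s{O}}_{\Shan}$ as a Banach (or ind-Banach) sheaf on the spectral space $|\Shan^{\tor}_{K^pK_p',C}|$: the Čech complex for the finite cover $\{U_i\}$ computes $R\Gamma$, and in particular the sequence
\[
0\to \widehat{\s{O}}_{\Shan}(U)\to \prod_i \widehat{\s{O}}_{\Shan}(U_i)\to \prod_{i,j}\widehat{\s{O}}_{\Shan}(U_i\cap U_j)
\]
is exact as a sequence of Banach $K_{p,U}$-representations. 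The main step is then to apply the functor of $K_{p,U}$-locally analytic vectors to this sequence and argue that it remains exact on the left — i.e., that an element of $\widehat{\s{O}}_{\Shan}(U)$ whose image in each $\widehat{\s{O}}_{\Shan}(U_i)$ is locally analytic is itself locally analytic.

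The hard part will be precisely this exactness of "locally analytic vectors" along the restriction maps, since taking locally analytic vectors is only left exact in general and one needs to know that the map $\widehat{\s{O}}_{\Shan}(U)\to\prod_i\widehat{\s{O}}_{\Shan}(U_i)$ is a closed embedding of Banach spaces that is moreover strict (which it is, by the sheaf/Čech exactness above) and $K_{p,U}$-equivariant. For a strict closed $K_{p,U}$-equivariant embedding $V\hookrightarrow W$ of Banach representations, the induced map $V^{la}\to W^{la}$ identifies $V^{la}$ with the preimage of $W^{la}$ — this is a standard fact about locally analytic vectors (e.g. via the Schneider–Teitelbaum theory, or \cite[\S 4]{RRLocallyAnalytic}, using that $V=W^{\ker}$ for a map $W\to W'$ and that $(-)^{la}$ commutes with kernels). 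Concretely: if $s\in\widehat{\s{O}}_{\Shan}(U)$ and each $s|_{U_i}$ is $K_{p,U_i}$-analytic on a small enough subgroup, then after shrinking to a common open normal subgroup $K''\subset\bigcap_i K_{p,U_i}$ the orbit map $K''\to\prod_i\widehat{\s{O}}_{\Shan}(U_i)$, $g\mapsto (g\cdot s|_{U_i})_i$, is analytic and lands in the closed subspace $\widehat{\s{O}}_{\Shan}(U)$, hence $g\mapsto g\cdot s$ is analytic; so $s\in\widehat{\s{O}}_{\Shan}(U)^{K''-la}\subset\s{O}^{la}_{\Shan}(U)$. This completes the gluing. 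I would also remark that the ind-Banach structure causes no trouble since filtered colimits of these exact sequences stay exact and $(-)^{la}$ commutes with the relevant filtered colimits.
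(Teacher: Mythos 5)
Your proposal is correct and follows essentially the same route as the paper: reduce to a finite cover, use that $\widehat{\s{O}}_{\Shan}$ is a sheaf to get a left exact \v{C}ech sequence of Banach representations, and conclude by left exactness of the locally analytic vectors functor. The only difference is cosmetic — the paper simply cites left exactness of $(-)^{la}$ from the solid framework of \cite[Definition 3.1.4 (3)]{RRLocAnII}, whereas you re-derive it via the orbit-map argument for a strict closed equivariant embedding of Banach spaces; both work, and the citation is the shorter path.
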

\begin{proof}
Let $U\subset |\Shan^{\tor}_{K^p,\infty,C}|$ be a qcqs subspace and let $\{U_i\}_{i=1}^n$ be a finite open cover of $U$ by qcqs subspaces. Since $\widehat{\s{O}}_{\Shan}$ is a sheaf, we have a left exact sequence 
\[
0\to \widehat{\s{O}}_{\Shan}(U)\to \prod_{i=1}^n\widehat{\s{O}}_{\Shan}(U_i)\to \prod_{1 \leq i\leq j\leq n} \widehat{\s{O}}_{\Shan}(U_i\cap U_j)).
\]
 Let $K_p\subset \bbf{G}(\bb{Q}_p)$ be such that all the  finite intersections of the $U_i$ are $K_p$-stable.  Then, since taking locally analytic vectors is left exact (eg. by \cite[Definition 3.2.3 (3)]{RRLocAnII}), we have a left exact sequence after taking locally analytic vectors
 \[
0\to  \s{O}^{la}_{\Shan}(U)\to \prod_{i=1}^n\s{O}^{la}_{\Shan}(U_i)\to \prod_{1 \leq i\leq j\leq n} \s{O}^{la}_{\Shan}(U_i\cap U_j))
\]
proving what we wanted. 
\end{proof}

\subsubsection{Locally analytic vectors of the boundary}

To introduce the relevant sheaf that will compute the compactly supported  locally analytic completed cohomology, we need to discuss the locally analytic vectors of the boundary divisors of the toroidal compactifications of the Shimura varieties. 

Let $D_{K_p}$ be the boundary of $\Shan^{\tor}_{K^pK_p,C}$ and let us write  $D_{K_p}=\bigcup_{a\in I} D_{K_p,a}$  as a union of irreducible smooth divisors with smooth finite intersections (we can always arrange this thanks to \cite[Proposition 9.20]{Pink1990ArithmeticalCO}). For $J\subset I$ a finite set we denote $D_{K_p,J}=\bigcap_{a\in J} D_{K_p,a}$, and for a level $K_p'\subset K_p$ we let $D_{K_p',J}$ be the reduced pullback of $D_{K_p,J}$ to $\Shan^{\tor}_{K^pK_p',C}$.   Let $\widehat{\s{O}}_{J}$ (resp. $\widehat{\s{O}}_{J}^+$) be  the pro-Kummer-\'etale sheaf   of completed (bounded) functions of  $\iota_{K_p,J}: D_{K_p,J}\subset \Shan^{\tor}_{K^pK_p,C}$ when endowed with the induced log structure. Finally, we define $\widehat{\n{I}}_{\Shan}^{+}$ to be the kernel of the map of pro-Kummer-\'etale shaves
\[
\widehat{\n{I}}_{\Shan}^{+} \colon= \ker(\widehat{\s{O}}_{\Shan}^{+}\to \bigoplus_{a\in I} \iota_{K_p,a,*} \widehat{\s{O}}_{a}^{+})
\]
and set $\widehat{\n{I}}_{\Shan}=\widehat{\n{I}}_{\Shan}^{+}[\frac{1}{p}]$.

\begin{lem}\label{LemmaLongExactSequenceOBoundary}
We have a long exact sequence of  almost pro-Kummer-\'etale $\widehat{\s{O}}^+_{\Shan}$-modules
\begin{equation}\label{eqExactSequenceBoundaryO}
0\to \widehat{\n{I}}_{\Shan}^{+}\to \widehat{\s{O}}_{\Shan}^{+}\to \bigoplus_{a\in  I} \iota_{K_p,a,*} \widehat{\s{O}}_a^{+} \to \cdots \to  \bigoplus_{\substack{J\subset  I \\ 
|J|=k}} \iota_{K_p,J,*} \widehat{\s{O}}_a^{+}  \to \cdots \to \iota_{K_p,I,*}\widehat{\s{O}}_I^{+}\to 0.
\end{equation}
Moreover, let $j_{K_p}:\Shan_{K^pK_p,C}\to \Shan_{K^pK_p,C}^{\tor}$ be the open immersion. The sequence \eqref{eqExactSequenceBoundaryO} is the $p$-completed $\widehat{\s{O}}_{\Shan}^+$-base change of the long exact sequence
\begin{equation}\label{eqExactSequenceBoundary}
\begin{aligned}
0\to j_{K_p,!} \bb{Z}_p \to \bb{Z}_p \to \bigoplus_{a\in I} \iota_{K_p,a,*} \bb{Z}_p \to  \cdots  \to \\  \cdots \to \bigoplus_{\substack{J\subset  I \\ 
|J|=k}} \iota_{K_p,J,*} \bb{Z}_p   \to \cdots   \to  \iota_{K_p,I,*}\bb{Z}_p \to 0.
\end{aligned}
\end{equation}
\end{lem}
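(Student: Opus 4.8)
The two displayed sequences are instances of the ``inclusion--exclusion'' (\v{C}ech-type) resolution attached to the stratification of the normal crossing divisor $D_{K_p}=\bigcup_{a\in I}D_{K_p,a}$; the plan is to verify exactness of \eqref{eqExactSequenceBoundary} on stalks at log geometric points and of \eqref{eqExactSequenceBoundaryO} on a basis of log affinoid perfectoids, the ``base change'' assertion then being the compatibility of the two local pictures.

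For \eqref{eqExactSequenceBoundary} we first reduce to torsion coefficients. One has $j_{K_p,!}\bb{Z}_p=R\varprojlim_s j_{K_p,!}\bb{Z}/p^s$ by Definition \ref{DefinitionExtensionbyZero}, $\bb{Z}_p=R\varprojlim_s\bb{Z}/p^s$ as pro-Kummer-\'etale sheaves, and $\iota_{K_p,J,*}\bb{Z}_p=R\varprojlim_s\iota_{K_p,J,*}\bb{Z}/p^s$ (a pushforward, being a right adjoint, commutes with $R\varprojlim$). Thus \eqref{eqExactSequenceBoundary} is the termwise $R\varprojlim_s$ of its reductions modulo $p^s$, and since $R\varprojlim_s$ sends a complex of inverse systems that is acyclic at each stage to an acyclic complex, it suffices to treat the coefficients $\bb{Z}/p^s$. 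With such coefficients every term is pulled back from the Kummer-\'etale site along $\nu\colon \Shan^{\tor}_{K^pK_p,C,\proket}\to\Shan^{\tor}_{K^pK_p,C,\ket}$ --- using \cite[Propositions 5.1.6 and 5.1.7]{DiaoLogarithmic2019}, the compatibility of Lemma \ref{LemmaProperBaseChangeEtaleSheaves}, and that $\iota_{K_p,J}$ is a strict closed immersion so that $\iota_{K_p,J,*}$ is exact --- so exactness may be checked at the log geometric points of $\Shan^{\tor}_{K^pK_p,C,\ket}$. At such a point $\overline{x}$, write $S=\{a\in I:\overline{x}\ \text{lies over}\ D_{K_p,a}\}$ for its depth set. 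If $S=\varnothing$ the stalk complex is $0\to\bb{Z}/p^s\xrightarrow{\mathrm{id}}\bb{Z}/p^s\to 0$; if $S\neq\varnothing$ it is
\[
0\to\bb{Z}/p^s\to\bigoplus_{a\in S}\bb{Z}/p^s\to\bigoplus_{\{a,b\}\subseteq S}\bb{Z}/p^s\to\cdots ,
\]
which is the augmented simplicial cochain complex of the full simplex on the vertex set $S$; this simplex is nonempty and contractible, so both complexes are exact, and hence so is \eqref{eqExactSequenceBoundary}.

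For \eqref{eqExactSequenceBoundaryO} we pass to a basis of log affinoid perfectoids $S=(\Spa(R,R^+),\n{M})$ whose image in $\Shan^{\tor}_{K^pK_p,C}$ lies in a chart where the boundary is cut out by transverse coordinates $T_1,\dots,T_k$ (with $D_{K_p,a_1},\dots,D_{K_p,a_k}$ the components meeting $S$), $S$ being perfectoid and hence containing compatible $p$-power roots of the $T_i$; such $S$ form a basis of $\Shan^{\tor}_{K^pK_p,C,\proket}$ by \cite[Proposition 9.20]{Pink1990ArithmeticalCO} and \cite[Proposition 5.3.12]{DiaoLogarithmic2019}. On such an $S$ one identifies $\widehat{\s{O}}_{\Shan}^{(+)}(S)=R^+$; $\iota_{K_p,J,*}\widehat{\s{O}}_J^{(+)}(S)$ with the completed structure ring of the perfectoid closed subspace $S\times_{\Shan^{\tor}_{K^pK_p,C}}D_{K_p,J}$ when $J\subseteq\{a_1,\dots,a_k\}$ and with $0$ otherwise; and $\widehat{\n{I}}_{\Shan}^{(+)}(S)$ with the ideal of $R^+$ of functions vanishing along the reduced divisor $D_{K_p}\cap S$. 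Under these identifications \eqref{eqExactSequenceBoundaryO} restricted to $S$ is the inclusion--exclusion (Koszul-type) complex associated to the transverse local equations of the boundary, which is exact by the same computation that proves exactness of the corresponding complex for a normal crossing divisor in a regular scheme, ultimately the regularity of the $T_i$ (and of their $p$-power roots) in $R^+$. As the $S$ range over a basis of the site, \eqref{eqExactSequenceBoundaryO} is exact.

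Finally, the ring homomorphisms $\bb{Z}_p\to\widehat{\s{O}}_{\Shan}^{(+)}$ and $\bb{Z}_p\to\widehat{\s{O}}_J^{(+)}$ on the various $D_{K_p,J}$, together with the induced $j_{K_p,!}\bb{Z}_p\to\widehat{\n{I}}_{\Shan}^{(+)}$ (a local section of $j_{K_p,!}\bb{Z}_p$ maps to a function of $\widehat{\s{O}}_{\Shan}^{(+)}$ vanishing along every boundary component), assemble into a morphism of complexes from \eqref{eqExactSequenceBoundary} to \eqref{eqExactSequenceBoundaryO} that replaces each $\bb{Z}_p$-coefficient sheaf by its natural $\widehat{\s{O}}_{\Shan}^{(+)}$-coefficient counterpart; this is the sense in which \eqref{eqExactSequenceBoundaryO} is the $p$-completed $\widehat{\s{O}}_{\Shan}^{(+)}$-base change of \eqref{eqExactSequenceBoundary}. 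I expect the main obstacle to be the local analysis of the third step on perfectoid neighbourhoods of the normal crossing divisor --- identifying $\widehat{\s{O}}_{\Shan}^{(+)}$, the sheaves $\iota_{K_p,J,*}\widehat{\s{O}}_J^{(+)}$, and above all $\widehat{\n{I}}_{\Shan}^{(+)}$ (i.e.\ showing the kernel of restriction to all boundary components is the ideal of the reduced boundary), and checking that the resulting complex is exact --- which hinges on the explicit structure of log affinoid perfectoids near the divisor and on the regularity of the boundary equations in the perfectoid rings that occur; a secondary technical point is the exactness of Kummer-\'etale pushforward along the strict closed immersions $\iota_{K_p,J}$ used to descend \eqref{eqExactSequenceBoundary} to the Kummer-\'etale site.
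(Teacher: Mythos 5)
Your treatment of the torsion sequence~\eqref{eqExactSequenceBoundary} is fine and in fact more self-contained than the paper's: you reduce to $\bb{Z}/p^s$-coefficients and check exactness at log geometric points, whereas the paper simply cites \cite[Lemma 2.1.5]{LanLiuZhuRhamComparison2019} for the torsion statement and then takes $R\varprojlim_s$. That part of your argument is a correct alternative.

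The second part, however, has a real gap, and it is precisely the one you yourself flag at the end. You propose to verify exactness of~\eqref{eqExactSequenceBoundaryO} directly on a basis of log affinoid perfectoids by exhibiting it as an inclusion--exclusion/Koszul complex built from transverse local equations $T_1,\dots,T_k$ and invoking ``regularity''. But perfectoid rings are highly non-noetherian: the ideals $(T_i^{1/p^\infty})$ need not be closed, ``the ideal of functions vanishing on the reduced boundary'' is a genuinely delicate object, and classical regular-sequence/Koszul exactness does not transfer without work. You also take for granted that $\iota_{K_p,J,*}\widehat{\s{O}}^{(+)}_J(S)$ is the completed structure ring of $S\times_{\Shan^{\tor}}D_{K_p,J}$ (this is a computation of pushforward sections in the pro-Kummer-\'etale site, not an identity), and the final paragraph only constructs a morphism of complexes from~\eqref{eqExactSequenceBoundary} to~\eqref{eqExactSequenceBoundaryO} rather than showing that the latter \emph{is} the $p$-completed $\widehat{\s{O}}_{\Shan}$-base change. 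The paper's proof sidesteps all of this: it tensors the mod-$p^s$ sequence with $\s{O}^+_{\Shan}/p^s$, where exactness is preserved by \cite[Lemma 4.5.7]{DiaoLogarithmic2019}, and then takes inverse limits over $s$ (and inverts $p$). This yields~\eqref{eqExactSequenceBoundaryO} by construction as the $p$-completed base change of~\eqref{eqExactSequenceBoundary}, so the base-change assertion is manifest, and no direct analysis of boundary equations in perfectoid rings is needed. If you want to keep your more hands-on route, you would need to supply the perfectoid Koszul/regularity argument and the identification of the pushforward sections; as written, that step does not close.
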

\begin{proof}

By \cite[Lemma 2.1.5]{LanLiuZhuRhamComparison2019} for all $s\in \bb{N}$ we have a long exact sequence  of Kummer-\'etale sheaves on $\Shan^{\tor}_{K^pK_p,C}$
\[
0\to j_{K_p,!} \bb{Z}/p^s \to \bb{Z}/p^s \to \bigoplus_{a\in I} \iota_{K_p,a,*} \bb{Z}/p^s \to \cdots \to \iota_{K_p,I,*}\bb{Z}/p^s \to 0.
\]
Taking derived limits we get a long exact sequence as in \eqref{eqExactSequenceBoundary}. On the other hand, by \cite[Lemma 4.5.7]{DiaoLogarithmic2019},  tensoring  \eqref{eqExactSequenceBoundary} with $\s{O}^+_{\Shan}/p^s$ produces a long exact sequence of almost Kummer-\'etale $\s{O}^+_{\Shan}/p^s$-modules
\[
\begin{gathered}
0\to j_{K_p,!} \bb{Z}_p\otimes_{\bb{Z}_p} \s{O}^+_{\Shan}/p^s\to  \s{O}_{\Shan}^+/p^s\to \bigoplus_{a\in  I} \iota_{K_p,a,*}  \s{O}_a^+/p^s \to \cdots  \\ \cdots \to \iota_{K_p,I,*} \s{O}_I^+/p^s\to 0.
\end{gathered}
\]
Taking inverse limits  we obtain  the long exact sequence \eqref{eqExactSequenceBoundaryO}.
\end{proof}

\begin{definition}\label{DefinitionOlaD}
\begin{enumerate}

\item  For each $J\subset I$ we let $D_{\infty,J}=\varprojlim_{K_p'\subset K_p} D_{K_p',J}$ be the limit in the pro-Kummer-\'etale site of $D_{K_{p},J}$. The analytic site of $D_{\infty, J}$ is defined as  in  Definition \ref{DefinitionOla} (1), similarly for rational subsapces. 

\item The sheaf $\s{O}^{la}_{J}$ of locally analytic functions of $D_{\infty,J}$ is the ind-Banach sheaf on the topological space $|D_{\infty,J}|$ of locally analytic sections of $\widehat{\s{O}}_{J}$, as in  Definition \ref{DefinitionOla} (2). More precisely, it is the the sheaf mapping a rational subspace $U\subset D_{\infty,J}$  with stabilizer $K_{p,U}$ to the locally analytic functions 
\[
\s{O}^{la}_J(U)= \widehat{\s{O}}_{J}(U)^{K_{p,U}-la}.
\]

\item Finally, we define the sheaf $\n{I}^{la}_{\Shan}$ to be the ind-Banach sheaf on $|\Shan^{\tor}_{K^p,\infty,C}|$ of locally analytic sections of $\widehat{\n{I}}_{\Shan}$ as  in  Definition \ref{DefinitionOla} (2). It is the the sheaf mapping a rational subspace $U\subset \Shan_{K^p,\infty,C}$  with stabilizer $K_{p,U}$ to the locally analytic functions 
\[
\n{I}^{la}_{\Shan}(U)= \widehat{\n{I}}_{\Shan}(U)^{K_{p,U}-la}.
\]

\end{enumerate}
\end{definition}

\begin{remark}
The objects $\s{O}_{J}^{la}$ and $\n{I}^{la}_{\Shan}$ are sheaves on $|D_{\infty,J}|$ and $|\Shan_{K^p,\infty,C}|$ respectively, for example, by the same proof of Lemma \ref{LemmaSheafOla}.
\end{remark}

\subsubsection{Main comparison theorem}

In the following we prove the main comparison theorem between locally analytic completed cohomology and the sheaf $\s{O}^{la}_{\Shan}$. We denote by $\pi_{K_p}:\Shan^{\tor}_{K^p,\infty,C}\to \Shan^{\tor}_{K^pK_p,C}$ the natural projection.

\begin{theo}
\label{TheoMainVanishingOla}
There are natural $K_p$-equivariant quasi-isomorphisms of derived solid $C$-vector spaces
\begin{equation}
\label{eqIso1LocAn}
(R\Gamma_{\proket}(\Shan^{\tor}_{K^p,\infty,C}, \bb{Z}_p)\widehat{\otimes}^L_{\bb{Z}_p} C)^{Rla}=R\Gamma_{\an}(\Shan^{\tor}_{K^p,\infty,C}, \s{O}^{la}_{\Shan})
\end{equation}
and
\begin{equation}
\label{eqIso2LocAn}
(R\Gamma_{\proket}(\Shan^{\tor}_{K^p,\infty,C},j_! \bb{Z}_p)\widehat{\otimes}^L_{\bb{Z}_p} C)^{Rla}=R\Gamma_{\an}(\Shan^{\tor}_{K^p,\infty,C},\n{I}^{la}_{\Shan}),
\end{equation}
where the  derived locally analytic vectors are taken with respect to the group $\widetilde{K}_p$ (see Section \ref{Subsec:LocallyAnalyticRepresentations}), and the  $\widehat{\otimes}$-tensor products are $p$-completed. Furthermore, we have natural isomorphisms of cohomology groups
\[
(\widetilde{H}^i(K^p,\bb{Z}_p)\widehat{\otimes}_{\bb{Z}_p} C)^{la}= H^i_{\an}(\Shan^{\tor}_{K^p,\infty,C}, \s{O}^{la}_{\Shan})
\]
and 
\[
(\widetilde{H}^i_{c}(K^p,\bb{Z}_p))\widehat{\otimes}_{\bb{Z}_p} C)^{la}= H^i_{\an}(\Shan^{\tor}_{K^p,\infty,C}, \n{I}^{la}_{\Shan}).
\]
\end{theo}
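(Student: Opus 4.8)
The plan is to combine the computation of the geometric Sen operator from Section~\ref{Section:SenOperators} (Theorem~\ref{TheoComputationSenOperator}) with the general decompletion machinery of geometric Sen theory \cite{RCGeoSenTheory} and the description of completed cohomology as pro-Kummer-\'etale cohomology of the infinite level Shimura variety (Corollary~\ref{CoroCompletedCohoAsProkummerCoho} and Theorem~\ref{TheoCompletedcohomologyAdmissible}). The starting point is the identification, valid after the solid enhancement of Proposition~\ref{LemmaSolidEnhancement}, of $R\Gamma_{\proket}(\Shan^{\tor}_{K^p,\infty,C},\bb{Z}_p)\widehat{\otimes}^L_{\bb{Z}_p}C$ with $R\Gamma_{\proket}(\Shan^{\tor}_{K^pK_p,C}, \bb{Z}_{p,\ket}\widehat{\otimes}_{\bb{Z}_p}\widehat{\s{O}}_{\Shan})$ via the $\widetilde{K}_p$-torsor $\pi_{K_p}^{\tor}$; here one uses that $C$ has cohomological dimension relating $\bb{Z}_p$-coefficient pro-Kummer-\'etale cohomology at infinite level to $\widehat{\s{O}}_{\Shan}$-coefficient cohomology at finite level by almost purity / the primitive comparison theorem (this is where the admissibility in Theorem~\ref{TheoCompletedcohomologyAdmissible} is crucial, so the higher locally analytic vectors vanish by Proposition~\ref{PropAdmissiblenoHigher}).

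First I would make precise that taking derived $\widetilde{K}_p$-locally analytic vectors commutes with $R\Gamma_{\proket}(\Shan^{\tor}_{K^pK_p,C},-)$ applied to $\bb{Z}_{p,\ket}\widehat{\otimes}\widehat{\s{O}}_{\Shan}$; this is formal from the solid formalism once the cohomology is promoted to solid $C$-vector spaces, using that $(-)^{Rla}$ is given by a limit-colimit of the functors $(-)^{R\bb{G}^{(h^+)}\mathrm{-an}}$ and each of these commutes with qcqs pushforward. The key geometric input is then that, by Theorem~\ref{TheoComputationSenOperator}, the geometric Sen operator $\theta_{\Shan}$ of the torsor $\pi_{K_p}^{\tor}$ is the pullback along $\pi_{\HT}^{\tor}$ of the surjection $\f{g}^{c,0,\vee}\to \f{n}^{c,0,\vee}_{\mu}$, and in particular condition (BUN) holds. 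By \cite[Theorem 3.3.2]{RCGeoSenTheory} (together with the part of \cite[\S 3.4]{RCGeoSenTheory} that decompletes under (BUN)), the pro-Kummer-\'etale cohomology of $\widehat{\s{O}}_{\Shan}$, after taking $\widetilde{K}_p$-locally analytic vectors, is computed by the analytic cohomology of the sheaf $\s{O}^{la}_{\Shan}$ on $|\Shan^{\tor}_{K^p,\infty,C}|$; concretely, the locally analytic decompletion produces the sheaf $\s{O}^{la}_{\Shan}$ of Definition~\ref{DefinitionOla} precisely because $\s{O}^{la}_{\Shan}(U)=\widehat{\s{O}}_{\Shan}(U)^{K_U\text{-}la}$. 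This yields \eqref{eqIso1LocAn}.

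For the compactly supported version \eqref{eqIso2LocAn} I would run the same argument with $\bb{Z}_p$ replaced by $j_!\bb{Z}_p$. The point is that $j_!\bb{Z}_p\widehat{\otimes}_{\bb{Z}_p}\widehat{\s{O}}_{\Shan}\cong \widehat{\n{I}}_{\Shan}$ by Lemma~\ref{LemmaLongExactSequenceOBoundary} (the long exact sequence \eqref{eqExactSequenceBoundaryO} is the $\widehat{\s{O}}_{\Shan}$-base change of \eqref{eqExactSequenceBoundary}), and each term $\iota_{K_p,J,*}\widehat{\s{O}}_J$ is the pushforward from the boundary stratum $D_{K_p,J}$, which is itself a log smooth space; applying the decompletion stratum-by-stratum one gets that the locally analytic vectors of $R\Gamma_{\proket}$ of $\iota_{K_p,J,*}\widehat{\s{O}}_J$ is the analytic cohomology of $\iota_{K_p,J,*}\s{O}^{la}_J$, hence by the exact sequence \eqref{eqExactSequenceBoundaryO}, exactness of (derived) locally analytic vectors, and comparison of the resolutions, the locally analytic vectors of $R\Gamma_{\proket}$ of $\widehat{\n{I}}_{\Shan}$ is $R\Gamma_{\an}(\Shan^{\tor}_{K^p,\infty,C},\n{I}^{la}_{\Shan})$. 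The last two displayed isomorphisms of cohomology groups follow by combining \eqref{eqIso1LocAn} and \eqref{eqIso2LocAn} with Corollary~\ref{CoroCompletedCohoAsProkummerCoho}, Proposition~\ref{PropAdmissiblenoHigher} (which kills the higher locally analytic vectors, so $(-)^{Rla}=(-)^{la}[0]$ on each cohomology group since $\widetilde{H}^i$ is admissible hence $\widetilde{H}^i\widehat{\otimes}C$ is a Banach admissible representation), and the fact that on the analytic side the sheaf cohomology sits in bounded degrees so there is no convergence issue in passing from the derived statement to individual cohomology groups.

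The main obstacle I expect is the careful bookkeeping needed to go from the abstract decompletion statement of \cite[\S 3.4]{RCGeoSenTheory} — which is phrased for a single log smooth adic space $X$ with a $G$-torsor satisfying (BUN) — to the sheaf-theoretic statement over $|\Shan^{\tor}_{K^p,\infty,C}|$ involving $\s{O}^{la}_{\Shan}$ and its boundary analogues; this requires checking that the decompletion is compatible with restriction to $K_p$-stable rational subspaces $U$ and with the (countably many) boundary strata, and that the solid/ind-Banach structures match up so that taking locally analytic vectors commutes with the \v{C}ech complexes computing $R\Gamma_{\an}$. A secondary technical point is justifying that $(-)^{Rla}$ commutes with the relevant derived limits and colimits (over $s$ in $\bb{Z}/p^s$, over the cone decomposition, and over $K_p'$) so that the $p$-complete and the level-infinite versions are compatible; this is handled by the stability properties of solid objects recalled in Proposition~\ref{LemmaSolidEnhancement} and the colimit description of $(-)^{Rla}$, but it needs to be spelled out.
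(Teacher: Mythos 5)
Your proposal follows the same architecture as the paper's proof: primitive comparison to pass from $\bb{Z}_p$- to $\widehat{\s{O}}_{\Shan}$-coefficients, the dévissage lemma (Lemma~\ref{LemmaTensorInsideCohomology}) to move $C^{la}(\widetilde{K}_p,\bb{Q}_p)$ inside $R\Gamma_{\proket}$, and the geometric Sen computation (Theorem~\ref{TheoComputationSenOperator}) combined with (BUN) to get local acyclicity of $\widehat{\s{O}}_{\Shan}\widehat{\otimes}C^{la}_{\ket}$ along $\eta_{K_p}$. So you have the right high-level plan. There are, however, two places where you are hiding genuine content under ``bookkeeping.''

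First, the local acyclicity itself. Condition (BUN) alone gives you surjectivity of the Sen operator; what actually makes the higher direct images of $\widehat{\s{O}}_{\Shan}\widehat{\otimes}C^{la}_{\ket}$ vanish is that the resulting $\f{n}^0_\mu$-Koszul complex is exact, and that is a Poincar\'e lemma, not a formal consequence of (BUN). The paper's Proposition~\ref{LemmaLongExactSequenceOlaSheaves}~(1) is devoted to this: one has to realize $C^{la}(\widetilde{\f{g}},\bb{Q}_p)$ as a colimit of functions on polydiscs $\bb{X}_r$ over an affinoid $V\subset\Fl$, and the crucial subtlety (see Lemma~\ref{LemmaVanishingCohomologyStein}) is that the de Rham cohomology of a \emph{closed} polydisc does not vanish, so one must pass to the Stein versions $\mathring{\bb{X}}_r$ and use the open-polydisc Poincar\'e lemma before taking colimits. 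Your citation of ``the part of \cite[\S 3.4]{RCGeoSenTheory} that decompletes under (BUN)'' may or may not cover this in the exact form needed, especially for the boundary strata $D_{K_p,J}$; in the paper the statement is proved from scratch precisely to have it for every $J$.

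Second, and more seriously, the compactly supported case has a real gap. You use the exact sequence $0\to\widehat{\n{I}}_{\Shan}\to\widehat{\s{O}}_{\Shan}\to\bigoplus_a\iota_{K_p,a,*}\widehat{\s{O}}_a\to\cdots\to 0$ and invoke ``exactness of (derived) locally analytic vectors.'' Applying $R\eta_{K_p,*}$ and part~(1) term by term gives you
\[
R\eta_{K_p,*}(\widehat{\n{I}}_{\Shan}\widehat{\otimes}C^{la}_{\ket}) \simeq \mathrm{fib}\Bigl(\pi_{K_p,*}\s{O}^{la}_{\Shan}\to \bigl[\textstyle\bigoplus_a\pi_{K_p,*}\iota_{a,*}\s{O}^{la}_a\to\cdots\bigr]\Bigr),
\]
which is a complex with potential cohomology in positive degrees. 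To identify it with $\pi_{K_p,*}\n{I}^{la}_{\Shan}$ placed in degree $0$ (as required by the theorem, since $\n{I}^{la}_{\Shan}$ is defined as the plain locally analytic vectors of $\widehat{\n{I}}_{\Shan}$), you need the boundary sequence
\[
0\to\n{I}^{la}_{\Shan}\to\s{O}^{la}_{\Shan}\to\bigoplus_a\s{O}^{la}_a\to\cdots\to\s{O}^{la}_I\to 0
\]
to be exact. The functor of locally analytic vectors is left exact but not right exact, so this does not follow formally from the exactness of the $\widehat{\s{O}}$-sequence nor from $(-)^{Rla}$ preserving triangles. The paper proves it in Proposition~\ref{LemmaLongExactSequenceOlaSheaves}~(2) by a decompletion argument: after decompleting the whole sequence uniformly to ON Banach modules over $A_{K_p'(r),n}$ (using the uniqueness in \cite[Theorem~2.4.3]{RCGeoSenTheory}), the relevant actions become \emph{smooth} because the geometric Sen operators are killed, and one can then take invariants exactly. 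This is an essential step, not a comparison of resolutions, and your proposal as written does not supply it.
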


\begin{remark}
Before giving the proof of the theorem let us explain how we see the objects involved in the equivalences \eqref{eqIso1LocAn} and \eqref{eqIso2LocAn} as solid $C$-linear $K_p$-equivariant representations, cf. \cite[Section 4.2]{RRLocallyAnalytic}. Let us just explain the first case, the second being analogous.  The completed cohomology $R\Gamma_{\proket}(\Shan^{\tor}_{K^p,\infty,C}, \bb{Z}_p)$ is a derived $p$-adically complete object with discrete mod $p$ fiber, then it naturally defines an object in the derived category of $K_p$-equivariant solid $\bb{Z}_p$-modules. The $p$-adically complete tensor product with $C$ is the same as the solid tensor product over $\bb{Z}_p$ (see \cite[Lemma 2.12.9]{mann2022padic}), and so it gives rise to a  $K_p$-equivariant solid $C$-vector space. Finally, the functor of locally analytic vectors is an endofunctor of the  derived category of $K_p$-equivariant solid $C$-vector spaces, see \cite[Definition 4.40]{RRLocallyAnalytic} and \cite[Definition 3.2.3]{RRLocAnII}. On the other hand, the right hand side term $R\Gamma_{\an}(\Shan^{\tor}_{K^p,C,\infty}, \s{O}^{la}_{\Shan})$ can be computed as a colimit of   \v{C}ech complexes of a rational hypercovers.  The values at rational subspaces of $\s{O}^{la}_{\Shan}$ are ind-Banach $C$-vector spaces, which are naturally solid $C$-vector spaces. In  Proposition \ref{LemmaLongExactSequenceOlaSheaves} we will even show that $\s{O}^{la}_{\Shan}$ is acyclic in a suitable  basis of rational open subspaces of $\Shan^{\tor}_{K^p,\infty,C}$.
\end{remark}

As a first key ingredient to show Theorem \ref{TheoMainVanishingOla}, we need a pro-Kummer-\'etale cohomological computation. In the following we let $\bb{T}_C=\Spa(C\langle T^{\pm 1}\rangle, \n{O}_C\langle T^{\pm 1}\rangle)$  denote the affinoid torus over $C$ with trivial log structure, and let  $\bb{D}_{C}=\Spa(C\langle S \rangle,\n{O}_C\langle S \rangle)$ denote the affinoid closed unit disc endowed with the log structure given by the divisor $S=0$. Let $V\subset \Fl$ be an open affinoid such that the sheaves $\f{n}^0_{\mu}$ and $\f{g}^{c,0}/\f{n}^{0}_{\mu}$ are finite free over $V$. Let  $K_p\subset \bbf{G}(\bb{Q}_p)$ be a compact open subgroup  stabilizing $V$ and $U\subset \Shan^{\tor}_{K^pK_p,C}$ open affinoid  such that $\pi_{K_p}^{-1}(U)\subset \pi^{\tor}_{\HT}(V)$.  Finally, we assume that $U$ admits a toric chart, namely, that there is a map $\psi: U\to \bb{T}^e_{C}\times \bb{D}^{d-e}_C $, for some $0\leq e\leq d$, that factors as a composite of finite \'etale maps and rational localizations, and such that $U$ has the log structure obtained from  $ \bb{T}^e_{C}\times \bb{D}^{d-e}_C$ by pullback.

\begin{prop}\label{LemmaLongExactSequenceOlaSheaves} Let $U\subset \Shan^{\tor}_{K^pK_p,C}$ be as above, and let $C^{la}(\widetilde{K}_p,\bb{Q}_p)_{\ket}$ be the pro-Kummer-\'etale sheaf over $\Shan^{\tor}_{K^pK_p,L}$ associated to the left regular representation of the locally analytic functions seen as a colimit of $p$-complete sheaves.  The following hold
 
 \begin{enumerate} 
 
 \item  For all $J\subset I$ there are natural quasi-isomorphisms 

\[
R\Gamma_{\proket}(U, \iota_{K_p,J.*} \widehat{\s{O}}_J \widehat{\otimes}_{\bb{Q}_p} C^{la}(\widetilde{K}_p, \bb{Q}_p)_{\ket}) \cong  \s{O}_J^{la}(\pi_{K_p}^{-1}(U)\cap D_{\infty,J}).
\]

\item We have a long exact sequence
\begin{equation}\label{eqLongExactSequneceIla}
0\to \n{I}^{la}_{\Shan}(\pi_{K_p}^{-1}(U))\to \s{O}^{la}_{\Shan}(\pi_{K_p}^{-1}(U))\to \cdots \to \s{O}^{la}_{I}(\pi_{K_p}^{-1}(U)\cap D_{\infty,I})\to 0.
\end{equation}
In particular, we also have that 
\begin{equation}\label{equationVanishingIlaProp}
R\Gamma_{\proket}(U, \widehat{\n{I}}_{\Shan} \widehat{\otimes}_{\bb{Q}_p} C^{la}(\widetilde{K}_p, \bb{Q}_p)_{\ket}) \cong  \n{I}_{\Shan}^{la}(\pi_{K_p}^{-1}(U)).
\end{equation}

\end{enumerate}
\end{prop}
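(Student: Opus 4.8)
The plan is to deduce everything from geometric Sen theory (Theorem~\ref{TheoMainSenTheory}) and the condition (BUN), which holds for the tower $\pi^{\tor}_{K_p}$ by Theorem~\ref{TheoComputationSenOperator}, via the decompletion results of \cite[\S 3.4]{RCGeoSenTheory}. The key input is the following: for a pro-Kummer-\'etale $G$-torsor $Y_\infty\to Y$ satisfying Condition~\ref{ConditionBUN}, an affinoid $W\subset Y$ admitting a toric chart and lying over an open where the relevant equivariant bundles are free, and an ON relative locally analytic $\widehat{\s{O}}_Y$-sheaf $\s{F}$, the complex $R\Gamma_{\proket}(W,\s{F}\widehat{\otimes}_{\bb{Q}_p}C^{la}(G,\bb{Q}_p)_{\ket})$ is concentrated in degree $0$ and equals the $K_W$-locally analytic sections $\s{F}(\pi^{-1}(W))^{K_W-la}$. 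The vanishing of higher cohomology is a consequence of Theorem~\ref{TheoMainSenTheory}(2): the Sen operator of $\s{F}\widehat{\otimes}C^{la}(G)_{\ket}$ factors through $\id\otimes\theta_{Y_\infty}$, and (BUN) makes the $C^{la}$-factor of the associated Higgs complex acyclic in positive degrees. We apply this to $\pi^{\tor}_{K_p}$ and to its restrictions to the boundary strata.

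For part~(1), fix $J\subset I$. Since $\Shan^{\tor}_{K^pK_p',C}\to\Shan^{\tor}_{K^pK_p,C}$ is finite Kummer-\'etale, the reduced pullback $D_{K_p',J}$ is the honest base change of $D_{K_p,J}$, so $D_{\infty,J}=D_{K_p,J}\times_{\Shan^{\tor}_{K^pK_p,C}}\Shan^{\tor}_{K^p,\infty,C}$ is a pro-Kummer-\'etale $\widetilde K_p$-torsor over $D_{K_p,J}$ (with its induced log structure) and $C^{la}(\widetilde K_p)_{\ket}|_{D_{K_p,J}}$ is its associated local system. As $R\Gamma_{\proket}(U,\iota_{K_p,J,*}(-))=R\Gamma_{\proket}(U\cap D_{K_p,J},(-))$, it suffices to treat $\widehat{\s{O}}_{J}\widehat{\otimes}C^{la}(\widetilde K_p)_{\ket}|_{D_{K_p,J}}$ over $U\cap D_{K_p,J}$. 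By functoriality of the geometric Sen operator under pullback along the closed immersion $D_{K_p,J}\hookrightarrow\Shan^{\tor}_{K^pK_p,C}$ of log smooth adic spaces (Theorem~\ref{TheoMainSenTheory}(1)) together with Theorem~\ref{TheoComputationSenOperator}, the Sen operator of $D_{\infty,J}\to D_{K_p,J}$ is the pullback under $\pi^{\tor}_{\HT}|_{D_{K_p,J}}$ of $\f{g}^{c,0,\vee}\to\f{n}^{c,0,\vee}_{\mu}$, post-composed with Kodaira--Spencer and with the natural surjection $\Omega^1_{\Shan}(\log)|_{D_{K_p,J}}\twoheadrightarrow\Omega^1_{D_{K_p,J}}(\log)$ obtained by restricting log differentials to the stratum; being a composite of surjections (using $\pi^{\tor}_{\HT}(\pi_{K_p}^{-1}(U)\cap D_{\infty,J})\subset V$) it is surjective, so (BUN) holds for $D_{\infty,J}\to D_{K_p,J}$. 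The toric chart on $U$ restricts to one on $U\cap D_{K_p,J}$ (cutting out the disc coordinates indexed by $J$) and the freeness hypotheses descend from $V$, so the decompletion theorem applies and yields the identification with $\s{O}^{la}_J(\pi_{K_p}^{-1}(U)\cap D_{\infty,J})$ in degree $0$; the case $J=\emptyset$ is the statement for $\pi^{\tor}_{K_p}$ itself.

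For part~(2), tensor the exact sequence of pro-Kummer-\'etale sheaves \eqref{eqExactSequenceBoundaryO} of Lemma~\ref{LemmaLongExactSequenceOBoundary} over $\bb{Q}_p$ with the flat coefficient $C^{la}(\widetilde K_p)_{\ket}$ (exactness is checked modulo $p^s$ using that $C(\widetilde K_p,\bb{Z}/p^s)$ is $\bb{Z}/p^s$-free, then one passes to colimits and derived limits), and apply $R\Gamma_{\proket}(U,-)$. By part~(1) each term is concentrated in degree $0$, so $R\Gamma_{\proket}(U,\widehat{\n{I}}_{\Shan}\widehat{\otimes}C^{la}(\widetilde K_p)_{\ket})$ is represented by the complex $[\s{O}^{la}_{\Shan}(\pi_{K_p}^{-1}(U))\to\bigoplus_{a}\s{O}^{la}_a(\ldots)\to\cdots\to\s{O}^{la}_I(\ldots)]$ in degrees $0,\dots,\#I$, whose $H^0$ equals $\n{I}^{la}_{\Shan}(\pi_{K_p}^{-1}(U))$ (taking $K_U$-locally analytic vectors of the defining kernel, using left-exactness of $(-)^{la}$ and its commutation with finite direct sums). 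To see this complex is concentrated in degree $0$ — which yields both \eqref{equationVanishingIlaProp} and the exactness of \eqref{eqLongExactSequneceIla} — note that $\widehat{\n{I}}_{\Shan}$ is itself an ON relative locally analytic $\widehat{\s{O}}_{\Shan}$-sheaf, being the ideal sheaf of the boundary divisor (locally in the pro-Kummer-\'etale topology it is free of rank one over $\widehat{\s{O}}_{\Shan}$, via multiplication by a product of boundary coordinates); since $\pi^{\tor}_{K_p}$ satisfies (BUN), the decompletion theorem applied to $\widehat{\n{I}}_{\Shan}\widehat{\otimes}C^{la}(\widetilde K_p)_{\ket}$ over $U$ gives $R\Gamma_{\proket}(U,\widehat{\n{I}}_{\Shan}\widehat{\otimes}C^{la}(\widetilde K_p)_{\ket})=\widehat{\n{I}}_{\Shan}(\pi_{K_p}^{-1}(U))^{K_U-la}=\n{I}^{la}_{\Shan}(\pi_{K_p}^{-1}(U))$ in degree $0$.

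The main obstacle is checking that (BUN) propagates to the boundary strata: one must confirm that the geometric Sen operator is compatible with pullback along the closed immersions $D_{K_p,J}\hookrightarrow\Shan^{\tor}_{K^pK_p,C}$ of log smooth adic spaces and that the resulting composite of surjections indeed lands onto all of $\widehat{\s{O}}_{J}(-1)\otimes\Omega^1_{D_{K_p,J}}(\log)$, together with the bookkeeping that the toric-chart and freeness hypotheses genuinely restrict from $(U,V)$ to each stratum. (As an alternative to invoking the decompletion theorem for $\widehat{\n{I}}_{\Shan}$ in part~(2), one could prove the exactness of \eqref{eqLongExactSequneceIla} directly from the strict exactness of the $\widehat{\s{O}}$-sections of \eqref{eqExactSequenceBoundaryO} over $\pi_{K_p}^{-1}(U)$ and the exactness of the derived locally analytic vectors functor, deducing \eqref{equationVanishingIlaProp} afterwards.) Everything else is formal once the Sen operator computations of Section~\ref{Section:SenOperators} and the machinery of \cite{RCGeoSenTheory} are available.
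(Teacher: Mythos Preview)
Your strategy for part~(1) is close in spirit to the paper's but takes a genuinely different route: you restrict to the boundary stratum $D_{K_p,J}$, verify (BUN) for the induced tower $D_{\infty,J}\to D_{K_p,J}$, and invoke a packaged decompletion theorem. The paper instead stays on the ambient Shimura variety and treats $\iota_{K_p,J,*}\widehat{\s{O}}_J$ directly: it passes to the log affinoid perfectoid cover $\widetilde U_{\infty,J}$, invokes \cite[Proposition~3.2.3]{RCGeoSenTheory} to get an abstract Sen theory for $\widehat{\s{O}}_J(\widetilde U_{p^\infty,J})$, and then computes the $\f n^0_\mu$-cohomology by an explicit Poincar\'e-lemma argument on Stein neighbourhoods $\mathring{\bb X}_r$ of $1$ inside $V\times\bb G_h$. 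Your route is more conceptual but the obstacle you flag is exactly the point: the functoriality clause in Theorem~\ref{TheoMainSenTheory}(1) is stated for pullbacks between log smooth spaces, and you must check it applies to the closed immersion $D_{K_p,J}\hookrightarrow\Shan^{\tor}_{K^pK_p,C}$ with the \emph{induced} log structure, and that the Kodaira--Spencer map followed by the quotient $\Omega^1_{\Shan}(\log)|_{D_{K_p,J}}\twoheadrightarrow\Omega^1_{D_{K_p,J}}(\log)$ really identifies the Sen operator of the restricted tower. The paper's approach sidesteps this by never leaving the ambient pro-Kummer-\'etale site.

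There is, however, a genuine error in your argument for part~(2). The sheaf $\widehat{\n I}_{\Shan}$ is \emph{not} locally free of rank one over $\widehat{\s O}_{\Shan}$ in the pro-Kummer-\'etale topology. On a log affinoid perfectoid over $U$ one has adjoined all $p$-power roots $S_j^{1/p^n}$ of the boundary coordinates, and the kernel of evaluation at the boundary then contains the strictly increasing chain of principal ideals $(S_j)\subsetneq(S_j^{1/p})\subsetneq(S_j^{1/p^2})\subsetneq\cdots$; their union is the relevant ideal and it is not principal. So you cannot trivialise $\widehat{\n I}_{\Shan}$ by a $\widetilde K_p$-invariant generator and reduce to the $J=\emptyset$ case. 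This is precisely why the paper argues differently: it works at the \emph{decompleted} level, where the boundary ideal \emph{is} principal. Concretely, for each $r$ the paper produces unique Sen-theoretic decompletions $S_{K_p'(r),n}(\s F_{J,r})$ over a finite-level ring $A_{\widetilde K_p'(r),n}$, observes that these are obtained from $S_{K_p'(r),n}(\s F_r)$ by tensoring with $A_{\widetilde K_p'(r),n,J}$, and then tensors the finite-level exact sequence $A_{\widetilde K_p'(r),n}\to\bigoplus_a A_{\widetilde K_p'(r),n,a}\to\cdots$ with the ON Banach module $S_{K_p'(r),n}(\s F_r)$. Because the Sen operators vanish on $\s F_{J,r}$ by construction, the $K_p(r)\times\Gamma_p$-action on the decompletions is smooth, so taking invariants preserves exactness; passing to the colimit in $r$ gives \eqref{eqLongExactSequneceIla}. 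Your parenthetical alternative (apply $(-)^{Rla}$ to the sequence of sections of \eqref{eqExactSequenceBoundaryO}) is closer to being salvageable, but it still requires knowing that each $\widehat{\s O}_J(\widetilde U_J)$ has no higher derived locally analytic vectors \emph{and} that the section sequence over $\widetilde U$ is exact to begin with --- both of which amount to re-proving part~(1) and a piece of part~(2), so it is not a shortcut.
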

\begin{proof}

The vanishing of the higher cohomology groups in part (1) is \cite[Proposition 3.2.7]{RCGeoSenTheory} for $J=\emptyset$ and \cite[Theorem 3.4.5]{RCGeoSenTheory} for general $J$. The exactness of the long exact sequence of part (2) is   \cite[Theorem 3.4.5]{RCGeoSenTheory} (more precisely, a direct consequence of its proof). For completeness of the paper,  we add the argument of part (1) which is the most interesting between the two. Indeed, part (2) is a by-product of the proof of part (1) after a more careful bookkeeping of the decompletions provided by  geometric Sen theory. 

\begin{itemize}

\item Let $T_1,\ldots, T_e$ be the coordinates of $\bb{T}^e_C$, similarly we let $S_{e+1},\ldots, S_d$ be the coordinates of $\bb{D}^{d-e}_{C}$.

\item  For $n\in \bb{N}$ we let $\bb{T}^{e}_{n,C}$ be the $e$-dimensional torus of coordinates $T_{1}^{1/n}, \ldots, T^{1/n}_{e}$, similarly we let $\bb{D}^{d-e}_{n,C}$ be the $(d-e)$-dimensional polydisc of coordinates $S_{e+1}^{1/n}, \ldots, S^{1/n}_{d}$.  

\item  We let $\bb{T}^{e}_{\infty,C}= \varprojlim_{n} \bb{T}^{e}_{n,C}$ and $\bb{D}^{d-e}_{\infty,C}=\varprojlim_{n} \bb{D}_{n,C}^{d-e}$ be the perfectoid torus and polydisc respectively. 

\item We let $\Gamma$ be the pro-Kummer-\'etale Galois group of $\bb{T}^e_{\infty,C}\times \bb{D}^{d-e}_{\infty,C}\to \bb{T}^e_{C}\times \bb{D}^{d-e}_C$. After fixing a sequence of  power roots of unit we have an isomorphism $\Gamma\cong \widehat{\bb{Z}}^d$ given by the action on coordinates. 

\item Finally, given $J\subset \{e+1,\ldots, d\}$ a finite subset we write $\Gamma=\Gamma_{J^c}\times \Gamma_{J}$, where $\Gamma_{J}$ is the Galois group associated to the coordinates $\{S_j\colon j\in J\}$.

\end{itemize}

Next we define the following pro-Kummer-\'etale objects over $U$

\begin{itemize}

\item  The pro-Kummer-\'etale $\widetilde{K}_p$-torsor $\widetilde{U}=\pi_{K_p}^{-1}(U)\to U$. For $K_p'\subset K_p$ we let $U_{K_p'}\to U$ be the quotient $U_{K_p'}=\widetilde{U}/K_p'$ in the pro-Kummer-\'etale site of $U$.

\item The finite Kummer-\'etale covers $U_n= U\times_{(\bb{T}^e_C \times \bb{D}^{d-e}_C)}(\bb{T}^e_{n,C} \times \bb{D}^{d-e}_{n,C})$.

\item The pro-Kummer-\'etale $\Gamma$-torsor $U_{\infty}=\varprojlim_n U_n$.

\item For $J\subset \{e+1,\ldots, d\}$ we write $\Gamma_{J}=\Gamma_J^p\times \Gamma_{J,p}$ as a product of its prime-to-$p$ part and its pro-$p$-Sylow subgroup. 

\item  The pro-Kummer-\'etale $\widetilde{K}_p\times \Gamma$-torsor $\widetilde{U}_{\infty}= \widetilde{U}\times_U U_{\infty} \to U$, resp. the $\widetilde{K}_p\times \Gamma_p$-torsor $\widetilde{U}_{p^\infty}= \widetilde{U}\times_U U_{p^\infty} \to U$. We also write $U_{K_p',p^n}= \widetilde{U}_{p^{\infty}}/ (\widetilde{K}_p'\times \Gamma_p^{p^n})$ as quotients in the pro-Kummer-\'etale site of $U$. 

\end{itemize}
Note that the objects $\widetilde{U}_{\infty}$ and $U_{\infty}$ are log affinoid perfectoid in the sense of  \cite[Definition 5.3.1]{DiaoLogarithmic2019}.

Finally, we introduce the last notation that takes care of the boundary.  For all $J\subset \{e+1,\ldots, d\}$ we let $U_{J}\to U$ be the Zariski closed subspace with vanishing locus $\{S_j=0 \colon j\in J\}$ endowed with the induced log structure. For $V\in U_{\proket}$ an object in the pro-Kummer-\'etale site of $U$, we let $V_{J}=U_J\times_{U} V$ be the pullback to an object in the pro-Kummer-\'etale site of $U_J$.

\textbf{Proof of part (1)}  Since $U$ has a toric chart $\psi$, we can assume that $I=\{e+1,\ldots, d\}$ indexes the variables of the polydisc. Let  $J\subset I$. We want to prove that the natural map 
\begin{equation}\label{eqComputationCohoCla}
\s{O}^{la}_J(\widetilde{U}_J)\to R\Gamma_{\proket}(U_J, \widehat{\s{O}}_J \widehat{\otimes}_{\bb{Q}_p} C^{la}(\widetilde{K}_p, \bb{Q}_p)_{\ket})
\end{equation}
is a quasi-isomorphism.  We proceed in different steps.

\textit{Step 1.} We rewrite the right hand side term of \eqref{eqComputationCohoCla} in terms of continuous group cohomology of an ind-Banach representation.  We claim that 
\begin{equation}\label{eqacyclicitylogPerfectoid}
R\Gamma_{\proket}(\widetilde{U}_{\infty, J},  \widehat{\s{O}}_J \widehat{\otimes}_{\bb{Q}_p} C^{la}(\widetilde{K}_p, \bb{Q}_p)_{\ket})= \widehat{\s{O}}_J(\widetilde{U}_{\infty, J})\widehat{\otimes}_{\bb{Q}_p} C^{la}(\widetilde{K}_p,\bb{Q}_p)
\end{equation}
as $\widetilde{K}_p\times \Gamma$-representation, where $\widetilde{K}_p$ acts diagonally  via the left regular action and $\Gamma$ only on the left term of the tensor.  Indeed, we can write $C^{la}(\widetilde{K}_p,\bb{Q}_p)=\varinjlim_{h} C(\bb{G}^{(h)}, \bb{Q}_p)$ as a colimit of analytic functions, where $\bb{G}^{(h)}\subset \bb{G}$ are rigid analytic groups whose intersection is $\widetilde{K}_p$, see \cite[Definition 2.1.4]{RRLocAnII}. Therefore, the sheaf $\s{G}_{J,K_p}= \widehat{\s{O}}_J \widehat{\otimes}_{\bb{Q}_p} C^{la}(\widetilde{K}_p, \bb{Q}_p)_{\ket}$  is a colimit of ON $\widehat{\s{O}}_J$-Banach sheaves $\s{G}_{J,K_p}=\varinjlim_{h} \s{G}_{J,K_p,h}$ whose restrictions to $\widetilde{U}_{\infty, J}$ are isomorphic to $\widehat{\bigoplus}_{\bb{N}} \widehat{\s{O}}_J$, and therefore relative locally analytic as in \cite[Definition 3.2.1]{RCGeoSenTheory}. 

By \cite[Lemma 5.3.8]{DiaoLogarithmic2019} there is an equivalence of topoi $\widetilde{U}_{\infty,J,\proet}^{\sim}\cong  \widetilde{U}_{\infty,J,\proket}^{\sim}$. Therefore, the almost acyclicity of $\s{O}^+_{J}/p$ on $\widetilde{U}_{\infty,J,\proet}^{\sim}$  \cite[Lemma 4.12]{ScholzeHodgeTheory2013} yields  the quasi-isomorphism \eqref{eqacyclicitylogPerfectoid}. 

Since $\widetilde{U}_{\infty,J}\to U_J$ is a $\widetilde{K}_p\times \Gamma$-torsor, there is a quasi-isomorphism 
\begin{equation}\label{eqwpjwepfoqw}
 R\Gamma_{\proket}(U_J, \s{G}_{J,K_p})\cong R\Gamma(\widetilde{K}_p\times \Gamma, \widehat{\s{O}}_J(\widetilde{U}_{\infty, J})\widehat{\otimes}_{\bb{Q}_p} C^{la}(\widetilde{K}_p,\bb{Q}_p))
\end{equation}
where the right hand side term is continuous group cohomology. It is clear that the $H^0$-cohomology group of the right hand side is $\s{O}^{la}_J(\widetilde{U}_J)$ (by first taking $\Gamma$-invariants and then $\widetilde{K}_p$-invariants). Therefore, we  only need to show the vanishing of the higher cohomology groups of \eqref{eqwpjwepfoqw}.

The following remark will be used later in the proof of Theorem \ref{TheoArithSenOla}.

\begin{remark}\label{RemaSimplification}
One can simplify the cohomology $ R\Gamma(\widetilde{K}_p\times \Gamma, \s{G}_{J,K_p}(\widetilde{U}_{\infty,J}))$ as  follows: since $U_{\infty,J}$ is log affinoid perfectoid, the almost acyclicity of $\s{O}^+/p$ yields 
\[
R\Gamma(\widetilde{K}_p\times \Gamma, \s{G}_{J,K_p}(\widetilde{U}_{\infty,J}))= R\Gamma(\Gamma, \s{G}_{J,K_p}(U_{\infty,J})).
\]
On the other hand, writing $\Gamma=\Gamma^p\times \Gamma_p$, with $\Gamma^p\cong \widehat{\bb{Z}}^{(p),d}$  having no pro-$p$-Sylow subgroups, the existence of a $p$-adic Haar measure of $\Gamma^p$   implies that $\Gamma^p$-invariants is exact in $p$-adic representations. A  Hochschild-Serre spectral sequence yields 
\[
R\Gamma(\Gamma, \s{G}_{J,K_p}(U_{\infty,J}))= R\Gamma(\Gamma_{p},  \s{G}_{J,K_p}(U_{p^{\infty},J})). 
\]
Similarly, one can also write 
\[
R\Gamma(\widetilde{K}_p\times \Gamma, \s{G}_{J,K_p}(\widetilde{U}_{\infty,J}))=R\Gamma(\widetilde{K}_p\times \Gamma_p, \s{G}_{J,K_p}(\widetilde{U}_{p^{\infty},J})). 
\]
\end{remark}

\textit{Step 2.} Now we use geometric Sen theory.  By \cite[Proposition 3.4.1]{RCGeoSenTheory} (and more precisely, its proof), we have that 
\begin{equation}\label{eqpwepgnqwofq}
H^i_{\proket}(U_J, \s{G}_{J,K_p})=H^0_{\proket}( H^i(\theta_{\s{G}}, \s{G}_{J,K_p} ) )
\end{equation}
where $\theta_{\s{G}}$ is the  geometric Sen operator of $\s{G}_{K_p}= \widehat{\s{O}}_{\Shan} \widehat{\otimes}_{\bb{Q}_p} C^{la}(\widetilde{K}_p, \bb{Q}_p)_{\ket}$.   Thus, it suffices to show that $ H^i(\theta_{\s{G}}, \s{G}_{J,K_p} ) =0$ for $i\geq 1$, and it would be enough to prove the vanishing after evaluating at $\widetilde{U}_{\infty,J}$. This  follows essentially  from  the Poincar\'e-Birkhoff-Witt theorem, after finding a complementary basis to the geometric Sen operator.  To prove it, it is more convenient to rewrite  the cohomology group $H^i_{\proket}(U_J, \s{G}_{J,K_p})$ as a colimit of group cohomologies as $K_p\to 1$. Namely, by \eqref{eqwpjwepfoqw} and \eqref{eqpwepgnqwofq} we have that 
\[
H^i_{\proket}(U_J, \s{G}_{J,K_p})= \varinjlim_{K_p'\subset K_p} H^i(\widetilde{K}_p'\times \Gamma,  \s{G}_{J,K_p'}(\widetilde{U}_{\infty,J}))= \varinjlim_{K_p'\subset K_p} H^i(\theta_{\s{G}},  \s{G}_{J,K_p'}(\widetilde{U}_{\infty,J}))^{\widetilde{K}_p'\times \Gamma}
\]
where in the first equality we use Shapiro's lemma since $\mathrm{Ind}_{K_p'}^{K_p}\big(\s{G}_{J,K_p'}(\widetilde{U}_{\infty})\big)=\s{G}_{J,K_p}(\widetilde{U}_{\infty,J})$ by construction.  So, denoting $\s{G}_{J,\f{g}}(\widetilde{U}_{\infty,J}):=\varinjlim_{K_p'\subset K_p} \s{G}_{J,K_p'}(\widetilde{U}_{\infty,J})$, we have to show that $H^i(\theta_{\s{G}}, \s{G}_{J,\f{g}}(\widetilde{U}_{\infty,J}))=0$ for $i\geq 1$. By the definition of $\s{G}_{J,K_p}$, we have the presentation 
 \[
 \s{G}_{J,\f{g}}(\widetilde{U}_{\infty,J})=\widehat{\s{O}}_J(\widetilde{U}_{J,\infty})\widehat{\otimes}_{\bb{Q}_p} C^{la}(\widetilde{\f{g}}, \bb{Q}_p),
 \]
 where $C^{la}(\widetilde{\f{g}}, \bb{Q}_p)=\varinjlim_{K_p'\subset K_p} C^{la}(\widetilde{K}_p', \bb{Q}_p)$ is the space of germs of locally analytic functors at $1$ of $\widetilde{K}_p$.  Summarizing, we need to prove that 
 \begin{equation}\label{eqpkwpemfpqwd}
 H^{i}(\theta_{\s{G}},\widehat{\s{O}}_J(\widetilde{U}_{\infty,J})\widehat{\otimes}_{\bb{Q}_p} C^{la}(\widetilde{\f{g}}, \bb{Q}_p) )= 0 \mbox{ for } i\geq 1.
 \end{equation}

\textit{Step 3.} In order to prove \eqref{eqpkwpemfpqwd}, we have to  consider  some rigid analytic varieties over $V\subset \Fl$.  The following constructions are in the same spirit as those of \cite[The\'or\`eme 6.1]{BC2}.

Let $g=\dim_{\bb{Q}_p} \widetilde{\f{g}}$ and  let $\f{H}_1,\ldots, \f{H}_g$ be a basis of $\widetilde{\f{g}}$ over $\bb{Q}_p$. Then, for $h\gg 0$ the exponential of the $\bb{Z}_p$-lattice $\n{K}_h$ generated by $\{p^h\f{H}_k\}_k$ defines an  affinoid group $\bb{G}_h$ isomorphic to a closed polydisc of dimension $g=\dim_{\bb{Q}_p} \widetilde{K}_p$, see \cite[Definition 2.1.4]{RRLocAnII}.  The groups $\{\bb{G}_h\}_{h}$ form a decreasing sequence of affinoid groups, for $h\gg 0$ we can also define open Stein groups $\mathring{\bb{G}}_{h}=\bigcup_{h'>h} \bb{G}_{h'}$ by taking the union of strictly smaller affinoid groups.  Let $C(\bb{G}_h,\bb{Q}_p)$ and $C(\mathring{\bb{G}}_h,\bb{Q}_p)$ be the spaces of functions of $\bb{G}_h$ and $\mathring{\bb{G}}_h$ respectively.  We have that 
\[
C^{la}(\widetilde{\f{g}}, \bb{Q}_p)=\varinjlim_{h\to \infty} C(\bb{G}_h,\bb{Q}_p)= \varinjlim_{h\to \infty} C(\mathring{\bb{G}}_h,\bb{Q}_p). 
\] 

Hence, for $h'>h\gg 0$ we have rigid analytic varieties over $V$:
\[
V\times \mathring{\bb{G}}_{h'}\subset V\times \bb{G}_{h'}\subset V\times \mathring{\bb{G}}_{h}\subset V\times \bb{G}_{h}. 
\]
Note that for a fixed $h>0$ there is some compact open subgroup $K_p'(h)\subset K_p$ (depending on $h$) acting on $V\times \bb{G}_{h}$ (resp. $V\times \mathring{\bb{G}}_{h}$) via the natural action on $V$, and the left multiplication on the group. Therefore, the spaces of functions 
\[
\s{O}(V\times \bb{G}_h)=\s{O}(V)\widehat{\otimes}_{\bb{Q}_p} C(\bb{G}_h, \bb{Q}_p)
\]
and 
\[
\s{O}(V\times \mathring{\bb{G}}_h)=\s{O}(V)\widehat{\otimes}_{\bb{Q}_p} C(\mathring{\bb{G}}_h, \bb{Q}_p)
\]
are naturally endowed with an action of $\widetilde{K}_p'(h)\subset \widetilde{K}'_{p}$ which is the natural action on $\s{O}(V)$ and the left regular action on the functions of the groups.

Now, let $\{\f{X}_1,\ldots, \f{X}_d\}$  and $\{\f{Y}_{d+1}, \cdots, \f{Y}_{g}\}$ be a basis of $\f{n}^0_{\mu} $  and a complement basis  in  $\widetilde{\f{g}}^0=\s{O}_{\Fl}\otimes \widetilde{\f{g}}$ over $V$ respectively. Since the Lie algebra $\f{n}_{\mu}\subset \f{g}$ is abelian, we can take the basis $\{\f{X}_s\}$ such that $[\f{X}_i,\f{X}_j]=0$ for all $1 \leq i,j\leq d$.

   For $r\gg 0$ the basis $\{p^r\f{X}_s, p^r\f{Y}_{k}\}_{s,k}$ of $\f{g}^0$ admits an exponential defining an affinoid rigid analytic  variety $\bb{X}_{r}$ over $V$ isomorphic to a polydisc of dimension $g$. Indeed, if $\n{L}_r$ is the $\s{O}^+(V)$-lattice generated by $\{p^r\f{X}_s, p^r\f{Y}_{k}\}_{s,k}$, for a given $h$ there is some $r$ big enough with $\n{L}_r\subset \s{O}^+(V)\otimes_{\bb{Z}_p} \n{K}_h$. Then, if $\bb{D}_{\bb{Q}_p}(\n{K}_h)$ is the closed polydisc over $\bb{Q}_p$ defined by the lattice $\n{K}_h$,  the exponential map 
\[
\exp: V\times \bb{D}_{\bb{Q}_p}(\n{K}_h) \xrightarrow{\sim } V\times \bb{G}_h
\]
identifies the polydisc   $\bb{D}_V(\n{L}_r)\subset  V\times \bb{D}_{\bb{Q}_p}(\n{K}_h)$ over $V$  with an  open subspace $\bb{X}_r\subset V\times \bb{G}_h$. Hence, we have a decreasing families of affinoid rigid spaces $\{\bb{X}_r\}_{r}$ which is final with respect to $\{V\times \bb{G}_h\}_h$.  We shall also consider Stein versions of the varieties $\mathring{\bb{X}}_{r}=\bigcup_{r'>r} \bb{X}_r$. Thus, for a fixed $r$ there is some compact open subgroup $\widetilde{K}_{p}(r)\subset \widetilde{K}_p$ acting on $\bb{X}_{r}$ and $\mathring{\bb{X}}_{r}$ by left multiplication. This endows $\s{O}(\bb{X}_{r})$ and $\s{O}(\mathring{\bb{X}}_{r})$ with a natural continuous locally analytic  action of $K_p(r)$.

 Finally, the basis $\{p^r\f{X}_s, p^r\f{Y}_k\}$ of  the lattice $\n{L}_r$ gives rise  decompositions as rigid spaces $\bb{X}_r=\bb{X}_r^1\times \bb{X}_r^2$ (resp. $\mathring{\bb{X}}_r= \mathring{\bb{X}}_{r}^1\times \mathring{\bb{X}}_r^2$) with each term isomorphic to an affinoid closed  polydisc (resp. an open polydisc) over $V$.

\textit{Step 4.} We now prove \eqref{eqpkwpemfpqwd}.  Set $A=\widehat{\s{O}}_J(\widetilde{U}_{\infty,J})$.    By Theorem \ref{TheoComputationSenOperator}, the action of the geometric Sen operator $\theta_{\s{G}}$ arise from the natural action by $A$-linear derivations  of $\f{n}^0_{\mu}$  via the map $\f{n}^0_{\mu} \to \widetilde{\f{g}}^0$. We want to compute the $\f{n}^0_{\mu}$-cohomology of $A\widehat{\otimes}_{\bb{Q}_p} C^{la}(\widetilde{\f{g}}, \bb{Q}_p)$, where $\widetilde{\f{g}}^0$ acts via  the $A$-linear extension of the derivation for the left regular action. By Step 3 we can write 
\begin{equation}\label{eqcolimitRigidX}
A\widehat{\otimes}_{\bb{Q}_p} C^{la}(\widetilde{\f{g}}, \bb{Q}_p)= \varinjlim_{r\to \infty} A\widehat{\otimes}_{\s{O}(V)}\s{O}(\bb{X}_r)=\varinjlim_{r\to \infty} A\widehat{\otimes}_{\s{O}(V)}\s{O}(\mathring{\bb{X}}_r) 
\end{equation}
where  the tensor product of the third term is a projective tensor product  of Fr\'echet spaces (isomorphic to  the global sections of a  closed polydisc relative to the perfectoid algebra $A$).

To finish the proof of part (1) it suffices to show the following lemma:

\begin{lem}\label{LemmaVanishingCohomologyStein}
Let $r\gg 0$, then there is a  quasi-isomorphism 
\begin{equation}\label{eqVanishingCohomologyOXring}
R\Gamma(\f{n}^0_{\mu}, \s{O}(\mathring{\bb{X}}_r))\cong \s{O}(\mathring{\bb{X}}_r^2). 
\end{equation}

\end{lem}
\begin{proof}
 Write $\mathring{\bb{X}}_r=\mathring{\bb{X}}_r^{1}\times \mathring{\bb{X}}^2_{r}$ as product of the Stein spaces obtained as the exponential of $\{p^r\f{X}_{s}\}_s$ and $\{p^r\f{Y}_k\}_{k}$  respectively. This allow us to write a point $x\in \bb{X}_r$ as 
\[
x(t_1,\ldots, t_g)= \exp(t_1 \f{X}_1)\cdots \exp(t_d \f{X}_d) \exp(t_{d+1} \f{Y}_{d+1})\cdots \exp(t_g\f{Y}_g)
\]
with $|t_i|<|p^r|$.

 By the choice of the basis $\{\f{X}_{s}\}_s$, we have that $[\f{X}_{i},\f{X}_j]=0$ for all $1\leq i,j\leq d$.  Therefore, the Lie algebra action of $\f{n}^0_{\mu}$ on $\s{O}(\mathring{\bb{X}}_r)\cong \s{O}(\mathring{\bb{X}}_r^1)\widehat{\otimes}_{\s{O}(V)}(\mathring{\bb{X}}_r^2)$ is nothing but the natural action by derivations with respect to the variables $\{t_1,\ldots, t_d\}$. Then, we obtain \eqref{eqVanishingCohomologyOXring} by the Poincar\'e lemma for open polydiscs \cite[Lemma 26]{Tamme}.  
\end{proof}

By Step (3) and \eqref{eqcolimitRigidX} we have
\[
H^i(\theta_{\Shan}, A\widehat{\otimes}_{\bb{Q}_p} C^{la}(\widetilde{\f{g}}, \bb{Q}_p))\cong \varinjlim_{r\to \infty} H^i(\f{n}^0_{\mu}, A\widehat{\otimes}_{\s{O}(V)} \s{O}(\mathring{\bb{X}}_r) ).
\]
Lemma \ref{LemmaVanishingCohomologyStein} shows that 
\[
R\Gamma(\f{n}^0_{\mu}, A\widehat{\otimes}_{\s{O}(V)} \s{O}(\mathring{\bb{X}}_r) )\cong A\widehat{\otimes}_{\s{O}(V)} \s{O}(\mathring{\bb{X}}_r^2),
\]
in particular that the cohomology groups of \eqref{eqpkwpemfpqwd}  vanish for $i>0$, proving what we wanted. 
\end{proof}

\begin{proof}[Proof of Theorem \ref{TheoMainVanishingOla}]
 In the following proof we consider pro-Kummer-\'etale cohomologies as objects in the derived category of solid abelian groups thanks to Lemma \ref{LemmaSolidEnhancement}. The $\widehat{\otimes}$-tensor products  between Banach or $p$-adically complete sheaves will be $p$-adically completed (these are the same as the solid tensor products by \cite[Lemma 3.13]{RRLocallyAnalytic} and \cite[Proposition 2.12.10]{mann2022padic}).  We  consider almost mathematics with respect to the maximal ideal of $\n{O}_C$.

\textit{Step 1.} Let us first rewrite the completed cohomologies of the left hand side terms of \eqref{eqIso1LocAn} and \eqref{eqIso2LocAn}. Consider the resolution 
\begin{equation}
\label{eqResolutionlowerShierk}
0\to j_{K_p,!}\bb{Z}_p \to \bb{Z}_p \to \prod_{a\in I} \iota_{K_p,a,*} \bb{Z}_p \to \cdots \to \prod_{\substack{J\subset I  \\  |J|=k} } \iota_{K_p,J,*} \bb{Z}_p \to \cdots \to \iota_{K_p,I,*} \bb{Z}_p\to 0,
\end{equation}
where $\iota_{J}: D_{K_p, J}\to \Shan^{\tor}_{K^pK_p,C}$ and $j_{K_p}:\Shan_{K^pK_p,C}\to \Shan^{\tor}_{K^pK_p,C}$.  By Lemma \ref{LemmaLongExactSequenceOBoundary} we have a long exact sequence
\begin{equation}
\label{eqResolutionOhatLowerShrieck}
0\to \widehat{\n{I}}^+_{\Shan} \to \widehat{\s{O}}^+_{\Shan} \to \cdots \to \prod_{\substack{J\subset I \\ |J|=k} } \iota_{K_p,J,*}\widehat{\s{O}}^+_{J} \to \cdots \to \iota_{K_p,I,*} \widehat{\s{O}}^+_{I}\to 0.
\end{equation}
  By the primitive comparison theorem \cite[Theorem 6.2.1]{DiaoLogarithmic2019} and \cite[Theorem 2.2.1]{LanLiuZhuRhamComparison2019} we have almost quasi-isomorphisms for $J\subset I$ finite and $s\in \bb{N}$
\[
R\Gamma_{\ket}(\Shan^{\tor}_{K^pK_p,C} ,\iota_{K_p,J,*}\bb{Z}/p^s)\otimes^L_{\bb{Z}/p^s} \n{O}_C/p^s \cong^{ae} R\Gamma_{\ket}(\Shan^{\tor}_{K^pK_p,C} ,\iota_{K_p,J,*}\widehat{\s{O}}^+_{J}/p^s).
\]
By \eqref{eqResolutionOhatLowerShrieck} we get an almost  quasi-isomorphism
\[
R\Gamma_{\ket}(\Shan^{\tor}_{K^pK_p,C} ,j_{K_p,!}\bb{Z}/p^s)\otimes^L_{\bb{Z}/p^s} \n{O}_C/p^s \cong^{ae} R\Gamma_{\ket}(\Shan^{\tor}_{K^pK_p,C} ,\widehat{\n{I}}^+_{\Shan}/p^s).
\]
Taking colimits as $K_p\to 1$, and derived limits as $s\to \infty$, we get  natural almost quasi-isomorphisms 
\begin{equation}
\label{eqPrimitive1}
R\Gamma_{\proket}(\Shan^{\tor}_{K^p,\infty,C} ,  \iota_{J,*}\bb{Z}_p) \widehat{\otimes}_{\bb{Z}_p}^L \n{O}_{C} \cong^{ae} R\Gamma_{\proket}(\Shan^{\tor}_{K^p,\infty,C},\iota_{J,*} \widehat{\s{O}}^+_{J})
\end{equation}
and 
\begin{equation}
\label{eqPrimitive2}
R\Gamma_{\proket}(\Shan^{\tor}_{K^p,\infty,C} , j_! \bb{Z}_p) \widehat{\otimes}_{\bb{Z}_p}^L \n{O}_{C} \cong^{ae} R\Gamma_{\proket}(\Shan^{\tor}_{K^p,\infty,C}, \widehat{\n{I}}^+_{\Shan}).
\end{equation}

\textit{Step 2.} We now rewrite the LHS of \eqref{eqIso1LocAn} and \eqref{eqIso2LocAn} in terms of pro-Kummer-\'etale cohomology.   We can assume without loss of generality that $\widetilde{K}_p$ is a uniform $p$-adic Lie group. We can then embed $\widetilde{K}_p \subset \bb{G}$ into a rigid analytic group, and write  the space of locally analytic functions as a colimit of functions in affinoid rigid analytic subgroups $\bb{G}^{(h)}\subset \bb{G}$ whose intersection is $\widetilde{K}_p$:
\[
C^{la}(\widetilde{K}_p,\bb{Q}_p)= \varinjlim_h C(\bb{G}^{(h)},\bb{Q}_p), 
\]  
 see \cite[Definition 2.1.4]{RRLocAnII}. By \eqref{eqPrimitive1} and Lemma \ref{LemmaTensorInsideCohomology}  below we have  natural equivalences
\begin{equation}
\label{eqClaIntoRGamma}
\begin{aligned}
(R\Gamma_{\proket}(\Shan^{\tor}_{K^p,\infty,C}, \iota_{J,*} \bb{Z}_p)\widehat{\otimes}^L_{\bb{Z}_p} C)^{Rla} & = 
R\Gamma(\widetilde{K}_p,R\Gamma_{\proket}(\Shan^{\tor}_{K^p,\infty,C}, \iota_{J,*} \widehat{\s{O}}_{J})\widehat{\otimes}^L_{\bb{Z}_p} C^{la}(\widetilde{K}_p,\bb{Q}_p)) \\ &  \cong R\Gamma_{\proket}(\Shan^{\tor}_{K^pK^p,C}, \iota_{J,*}\widehat{\s{O}}_{J}\widehat{\otimes}^L_{\bb{Q}_p} C^{la}(\widetilde{K}_p,\bb{Q}_p)_{\ket}).
\end{aligned}
\end{equation}
By  \eqref{eqPrimitive2} and the resolution \eqref{eqResolutionOhatLowerShrieck}  we also have an analogue of \eqref{eqClaIntoRGamma} for the cohomology with compact support:
\[
(R\Gamma_{\proket}(\Shan^{\tor}_{K^p,\infty,C},j_! \bb{Z}_p)\widehat{\otimes}^L_{\bb{Z}_p} C)^{Rla}\cong R\Gamma_{\proket}(\Shan^{\tor}_{K^p,\infty,C}, \widehat{\n{I}}_{\Shan}\widehat{\otimes}_{\bb{Q}_p} C^{la}(\widetilde{K}_p,\bb{Q}_p)_{\ket}). 
\]

\textit{Step 3.} Now we prove \eqref{eqIso1LocAn} and \eqref{eqIso2LocAn}.   Let $\eta_{K_p}: \Shan^{\tor}_{K^pK_p,C,\proket}\to \Shan^{\tor}_{K^pK_p,C,\an}$ be the projection of sites. By Proposition \ref{LemmaLongExactSequenceOlaSheaves}  we have a natural quasi-isomorphisms of  sheaves 
\begin{equation}\label{eqHigherVanishingIla}
\begin{aligned}
R\eta_{K_p,*} ( \widehat{\s{O}}_{\Shan} \widehat{\otimes}_{\bb{Q}_p} C^{la}(\widetilde{K}_p, \bb{Q}_p)_{\ket}) & \cong & \pi_{K_p,*}( \s{O}^{la}_{\Shan})\\
R\eta_{K_p,*} ( \widehat{\n{I}}_{\Shan} \widehat{\otimes}_{\bb{Q}_p} C^{la}(\widetilde{K}_p, \bb{Q}_p)_{\ket}) & \cong & \pi_{K_p,*}( \n{I}^{la}_{\Shan}). 
\end{aligned}
\end{equation}
Then the quasi-isomorphisms   \eqref{eqIso1LocAn} and \eqref{eqIso2LocAn} follow by Step 2 after  taking global sections on 
 \eqref{eqHigherVanishingIla}.

\textit{Step 4.} Finally, the isomorphism at the level of cohomology groups follows from the vanishing of higher locally analytic vectors of admissible representations of Proposition \ref{PropAdmissiblenoHigher}, and the spectral sequence of \cite[Theorem 1.5]{RRLocallyAnalytic}. Indeed, by the projection formula of locally analytic vectors \cite[Corollary 3.2.14 (3)]{RRLocAnII} we have that 
\[
(R\Gamma_{\proket}(\Shan^{\tor}_{K^p,\infty,C}, \bb{Z}_p)\widehat{\otimes}^L_{\bb{Z}_p} C)^{Rla}= (R\Gamma_{\proket}(\Shan^{\tor}_{K^p,\infty,C}, \bb{Q}_p))^{Rla}\widehat{\otimes}^L_{\bb{Q}_p} C,
\]
and $R\Gamma_{\proket}(\Shan^{\tor}_{K^p,\infty,C}, \bb{Q}_p)$ is quasi-isomorphic to a complex of admissible representations of $\widetilde{K}_p$ over $\bb{Q}_p$ by  Theorem \ref{TheoCompletedcohomologyAdmissible}. 
\end{proof}

The following lemma was used in the proof of Theorem \ref{TheoMainVanishingOla}

\begin{lem}\label{LemmaTensorInsideCohomology}
Let $V$ be a filtered colimit of Banach  $\bb{Q}_p$-linear representations of $\widetilde{K}_p$. Then there is a natural equivalence
\[
R\Gamma(\widetilde{K}_p, R\Gamma_{\proket}(\Shan^{\tor}_{K^p,\infty,C}, \iota_{J,*} \widehat{\s{O}}_{J})\widehat{\otimes}_{\bb{Q}_p}^L V) \cong R\Gamma_{\proket}(\Shan^{\tor}_{K^pK_p,C}, \iota_{J,*} \widehat{\s{O}}_{J} \widehat{\otimes}_{\bb{Q}_p}^L V_{\ket}). 
\]
\end{lem}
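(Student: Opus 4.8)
The plan is to reduce the statement to the $\widetilde K_p$-torsor structure of $\Shan^{\tor}_{K^p,\infty,C}\to \Shan^{\tor}_{K^pK_p,C}$ together with a projection-formula / commuting-colimit argument, all carried out in the derived category of solid abelian groups (Proposition \ref{LemmaSolidEnhancement}). First I would recall that $\iota_{J,*}\widehat{\s{O}}_J$ over the infinite-level space is obtained by pullback from finite level, and that $\pi_{K_p}^{\tor}$ is a pro-Kummer-\'etale $\widetilde K_p$-torsor; hence by Hochschild--Serre (the $\widetilde K_p$-torsor version used already in the proof of Theorem \ref{TheoCompletedcohomologyAdmissible}) there is a natural equivalence
\[
R\Gamma_{\proket}(\Shan^{\tor}_{K^pK_p,C}, \iota_{J,*}\widehat{\s{O}}_J\widehat{\otimes}^L_{\bb{Q}_p} V_{\ket}) \cong R\Gamma\big(\widetilde K_p, R\Gamma_{\proket}(\Shan^{\tor}_{K^p,\infty,C}, \iota_{J,*}\widehat{\s{O}}_J)\widehat{\otimes}^L_{\bb{Q}_p}\underline{V}\big),
\]
where $\underline V$ carries the diagonal $\widetilde K_p$-action, once we know that $V_{\ket}$ restricted to the infinite-level cover is the constant sheaf $\underline V$ and that the relevant tensor product can be pulled out of the cohomology. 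So the real content is the commutation of $\widehat{\otimes}^L_{\bb{Q}_p} V$ with $R\Gamma_{\proket}(\Shan^{\tor}_{K^p,\infty,C},-)$ applied to $\iota_{J,*}\widehat{\s{O}}_J$.

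To establish that commutation, I would first reduce to the case $V$ a single Banach representation, since cohomology of a qcqs space (here computed via a finite \v Cech hypercover at suitable finite level, using that $\iota_{J,*}\widehat{\s{O}}_J$ is built from sheaves on qcqs spaces) commutes with the filtered colimit defining $V$, and $\widehat{\otimes}^L_{\bb{Q}_p}$ is compatible with filtered colimits. For $V$ Banach, write $V$ as a filtered colimit (or a single ON Banach space) and use that $\iota_{J,*}\widehat{\s{O}}_J$ is a (derived $p$-complete) sheaf with Kummer-\'etale mod-$p$ fibers, so that $R\Gamma_{\proket}(\Shan^{\tor}_{K^p,\infty,C}, \iota_{J,*}\widehat{\s{O}}_J)$ is a solid object; then the projection formula for the solid tensor product over $\bb{Q}_p$ (equivalently the $p$-completed tensor, by \cite[Lemma 3.13]{RRLocallyAnalytic} / \cite[Lemma 2.12.9]{mann2022padic}) together with the fact that $\iota_{J,*}\widehat{\s{O}}_J\widehat{\otimes}^L_{\bb{Q}_p}\underline V$ is again a derived $p$-complete sheaf with Kummer-\'etale mod-$p$ fibers lets one move $V$ past $R\Gamma_{\proket}$. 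Concretely, I would argue level by level: for a finite Kummer-\'etale $\widetilde K_p/\widetilde K_p'$-cover, $R\Gamma_{\proket}$ of $\iota_{J,*}\widehat{\s{O}}_J\widehat{\otimes}\,\underline V$ is a finite limit, so it commutes with $\widehat{\otimes}^L_{\bb{Q}_p}V$ by flatness/coherence of the solid tensor over the complete base, and then pass to the colimit over $K_p'$ using qcqs-ness (as in \cite[Proposition 5.1.6]{DiaoLogarithmic2019}).

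The main obstacle I expect is the interchange of the $p$-completed tensor product $\widehat{\otimes}^L_{\bb{Q}_p} V$ with the derived inverse limit implicit in $R\Gamma_{\proket}(\Shan^{\tor}_{K^p,\infty,C},\iota_{J,*}\widehat{\s{O}}_J)$ (the $R\varprojlim_s$ over $\bb{Z}/p^s$-coefficients, plus the inverse limit over the boundary stratification): a priori $\widehat{\otimes}$ and $R\varprojlim$ do not commute. This is handled precisely by the solid formalism — completed tensor with a nuclear/Banach space is exact and commutes with countable limits on derived $p$-complete complexes — so the key technical step is to verify that $R\Gamma_{\proket}(\Shan^{\tor}_{K^p,\infty,C},\iota_{J,*}\widehat{\s{O}}_J)$ lies in the subcategory of solid $\bb{Q}_p$-modules where this holds (e.g.\ is represented by a bounded complex of Banach spaces, which follows from Theorem \ref{TheoCompletedcohomologyAdmissible}-type finiteness after the primitive comparison theorem, or simply from the almost-equivalences \eqref{eqPrimitive1}). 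Once that is in place the rest is bookkeeping with Hochschild--Serre and the torsor $\pi_{K_p}^{\tor}$.
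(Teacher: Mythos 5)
Your outline identifies the two real ingredients — reduction to a single Banach $V$ by filtered colimits, and a Hochschild–Serre argument against the $\widetilde K_p$-torsor $\pi_{K_p}^{\tor}$ — but the mechanism you propose for commuting $\widehat{\otimes}^L_{\bb{Q}_p} V$ past $R\Gamma_{\proket}$ is genuinely different from what the paper does, and I don't think it closes cleanly as written. You try to handle the $\widehat{\otimes}$-vs-$R\varprojlim$ interchange at the level of $\bb{Q}_p$-Banach spaces, invoking nuclearity/boundedness of the cohomology complex (pointing at Theorem~\ref{TheoCompletedcohomologyAdmissible} or \eqref{eqPrimitive1} to get that boundedness). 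The paper sidesteps this entirely by a dévissage to torsion: after the reduction to $V$ Banach, replace $\widehat{\s{O}}_J$ by $\widehat{\s{O}}_J^+$ and $V$ by a $p$-adically complete lattice $V^0$ (both sides of the lemma are the $\bb{Q}_p$-linear extensions of these integral variants), observe both sides are derived $p$-complete, and hence reduce to the mod-$p^s$ statement. Mod $p^s$ everything is Kummer-\'etale, the $\widehat{\otimes}$ is an ordinary tensor over $\bb{Z}/p^s$, and the equivalence is simply the Hochschild–Serre spectral sequence for the \v{C}ech nerve of the pro-Kummer-\'etale torsor $\Shan^{\tor}_{K^p,\infty,C}\to\Shan^{\tor}_{K^pK_p,C}$ of qcqs objects, together with qcqs-ness to commute cohomology with the filtered colimit over levels (\cite[Proposition~5.1.6]{DiaoLogarithmic2019}).

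The gap in your route: the assertion that ``completed tensor with a nuclear/Banach space is exact and commutes with countable limits on derived $p$-complete complexes'' is not a standard fact in the generality you'd need, and the ``concretely'' paragraph conflates the finite \v{C}ech nerve of a finite quotient $\widetilde K_p/\widetilde K_p'$ with $R\Gamma_{\proket}$ of a single qcqs object (which is not a finite limit). Moreover, invoking the boundedness/Banach structure of $R\Gamma_{\proket}(\Shan^{\tor}_{K^p,\infty,C},\iota_{J,*}\widehat{\s{O}}_J)$ via the primitive comparison is circular here, since that comparison is exactly what the lemma is in service of (it appears as Step~1 of the proof of Theorem~\ref{TheoMainVanishingOla}). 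What makes the whole thing work without any nuclearity input is the integral dévissage followed by reduction mod $p^s$: that is precisely where the derived $p$-completed tensor product is by definition an $R\varprojlim_s$ of mod-$p^s$ tensors, so the interchange you were worried about becomes tautological. If you insert that step, the rest of your argument (filtered colimits, torsor, Hochschild–Serre) goes through as in the paper.
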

\begin{proof}
Since both terms commute with filetered colimits it suffices to construct a natural isomorphism when $V$ is a Banach representation. Moreover, both terms are the $\bb{Q}_p$-linear extensions of the analogue expressions when $\widehat{\s{O}}_{J}$ is replaced by $\widehat{\s{O}}_{J}^+$ and $V$ by a $p$-adically complete lattice $V^0$. Thus, it suffices to see that for a $p$-adically complete representation $V$ of $\widetilde{K}_p$ there is a natural quasi-isomorphism 
\begin{equation}\label{eqDevisageCohomologyInsideRGamma1}
R\Gamma(\widetilde{K_p}, R\Gamma_{\proket}(\Shan^{\tor}_{K^p,\infty,C}, \iota_{J,*} \widehat{\s{O}}_{J}^+)\widehat{\otimes}_{\bb{Z}_p}^L V) \cong R\Gamma_{\proket}(\Shan^{\tor}_{K^pK_p,C}, \iota_{J,*} \widehat{\s{O}}^+_{J} \widehat{\otimes}_{\bb{Z}_p}^L V_{\ket})
\end{equation}
where now the tensor products are derived $p$-complete tensor products.  On the other hand, the terms in \eqref{eqDevisageCohomologyInsideRGamma1} are derived $p$-complete, so it suffices to construct natural equivalences after taking reduction modulo $p^s$ for all $s\in \bb{N}$: 
\[
R\Gamma(\widetilde{K}_p, R\Gamma_{\ket}(\Shan^{\tor}_{K^p,\infty,C}, \iota_{J,*} \s{O}_{J}^+/p^s) \otimes_{\bb{Z}/p^s}^L V/p^s) \cong R\Gamma_{\ket}(\Shan^{\tor}_{K^pK_p,C}, \iota_{J,*}  \s{O}^+_{J}/p^s  \otimes_{\bb{Z}/p^s}^L (V/p^s)_{\ket}).
\]
But now the statement follows from the fact that $\Shan^{\tor}_{K^p,\infty,C}\to \Shan^{\tor}_{K^pK_p,C}$ is a pro-Kummer-\'etale $\widetilde{K}_p$-torsor of qcqs objects,  and from a Hochschild-Serre spectral sequence induced from the \v{C}ech nerve of $\Shan^{\tor}_{K^p,\infty,C}\to \Shan^{\tor}_{K^pK_p,C}$ in the pro-Kummer-\'etale site. 
\end{proof}

We deduce the rational vanishing of the Calegari-Emerton conjectures for Shimura varieties \cite[Conjecture 1.5]{CalegariEmerton}.

\begin{cor}
\label{CoroCalegari}
Let $d=\dim \Shan$ be the dimension of the Shimura variety, then for $i>d$ the  rational completed cohomology groups at level $K^p$ vanish
\[
\widetilde{H}^i(K^p,\bb{Z}_p)[\frac{1}{p}]=\widetilde{H}^i_{c}(K^p,\bb{Z}_p)[\frac{1}{p}]=0. 
\]
\end{cor}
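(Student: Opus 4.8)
The plan is to deduce Corollary \ref{CoroCalegari} from Theorem \ref{TheoMainVanishingOla} together with two facts: the density of locally analytic vectors in admissible representations, and the bound on the cohomological dimension of the infinite-level Shimura variety. First I would observe that by Corollary \ref{CoroCompletedCohoAsProkummerCoho} we may identify $\widetilde H^i(K^p,\bb Z_p)$ with $H^i_{\proket}(\Shan^{\tor}_{K^p,\infty,C},\bb Z_p)$, and by Theorem \ref{TheoCompletedcohomologyAdmissible} this group (and its compactly supported variant) is an admissible $\widetilde K_p$-representation over $\bb Z_p$. Inverting $p$ and completing $\bb C_p$-linearly, $\widetilde H^i(K^p,\bb Z_p)[\tfrac1p]\widehat\otimes_{\bb Q_p}\bb C_p$ is an admissible $\widetilde K_p$-Banach representation over $\bb C_p$ (admissibility is preserved under the flat base change $\bb Q_p\to \bb C_p$, or one argues directly with the explicit complex of copies of $C(\widetilde K_p,\bb Z_p)$ produced in the proof of Theorem \ref{TheoCompletedcohomologyAdmissible}).

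Next, by \cite[Theorem 7.1]{SchTeitDist} the locally analytic vectors of an admissible Banach representation are dense; since a dense subspace of a nonzero Hausdorff topological vector space is nonzero, it suffices to prove that
\[
(\widetilde H^i(K^p,\bb Z_p)\widehat\otimes_{\bb Z_p}\bb C_p)^{la}=0 \quad\text{and}\quad (\widetilde H^i_c(K^p,\bb Z_p)\widehat\otimes_{\bb Z_p}\bb C_p)^{la}=0
\]
for $i>d$. By Theorem \ref{TheoMainVanishingOla} these are computed by the sheaf cohomology groups $H^i_{\an}(\Shan^{\tor}_{K^p,\infty,C},\s O^{la}_{\Shan})$ and $H^i_{\an}(\Shan^{\tor}_{K^p,\infty,C},\n I^{la}_{\Shan})$. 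Finally, the underlying topological space $|\Shan^{\tor}_{K^p,\infty,C}|=\varprojlim_{K_p}|\Shan^{\tor}_{K^pK_p,C}|$ is a cofiltered limit of spectral spaces each of Krull dimension $d$ (the toroidal compactifications are $d$-dimensional), hence itself a spectral space of covering dimension $\le d$, so sheaf cohomology on it vanishes above degree $d$ for any abelian sheaf — this is exactly the argument used in \cite[proof of Corollary IV.2.2]{ScholzeTorsion2015}. Therefore $H^i_{\an}(\Shan^{\tor}_{K^p,\infty,C},\s O^{la}_{\Shan})=H^i_{\an}(\Shan^{\tor}_{K^p,\infty,C},\n I^{la}_{\Shan})=0$ for $i>d$, and the corollary follows.

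The only genuinely delicate point is making sure the hypotheses of \cite[Theorem 7.1]{SchTeitDist} really apply: one must know that $\widetilde H^i(K^p,\bb Z_p)[\tfrac1p]$ base-changed to $\bb C_p$ is admissible as a representation of a compact $p$-adic Lie group over the spherically complete (or at least, over $\bb Q_p$ one has admissibility and then extends scalars) field, and that "locally analytic vectors" in the sense of Theorem \ref{TheoMainVanishingOla} — which a priori are the $0$-th cohomology of the derived functor $(-)^{Rla}$ — agree with the classical locally analytic vectors to which the density theorem applies. Both are handled: admissibility over $\bb C_p$ follows from the explicit description of completed cohomology as a perfect complex of $C(\widetilde K_p,\bb Z_p)$'s in Theorem \ref{TheoCompletedcohomologyAdmissible} (so its cohomology is finitely presented over the Iwasawa algebra, a property stable under the base change $\bb Z_p\to \n O_{\bb C_p}$), and Proposition \ref{PropAdmissiblenoHigher} shows $(-)^{Rla}=(-)^{la}[0]$ on admissible Banach representations, so the higher derived locally analytic vectors vanish and the degree-$0$ object is the classical one. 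With these two observations in place the deduction is formal.
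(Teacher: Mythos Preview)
Your proof is correct and follows essentially the same route as the paper: reduce to showing vanishing of locally analytic vectors via density for admissible Banach representations \cite[Theorem 7.1]{SchTeitDist}, invoke Theorem \ref{TheoMainVanishingOla} to identify these with sheaf cohomology of $\s{O}^{la}_{\Shan}$ and $\n{I}^{la}_{\Shan}$, and conclude by the cohomological-dimension bound of \cite[Corollary IV.2.2]{ScholzeTorsion2015}. You are more explicit than the paper about the two delicate points (admissibility after base change to $\bb{C}_p$, and that $(-)^{Rla}=(-)^{la}[0]$ on admissibles via Proposition \ref{PropAdmissiblenoHigher}), but these are exactly the justifications the paper is implicitly relying on.
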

\begin{proof}
We can prove the vanishing after extending scalars to $C$.  By \cite[Theorem 7.1]{SchTeitDist} (see also \cite[Corollary 4.49]{RRLocallyAnalytic}) it suffices to show that the locally analytic vectors of the completed cohomology  groups vanish for $i>d$.  By Theorem \ref{TheoMainVanishingOla} the latter can be computed in terms of sheaf cohomology of $\s{O}^{la}_{\Shan}$ and  $\n{I}^{la}_{\Shan}$ over $\Shan^{\tor}_{K^p,\infty,C}$, but this  space has cohomological dimension $d$ by the proof of \cite[Corollary IV.2.2]{ScholzeTorsion2015}, which implies the corollary.  
\end{proof}

A corollary of the computation of the geometric Sen operator Theorem \ref{TheoComputationSenOperator} is the vanishing of the action of $\f{n}^0_{\mu}$ on the sheaf $\s{O}^{la}_{\Shan}$.

\begin{cor}\label{CoroSenOpKillOla}
The action of the sub-Lie algebroid  $\f{n}^0_{\mu,\Shan}:=\s{O}^{la}_{\Shan}\otimes_{\pi_{\HT}^{\tor,-1}(\s{O}_{\Fl})} \pi_{\HT}^{\tor,-1}(\f{n}^0_{\mu})\subset \s{O}^{la}_{\Shan}\otimes_{\bb{Q}_p} \widetilde{\f{g}}$ on the sheaf $\s{O}^{la}_{\Shan}$ induced by derivations vanishes. 
\end{cor}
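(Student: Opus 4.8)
The plan is to deduce the corollary from the local cohomological computation already performed in the proof of Proposition~\ref{LemmaLongExactSequenceOlaSheaves}, which in effect realizes $\s{O}^{la}_{\Shan}$ as a sheaf of $\f{n}^0_{\mu}$-invariants. The vanishing of a Lie algebroid action can be checked on a basis of the analytic site of $\Shan^{\tor}_{K^p,\infty,C}$, so first I would reduce to showing that $\f{n}^0_{\mu}$ annihilates $\s{O}^{la}_{\Shan}(\widetilde{U})$ for $\widetilde{U}=\pi_{K_p}^{-1}(U)$, with $U\subset \Shan^{\tor}_{K^pK_p,C}$ ranging over the open affinoids considered in Proposition~\ref{LemmaLongExactSequenceOlaSheaves} (those admitting a toric chart and lying over an affinoid $V\subset \Fl$ trivializing the bundles $\f{n}^0_{\mu}$ and $\f{g}^{c,0}/\f{n}^0_{\mu}$); such $\widetilde{U}$ form a basis.

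For such a $\widetilde{U}$, set $A=\widehat{\s{O}}_{\Shan}(\widetilde{U}_{p^{\infty}})$ in the notation of the proof of Proposition~\ref{LemmaLongExactSequenceOlaSheaves}. Step~4 of that proof (the case $J=\emptyset$) provides the identification
\[
\s{O}^{la}_{\Shan}(\widetilde{U}) \;=\; H^{0}\bigl(\f{n}^0_{\mu},\, A\,\widehat{\otimes}_{\bb{Q}_p}\,C^{la}(\widetilde{\f{g}},\bb{Q}_p)\bigr)^{\widetilde{K}_p-sm,\,\Gamma_p},
\]
where $\f{n}^0_{\mu}$ acts by $A$-linear derivations through the inclusion $\f{n}^0_{\mu}\subset \widetilde{\f{g}}^0=\s{O}_{\Fl}\otimes_{\bb{Q}_p}\widetilde{\f{g}}$ and the infinitesimal left regular action on $C^{la}(\widetilde{\f{g}},\bb{Q}_p)$. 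Since the zeroth Lie algebra cohomology is by definition the subspace of $\f{n}^0_{\mu}$-invariants, the action of $\f{n}^0_{\mu}$ on $\s{O}^{la}_{\Shan}(\widetilde{U})$ is zero, and ranging over the basis yields the corollary.

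The only point needing care, and the one I expect to be the main obstacle, is to verify that the $\f{n}^0_{\mu}$-action just used coincides with the restriction to $\s{O}^{la}_{\Shan}(\widetilde{U})$ of the derivation action of the Lie algebroid $\f{n}^0_{\mu,\Shan}$ in the statement. This is not a new computation but an unwinding of the commutative triangle of Theorem~\ref{TheoMainSenTheory}~(2), which expresses the geometric Sen operator of $\s{F}=\widehat{\s{O}}_{\Shan}\widehat{\otimes}_{\bb{Q}_p}C^{la}(\widetilde{K}_p,\bb{Q}_p)_{\ket}$ as the composite of the infinitesimal derivation $d\colon \widetilde{\f{g}}\to \widetilde{\f{g}}^{0}$ with $\theta_{\Shan}$, together with Theorem~\ref{TheoComputationSenOperator}, which identifies $\theta_{\Shan}$ with the pullback along $\pi^{\tor}_{\HT}$ of the projection $\f{g}^{c,0,\vee}\to\f{n}^{c,0,\vee}_{\mu}$ transported by the Kodaira--Spencer isomorphism $\KS\colon\pi_{\HT}^{\tor,*}(\f{n}^{0,\vee}_{\mu})\cong\pi_{K_p}^*(\Omega^1_{\Shan}(\log))(-1)$. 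Combined, these show that, under $\KS$, the $\f{n}^0_{\mu}$-component of the infinitesimal $\widetilde{K}_p$-action on $\s{O}^{la}_{\Shan}$ is precisely the geometric Sen operator, i.e. the operator appearing in the Chevalley--Eilenberg complex of Proposition~\ref{LemmaLongExactSequenceOlaSheaves}. Alternatively, one can argue globally via \eqref{eqHigherVanishingIla}: there $\pi_{K_p,*}\s{O}^{la}_{\Shan}\cong \eta_{K_p,*}\ker(\theta_{\s{F}})$, and $\theta_{\s{F}}$ factors through $\f{n}^0_{\mu}$ by Theorem~\ref{TheoComputationSenOperator}.
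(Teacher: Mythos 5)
Your setup is correct: the vanishing is a local statement, and Proposition~\ref{LemmaLongExactSequenceOlaSheaves} identifies $\s{O}^{la}_{\Shan}(\widetilde{U})$ with the $\f{n}^0_{\mu}$-invariants (for an appropriate action) inside $\widehat{\s{O}}_{\Shan}(\widetilde{U})\widehat{\otimes}_{\bb{Q}_p}C^{la}(\widetilde{K}_p,\bb{Q}_p)$. You also correctly flag the crux: one must match the $\f{n}^0_{\mu}$-action used there with the derivation action of $\f{n}^0_{\mu,\Shan}$ appearing in the statement. However, your resolution of that point---``an unwinding of Theorem~\ref{TheoMainSenTheory}~(2) and Theorem~\ref{TheoComputationSenOperator}''---does not close the gap, and this is exactly where the paper does real work.

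The two actions are genuinely different and are \emph{not} identified by those theorems. The operator appearing in the $H^0$-computation is the geometric Sen operator of $\s{F}=\widehat{\s{O}}_{\Shan}\widehat{\otimes}_{\bb{Q}_p}C^{la}(\widetilde{K}_p,\bb{Q}_p)_{\ket}$, which by Theorem~\ref{TheoMainSenTheory}~(2) factors through $d_V$, the derivation of the \emph{left regular} action on the factor $C^{la}(\widetilde{K}_p,\bb{Q}_p)$. The operator in the Corollary, by contrast, is the derivative of the $\widetilde{K}_p$-action on $\s{O}^{la}_{\Shan}\subset\widehat{\s{O}}_{\Shan}$, which---under the orbit map $O\colon v\mapsto O_v$, $O_v(g)=g\cdot v$, that realizes $\s{O}^{la}_{\Shan}(\widetilde{U})$ inside $\widehat{\s{O}}_{\Shan}(\widetilde{U})\widehat{\otimes}C^{la}(\widetilde{K}_p,\bb{Q}_p)$---corresponds to the \emph{right regular} derivation evaluated at $1$: one has $\f{X}\cdot v = (\f{X}\cdot_R O_v)(1)$. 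The two are related only through the elementary but essential observation that $(\f{X}\cdot_L f)(1)=-(\f{X}\cdot_R f)(1)$ for any locally analytic $f$, and this is what converts the invariance $\f{X}\cdot_L O_v=0$ into the conclusion $\f{X}\cdot v=-(\f{X}\cdot_L O_v)(1)=0$. Neither Theorem~\ref{TheoMainSenTheory}~(2) nor Theorem~\ref{TheoComputationSenOperator} produces the orbit map comparison or this sign identity; they only tell you that the geometric Sen operator of $\s{F}$ factors through the $\f{n}^0_{\mu}$-projection under $\KS$. Your ``alternative global argument'' via $\eqref{eqHigherVanishingIla}$ has the same issue: knowing $\pi_{K_p,*}\s{O}^{la}_{\Shan}\cong\eta_{K_p,*}\ker(\theta_{\s{F}})$ tells you the left-derivation action of $\f{n}^0_{\mu}$ on $\s{F}$ kills the image of $\s{O}^{la}_{\Shan}$, not that the derivation of the $\widetilde{K}_p$-action on $\s{O}^{la}_{\Shan}$ does. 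You should supply the orbit map step and the $\cdot_L$ vs.\ $\cdot_R$ comparison at the identity; that is the actual content of the Corollary's proof.
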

\begin{proof}
This is \cite[Corollary 3.2.6 (2)]{RCGeoSenTheory}, we repeat the argument for completeness of the paper. Let $\widetilde{U}\subset \Shan_{K^p,\infty, C}^{\tor}$ be a qcqs open subspace. 

 By Theorem \ref{TheoMainSenTheory}  (1) and Proposition \ref{LemmaLongExactSequenceOlaSheaves} (1) we have that 
\[
\s{O}^{la}_{\Shan}(\widetilde{U}) = (\widehat{\s{O}}_{\Shan}(\widetilde{U})\widehat{\otimes}_{\bb{Q}_p} C^{la}(\widetilde{K}_p,\bb{Q}_p))^{\f{n}^0_{\mu,\Shan}=0, \widetilde{K}_p}
\]
where $\widetilde{K}_p$ acts diagonally on the coefficients and via the left regular action on the locally analytic functions, and $\f{n}^0_{\mu,\Shan}$ acts by $\widehat{\s{O}}_{\Shan}(\widetilde{U})$-linear derivations via the left derivations of $\widetilde{\f{g}}$ on the locally analytic functions. Consider the orbit map 
\[
O:\s{O}^{la}_{\Shan}(\widetilde{U}) \to \widehat{\s{O}}_{\Shan}(\widetilde{U})\widehat{\otimes}_{\bb{Q}_p} C^{la}(\widetilde{K}_p,\bb{Q}_p),
\]
it sends an element $v\in \s{O}^{la}_{\Shan}(\widetilde{U})\subset \widehat{\s{O}}_{\Shan}(\widetilde{U})$ to the locally analytic function $O_v: \widetilde{K}_p\to \widehat{\s{O}}_{\Shan}(\widetilde{U})$ given by 
\[
O_v(g)= g\cdot v
\]
for $g\in \widetilde{K}_p$.  The orbit map $O$ is $\widetilde{K}_p$-equivariant for the $\widehat{\s{O}}_{\Shan}(\widetilde{U})$-linear right regular action of $\widehat{\s{O}}_{\Shan}(\widetilde{U})\widehat{\otimes}_{\bb{Q}_p} C^{la}(\widetilde{K}_p,\bb{Q}_p)$. Note that, for general vector field  $\f{Y}\in \s{O}^{la}_{\Shan}\otimes_{\bb{Q}_p} \widetilde{\f{g}}$ and $f: \widetilde{K}_p\to \widehat{\s{O}}_{\Shan}(\widetilde{U})$ a locally analytic function one has that 
\[
(\f{Y}\cdot_{L} f)(1)=- (\f{Y}\cdot_{R} f)(1)
\]
where $\cdot_{L}$ and $\cdot_{R}$ are the derivations with respect to the left and right regular action respectively. Indeed, this is a consequence of the fact that for $g\in \widetilde{K}_p$ the left and right regular action of $g$ on $f$ are related as follows
\[
(g\cdot_{L} f)(1)= f(g^{-1})= (g^{-1}\cdot_{R}f)(1).  
\]
 Then, for $\f{X}\in \f{n}^0_{\mu,\Shan}$ and $v\in \s{O}^{la}_{\Shan}(\widetilde{U})$ we have that 
\[
\f{X}\cdot v = (\f{X}\cdot_{R} O_v)(1)= - (\f{X}\cdot_L O_v)(1)=0
\]
proving what we wanted.
\end{proof}

Finally, we have the following corollary of the proof of Proposition \ref{LemmaLongExactSequenceOlaSheaves} describing the base change of $\s{O}^{la}_{\Shan}$ to  $\widehat{\s{O}}_{\Shan}$. 

\begin{cor}\label{CoroDescriptionBaseChangeOla}
Let $U\subset \Shan^{\tor}_{K^pK_p,C}$ be an open affinoid as in Proposition \ref{LemmaLongExactSequenceOlaSheaves}, $\widetilde{U}\subset \Shan^{\tor}_{K^p,\infty, C}$ its pullback at infinite level, and  $\psi\colon U\to \bb{T}^{e}_{C} \times \bb{D}^{d-e}_{C}$ a toric chart. Let $U_{\infty}$ be the $\Gamma$-torsor obtained from taking $p$-th power roots of the coordinates and let $\widetilde{U}_{\infty}=\widetilde{U}\times_{U} U_{\infty}$ be the pullback in the pro-Kummer-\'etale site. Then the orbit map produces a natural $\widehat{\s{O}}_{\Shan}(\widetilde{U}_{\infty})$-linear isomorphism
\[
\widehat{\s{O}}_{\Shan}(\widetilde{U}_{\infty})^{K_p-la,\Gamma-{\sm}}\widehat{\otimes}_{\widehat{\s{O}}_{\Shan}(\widetilde{U}_{\infty})^{K_p\times\Gamma-sm}} \widehat{\s{O}}_{\Shan}(\widetilde{U}_{\infty}) \xrightarrow{\sim } C^{la}(\widetilde{\f{g}}, \widehat{\s{O}}_{\Shan}(\widetilde{U}_{\infty}))^{\f{n}^0_{\mu, \star_1}=0},
\]
where $C^{la}(\widetilde{\f{g}}, \widehat{\s{O}}_{\Shan}(\widetilde{U}_{\infty}))=\varinjlim_{K_p} C^{la}(\widetilde{K}_p,\widehat{\s{O}}_{\Shan}(\widetilde{U}_{\infty}))$. 
\end{cor}
\begin{proof}
Consider the pro-Kummer-\'etale sheaf $C^{la}(\widetilde{K}_p,\widehat{\s{O}}_{\Shan}):=C^{la}(\widetilde{K}_p, \bb{Q}_p)_{\ket}\widehat{\otimes}_{\bb{Q}_p} \widehat{\s{O}}_{\Shan}$ on $\Shan_{K^pK_p,C, \proket}^{\tor}$. Write $C^{la}(K_p,\bb{Q}_p)=\varinjlim_{h} C(\bb{G}^{(h)}, \bb{Q}_p)$ as a colimit of analytic representations as $h\to \infty$. Then $C^{la}(\widetilde{K}_p,\widehat{\s{O}}_{\Shan})=\varinjlim_{h} C(\bb{G}^{(h)}, \widehat{\s{O}}_{\Shan})$. Since  $U_{\infty}$ is perfectoid, and $C(\bb{G}^{(h)} , \widehat{\s{O}}_{\Shan})$ is a relative locally analytic sheaf,   we have that 
\[
  C(\bb{G}^{(h)} , \widehat{\s{O}}_{\Shan}(\widetilde{U}_{\infty}))=  \s{F}_h \widehat{\otimes}_{\widehat{\s{O}}_{\Shan}(U_{\infty})}  \widehat{\s{O}}_{\Shan}(\widetilde{U}_{\infty})
\]
with  $\s{F}_h=( C(\bb{G}^{(h)}, \widehat{\s{O}}_{\Shan}(\widetilde{U}_{\infty})))^{\widetilde{K}_p}$ an  $\widehat{\s{O}}_{\Shan}(U_{\infty})$-semilinear relative locally analytic representation of $\Gamma$.  By \cite[Theorem 2.4.4]{RCGeoSenTheory}, $\s{F}_h$ satisfies $\s{F}_h^{R\Gamma-la}=\s{F}_h^{\Gamma-la}$ and 
\[
\s{F}_h^{\Gamma-la}\widehat{\otimes}_{\widehat{\s{O}}_{\Shan}(U_{\infty})^{\Gamma-la}} \widehat{\s{O}}_{\Shan}(U_{\infty})=  \s{F}_h
\]
where $\widehat{\s{O}}_{\Shan}(U_{\infty})^{\Gamma-la}=\widehat{\s{O}}_{\Shan}(U_{\infty})^{\Gamma-sm}=\varinjlim_{n} \s{O}_{\Shan}(U_n)$ where $U_n\to U$ is the finite Kummer-\'etale map obtained by taking $p^n$-th powers to the coordinates.   Now, consider the map $\theta_{\s{F}_h}\colon \s{F}_h\to \s{F}_h\otimes_{\s{O}_{\Shan}(U)} \Omega^1(\log)_U(-1)$ given by the geometric Sen operator with kernel $K_h$ and cokernel $Q_h$.  Note that  $\varinjlim_h K_h= \widehat{\s{O}}_{\Shan}(\widetilde{U}_{\infty})^{\widetilde{K}_p-la, \f{n}^0_{\mu}=0}$.

We claim that the natural map 
\begin{equation}\label{eqpampsq33d1d1}
\varinjlim_{h} K_h\widehat{\otimes}_{\widehat{\s{O}}_{\Shan}(U_{\infty})} \widehat{\s{O}}_{\Shan}(\widetilde{U}_{\infty}) \xrightarrow{\sim} C^{la}(\widetilde{K}_p,  \widehat{\s{O}}_{\Shan}(\widetilde{U}_{\infty})  )^{\f{n}^0_{\mu,\star_1}=0}
\end{equation}
is an equivalence.  Indeed, by the vanishing of the higher cohomology groups for the action of the geometric Sen operator of Proposition \ref{LemmaLongExactSequenceOlaSheaves}, we have a quasi-isomorphism
\[
\varinjlim_{h} K_h \xrightarrow{\sim} R\Gamma(\theta_{\s{F}}, \varinjlim_{h} \s{F}_h). 
\]
where $\theta_{\s{F}}$ is the geometric Sen operator of the $\s{F}_h$. Taking base change along $\widehat{\s{O}}_{\Shan}(U_{\infty})\to \widehat{\s{O}}_{\Shan}(\widetilde{U}_{\infty})$, we deduce that the map
\[
\varinjlim_{h} K_h  \widehat{\otimes}^L_{\widehat{\s{O}}_{\Shan}(U_{\infty})} \widehat{\s{O}}_{\Shan}(\widetilde{U}_{\infty}) \xrightarrow{\sim}R\Gamma(\theta_{\s{F}}, \varinjlim_{h} \s{F}_h  \widehat{\otimes}^L_{\widehat{\s{O}}_{\Shan}(U_{\infty})} \widehat{\s{O}}_{\Shan}(\widetilde{U}_{\infty}) = R\Gamma(\f{n}^0_{\mu}, C^{la}(\widetilde{K}_p,\widehat{\s{O}}_{\Shan}(\widetilde{U}_{\infty}) )
\]
is a quasi-isomorphism, and hence that  \eqref{eqpampsq33d1d1} is a quasi-isomorphism as wanted.

By \cite[Proposition 2.5.5]{RCGeoSenTheory}  the spaces $K_h$ and $Q_h$ have vanishing higher locally analytic vectors for the action of $\Gamma$. In particular, one has an exact sequence 
\[
0\to K_h^{\Gamma-la}\to \s{F}_h^{\Gamma-la}\to \s{F}^{\Gamma-la}_h\otimes_{\s{O}_{\Shan}(U)} \Omega^1_U(\log)(-1)\to Q^{\Gamma-la}_h\to 0. 
\]
We claim that the map $\s{O}_{\Shan}(U_n)\to  \s{O}_{\Shan}(U_{\infty})$ is flat for the solid tensor product, namely, it is the pullback along $\psi$ of the map $\bb{Q}_p\langle \underline{T}^{\pm 1},\underline{S} \rangle\to \bb{Q}_p\langle \underline{T}^{\pm 1/p^{\infty}},\underline{S}^{1/p^{\infty}} \rangle$,  and  the algebra $ \bb{Q}_p\langle \underline{T}^{\pm 1/p^{\infty}},\underline{S}^{1/p^{\infty}} \rangle$  admits an ON basis over $\bb{Q}_p\langle \underline{T}^{\pm 1},\underline{S} \rangle$, which produces an isomorphism 
\[
\bb{Q}_p\langle \underline{T}^{\pm 1},\underline{S} \rangle\widehat{\otimes}_{\bb{Q}_{p}} V_0\cong \bb{Q}_p\langle \underline{T}^{\pm 1/p^{\infty}},\underline{S}^{1/p^{\infty}} \rangle 
\]
with $V_0$ a suitable ON $\bb{Q}_p$-Banach space. Finally, the flatness of $V_0$ over $\bb{Q}_p$ for the solid tensor product (which follows from \cite[Lemma 3.21]{RRLocallyAnalytic}) yields the flatness of $\bb{Q}_p\langle \underline{T}^{\pm 1},\underline{S} \rangle\to \bb{Q}_p\langle \underline{T}^{\pm 1/p^{\infty}},\underline{S}^{1/p^{\infty}} \rangle$ as wanted. Taking colimits as $n\to \infty$, one deduces   that the map $\widehat{\s{O}}_{\Shan}(U_{\infty})^{\Gamma-\sm}\to \widehat{\s{O}}_{\Shan}(U_{\infty})$ is flat for the solid tensor product and therefore
\[
K_h^{\Gamma-la}\otimes_{\widehat{\s{O}}_{\Shan}(U_{\infty})^{\Gamma-\sm},\sol} \widehat{\s{O}}_{\Shan}(U_{\infty})= K_{h}.
\]
Taking colimits as $h\to \infty$ and a further base change along $\widehat{\s{O}}_{\Shan}(U_{\infty})\to \widehat{\s{O}}_{\Shan}(\widetilde{U}_{\infty})$, \eqref{eqpampsq33d1d1} yields
\begin{equation}\label{eqinfaaebkfabeulaef}
C^{la}(\widetilde{K}_p,\widehat{\s{O}}_{\Shan}(\widetilde{U}_{\infty}) )^{\f{n}^0_{\mu, \star,1}=0} =\varinjlim_{h}  K_h^{\Gamma-la}\otimes_{\widehat{\s{O}}_{\Shan}(U_{\infty})^{\Gamma-\sm},\sol} \widehat{\s{O}}_{\Shan}(\widetilde{U}_{\infty}). 
\end{equation}
Now, $K_h^{\Gamma-la}$ has trivial geometric Sen operator by construction, and so $K_h^{\Gamma-la}=K_p^{\Gamma-\sm}$. Hence $\varinjlim_h K_h^{\Gamma-la}=\widehat{\s{O}}_{\Shan}(\widetilde{U}_{\infty})^{\widetilde{K}_p-la,\Gamma-sm}$. Thus, taking colimits as $K_p\to 1$, we get
\[
C^{la}(\f{g},\widehat{\s{O}}_{\Shan}(\widetilde{U}_{\infty}) )^{\f{n}^0_{\star,1}=0}=  \widehat{\s{O}}_{\Shan}(\widetilde{U}_{\infty})^{\widetilde{K}_p-la,\Gamma-sm}\widehat{\otimes}_{ \widehat{\s{O}}_{\Shan}(\widetilde{U}_{\infty})^{K_p\times \Gamma-sm}} \widehat{\s{O}}_{\Shan}(\widetilde{U}_{\infty})
\]
proving what we wanted. 
\end{proof}

\section{Arithmetic Sen operator of completed cohomology}\label{SectionArithmeticSen}

Let $L$ be a finite extension of $\bb{Q}_p$. In this last section we define a  notion of arithmetic Sen operator for  $\bb{C}_p$-semilinear solid Galois representations. We will prove that the locally analytic completed cohomology admits an arithmetic Sen operator and then  compute it in terms of $\pi_{\HT}^{\tor}$ and the Lie algebras of Section \ref{Section:EquivariantFlag} over the flag variety.

In the rest of the section we assume that all the solid $\bb{Q}_p$-algebras have the induced analytic structure from $\bb{Q}_{p,\sol}$ \cite[Definition 2.3.13 (2)]{mann2022padic}.  We shall need the  notion of the derived category of semilinear solid representations. 

\begin{definition}
Let $\Pi$ be a profinite group and $A$ a solid $\bb{Q}_p$-algebra endowed with an action of $\Pi$. We let $\Rep^{\sol}_{A}(\Pi)$ denote the abelian category of $A$-semilinear solid  representations of $\Pi$, and let $D(\Rep^{\sol}_A(\Pi))$ be its derived category.
\end{definition}

\subsection{Arithmetic Sen operator of solid $\bb{C}_p$-semilinear $\Gal_{L}$-representations}

In order to define a general notion of arithmetic Sen operator that is useful to deal with cohomology  we must work with derived categories and solid $\bb{C}_p$-equivariant $\Gal_{L}$-representations. For this, let us introduce some notation. 

Let $L_{\infty}= L(\zeta_{p^{\infty}})$ be the algebraic extension of $L$ obtained by adding the $p$-th power roots of unit, and let $L^{\cyc}$ be the completion of $L_{\infty}$ to a perfectoid field. For $k\in \bb{N}$ let $L_k=L(\zeta_{p^k})$. Let us write $\Gamma^{\mathrm{arith}}:=\Gal(L_{\infty}/L)$ and $H= \Gal_{L_{\infty}}$ so that we have a short exact sequence
\[
1\to H\to \Gal_{L}\to \Gamma^{\mathrm{arith}}\to 1. 
\] 
We let $\Rep^{la}_{L_{\infty}}(\Gamma^{\mathrm{arith}})$ be the abelian category of $L_{\infty}$-semilinear solid locally analytic representations of $\Gamma^{\mathrm{arith}}$ and let $D(\Rep^{la}_{L_{\infty}}(\Gamma^{\mathrm{arith}}))$ be its derived category.

The following lemma  describes the process of decompletion by taking locally analytic vectors.

\begin{lem}\label{LemDecompletionArith1}
Set $\Gamma=\Gamma^{\mathrm{arith}}$. The natural map 
\[
L_{\infty}\to R\Gamma(H, \bb{C}_p)^{R\Gamma-la}
\]
is an equivalence. In particular, we have a decompletion by locally analytic vectors 
\[
RS_{L_{\infty}}: D(\Rep_{\bb{C}_p}^{\sol}(\Gal_L)) \to D(\Rep^{la}_{L_{\infty}}(\Gamma))
\]
given by 
\[
RS_{L_{\infty}}(P):= R\Gamma(H, P)^{R\Gamma-la}.
\]
\end{lem}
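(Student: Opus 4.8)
The plan is to prove the equivalence $L_\infty \xrightarrow{\sim} R\Gamma(H,\bb{C}_p)^{R\Gamma\text{-}la}$ by reducing to the classical Tate–Sen theory of $\bb{C}_p$ as a $\Gal_L$-representation and its derived/solid enhancement. First I would recall that, by the theory of solid locally analytic representations \cite{RRLocallyAnalytic,RRLocAnII}, the functor $(-)^{R\Gamma\text{-}la}$ on $D(\Rep^{\sol}_{L_\infty}(\Gamma))$ commutes with the relevant limits and colimits, so it suffices to understand $R\Gamma(H,\bb{C}_p)$ as an object carrying a $\Gamma$-action and then take its derived locally analytic vectors. The key computational input is the almost purity/Tate result: $H^0(H,\bb{C}_p) = \widehat{L_\infty} = L^{\cyc}$ and $H^i(H,\bb{C}_p) = 0$ for $i>0$ (this is Tate's computation of Galois cohomology of $\bb{C}_p$ for the kernel of the cyclotomic character, together with the almost vanishing of higher cohomology); in the solid/condensed setting this is the statement that $R\Gamma(H,\bb{C}_p) \cong L^{\cyc}[0]$ as a solid $\Gamma$-module.

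Next I would apply the decompletion-by-locally-analytic-vectors principle to $L^{\cyc}$. This is exactly the classical Sen–Colmez result, in the form used in $p$-adic Langlands and made functorial in \cite{RRLocAnII}: the $\Gamma$-locally analytic vectors of $L^{\cyc}$ (equivalently $\bb{C}_p^{\cyc}$) recover $L_\infty$, i.e. $R\Gamma(\Gamma, L^{\cyc} \otimes C^{la}(\Gamma,\bb{Q}_p))$ decompletes so that $(L^{\cyc})^{R\Gamma\text{-}la} = L_\infty$, concentrated in degree $0$ (the higher locally analytic vectors vanish because $L^{\cyc}$ is, after the cyclotomic twist bookkeeping, an orthonormalizable Banach $\Gamma$-representation with vanishing Sen operator on the invariants). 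Composing the two steps — first $R\Gamma(H,-)$ giving $L^{\cyc}$, then $(-)^{R\Gamma\text{-}la}$ giving $L_\infty$ — yields the claimed equivalence $L_\infty \xrightarrow{\sim} R\Gamma(H,\bb{C}_p)^{R\Gamma\text{-}la}$.

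The construction of the functor $RS_{L_\infty}$ is then formal: for $P \in D(\Rep^{\sol}_{\bb{C}_p}(\Gal_L))$ one sets $RS_{L_\infty}(P) = R\Gamma(H,P)^{R\Gamma\text{-}la}$, which lands in $D(\Rep^{\sol}_{L_\infty}(\Gamma))$ because $R\Gamma(H,P)$ carries a residual $\Gamma = \Gal_L/H$-action over the ring $R\Gamma(H,\bb{C}_p)$, and taking $R\Gamma$-locally analytic vectors turns the coefficient ring into its own locally analytic vectors $L_\infty$ by the equivalence just proved; one should check that for $P = \bb{C}_p$ this is compatible (i.e.\ the module structure is the expected one) and that the assignment is functorial and triangulated, both of which are immediate from the corresponding properties of $R\Gamma(H,-)$ and of the derived locally analytic vectors functor.

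The main obstacle I anticipate is entirely at the level of the solid/condensed bookkeeping rather than the arithmetic: one must ensure that Tate's vanishing $R\Gamma(H,\bb{C}_p) = L^{\cyc}[0]$ and Sen–Colmez decompletion $(L^{\cyc})^{R\Gamma\text{-}la} = L_\infty$ hold in the derived category of solid $\bb{Q}_p$-modules with the analytic structure induced from $\bb{Q}_{p,\sol}$, that the functor $(-)^{R\Gamma\text{-}la}$ commutes past $R\Gamma(H,-)$ in the appropriate sense, and that no higher cohomology is hiding in the solid enhancement (this is where Proposition \ref{PropAdmissiblenoHigher} on the absence of higher locally analytic vectors for admissible representations, applied to the relevant Banach pieces, does the work). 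Once those compatibilities are in place — most of which are recorded in \cite{RRLocAnII} — the argument is a clean two-step decompletion.
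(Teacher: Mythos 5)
Your two-step decomposition — first Tate's almost vanishing to get $R\Gamma(H,\bb{C}_p)\cong L^{\cyc}[0]$, then Sen--Colmez decompletion to get $(L^{\cyc})^{R\Gamma\text{-}la}\cong L_\infty[0]$ — is a correct unpacking of what the paper invokes in one shot. The paper's proof simply cites \cite[Theorem~2.4.3(2)]{RCGeoSenTheory} together with the observation that $(\bb{C}_p,\Gal_{\bb{C}_p},\Gal_L\to\Pi)$ is an abstract Sen theory in the sense of \cite[Definition~2.2.6]{RCGeoSenTheory}; that packaged theorem is precisely the Tate--Sen formalism you are reproducing by hand, so the two arguments are substantively the same, with yours being more self-contained and the paper's shorter because it black-boxes the machinery into a single citation. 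For the second claim (the construction of $RS_{L_\infty}$) the paper also gives less detail than you do: it only notes that $RS_{L_\infty}$ is the right derived functor of $W\mapsto W^{H,\Gamma\text{-}la}$, leaving the codomain and functoriality statements implicit, whereas you spell them out; both are fine.

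One caveat worth flagging: in your last paragraph you suggest that Proposition~\ref{PropAdmissiblenoHigher} (vanishing of higher locally analytic vectors for admissible Banach representations) is what guarantees $(L^{\cyc})^{R\Gamma\text{-}la}$ has no higher cohomology. This is not the right justification. It is not at all clear that $L^{\cyc}$ is an admissible Banach $\Gamma$-representation (its continuous dual is not obviously finitely generated over the Iwasawa algebra), and admissibility plays no role in the Tate--Sen/Colmez decompletion. The actual reason higher locally analytic vectors vanish is the Tate--Sen axioms themselves — the almost-\'etale descent and the existence of Tate's normalized traces — which is exactly what \cite[Theorem~2.4.3]{RCGeoSenTheory} packages. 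So you should drop the appeal to admissibility and instead cite the Sen--Colmez decompletion directly (as you in fact do in the second paragraph); the third paragraph's concern about "where the vanishing comes from" is resolved by that citation, not by PropAdmissiblenoHigher.
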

\begin{proof}
The following argument goes back to Tate \cite{Tatepdivisible}. By acyclicity of the   pro\'etale cohomology for  $\widehat{\s{O}}$  in perfectoid affinoid spaces, we have that $R\Gamma(H, \bb{C}_p)=L^{\cyc}$ where $L^{\cyc}$ is the completed cyclotomic extension of $L$. The equivalence $L_{\infty}\to (L^{\cyc})^{R\Gamma-la}$ follows from  \cite[Lemma 2.4.3 (3)]{RCGeoSenTheory} and the fact that  $(L^{\cyc}, \Gamma)$ gives rise to an abstract Sen theory as in \cite[Definition 2.2.1]{RCGeoSenTheory} thanks to Tate's normalized traces. 

 For the second claim, note that $RS_{L_{\infty}}$ is the right derived functor  of the decompletion functor 
\[
S_{L_{\infty}}: \Rep_{\bb{C}_p}^{\sol}(\Gal_{L})\to \Rep_{L_{\infty}}^{la}(\Gamma)
\]
given by 
\[
S_{L_{\infty}}(W)=W^{H, \Gamma-la}. 
\]
\end{proof}

\begin{definition}\label{DefinitionDecompletionByLocAn}
Let $P\in D(\Rep^{\sol}_{\bb{C}_p}(\Gal_{L}))$ be a derived $\bb{C}_p$-semilinear solid representation of $\Gal_{L}$.  We say that $P$ admits a decompletion by locally analytic vectors if the natural base change
\[
RS_{L_{\infty}}(P)\otimes_{L_{\infty},\sol}^{L} \bb{C}_p \xrightarrow{\sim} P
\]
is a quasi-isomorphism. If this is the case, we define the arithmetic Sen operator  of $P$ to be the $\bb{C}_p$-base change of the action by derivations on $RS_{L_{\infty}}(P)$  of the element $\theta^{\mathrm{arith}}\in \Lie \Gamma^{\mathrm{arith}}$ given by the derivative $ \partial_{\chi}$, where $\chi$ is the coordinate given by the cyclotomic character $\chi\colon \Gamma^{\mathrm{arith}}\to \mathbb{Z}_p^{\times}$. 
\end{definition}

The following lemma shows some stability properties of semilinear representations admitting decompletions by locally analytic vectors, and that decompletions are unique  under some mild hypothesis. 

\begin{lem}\label{LemmaStabilityDecompletions}
Set $\Gamma=\Gamma^{\mathrm{arith}}$. Let $\s{C}\subset D(\Rep^{\sol}_{\bb{C}_p}(\Gal_L))$ be the full subcategory of semilinear representations admitting a decompletion by locally analytic vectors. Then $\s{C}$ is stable under cones, finite direct sums and retracts.  Furthermore, if $W_0\in D(\Rep^{la}_{L_{\infty}}(\Gamma))^{\mathrm{nuc}}$ is a nuclear derived $L_{\infty}$-semilinear locally analytic representation of $\Gamma$ (cf. Remark \ref{RemNucMod}), the natural map 
\[
W_0 \to RS_{L_{\infty}}(W_0\otimes^L_{L_{\infty},\sol} \bb{C}_p )
\]
is a quasi-isomorphism.  In particular,  if $W\in D(\Rep_{\bb{C}_p}^{\sol}(\Gal_L))^{\mathrm{nuc}}$  is a nuclear semilinear representation, it admits a decompletion via locally analytic vectors if and only if there is an object $W_0\in D(\Rep^{la}_{L_{\infty}}(\Gamma))^{\mathrm{nuc}}$ such that $W_0\otimes^L_{L_{\infty}} \bb{C}_p\cong W$ as $\bb{C}_p$-semilinear representations of $\Gal_L$, if that is the case, there is a natural quasi-isomorphism $W_0\cong RS_{L_{\infty}}(W)$ as $L_{\infty}$-semilinear locally analytic representations of $\Gamma$.
\end{lem}

\begin{remark}\label{RemNucMod}
In Lemma \ref{LemmaStabilityDecompletions}, we say that a solid $\bb{Q}_p$-linear representation of a profinite group $\Pi$ is nuclear if its underlying  solid $\bb{Q}_p$-vector space is nuclear (cf. \cite[Definition 3.14]{RRLocallyAnalytic}). An object $W$ in the derived category of solid $\bb{Q}_p$-linear representations of $\Pi$ is said nuclear if $H^{i}(W)$ is nuclear for all $i\in \bb{Z}$. By \cite[Proposition 3.29]{RRLocallyAnalytic}, Fr\'echet spaces are nuclear solid $\bb{Q}_p$-vector spaces. 
\end{remark}

\begin{proof}
The first claim follows from the fact that $RS_{L_{\infty}}$ and $\otimes_{L_{\infty},\sol}^L \bb{C}_p$  are exact functors of triangulated categories.    Now let $W_0\in D(\Rep^{la}_{L_{\infty}}(\Gamma))^{\mathrm{nuc}}$, we want to prove that the natural map 
\[
W_0\to RS_{L_{\infty}}(W_0\otimes_{L_{\infty},\sol}^L \bb{C}_p)
\]
is an equivalence. We first prove that the map 
\begin{equation}\label{eqpkfapmaps}
W_0\otimes^L_{L_{\infty},\sol} L^{\cyc} \xrightarrow{\sim} R\Gamma(H, W_0\otimes^L_{L_{\infty},\sol} \bb{C}_p)
\end{equation}
is a quasi-isomorphism. By \cite[Lemma 3.21]{RRLocallyAnalytic},  for any finite extension $F/\bb{Q}_p$,  any quasi-separated solid $F$-vector space $V$ is flat  for the solid tensor product. In particular, by taking filtered colimits, we see that $L_{\infty}\to \bb{C}_p$ and $L_{\infty}\to L^{\cyc}$ are $\otimes_{\sol}$-flat.  Therefore,  by writing $W_0=R\varprojlim_n \tau^{\geq -n} W_0$ as limit of its left canonical truncations, to show that \eqref{eqpkfapmaps} is an equivalence we can assume without loss of generality that $W_0$ is coconnective, i.e. concentrated in cohomological degrees $\geq 0$. On the other hand, by \cite[Lemma 5.2]{RRLocallyAnalytic} (1) the trivial $\bb{Q}_p$-representation of $H$ has a  resolution in terms of solid compact projective  modules over the Iwasawa algebra $\bb{Q}_{p,\sol}[H]=(\bb{Z}_p[[H]])[\frac{1}{p}]$. In particular, $H$-cohomology commutes with filtered colimits of coconective objects, see \cite[Proposition 4.12]{ClausenScholzeCondensed2019}.  Hence,  by writing $W_0=\varinjlim_{n} \tau^{\leq n} W_0$ as the filtered colimit of its right canonical truncations, we can assume without loss of generality that $W_0$ is bounded. By a further inductive argument we can assume that $W_0$ is even in degree $0$.

By hypothesis, $W_0$ is a nuclear $\bb{Q}_p$-vector space, and \cite[Theorem A.43]{Bosco2021padic} implies that both $W_0\otimes_{L_{\infty},\sol} L^{\cyc}$ and $W_0\otimes_{L_{\infty},\sol} \bb{C}_p$ are also nuclear. On the other hand, \cite[Lemma 5.2 (1)]{RRLocallyAnalytic} implies that $R\Gamma(H,  W_0\otimes_{L_{\infty},\sol} \bb{C}_p)$ is represented by the complex of solid cochains
\[
\begin{aligned}
\mathrm{Ch}^{\bullet}(H;W_0\otimes_{L_{\infty},\sol} \bb{C}_p ):=[W_0\otimes_{L_{\infty},\sol} \bb{C}_p  \to \underline{\Hom}_{\bb{Q}_p}(\bb{Q}_{p,\sol}[H],W_0\otimes_{L_{\infty},\sol} \bb{C}_p ) \\ \to \underline{\Hom}_{\bb{Q}_p}(\bb{Q}_{p,\sol}[H^2],W_0\otimes_{L_{\infty},\sol} \bb{C}_p )\to \cdots ]
\end{aligned}
\]
Since $W_0\otimes_{L_{\infty},\sol} \bb{C}_p$ is nuclear and $H$ is profinite,  we have  that 
\[
\underline{\Hom}_{\bb{Q}_p}(\bb{Q}_{p,\sol}[H^k],W_0\otimes_{L_{\infty},\sol} \bb{C}_p ) = C(H^k,\bb{Q}_p)\otimes_{\bb{Q}_p,\sol} W_0\otimes_{L_{\infty},\sol} \bb{C}_p
\] 
for all $k\in \bb{N}$, where $C(H^k,\bb{Q}_p)=\underline{\Hom}_{\bb{Q}_p}(\bb{Q}_{p,\sol}[H^k], \bb{Q}_p)$ is the Banach space of $\bb{Q}_p$-valued continuous functions of $H^k$. Therefore, since $H$ acts trivially on $W_0$ and $L_{\infty}$, we have that
\[
\mathrm{Ch}^{\bullet}(H;W_0\otimes_{L_{\infty},\sol} \bb{C}_p )= W_0\otimes_{L_{\infty},\sol} \mathrm{Ch}^{\bullet}(H;\bb{C}_p ).
\]
In other words, we have the projection formula for group cohomology on nuclear representations
\[
W_0\otimes_{L_{\infty},\sol}^L R\Gamma(H, \bb{C}_p)\xrightarrow{\sim} R\Gamma(H,W_0\otimes_{L_{\infty},\sol} \bb{C}_p).
\]
But pro\'etale descent yields $R\Gamma(H, \bb{C}_p)= L^{\cyc}$, proving that \eqref{eqpkfapmaps} is an equivalence as wanted.  

Finally, we want to prove that the map $W_0\to (W_0\otimes^L_{L_{\infty},\sol} L^{\cyc})^{R\Gamma-la}$ is a quasi-isomorphism.  Set $L_n=\bb{Q}_p(\zeta_{p^n})$, by \cite[Proposition 3.2.6 (3)]{RRLocAnII} the functor of locally analytic vectors preserves filtered colimits, hence it suffices to show that the natural map
\[
W_0\otimes_{L_n,\sol} L_{\infty} \to (W_0\otimes_{L_n,\sol} L^{\cyc})^{R\Gamma-la}
\]
is a quasi-isomorphism. But then, by the projection formula of locally analytic vectors \cite[Corollary 3.2.14 (3)]{RRLocAnII} (that we can apply since an open subgroup of $\Gamma$ acts trivially on $L_n$), we have that 
\[
(W_0\otimes_{L_n,\sol} L^{\cyc})^{R\Gamma-la}= W_0\otimes_{L_n,\sol} (L^{\cyc})^{R\Gamma-la} = W_0\otimes_{L_n,\sol} L_{\infty} 
\]
where in the last equivalence we use Lemma \ref{LemDecompletionArith1}. This finishes the proof. 
\end{proof}

\begin{cor}\label{CoroExactDecompletion}
Keep the notation of Lemma \ref{LemmaStabilityDecompletions}. Let  $W\in D(\Rep_{\bb{C}_p}^{\sol}(\Gal_L))^{\mathrm{nuc}}$  be an object admitting a decompletion $W_0\in D(\Rep^{la}_{L_{\infty}}(\Gamma))^{\mathrm{nuc}}$. Then  the cohomology groups $H^i(W)$ admit decompletions given by $H^i(W_0)$ for $i\in \bb{Z}$.  In particular, if $W$ sits in degree $0$ and admits a decompletion $W_0$, then $W_0$ also sits in degree $0$. 
\end{cor}
\begin{proof}
By Lemma \ref{LemmaStabilityDecompletions} we have an isomorphism of semilinear representations $W=\bb{C}_p\otimes^L_{L_{\infty},\sol} W_0$. Hence, the statement will follow if the functor $\bb{C}_p\otimes^L_{L_{\infty},\sol} $ is $t$-exact. But we have $\bb{C}_p\otimes^L_{L_{\infty},\sol}-= \varinjlim_{n} \bb{C}_p\otimes^L_{L_n,\sol} -$ where $L_n=L(\zeta_{p^n})$, thus it suffices to see that $\bb{C}_p\otimes^L_{L_n,\sol} -$ is $t$-exact, which follows from \cite[Lemma 3.21]{RRLocallyAnalytic} and the fact that $\bb{C}_p$ is quasi-separated as solid $L_n$-vector space. 
\end{proof}

\subsection{Arithmetic Sen operator of completed cohomology}

 Recall from Section \ref{Section:EquivariantFlag} that over $\Fl$ we have Lie algebroids $\f{n}^0_{\mu}\subset \f{p}^0_{\mu}\subset \f{g}^0$ on $\Fl$ corresponding to the $\bbf{P}_{\mu}$-equivariant adjoint actions $\f{n}_{\mu}\subset \f{p}_{\mu}\subset \f{g}$. On the other hand, the Hodge cocharacter $\mu:\bb{G}_m\to \bbf{G}_L$ lands in $\bbf{P}_{\mu}$, and the passage to tangent spaces gives rise an element $\theta_{\mu}\in \f{p}_{\mu}$. By definition of the Levi subgroup $\bbf{M}_{\mu}$ as the centralizer of $\mu$, the  adjoint action of $\theta_{\mu}$ on $\f{m}_{\mu}$  is trivial. In this way, we have a $\bbf{P}_{\mu}$-equivariant extension of adjoint representations
\[
\begin{tikzcd}
0 \ar[r]& \f{n}_{\mu} \ar[r] \ar[d] & \f{p}^+_{\mu} \ar[r]\ar[d] & L\cdot \theta_{\mu} \ar[r]\ar[d] & 0 \\
0 \ar[r] & \f{n}_{\mu}\ar[r] & \f{p}_{\mu}  \ar[r] & \f{m}_{\mu} \ar[r] & 0
\end{tikzcd}
\]
where the right square is a pullback square. Passing to $\bbf{G}$-equivariant vector bundles over $\Fl$ the upper short exact sequence produces an exact sequence
\[
0\to \f{n}^0_{\mu} \to \f{p}^{0,+}_{\mu} \to \s{O}_{\Fl}\cdot \theta_{\mu}\to 0. 
\]
By  Corollary \ref{CoroSenOpKillOla}, the action of $\mathfrak{g}^0$ on $\s{O}^{la}_{\Shan}$ vanishes when restricted to $\mathfrak{n}^0_{\mu}$. This produces an action of $\mathfrak{m}_{\mu}$ on $\s{O}^{la}_{\Shan}$, called the horizontal action, and in particular an action of the operator $\theta_{\mu}\in  \mathfrak{m}_{\mu}$.

Let $\pi_{K_p}: \Shan^{\tor}_{K^p,\infty,L}\to \Shan^{\tor}_{K^pK_p,L}$ be the natural map and $\pi_{\HT}^{\tor }: \Shan^{\tor}_{K^p,\infty,L}\to \Fl_{L}$ the Hodge-Tate period map .

\begin{theo}\label{TheoArithSenOla}
 Let $U\subset \Shan^{\tor}_{K^pK_p,L}$ be an open affinoid subspace  satisfying  the conditions of Proposition \ref{LemmaLongExactSequenceOlaSheaves}. Then   the $\bb{C}_p$-semilinear $\Gal_{L}$-representation $\s{O}^{la}_{\Shan}(\pi_{K_p}^{-1}(U_{\bb{C}_p}))$ admits a decompletion by locally analytic vectors as in Definition \ref{DefinitionDecompletionByLocAn}.  Moreover,  
\[
RS_{L_{\infty}}(\s{O}^{la}_{\Shan}(\pi_{K_p}^{-1}(U_{\bb{C}_p})))=\bigg( \s{O}^{la}_{\Shan}(\pi_{K_p}^{-1}(U_{\bb{C}_p}))^{H} \bigg)^{\Gamma^{\mathrm{arith}}-la}
\]
sits in degree $0$ and the action of the arithmetic Sen operator is given by $-\theta_{\mu}$.  Moreover, the natural map 
\[
\bigg( \s{O}^{la}_{\Shan}(\pi_{K_p}^{-1}(U_{\bb{C}_p}))^{H} \bigg)^{\Gamma^{\mathrm{arith}}-la}\widehat{\otimes}_{L_{\infty}}\bb{C}_p \to \s{O}^{la}_{\Shan}(\pi_{K_p}^{-1}(U_{\bb{C}_p})
\]
is an isomorphism. In other words, $\s{O}^{la}_{\Shan}(\pi_{K_p}^{-1}(U_{\bb{C}_p})$ admits a decompletion as in Definition \ref{DefinitionDecompletionByLocAn}.

 An analogue  statement holds for the sheaves $\s{O}^{la}_{J}$ associated to the boundary divisors, and the sheaf $\n{I}^{la}_{\Shan}$ of Definition \ref{DefinitionOlaD}.
\end{theo}

For simplicity and to light notation, we shall prove the theorem only in the case where $J=\emptyset$, that is when $\s{O}^{la}_J=\s{O}^{la}_{\Shan}$.  The case for general $J$ is proven in the exact same way. The case of $\n{I}^{la}_{\Shan}$ follows from the long exact sequence of divisors of Proposition \ref{LemmaLongExactSequenceOlaSheaves}, and the fact that the  decompletion remains exact thanks to Corollary \ref{CoroExactDecompletion}.

There are two main statements to prove in Theorem \ref{TheoArithSenOla}, that is, the existence of the decompletion of  the locally analytic sheaf $\s{O}^{la}_{\Shan}$ with respect to the Galois action  (and hence the decompletion for locally analytic completed cohomology), and the computation of its arithmetic Sen operator. 

\subsection{Proof of the arithmetic decompletion}\label{SubSectionArithmeticDecompletion}

\begin{proof}[Proof of Theorem  \ref{TheoArithSenOla}: existence of the arithmetic decompletion]
 We keep the notation of the proof of Proposition \ref{LemmaLongExactSequenceOlaSheaves} except that now we consider the Shimura variety as a log adic space of finite type over $L$. In particular, there is an open affinoid $V\subset \Fl_{L}$ such that $\f{n}^0_{\mu}$ and $\f{g}^0/\f{n}^0_{\mu}$ are finite free when restricted to $V$, $\widetilde{U}:=\pi_{K_p}^{-1}(U)\subset \pi_{\HT}^{\tor,-1}(V)$ with $U\subset \Shan^{\tor}_{K^pK_p,L}$ an open affinoid subspace,  and there is a toric chart $\psi: U\to \bb{T}^e_{L}\times \bb{D}_{L}^{d-e}$ over $L$.  We write $U_{\infty}$ for the pre-perfectoid product of tori and polydics over $U$ (resp. $U_n$ for the finite level ones), and $\widetilde{U}_{\infty}=\widetilde{U}\times_U U_{\infty}$ for the product in the pro-Kummer-\'etale site of $U$. Note that the map $U_{p^{\infty},L^{\cyc}}\to U$ is a pro-Kummer-\'etale Galois cover of group  the semidirect product $\Gamma^{\mathrm{arith}}\ltimes \Gamma_p$.

\textit{Step 1.}  Consider the $L^{\cyc}$-base change $\Shan^{\tor}_{K^p,\infty,L^{\cyc}}$ of the infinite level Shimura variety and let $|\Shan^{\tor}_{K^p,\infty,L^{\cyc}}|=\varprojlim_{K_p} |\Shan^{\tor}_{K^pK_p,L^{\cyc}}|$ be its underlying topological space. Define a sheaf of $\widetilde{K}_p$-locally analytic functions $\s{O}^{la}_{\Shan,L^{\cyc}}$ over $|\Shan^{\tor}_{K^p,\infty,L^{\cyc}}|$ as in Definition \ref{DefinitionOla} (2). We  compare the sheaves $\s{O}^{la}_{\Shan}$ and $\s{O}^{la}_{\Shan,L^{\cyc}}$.

By Remark \ref{RemaSimplification}  and  Proposition \ref{LemmaLongExactSequenceOlaSheaves} one finds that 
\begin{equation}\label{eqAciclicityOlaLcyc}
\begin{aligned}
\s{O}^{la}_{\Shan,L^{\cyc}}(\widetilde{U}_{L^{\cyc}}) & \cong R\Gamma_{\proket}(U_{L^{cyc}}, \widehat{\s{O}}_{\Shan}\widehat{\otimes}_{\bb{Q}_p} C^{la}(\widetilde{K}_p, \bb{Q}_p)_{\ket})\\ & \cong  R\Gamma(\widetilde{K}_p\times \Gamma_p,  \widehat{\s{O}}_{\Shan}(\widetilde{U}_{p^{\infty},L^{\cyc}})\widehat{\otimes}_{\bb{Q}_p} C^{la}(\widetilde{K}_p,\bb{Q}_p)),
\end{aligned}
\end{equation}
namely, the only property we used is that $\widetilde{U}_{\infty,L^{cyc}}$ was a log affinoid perfectoid, so that the vanishing of pro-Kummer-\'etale $\s{O}^+/p$-cohomology yields
\[
R\Gamma_{\proket}(\widetilde{U}_{\infty,L^{cyc}}, \widehat{\s{O}}_{\Shan}\widehat{\otimes}_{\bb{Q}_p} W)= R\Gamma(H, \widehat{\s{O}}_{\Shan}(\widetilde{U}_{\infty, \bb{C}_p})\widehat{\otimes}_{\bb{Q}_p} W)=\widehat{\s{O}}_{\Shan}(\widetilde{U}_{\infty, L^{\cyc}})\widehat{\otimes}_{\bb{Q}_p} W
\] 
for any (colimit of) Banach $\bb{Q}_p$-vector space $W$. Then, taking invariants under $\Gamma^p\cong \widehat{\bb{Z}}^{(p),d}$ one gets 
\[
R\Gamma_{\proket}(\widetilde{U}_{p^{\infty},L^{cyc}}, \widehat{\s{O}}_{\Shan}\widehat{\otimes}_{\bb{Q}_p} W)=R\Gamma(H, \widehat{\s{O}}_{\Shan}(\widetilde{U}_{p^{\infty}, \bb{C}_p})\widehat{\otimes}_{\bb{Q}_p} W)=\widehat{\s{O}}_{\Shan}(\widetilde{U}_{p^{\infty}, L^{\cyc}})\widehat{\otimes}_{\bb{Q}_p} W.
\] 

 Moreover, since 
\[
\widehat{\s{O}}_{\Shan}(\widetilde{U}_{p^{\infty},\bb{C}_p})= \widehat{\s{O}}_{\Shan}(\widetilde{U}_{p^{\infty},L^{\cyc}})\widehat{\otimes}_{L^{\cyc}} \bb{C}_p,
\]
we have $\widehat{\s{O}}_{\Shan}(\widetilde{U}_{p^n,\bb{C}_p})=\widehat{\s{O}}_{\Shan}(\widetilde{U}_{p^{n},L^{\cyc}})\widehat{\otimes}_{L^{\cyc}} \bb{C}_p$ for all $n\in \bb{N}$ (after taking $\Gamma_p^{p^n}$-invariants), and by passing to $K_p$-locally analytic vectors, we must have 
\[
\s{O}^{la}_{\Shan}(\widetilde{U}_{p^{n},\bb{C}_p})= \s{O}^{la}_{\Shan,L^{\cyc}}(\widetilde{U}_{p^{n}, L^{\cyc}})\widehat{\otimes}_{L^{\cyc}} \bb{C}_p
\]
  Indeed, this follows from the projection formula of $\widetilde{K_p}$-locally analytic vectors, see \cite[Lemma 2.1.6]{RRLocAnII} or  \cite[Corollary 3.1.15 (3)]{RCGeoSenTheory}.

Therefore, it suffices to show that $\s{O}^{la}_{\Shan,L^{\cyc}}(\widetilde{U}_{p^{n}, L^{\cyc}})$ admits a decompletion by locally analytic vectors for the action of $\Gamma^{\mathrm{arith}}$ and  $n=0$.  We will even show that it admits a decompletion for all $n\in \mathbb{N}$.

\textit{Step 2.} It suffices to show that the colimit  $\varinjlim_{n\in \bb{N}}\s{O}^{la}_{\Shan,L^{cyc}}(\widetilde{U}_{p^{n}, L^{\cyc}})$  admits a decompletion by locally analytic vectors for the action of $\Gamma^{\mathrm{arith}}$.  Indeed,  suppose that this is the case and let  us denote $W=\varinjlim_{n\in \bb{N}}\s{O}^{la}_{\Shan,L^{cyc}}(\widetilde{U}_{p^{n}, L^{\cyc}})$, then $W$ is a $\Pi=\Gamma^{\mathrm{arith}}\ltimes \Gamma_{p}$-representation which is smooth when restricted to $\Gamma_{p}$. By  Lemma \ref{LemmaSemidirectAnVectors} (more precisely by taking colimits with respect to rigid group neighbourhoods of $\Gamma^{\mathrm{arith}}$ and $\Gamma_p$), we have that 
\begin{equation}\label{eq1oiqkwdq}
W^{R\Pi-la}\cong (W^{R\Gamma_p-la})^{R\Gamma^{\mathrm{arith}}-la} = W^{R\Gamma^{\mathrm{arith}}-la } = W_0
\end{equation}
sits in degree $0$ (thanks to Corollary \ref{CoroExactDecompletion}) and is the decompletion of $W$ with respect to $\Gamma^{\mathrm{arith}}$. In \eqref{eq1oiqkwdq} the second quasi-isomorphism follows from the fact that a $\Gamma_p$-smooth representation is already  locally analytic. In particular, $W_0$ has a natural action of $\Pi$ and we have an equivalence of $\Pi$-equivariant solid representations 
\[
W= W_0\otimes^L_{L_{\infty},\sol} L^{\cyc}=W_0\otimes_{L_{\infty},\sol} L^{\cyc}
\]
(where the last equivalence follows from the flatness of $L^{\cyc}$ over $L_{\infty}$ for the solid tensor product as used in the proof of Corollary \ref{CoroExactDecompletion}). Since $\Gamma_p$ acts $L^{\cyc}$-linearly, we find an isomorphism 
\[
\s{O}^{la}_{\Shan,L^{\cyc}}(\widetilde{U}_{p^n},L^{\cyc})= W^{\Gamma_p^{p^n}}= W_0^{\Gamma_p^{p^n}}\otimes_{L_{\infty},\sol} L^{\cyc}
\]
of $\Gamma^{\mathrm{arith}}$-representations. Since $ W_0^{\Gamma_p^{p^n}}$ is $\Gamma^{\mathrm{arith}}$-locally analytic as $W_0$ is so (thanks to Lemma \ref{LemmaInvariantCompactPreservesLocAn}) we deduce that $\s{O}^{la}_{\Shan,L^{\cyc}}(\widetilde{U}_{p^n,L^{\cyc}})$ admits a decompletion as in Definition \ref{DefinitionDecompletionByLocAn}.

\textit{Step 3.} By \eqref{eqAciclicityOlaLcyc} we have that 
\[
\s{O}^{la}_{\Shan,L^{\cyc}}(\widetilde{U}_{p^n,L^{\cyc}})= R\Gamma(\widetilde{K}_p\times \Gamma_p^{p^n}, \widehat{\s{O}}_{\Shan}(\widetilde{U}_{p^{\infty},L^{cyc}})\widehat{\otimes}_{\bb{Q}_p}  C^{la}(\widetilde{K}_p, \bb{Q}_p)). 
\]
Let us write $C^{la}(\widetilde{K}_p,\bb{Q}_p)=\varinjlim_{h\to \infty} C(\bb{G}^{(h)}, \bb{Q}_p)$ as a colimit of $h$-analytic functions of $\widetilde{K}_p$ (depending on a the coordinates of a fixed normal open uniform pro-$p$-subgroup), and let 
\[
\s{F}_{h}:= \Gamma(\widetilde{K}_p,\widehat{\s{O}}_{\Shan}(\widetilde{U}_{p^{\infty},L^{cyc}})\widehat{\otimes}_{\bb{Q}_p}C(\bb{G}^{(h)}, \bb{Q}_p) ) =  R\Gamma(\widetilde{K}_p,\widehat{\s{O}}_{\Shan}(\widetilde{U}_{p^{\infty},L^{cyc}})\widehat{\otimes}_{\bb{Q}_p}C(\bb{G}^{(h)}, \bb{Q}_p) ), 
\]
where the vanishing of higher cohomology follows from \cite[Lemma 3.1.5 and Remark 3.2.2]{RCGeoSenTheory} as $U_{p^{\infty}}$ is perfectoid.    Let $B_{\infty}:=\widehat{\s{O}}_{\Shan}(U_{p^{\infty},L^{\cyc}})$ and $B_n:=\s{O}_{\Shan}(U_{p^n,L^{\cyc}})$,  by \cite[Proposition 2.2.14]{RCGeoSenTheory} there are normalized traces $R_n\colon B_{\infty}\to B_n$ for $n\gg 0$ with respect to the action of $\Gamma_p$, giving rise to a  Sen theory on $B_{\infty}$ as in \cite[Definition 2.2.5]{RCGeoSenTheory}. Moreover, as $C(\mathbb{G}^{(h)}, \bb{Q}_p)$ is a locally analytic Banach representation of $\widetilde{K}_p$, the $\Gamma^{\mathrm{arith}}\ltimes \Gamma_p$-representation  $\s{F}_{h}$ is an $B_{\infty}$-semilinear relative locally analytic (to see this, it suffices to notice that the pro-Kummer-\'etale sheaf  $C(\bb{G}^{(h)}, \bb{Q}_p)_{\ket}\widehat{\otimes}_{\bb{Q}_p} \widehat{\s{O}}_{\Shan}$ is relative locally analytic, and then so it is its evaluation at $U_{p^\infty,L^{\cyc}}$ being perfectoid).   Therefore, by \cite[Theorem 2.4.4]{RCGeoSenTheory} the $\Gamma_p$-representation $\s{F}_h$ admits a decompletion via locally analytic vectors, given  by $\s{F}_h^{\Gamma_p-la}$, and by \cite[Corollary 2.5.2]{RCGeoSenTheory} we have that 
\[
R\Gamma(\Gamma_{p}^{p^n}, \s{F}_{h})= R\Gamma(\Gamma_p^{p^n,\sm}, R\Gamma(\Lie \Gamma_p,\s{F}_{h}^{\Gamma_{p}-la})),
\]
where $R\Gamma(\Gamma_p^{p^n,\sm}, -)$ refers to the smooth or locally compact group cohomology as in \cite[Definition 6.3.1]{RRLocAnII}.
Taking colimits as $h,n\to \infty$, we deduce that 
\[
\varinjlim_{n} \s{O}^{la}_{\Shan,L^{\cyc}}(\widetilde{U}_{p^n,L^{\cyc}})= \varinjlim_{h} R\Gamma(\Lie \Gamma_p,\s{F}_h^{\Gamma_p-la}). 
\]
Hence, by applying Corollary \ref{CoroExactDecompletion}, to show that the left term above admits a decompletion with respect to the action of $\Gamma^{\mathrm{arith}}$, it suffices to show that each $\s{F}_h^{\Gamma_p-la}$ admits a decompletion for the action of this group.

\textit{Step 4.} Let us now study more carefully the spaces  $\s{F}_h^{\Gamma_p-la}$. For $n\in \bb{N}$ let us denote $A_{n}=\s{O}_{\Shan}(U_{p^n})$, we have that $ A_n\widehat{\otimes}_{L} L^{\cyc} =  B_n=\s{O}_{\Shan}(U_{p^n,L^{\cyc}})$. Finally, write $B_{\infty}=\widehat{\s{O}}_{\Shan}(U_{\infty,L^{\cyc}})$.

% Let  $A_{\infty}$ be the uniform completion of the colimit $\varinjlim_n A_{n}$, then $A_{\infty}\widehat{\otimes}_L L^{\cyc}=B_{\infty}=\widehat{\s{O}}_{\Shan}(U_{\infty,L^{\cyc}})$, namely, $A_1$ is \'etale over $L\langle T_1,\ldots, T_e, S_{e+1},\ldots, S_{d} \rangle$ and we have 
%\[
%A_{\infty}= A_1\widehat{\otimes}_{L\langle T_1,\ldots, T_e, S_{e+1},\ldots, S_{d} \rangle} L\langle T_1^{1/p^{\infty}},\ldots, T_e^{1/p^{\infty}}, S_{e+1}^{1/p^{\infty}},\ldots, S_{d}^{1/p^{\infty}} \rangle.
%\]
%Thus, one has  
%\[
%B_{\infty}=A_1\widehat{\otimes}_{L\langle T_1,\ldots, T_e, S_{e+1},\ldots, S_{d} \rangle} L^{\cyc}\langle T_1^{1/p^{\infty}},\ldots, T_e^{1/p^{\infty}}, S_{e+1}^{1/p^{\infty}},\ldots, S_{d}^{1/p^{\infty}} \rangle=A_{\infty}\widehat{\otimes}_{L} L^{\cyc}.
%\]

  The $\Gamma^{\mathrm{arith}}\ltimes \Gamma_p$-representation $\s{F}_h$ over $B_{\infty}$ is relative locally analytic by the Step 3. Hence, there is a basis $\{e_{i}\}_{i\in I}$ of $\s{F}_{h}$ spanning a $\Gamma^{\mathrm{arith}}\ltimes \Gamma_p$-stable $B_{\infty}^{\circ}$-lattice $\s{F}^{\circ}_{h}\subset \s{F}_h$, such that there is an $\epsilon>0$ and a $B_{\infty}^{\circ}/p^{\epsilon}$-semilinear $\Gamma^{\mathrm{arith}}\ltimes \Gamma_p$-equivariant isomorphism 
  \[
  \s{F}^{\circ}_h/p^{\epsilon}\cong \bigoplus_{i\in I} B_{\infty}^{\circ}/p^{\epsilon} \cdot e_i. 
  \]
  where $\Gamma^{\mathrm{arith}}\ltimes \Gamma_p$ acts trivially on $e_i$ modulo $p^{\epsilon} \s{F}^{\circ}_h$. Let $\epsilon >\delta>0$ be fixed and closed enough to $0$, by \cite[Theorem 2.4.4 (1)]{RCGeoSenTheory}, there is $n\gg 0$ and elements $\{e_i'\}_{i\in I}$ in $\s{F}^{\circ}_{h}$ with $e_i'\equiv e_i \mod p^{\delta} \s{F}^{\circ}_{h}$, such that $\s{F}_{h,n}:=\widehat{\bigoplus}_{i\in I} B_{n} e_i'\subset \s{F}_{h}$ is a $\Gamma_p$-stable subrepresentation. In particular  $\s{F}_h=\s{F}_{h,n}\widehat{\otimes}_{B_n} B_{\infty}$. Furthermore, since $B_{\infty}^{\Gamma^{p^n}_p-an}=B_{\infty}^{\Gamma_p^{p^n}}=B_n$,    by \cite[Theorem 2.4.4 (2)]{RCGeoSenTheory} we have $\s{F}_{h,n}=\s{F}_{h}^{\Gamma_p^{p^n}-an}$.

%  
%   given $n\gg 0$ there is some $m<n$ such that $\s{F}_{h,n}= A_{n,L^{\cyc}}\widehat{\otimes}_{A_{m,L^{\cyc}}} \s{F}_h^{\Gamma_p^{p^m}-an}$, and $\s{F}_h^{\Gamma_p^{p^m}-an}$ is still an $A_{\infty,L^{\cyc}}^{\Gamma_p^{p^m}-an}= A_{\infty,L^{\cyc}}^{\Gamma_p^{p^m}}=A_{m,L^{\cyc}}$-ON Banach representation of $\Gamma_p$ (here we use that the $\Gamma_p$-locally analytic vectors of $A_{\infty,L^{\cyc}}$ are the same as its $\Gamma_p$-smooth vectors).
     By Lemma \ref{LemmaAnRepNormalSubgroup}, $\s{F}_{h,n}=\s{F}_h^{\Gamma_p^{p^n}-an}$ is stable under the $\Gamma^{\mathrm{arith}}\ltimes \Gamma_{p}$-action in $\s{F}_h$.  In particular, $\s{F}_{h,n}$ is an $B_n$-semilinear locally analytic representation of $\Gamma^{\mathrm{arith}}$.   On the other hand, the normalized Tate traces $\widetilde{\Tr}_k\colon L^{\cyc}\to L_k$ give rise to normalized Tate traces $\widetilde{\Tr}_k\colon B_n=A_{n}\widehat{\otimes}_L L^{\cyc} \to A_{n}\widehat{\otimes}_L L_k $ that when endowed with the action of $\Gamma^{\mathrm{arith}}$ give rise to a Sen theory by Lemma \ref{LemmaBaseChangeSenTrances}. Hence, applying \cite[Theorem 2.4.4 (1)]{RCGeoSenTheory} with respect to the action of $\Gamma^{\mathrm{arith}}$, we deduce that the representations $\s{F}_{h,n}$ admit decompletions by $\Gamma^{\mathrm{arith}}$-locally analytic vectors. Taking colimits as $n\to \infty$, we get that $\s{F}_h^{\Gamma_p-la}=\varinjlim_{n} \s{F}_{h,n}$ admits a locally analytic decompletion for the action of $\Gamma^{\mathrm{arith}}$, proving what we wanted.  
\end{proof}

\begin{lem}\label{LemmaBaseChangeSenTrances}
Let $A$ be an \'etale algebra over $L\langle T_1^{\pm 1},\ldots, T_{e}^{\pm 1}, S_{e+1},\ldots, S_{d} \rangle$ that factors as a composite of rational localizations and finite \'etale maps. Consider the Sen theory on $L^{\cyc}$ given by Tate's normalized traces $\widetilde{\Tr}_k\colon \colon L^{\cyc}\to L_k$ for the action of $\Gamma^{\mathrm{arith}}$. Then the base change $\widetilde{\Tr}_k\colon A_{L^{\cyc}}\to A_{L}$ is a Sen theory as in \cite[Definition 2.1.1]{RCGeoSenTheory} for the action of $\Gamma^{\mathrm{arith}}$.
\end{lem}
\begin{proof}
Let $A_{\infty}=A\widehat{\otimes}_{L\langle T_1^{\pm 1},\ldots, T_{e}^{\pm 1}, S_{e+1},\ldots, S_{d} \rangle} L\langle T_1^{\pm 1/p^{\infty}},\ldots, T_{e}^{\pm 1/p^{\infty}}, S_{e+1}^{1/p^{\infty}},\ldots, S_{d}^{1/p^{\infty}} \rangle$.   Since $A_{L^{\cyc}}$ is a retract of $A_{\infty,L^{\cyc}}$ and the base change of the traces $\widetilde{\Tr}_{k}$ commute with the retract, by \cite[Lemma 2.2.8]{RCGeoSenTheory} it suffices to show that $(A_{\infty, L^{\cyc}}, \widetilde{\Tr}_k,\Gamma^{\mathrm{arith}})$ is a Sen theory as in \cite[Definition 2.1.1]{RCGeoSenTheory}. Note that $A_{\infty,L^{\cyc}}$ is perfectoid, then  by \cite[Lemma 2.2.12 (2)]{RCGeoSenTheory}   we are reduced to the case where  $A=L\langle T_1,\ldots, T_{e}, S_{e+1},\ldots, S_{d} \rangle$, in which case the triple $(A_{\infty, L^{\cyc}}, \widetilde{\Tr}_k,\Gamma^{\mathrm{arith}})$ satisfies the axioms of \cite[Definition 2.1.1]{RCGeoSenTheory} thanks to the fact that $A_{\infty,L^{\cyc}}^{\circ}=A_{\infty}^{\circ}\widehat{\otimes}_{\n{O}_L} \n{O}_{L^{\cyc}}$ and \cite[Proposition 4.1.1]{BC1}.
\end{proof}

\subsection{Computation of the arithmetic Sen operator}

\begin{proof}[Proof of Theorem \ref{TheoArithSenOla}: computation of the arithmetic Sen operator]
We keep the notation of Section \ref{SubSectionArithmeticDecompletion}.  The sheaf $\s{O}^{la}_{\Shan}$ on $|\Shan^{\tor}_{K^p,\infty,\bb{C}_p}|$ admits an arithmetic decompletion, in particular, it is endowed with an arithmetic Sen operator $\theta^{\mathrm{arith}}$. Note that the arithmetic  Sen operator $\theta^{\mathrm{arith}}$ acts as a derivation on  $\s{O}^{la}_{\Shan}$ being constructed as a base change of a derivation on the $\Gamma^{\mathrm{arith}}$-decompletion by locally analytic vectors.

 Our next task is to show that $\theta^{\mathrm{arith}}$ agrees with the operator $-\theta_{\mu}$. To prove that, let $U\subset \Shan_{K^pK_p,L}$ be an open affinoid as in  Theorem \ref{TheoArithSenOla}, by the equation \eqref{eqpampsq33d1d1} in the proof Corollary \ref{CoroDescriptionBaseChangeOla} the orbit map for the action of $\widetilde{K}_p$  induces an $\widehat{\s{O}}_{\Shan}(\widetilde{U}_{p^{\infty}, \bb{C}_p})$-linear  isomorphism 
\begin{equation}\label{qwijioenflaefwe}
\widehat{\s{O}}_{\Shan}(\widetilde{U}_{p^{\infty},\bb{C}_p})^{\widetilde{K}_p-la,\Gamma_p-{\sm}}\widehat{\otimes}_{\widehat{\s{O}}_{\Shan}(U_{p^{\infty},\bb{C}_p})^{\Gamma_p-sm}} \widehat{\s{O}}_{\Shan}(\widetilde{U}_{p^{\infty},\bb{C}_p}) \xrightarrow{\sim } C^{la}(\widetilde{K}_p, \widehat{\s{O}}_{\Shan}(\widetilde{U}_{p^{\infty},\bb{C}_p}))^{\f{n}^0_{\mu,\star_1}=0}.
\end{equation}
Indeed, the equation \eqref{eqpampsq33d1d1} is the isomorphism induced by the orbit map
\[
\widehat{\s{O}}_{\Shan}(\widetilde{U}_{p^{\infty},\bb{C}_p})^{\widetilde{K}_p-la, \f{n}^0_{\mu}=0}\otimes_{\widehat{\s{O}}_{\Shan}(U_{p^{\infty},\bb{C}_p})} \widehat{\s{O}}_{\Shan}(\widetilde{U}_{p^{\infty},\bb{C}_p})\xrightarrow{\sim} C^{la}(\widetilde{K}_{p}, \widehat{\s{O}}_{\Shan}(\widetilde{U}_{p^{\infty},\bb{C}_p}))^{\f{n}^0_{\mu,\star_1}=0}.
\]
Then, we can decomplete $\widehat{\s{O}}_{\Shan}(\widetilde{U}_{p^{\infty},\bb{C}_p})^{\widetilde{K}_p-la, \f{n}^0_{\mu}=0}$ by taking $\Gamma_{p}$-locally analytic vectors, and the equation \eqref{eqinfaaebkfabeulaef} and  its subsequent discussion yields \eqref{qwijioenflaefwe}.

Let us describe the equivariant actions on \eqref{qwijioenflaefwe}.  It is $\Gamma_p$-equivariant for the natural action on the left term given by acting on each factor, and the natural action on the right factor on the coefficients. The isomorphism is $\widetilde{K}_p$-equivariant for two different actions; in one hand one has the action of   $\widetilde{K}_p$ on $\widehat{\s{O}}_{\Shan}(\widetilde{U}_{p^{\infty},\bb{C}_p})^{\widetilde{K}_p-la,\Gamma_p-{\sm}}$ and the right regular action on the second term, on the other hand one has the action on $\widehat{\s{O}}_{\Shan}(\widetilde{U}_{p^{\infty},\bb{C}_p}) $ in the first term and the $\star_{1,3}$ action on the second term (i.e the left regular action and the action on the coefficients).

%where the action of $\widehat{\s{O}}_{\Shan}(\widetilde{U}_{p^{\infty}, \bb{C}_p})$ on the first term of  \eqref{qwijioenflaefwe} is by multiplication on the right factor, and the action of the second term of \eqref{qwijioenflaefwe} is the obvious one. The action of $\widetilde{\f{g}}$ on the first factor is by derivations on the left term, the action on the second factor is then by the right regular action, i.e. via right derivations. Notice that the previous isomorphism is also $\Gamma_{p}$-equivariant for the natural action on the tensor product in the left term acting on each factor, and the natural action   on the second term given by the action on the coefficients of the locally analytic functions. 

Since the arithmetic Sen operator $\theta^{\mathrm{arith}}$ is a derivation functorial for the  right regular action on the second term of \eqref{qwijioenflaefwe}, it must arise from a    $\widetilde{K}_p$-right invariant derivation of the space  $C^{la}(\widetilde{K}_{p}, \widehat{\s{O}}_{\Shan}(\widetilde{U}_{p^{\infty},\bb{C}_p}))^{\f{n}^0_{\mu,\star_1}=0}$, this corresponds to an object in 
\[
\theta^{\mathrm{arith}}\in (\widehat{\s{O}}_{\Shan}(\widetilde{U}_{p^{\infty},\bb{C}_p})\otimes \widetilde{\f{g}})/( \widehat{\s{O}}_{\Shan}(\widetilde{U}_{p^{\infty},\bb{C}_p})\otimes _{\s{O}_{\Fl}} \f{n}^0_{\mu}). 
\]
Furthermore, the arithmetic Sen-operator is also a  $\widetilde{K}_p\times \Gamma_p$ equivariant operator of $C^{la}(\widetilde{K}_{p}, \widehat{\s{O}}_{\Shan}(\widetilde{U}_{p^{\infty},\bb{C}_p}))^{\f{n}^0_{\mu,\star_1}=0}$ for the natural action of $\Gamma_p$ and the $\star_{1,3}$ action of $\widetilde{K}_p$, and  it is defined globally over the Shimura variety. The previous implies that the arithmetic Sen operator arises  from an element $\theta^{\mathrm{arith}}\in H^0_{\proket}(\Shan_{K^pK_p,\bb{C}_p}^{\tor}, \widehat{\s{O}}_{\Shan}\otimes_{\bb{Q}_p} \widetilde{\f{g}}_{\ket}/ \pi_{\HT}^{\tor,*} (\f{n}^0_{\mu}))$.

 Now,  write  $\widetilde{\f{g}}=\f{g}^{\mathrm{der}}\oplus \f{z}$ as the direct sum of its derived subalgebra and its center.  
It suffices to see that the projections of $\theta^{\mathrm{arith}}$ to $\widehat{\s{O}}_{\Shan}\otimes _{\bb{Q}_p}\f{z}$ and $\widehat{\s{O}}_{\Shan}\otimes_{\bb{Q}_p}\f{g}^{\mathrm{der}}/\pi^{\tor,*}_{\HT}(\f{n}^0_{\mu})$  are given by   $-\theta_{\mu}$.

%Thus, in the previous isomorphism, the arithmetic Sen operator is invariant for the (colimit of the) right action of $\widetilde{K}_p$ and the action of $\Gamma_p$. Thus, it must arise as the derivation of an element $\theta'\in \widehat{\s{O}}_{\Shan}(\widetilde{U}_{\infty,\bb{C}_p})) \otimes_{\bb{Q}_p} \widetilde{\f{g}}/ \widehat{\s{O}}_{\Shan}(\widetilde{U}_{\infty,\bb{C}_p}))\otimes_{\s{O}_{\Fl}}\f{n}^0_{\mu}$. This element is already $K_p\times \Gamma$-equivariant and it is defined globally on the Shimura variety, thus, 

\textbf{Case of $\f{z}$.}  Let us fix a connected component $\Shan^{\tor,0}_{K^p,\infty,\bb{C}_p}\subset \Shan^{\tor}_{K^p,\infty,\bb{C}_p}$ of the infinite level Shimura variety, and let $\Shan^{\tor,0}_{K^pK_p,\bb{C}_p}\subset \Shan^{\tor}_{K^pK_p,\bb{C}_p}$ be its image at level $K_p$. Then, by \cite[Corollary 5.2.4]{DiaoLogarithmicHilbert2018} the Galois group of the pro-Kummer-\'etale cover $\Shan^{\tor,0}_{K^p,\infty,\bb{C}_p}\to \Shan^{\tor,0}_{K^pK_p,\bb{C}_p}$ injects into $\bbf{G}^{\mathrm{der}}(\bb{Q}_p)$. Therefore, up to passing to an open subgroup to  guarantee that $\n{Z}(\widetilde{K}_p)\cap \bbf{G}^{\mathrm{der}}(\bb{Q}_p)=1$, the action of the center $\n{Z}(\widetilde{K}_p)$ on the connected components $H^0_{\proket}(\Shan^{\tor}_{K^p,\infty,\bb{C}_p}, \widehat{\s{O}}_{\Shan})$ is faithful.  Thus, for computing the image in $H^0_{\proket}(\Shan^{\tor}_{K^pK_p,\bb{C}_p},\widehat{\s{O}}_{\Shan}\otimes_{\bb{Q}_p}\f{z})$ of the arithmetic Sen operator, it suffices to compute the arithmetic Sen operator of  $H^0_{\proket}(\Shan^{\tor}_{K^p,\infty,\bb{C}_p}, \widehat{\s{O}}_{\Shan})$. 

By taking a special point of the Shimura datum $(\bbf{G}, X)$ as in \cite[Section 2.2.4]{DeligneShimura}, and by the naturality of the  arithmetic Sen operator, we  reduce the question when $\bbf{G}=\bbf{T}$ is a torus\footnote{Another way to reduce to the case of tori is by using \cite[Theorem 2.6.3]{DeligneShimura} that describes the group action on connected components of  Shimura varieties in terms of class field theory, we thank the referee for pointing this out.}. Let $E$ be the reflex field and let $\mathrm{res}_E:\bb{G}_m(\bb{A}^{\infty}_E)/E^{\times}_+ \to \Gal_{E}^{\mathrm{ab}}$ be the arithmetic reciprocity map (i.e. $ \mathrm{res}_E$ maps uniformizers in the unramified places dividing $\ell$ to the arithmetic Frobenius, and $E^{\times}_+\subset E^{\times}$ is the subgroup of positive units, that is, those units $a$ such that $\sigma(a)>0$ for all $\sigma\colon E\to \mathbb{R}$ a real embedding). In this case, the action of an element  $\sigma\in  \Gal_{E}^{\mathrm{ab}}$  in the image of $\mathrm{res}_E$ on the set 
\[
\Shan_{K^p,\bb{C}_p}=\bbf{T}(\bb{A}^{\infty}_{\bb{Q}})/K^p \overline{\bbf{T}(\bb{Q})}
\]
is by right multiplication of $N_{E/\bb{Q}}(\mu( \mathrm{res}_E^{-1}(\sigma)))$,  where $N_{E/\bb{Q}}:  \bbf{T}(\bb{A}_{E}^{\infty})\to \bbf{T}(\bb{A}_{\bb{Q}}^{\infty})$ is the norm map,  see Section 2.2.3 of \textit{loc. cit}.   Let $\f{p}$ be a place over $p$ and take  $L=E_{\f{p}}$,   the reciprocity map $\mathrm{res}_E$ is compatible with the local reciprocity map $\mathrm{res}_L:  L^{\times}\to \Gal_{L}^{\mathrm{ab}}$ mapping a uniformizer to Frobenius,  and the completed cohomology in this case  is nothing but the space continuous functions of $|\Shan_{K^p,\bb{C}_p}|$ to $\bb{Q}_p$.   Let $f:  |\Shan_{K^p,\bb{C}_p}|\to \bb{C}_p$ be a locally analytic function,  and let $\sigma\in W_L\subset \Gal_{L}$ be an element in the Weil group of $L$ (so that it admits a preimage along $\mathrm{res}_L$),  we have that 
\[
\sigma\cdot_{L} (f)(x)= f(\sigma^{-1}(x))= f( x N_{L/\bb{Q}_p}(\mu(\mathrm{res}^{-1}_L(\sigma))) ).
\]
Thus, if $\lambda: \bbf{T}(\bb{A}^{\infty}_{\bb{Q}})/K^p \overline{\bbf{T}(\bb{Q})} \to \bb{C}_p$  is a character and  $f\in C(\bbf{T}(\bb{A}^{\infty}_{\bb{Q}})/K^p \overline{\bbf{T}(\bb{Q})}, \bb{C}_p)$  satisfies $f(x t)= \lambda(t) f(x)$ for $t\in  K^pK_p/ (K^p,  K^pK_p\cap \overline{\bbf{T}(\bb{Q})})= \widetilde{K}_p$,  one has $\sigma\cdot f= \lambda(N_{L/ \bb{Q}_p}(\mu(\mathrm{res}^{-1}_{L}(\sigma)))) f$ for all $\sigma\in W_L$ close enough to $1$.  But the representation $N_{L/ \bb{Q}_p}\circ \mathrm{res}_L^{-1}:  W_L \to  \bb{Q}_p^{\times}$ has Hodge-Tate weight $1$,   this proves that $\theta^{\mathrm{arith}}= \theta_{\mu, R}=-\theta_{\mu,L}$   with $\theta_{\mu,R}$ and $\theta_{\mu,L}$ being the action of $\theta_{\mu}$ via the right and left regular action respectively. This proves the claim as we are considering the completed cohomology with respect to the left regular action.

\textbf{Case of $\f{g}^{\mathrm{der}}$.}  
  It suffices to compute the image of $\theta^{\mathrm{arith}}$ in $\widehat{\s{O}}_{\Shan}\otimes_{\bb{Q}_p} \f{g}^c/ \pi_{\HT}^{\tor,*}(\f{n}^0_{\mu})$. By Theorem \ref{TheoHodgeTatePeriod}  the $\widehat{\s{O}}_{\Shan}$-module $\widehat{\s{O}}_{\Shan}\otimes_{\bb{Q}_p} \f{g}^c$ has Hodge-Tate weights $-1,0,1$ and  its Hodge-Tate filtration is given by  the pullback along $\pi_{\HT}$ of 
  \[
\f{n}^0_{\mu}\subset \f{p}^{c,0}_{\mu} \subset \f{g}^{c,0}
  \]
  with graded pieces the pullback of $\f{n}^0_{\mu}$, $\f{m}^{c,0}_{\mu}$ and $\f{g}^{c,0}/\f{p}^{c,0}_{\mu}$ with Hodge-Tate weights $-1$, $0$ and $1$ respectively.  The arithmetic Sen operator is then depicted as  a Galois-equivariant  map 
  \[
  \theta^{\mathrm{arith}}\colon \widehat{\s{O}}_{\Shan}\to \widehat{\s{O}}_{\Shan}\otimes_{\bb{Q}_p} \f{g}^c/ \pi_{\HT}^{\tor,*}(\f{n}^0_{\mu}). 
  \]
  Since the source has Hodge-Tate weight zero,  it must land in the subspace 
  \[
   \theta^{\mathrm{arith}}\in \pi_{\HT}^{\tor,*}(\f{m}^{c,0}_{\mu}) \subset  \widehat{\s{O}}_{\Shan}\otimes_{\bb{Q}_p} \f{g}^c/ \pi_{\HT}^{\tor,*}(\f{n}^0_{\mu}). 
  \]
  Thus, to compute $\theta^{\mathrm{arith}}$ it suffices to understand the relation between the Galois action and the  horizontal action of $\mathfrak{m}^{c,0}_{\mu}$ on $\s{O}^{la}_{\Shan,\bb{C}_p}$. 
  
  Using the isomorphism \eqref{qwijioenflaefwe}, we can look at the $\widetilde{K}_{p}\times \Gamma_p$-equivariant subspace
  \begin{equation}\label{eihh9o3aoaefaef}
(\widehat{\s{O}}_{\Shan}(\widetilde{U}_{p^{\infty},\bb{C}_p}) \otimes_L  C^{\alg}(\bbf{G}^c,L))^{\f{n}^0_{\mu,\star_1}=0}  \subset C^{la}(\widetilde{K}_p, \widehat{\s{O}}_{\Shan}(\widetilde{U}_{p^{\infty},\bb{C}_p}))^{\f{n}^0_{\mu,\star_1}=0} 
  \end{equation}
  consisting of $\widehat{\s{O}}_{\Shan}(\widetilde{U}_{p^{\infty},\bb{C}_p})$-linear algebraic functions of $\bbf{G}^{c}$ which are killed by the left regular action of  $\f{n}^0_{\mu,\star_1}$. We let $C^{\mathrm{alg}}(\bbf{G}^c, \widehat{\s{O}}_{\Shan})^{\f{n}_{\mu,\star_1}^0=0}$ denote the descent of the left term of \eqref{eihh9o3aoaefaef} to a pro-Kummer-\'etale sheaf on $\Shan_{K^pK_p,\bb{C}_p}$. We claim that the action of $\pi_{\HT}^*(\f{m}^{c,0})$ on $\s{G}:=C^{\mathrm{alg}}(\bbf{G}^c, \widehat{\s{O}}_{\Shan})^{\f{n}_{\mu,\star_1}^0=0}$ is faithful. Indeed,   $\s{G}$ can be described as the pullback along $\pi_{\HT}$ of the $\bbf{G}^c$-equivariant sheaf on the flag variety $\Fl_{\mu}$ given by $C^{\mathrm{alg}}(\bbf{G}^c, \s{O}_{\Fl_{\mu}})^{\f{n}^0_{\mu,\star_1}=0}$.  By the dictionary between $\bbf{G}^c$-equivariant  quasi-coherent sheaves on $\Fl_{\mu}$ and  algebraic $\bbf{P}_{\mu}^c$-representations (see Proposition  \ref{PPropEqGSheavesFL}), the sheaf  $C^{\mathrm{alg}}(\bbf{G}^c, \s{O}_{\Fl_{\mu}})^{\f{n}^0_{\mu,\star_1}=0}$ corresponds to the $\bbf{P}_{\mu}^c$-representation $\Gamma(\f{n}_{\mu,\star_1}, \s{O}(\bbf{G}^{c}))=\s{O}(\bbf{N}_{\mu}\backslash \bbf{G}^c)$ consisting on left $\bbf{N}_{\mu}$-invariant algebraic regular functions on $\bbf{G}^c$. Writing $\s{O}(\bbf{G}^c)\cong \bigoplus_{\lambda} V_{\lambda}\otimes V_{\lambda}^{\vee}$ as a direct sum of its irreducible factors as $\bbf{G}^{c}\times \bbf{G}^{c}$-representation (with respect to the left and right regular action respectively, indexed by highest weight), we see that 
  \[
  \s{O}(\bbf{N}_{\mu}\backslash \bbf{G}^c)\cong \bigoplus_{\lambda} \Gamma(n_{\mu},V_{\lambda})\otimes V_{\lambda}^{\vee}\cong \bigoplus_{\lambda} W_{\lambda}\otimes V_{\lambda}^{\vee}
  \]
  where $W_{\lambda}\subset V_{\lambda}$ is the $\bbf{P}^c_{\mu}$-subrepresentation whose action factors through the highest weight representation of $\bbf{M}^c$ with weight $\lambda$. Therefore, we have an isomorphism of $\bbf{G}^c$-equivariant vector bundles 
  \begin{equation}\label{eq9ho3noanlefnir}
  \bigoplus_{\lambda} \mathcal{W}(W_\lambda)\otimes_{\bb{Q}_p} V_{\lambda}^{\vee} \cong  C^{\mathrm{alg}}(\bbf{G}^c, \s{O}_{\Fl_{\mu}})^{\f{n}^0_{\mu,\star_1}=0}
  \end{equation}
  where $\bbf{G}^c$ acts trivially on the $V_{\lambda}^{\vee} $-factor.  Is is then clear that the action of $\f{m}^{c,0}_{\mu}$ on the left term of \eqref{eq9ho3noanlefnir} is faithful (namely, this follows from the fact that the action of $\f{m}_{\mu}$ on the family of  representations $W_{\lambda}$ is faithful), and by taking pullbacks then so is the action on $\s{G}$ as wanted.

We are then reduced to describe the Galois action of $\s{G}$. For that, we can use the isomorphism \eqref{eq9ho3noanlefnir} to deduce that 
\[
\s{G}\cong \bigoplus_{\lambda} \pi_{\HT}^*(\n{W}(W_{\lambda}))\otimes_{\bb{Q}_p} V_{\lambda}^{\vee}.
\]  
Thus, to compute $\theta^{\mathrm{arith}}$ it suffices to identify the Hodge-Tate weights of $\pi_{\HT}^*(\n{W}(W_{\lambda}))$.   
But Corollary \ref{CoroHodgeComparison} says that 
\[
\pi_{\HT}^{\tor,*}(\n{W}(W_{\lambda})) = \pi_{K_p}^*(W_{\lambda,\mathrm{Hod}})\otimes_{\widehat{\s{O}}_{\Shan}} \widehat{\s{O}}_{\Shan}(-\mu(\lambda)),
\]
showing that the action of $\theta^{\mathrm{arith}}$ on $\s{G}$ is given by $-\theta_{\mu}$ as wanted. 
\end{proof}

\begin{cor}\label{coroArithSenCompleted}
The locally analytic completed cohomology \[(R\Gamma_{\proket}(\Shan^{\tor}_{K^p,\infty,\bb{C}_p}, \bb{Q}_p)\widehat{\otimes}_{\bb{Q}_p} \bb{C}_p )^{R\widetilde{K}_p-la}\]  admits a decompletion by locally analytic vectors as  in Definition \ref{DefinitionDecompletionByLocAn}.  More precisely, we have a $K_p\times \Gamma^{\mathrm{arith}}$-equivariant quasi-isomorphism 
\begin{equation}\label{eqpmaglr72}
RS_{L_{\infty}}((R\Gamma_{\proket}(\Shan^{\tor}_{K^p,\infty,\bb{C}_p}, \bb{Q}_p)\widehat{\otimes}_{\bb{Q}_p} \bb{C}_p )^{R\widetilde{K}_p-la}) \cong R\Gamma_{\an}(\Shan^{\tor}_{K^p,\infty,L_{\infty}}, \s{O}^{la, \Gamma^{\mathrm{arith}-la}}_{\Shan,L^{\cyc}})
\end{equation}
where $\s{O}^{la, \Gamma^{\mathrm{arith}-la}}_{\Shan}\subset \s{O}^{la}_{\Shan,L^{\cyc}}$ is the subsheaf of $K_p\times \Gamma^{\mathrm{arith}}$-locally analytic sections of Theorem \ref{TheoArithSenOla}.  The action of the arithmetic Sen operator is given by the global section of $-\theta_{\mu}$ acting on $\s{O}^{la}_{\Shan}$.

Moreover, for $i\in \bb{Z}$ the locally analytic completed cohomology groups  admit  decompletions,  and  we have $K_p\times \Gamma^{\mathrm{arith}}$-equivariant isomorphisms 
\begin{equation}\label{eqpamadjawefkf}
RS_{L_{\infty}}\bigg(\big( H^i_{\proket}(\Shan^{\tor}_{K^p,\infty,\bb{C}_p}, \bb{Q}_p)\widehat{\otimes}_{\bb{Q}_p}\bb{C}_p \big)^{\widetilde{K}_p-la}\bigg) = H^i_{\an}(\Shan^{\tor}_{K^p,\infty,L^{\cyc}}, \s{O}^{la,\Gamma^{\mathrm{arith}-la}}_{\Shan}).
\end{equation}
The analogue statement holds for the cohomology with compact support and the cohomologies of the boundary divisors of the Shimura variety. 
\end{cor}
\begin{proof}
We only prove the statement for the pro-Kummer-\'etale cohomology of $\Shan^{\tor}_{K^p,\infty,\bb{C}_p}$, the case of boundary divisors is proven in the exact same way (where the only input is Theorem \ref{TheoArithSenOla}), and in the case of the cohomology with compact support one uses the long exact sequence  \eqref{eqLongExactSequneceIla} which remains exact after applying the functor  $RS_{L_{\infty}}$ (cf. Corollary \ref{CoroExactDecompletion}).

   Let $\f{U}=\{U_i\}_{i\in I}$ be a finite cover by open affinoids of $\Shan^{\tor}_{K^pK_p,L}$  satisfying  the conditions of Proposition \ref{LemmaLongExactSequenceOlaSheaves}. Then, by the acyclicity on affinoids of $\s{O}^{la}_{\Shan}$ of  \textit{loc. cit.}  and  Theorem \ref{TheoMainVanishingOla}, the locally analytic completed cohomology 
   \[
   (R\Gamma_{\proket}(\Shan^{\tor}_{K^p,\infty,\bb{C}_p}, \bb{Q}_p)\widehat{\otimes}_{\bb{Q}_p} \bb{C}_p )^{R\widetilde{K}_p-la}\] is represented by a \v{C}ech complex 
\[
R \check{\Gamma}(\f{U}, \s{O}^{la}_{\Shan})
\]
whose terms are finite direct sums of locally analytic functions $\s{O}^{la}_{\Shan}(\pi_{K_p}^{-1}(U_{J,\bb{C}_p}))$, where $U_J=\cap_{j\in J} U_j$ and $J\subset I$. Since each term in the \v{C}ech complex admits a decompletion, the stability under cones and finite direct sums of Lemma \ref{LemmaStabilityDecompletions} imply that the locally analytic completed cohomology complex admits a decompletion.  By taking $H$-invariants and $\Gamma^{\mathrm{arith}}$-locally analytic vectors, the acyclicity of Theorem  \ref{TheoArithSenOla} yields the desired equivariant equivalence \eqref{eqpmaglr72}.

We now prove the decompletion for cohomology groups of \eqref{eqpamadjawefkf}. By  Lemma \ref{LemmaStabilityDecompletions},  it suffices to show that the natural map 
\begin{equation}\label{eqwsfpasas}
H^i_{\an}(\Shan^{\tor}_{K^p,\infty,L^{\cyc}}, \s{O}^{la,\Gamma^{\mathrm{arith}-la}}_{\Shan})\otimes_{L_{\infty},\sol} \bb{C}_p\to \big( H^i_{\proket}(\Shan^{\tor}_{K^p,\infty,\bb{C}_p}, \bb{Q}_p)\widehat{\otimes}_{\bb{Q}_p}\bb{C}_p \big)^{\widetilde{K}_p-la} 
\end{equation}
is an equivalence. But then, by \eqref{eqpmaglr72} and the fact  that completed cohomology admits a decompletion,  one has  the $\bb{C}_p$-semilinear $K_p\times \Gal_{\bb{Q}_p}$-equivariant quasi-isomorphism 
\begin{equation}\label{eqwpkapsfjapsd}
(R\Gamma_{\proket}(\Shan^{\tor}_{K^p,\infty,\bb{C}_p}, \bb{Q}_p)\widehat{\otimes}_{\bb{Q}_p} \bb{C}_p )^{R\widetilde{K}_p-la} \cong R\Gamma_{\an}(\Shan_{K^p,\infty, L^{\cyc}}^{\tor}, \s{O}^{la, \Gamma^{\mathrm{arith}}-la}_{\Shan}) \otimes^L_{L_{\infty},\sol} \bb{C}_p.
\end{equation}
Since $L_{\infty}\to \bb{C}_p$ is flat for the solid tensor product (cf.  \cite[Lemma 3.21]{RRLocallyAnalytic}), the presentation of $R\Gamma_{\an}(\Shan_{K^p,\infty, L^{\cyc}}^{\tor}, \s{O}^{la, \Gamma^{\mathrm{arith}}-la}_{\Shan})$ as \v{C}ech cohomology yields 
\[
H^i(R\Gamma(\Shan_{K^p,\infty, L^{\cyc}}^{\tor}, \s{O}^{la, \Gamma^{\mathrm{arith}}-la}_{\Shan}) \otimes^L_{L_{\infty},\sol} \bb{C}_p)= H^i_{\an}(\Shan_{K^p,\infty, L^{\cyc}}^{\tor}, \s{O}^{la, \Gamma^{\mathrm{arith}}-la}_{\Shan}) \otimes_{L_{\infty},\sol} \bb{C}_p. 
\]
Thus,  the isomorphism \eqref{eqwsfpasas} is obtained by taking cohomology groups of \eqref{eqwpkapsfjapsd}. 
\end{proof}

\begin{remark}
A priori it is unclear whether the  cohomology groups  $H^i_{\an}(\Shan^{\tor}_{K^p,\infty,L^{\cyc}}, \s{O}^{la,\Gamma^{\mathrm{arith}-la}}_{\Shan})$ have good topological properties. The isomorphism  \eqref{eqpamadjawefkf} shows that it is a colimit of Banach representations along injective transitions maps. With a little bit more effort one should be able to  prove that the transition maps are even compact, proving   that these cohomologies are LB spaces of compact type over $\bb{Q}_p$.  One could  also have proven the decompletion of the cohomology groups in a more direct way as in \cite[Remark 5.1.16]{LuePan}. Nevertheless, Lemma \ref{LemmaStabilityDecompletions} tells us that the decompletion is unique in the derived category, even for the complex computing locally analytic completed cohomology, and for the locally analytic sheaf $\s{O}^{la}_{\Shan}$. 
\end{remark}

\bibliographystyle{alpha}
\bibliography{LACC}

 \begin{center}
 $\bullet$
 \end{center}

\end{document}